\tikzset{join/.code=\tikzset{after node path={%
\ifx\tikzchainprevious\pgfutil@empty\else(\tikzchainprevious)%
edge[every join]#1(\tikzchaincurrent)\fi}}}
\tikzset{>=stealth',every on chain/.append style={join},
         every join/.style={->}}
\tikzstyle{labeled}=[execute at begin node=$\scriptstyle,
\theoremstyle{plain}
\newtheorem{theorem}{Theorem}[section]
\newtheorem{lemma}[theorem]{Lemma}
\newtheorem{corollary}[theorem]{Corollary}
\newtheorem{proposition}[theorem]{Proposition}
\theoremstyle{definition}
\newtheorem{definition}[theorem]{Definition}
\newtheorem{example}[theorem]{Example}
\newtheorem{conjecture}[theorem]{Conjecture}
\newtheorem{remark}[theorem]{Remark}
\theoremstyle{remark}
\numberwithin{equation}{section}
\newcommand{\row}{\rightarrow}
\renewcommand{\phi}{\varphi}
\newcommand{\lam}{\lambda}
\renewcommand{\epsilon}{\varepsilon}
\renewcommand{\P}{P}
\newcommand{\Q}{Q}
\newcommand{\R}{R}
\newcommand{\calI}{\mathcal{I}} %
\newcommand{\calZ}{\mathfrak{Z}} 
\newcommand{\ZZ}{\mathbb{Z}}
\newcommand{\N}{\mathsf{N}} 
\newcommand{\E}{\mathsf{E}} 
\newcommand{\dinv}{\ensuremath{\mathsf{dinv}}}
\newcommand{\bounce}{\ensuremath{\mathsf{bounce}}}
\newcommand{\area}{\ensuremath{\mathsf{area}}}
\renewcommand{\sl}{\ensuremath{\mathsf{sl}}}
\newcommand{\rk}{\ensuremath{\mathsf{rk}}}
\newcommand{\core}{\mathfrak{c}}
\newcommand{\overbar}[1]{\mkern 1.5mu\overline{\mkern-1.5mu#1\mkern-1.5mu}\mkern 1.5mu}
\renewcommand{\bar}{\overbar}
\newcommand{\DD}{\mathfrak{D}} 
\newcommand{\len}{l}
\newcommand{\flip}{\mathrm{flip}}
\newcommand{\TODO}[2][To do: ]{\textcolor{red}{\textbf{#1#2}}}
\newcommand{\TODO}[2][]{}
\definecolor{darkblue}{rgb}{0, 0, .4}
\definecolor{grey}{rgb}{.7, .7, .7}
\title{Combinatorics of the zeta map on rational Dyck paths}
\author{Cesar Ceballos}
\address{Faculty of Mathematics, University of Vienna, Oskar-Morgenstern-Platz 1, 1090 Vienna, Austria}
\email{\href{mailto:cesar.ceballos@univie.ac.at}{\texttt{cesar.ceballos@univie.ac.at}}}
\urladdr{\url{http://garsia.math.yorku.ca/~ceballos/}}
\author{Tom Denton}
\address{Department of Mathematics and Statistics\\ York University\\ Toronto\\ Ontario M3J 1P3\\ Canada}
\email{\href{mailto:sdenton4@gmail.com}{\texttt{sdenton4@gmail.com}}}
\urladdr{\url{http://inventingsituations.net/}}
\author{Christopher R.\ H.\ Hanusa}
\address{Department of Mathematics \\ Queens College (CUNY) \\ 65-30 Kissena Blvd. \\ Flushing, NY 11367\\ United States}
\email{\href{mailto:chanusa@qc.cuny.edu}{\texttt{chanusa@qc.cuny.edu}}}
\urladdr{\url{http://qc.edu/~chanusa/}}
\thanks{
The first author was supported by the government of Canada through a Banting Postdoctoral Fellowship. He was also supported by a York University research grant.
The second author was supported by a postdoctoral fellowship with York
University and the Fields Institute, with additional support from
NSERC.
The third author gratefully acknowledges support from PSC-CUNY Research Awards TRADA-44-168 and TRADA-45-60.
}
\keywords{Dyck path, rational Dyck path, lattice path, core partition, zeta map, eta map, sweep map, lasers, conjugate-area map, area statistic, dinv statistic}
\begin{document}

\begin{abstract}
An $(a,b)$-Dyck path $\P$ is a lattice path from $(0,0)$ to $(b,a)$ that stays above the line $y=\frac{a}{b}x$.  The zeta map is a curious rule that maps the set of $(a,b)$-Dyck paths into itself; it is conjecturally bijective, and we provide progress towards proof of bijectivity in this paper, by showing that knowing zeta of $\P$ and zeta of $\P$ conjugate is enough to recover $\P$. 

Our method begets an area-preserving involution $\chi$ on the set of $(a,b)$-Dyck paths when $\zeta$ is a bijection, as well as a new method for calculating $\zeta^{-1}$ on classical Dyck paths.  For certain nice $(a,b)$-Dyck paths we give an explicit formula for $\zeta^{-1}$ and $\chi$ and for additional $(a,b)$-Dyck paths we discuss how to compute $\zeta^{-1}$ and $\chi$ inductively.

We also explore Armstrong's skew length statistic and present two new combinatorial methods for calculating the zeta map involving lasers and interval intersections.  We provide a combinatorial statistic $\delta$ that can be used to recursively compute $\zeta^{-1}$ and show that $\delta$ is computable from $\zeta(P)$ in the Fuss-Catalan case.
\end{abstract}

\maketitle

\section{Introduction}
\label{sec.intro}

Let $a$ and $b$ be relatively prime positive integers and let $\DD_{a,b}$ be the set of $(a,b)$-Dyck paths, lattice paths $\P$ from $(0,0)$ to $(b,a)$ staying above the line $y=\frac{a}{b}x$.  These paths are often called {\em rational Dyck paths} and they generalize the classical and well-studied {\em Dyck paths}.

\renewcommand*{\thefootnote}{\fnsymbol{footnote}}

We study a remarkable function $\zeta$ on rational Dyck paths conjectured to be an automorphism\footnote{After this article was accepted for publication, we learned that Nathan Williams proved that the zeta map and its sweep map brethren are indeed bijective using other methods. \cite{Williams}}, which has received considerable attention lately; this ``zeta map'' generalizes the map on standard Dyck paths discovered by Haiman in the study of diagonal harmonics and $q,t$-Catalan numbers~\cite{haglund2008q}.  Combinatorial definitions of $q,t$-statistics for classical Dyck paths were famously difficult to find, but were nearly simultaneously discovered by Haglund and Haiman.  Interestingly, they discovered two \emph{different} pairs of statistics: Haiman found $\area$ and $\dinv$ shortly after Haglund discovered $\bounce$ and $\area$ statistics.  The zeta map was then uncovered, which satisfies $\bounce(\zeta(\P))=\area(\P)$ and $\area(\zeta(\P))=\dinv(\P)$.

Many details about the zeta map have been gathered and unified in a comprehensive article by Armstrong, Loehr, and Warrington \cite{ALW-sweep}, including progress on proving its bijectivity in certain cases such as $(a,am\pm 1)$-Dyck paths \cite{Loehr,GMII} (which is associated to the Fuss-Catalan numbers).  The zeta map was shown to be a bijection in these special cases by way of a ``bounce path'' by which zeta inverse could be computed.  However, constructing such a bounce path for the general $(a,b)$ case remains elusive.  Armstrong, Loehr, and Warrington showed that there is a much larger family of sweep maps (for which the zeta map is a special case) which extensive computational exploration suggests are also bijective.  A construct of theirs upon which we have relied heavily is the notion of the \emph{levels} of a lattice path.

Recent progress related to rational Dyck paths has been made in the case when $a\leq3$ by Gorsky and Mazin and by Kaliszewski and Li \cite{GMII, KL14}, when $a=4$ by Lee, Li, and Loehr \cite{LLL14} in connection with the $q,t$-symmetry of the rational Catalan numbers.  A type $C$ analog of the zeta map has been introduced by Sulzgruber and Thiel \cite{ST14}.  Rational Dyck paths also are intimately entwined in the study of rational parking functions and MacDonald polynomials, with recent work by Gorsky, Mazin, and Vazirani \cite{GMV} and when $a$ and $b$ are not relatively prime by Bergeron, Garsia, Levin, and Xin~\cite{BGLX}.

Our goal is to explore the following conjecture:

\begin{conjecture}[\cite{ALW-sweep,GMI}]
\label{conj.main}
Let $a$ and $b$ be relatively prime positive integers.  The {\em zeta map} $\zeta:\DD_{a,b}\rightarrow\DD_{a,b}$ is a bijection.
\end{conjecture}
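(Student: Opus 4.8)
The plan is to prove the conjecture by establishing injectivity of $\zeta$, which suffices because $\DD_{a,b}$ is finite. I would begin by setting up the combinatorial scaffolding of Armstrong--Loehr--Warrington: assign to each unit step of a path a \emph{level}, essentially $b\cdot(\text{height of its starting point})-a\cdot(\text{horizontal coordinate})$ together with a fixed rule for breaking ties between N- and E-steps, chosen so that the $a+b$ step-levels of any path in $\DD_{a,b}$ are pairwise distinct (here is where $\gcd(a,b)=1$ enters) and so that $\zeta(\P)$ is precisely the word of step-types read in order of increasing level. The essential difficulty is visible already here: the level of a step of $\P$ is computed from the coordinates of its starting point, and these are exactly the data that $\zeta$ scrambles, so inverting $\zeta$ amounts to recovering, from the output word alone, the correspondence ``position in $\zeta(\P)$ $\mapsto$ level of the corresponding step of $\P$.''

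The key reduction I would pursue is to show that $\P$ is determined by the pair $(\zeta(\P),\zeta(\bar\P))$, where $\bar\P$ is the conjugate of $\P$. The guiding intuition is that $\zeta(\P)$ records the order in which the steps of $\P$ are met when one sweeps by lines of slope $a/b$ from one side, while $\zeta(\bar\P)$ records the order obtained from the ``opposite'' side, and that these two orderings jointly pin down the positions of all steps of $\P$. Concretely, I would try to express the valleys of $\P$ --- equivalently the \emph{laser} data of $\P$ --- through interval-intersection information that can be extracted from $\zeta(\P)$ and $\zeta(\bar\P)$, thereby producing an explicit reconstruction algorithm. This already yields that $\P\mapsto(\zeta(\P),\zeta(\bar\P))$ is injective, and it is also what lets one define the area-preserving involution $\chi$ on $\DD_{a,b}$ under the hypothesis that $\zeta$ is bijective.

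To upgrade this to the full conjecture --- that $\zeta(\P)$ by itself suffices --- the plan is to isolate a combinatorial statistic $\delta(\P)$ that drives a recursion for $\zeta^{-1}$, in the spirit of a generalized ``bounce path'': given $\zeta(\P)$, one reads off $\delta$, peels off a piece of the path, and recurses. I would first carry this out in the $(a,am\pm1)$ Fuss--Catalan case, where a bounce construction is already known, verifying there that $\delta$ is computable from $\zeta(\P)$, and then attempt to extend the construction to arbitrary coprime $(a,b)$ using the laser/interval description of $\zeta$ developed above. The main obstacle I anticipate is precisely the step that has blocked earlier approaches: there is no known uniform bounce-path construction for general $(a,b)$, and the recursive data that is manifestly recoverable from $\zeta(\P)$ alone appears, a priori, to determine $\P$ only up to the ambiguity that is killed by additionally knowing $\zeta(\bar\P)$. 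I therefore expect to be able to establish the $(\zeta(\P),\zeta(\bar\P))$-reconstruction and the Fuss--Catalan case unconditionally, while the passage to a genuine one-sided inverse in full generality remains the crux and is the part I would not expect to settle here.
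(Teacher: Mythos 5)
The statement you were asked to prove is, in the paper, exactly that: a conjecture. The authors do not prove it, and neither does your proposal --- as you yourself concede in your final sentence. What you have written is a faithful reconstruction of the paper's \emph{program} rather than a proof. Your key reduction, that $\P$ is recoverable from the pair $(\zeta(\P),\zeta(\P^c))=(\zeta(\P),\eta(\P))$, is the paper's Theorem~\ref{thm:inverse}, carried out there via the cycle permutation $\gamma$ built from matching steps of $Q$ and $R$ in the same row or column; your conjugate-area involution $\chi$ is their Section~\ref{sec:perp}; and your statistic $\delta$ with an initial-bounce recursion, verified unconditionally only in the Fuss--Catalan case $(a,ak+1)$, is precisely their Section~\ref{sec:9} (Proposition~\ref{prop:funnydelta} and Corollaries~\ref{cor:delta_Fuss} and~\ref{cor:zetabijection_Fuss}). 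So the approach is the same, and it buys the same partial results.

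The genuine gap --- in your proposal and in the paper alike --- is the step you flag: showing that $\delta(\P)$ (equivalently, the $\zeta$-predecessor of $\zeta(\P)$) is determined by $\zeta(\P)$ \emph{alone} for arbitrary coprime $(a,b)$. Injectivity of $\P\mapsto(\zeta(\P),\eta(\P))$ does not yield injectivity of $\zeta$ without a further argument that at most one $\R$ can be paired admissibly with a given $\Q$, which the paper also leaves as a conjecture. Nothing in your outline supplies that missing ingredient, so as a proof of the stated conjecture the proposal is incomplete; the paper itself records (in a footnote) that bijectivity was only later established by Williams using entirely different methods.
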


Our perspective is that there are in fact two maps, the zeta map and the eta map, which jointly contain enough information to recover the original path.  In Section~\ref{sec:zetaeta}, we provide a straightforward algorithm for recovering $\P$ from the combined data of $Q=\zeta(\P)$ and $R=\eta(\P)$.  What we find interesting is that the information contained solely in $\zeta(\P)$ does not seem to be enough to reconstruct $\P$ directly.  Our argument does not give an explicit construction of $\zeta^{-1}(Q)$, nor do we construct a bounce path.  

The zeta and eta maps appeared previously in the work of Gorsky and Mazin (See $G_{n,m}$ and~$G_{m,n}$ in \cite{GMII}) and in the work of Armstrong, Loehr, and Warrington (varying the direction of the sweep map in \cite{ALW-sweep}). Although, they were never used simultaneously as we do in this paper.  
The eta map is based on a natural notion of conjugation on rational Dyck paths explored in Section~\ref{sec:conjugate} that arises from Anderson's bijection \cite{And02} between $(a,b)$-Dyck paths and simultaneous $(a,b)$-core partitions, which in turn are related to many more combinatorial interpretations. (See \cite{AHJ14} for additional background.) One can define the map $\eta$  by $\eta(\P)=\zeta(\P^c)$; in most cases $\zeta(\P)\neq \eta(\P)$.  Section~\ref{sec:zeta_map} is devoted to presenting the algorithms for calculating the zeta map and the eta map in multiple fashions.  In particular, we present two new methods involving lasers and interval intersections.

Meanwhile, $\zeta$ and $\eta$ combine to induce a new {\em area-preserving} involution $\chi$ on the set of Dyck paths defined in Section~\ref{sec:perp} by \[\chi(\Q):=\eta(\zeta^{-1}(\Q))=\zeta(\zeta^{-1}(\Q)^c).\]  In Section~\ref{sec:square}, we give a new proof that in the classical Catalan case, this {\em conjugate-area map} $\chi$ is the map that reverses the Dyck path.   Applying our inverse algorithm presents a new construction of the inverse of the zeta map on a Dyck path.  However, we have no explicit description of $\chi(\Q)$ from $\Q$ in the general $(a,b)$-case.  Indeed, a concrete construction of $\chi(\Q)$ from $\Q$ could be used to construct an explicit inverse for the zeta map.

In Section~\ref{sec:inductive_zeta_inverse}, we show that when a rational Dyck path $\Q$ visits the lattice point having level equal to 1, $\zeta^{-1}(Q)$ has a nice decomposition as does its image under the conjugate-area map $\chi$.  These observations allows us to explicitly find $\chi$ (and therefore $\zeta^{-1}$) of any path that has valleys exactly on levels equal to $\{1,\hdots,k\}$ for $k<a$ in Theorem~\ref{thm:inductive_area}.  We have also constructed $\chi(Q)$ and $\zeta^{-1}(Q)$ for paths that bound left-adjusted or up-adjusted partitions in Proposition~\ref{prop:justified}.

Section~\ref{sec:9} investigates the poset of rational Dyck paths ordered by when one path is weakly below the other, motivating a new statistic $\delta(P)$ that appears to be fruitful for recursively computing $\zeta^{-1}$ from evidence gathered by computer learning algorithms.  Indeed, in the remainder of Section~\ref{sec:9}, we use $\delta(P)$ to construct the initial part of a rational bounce path and to give a new algorithm that computes $\zeta^{-1}$ for $(a,am+1)$-Dyck paths. 

\medskip
One of the primary motivations for our research was the study of conjectured statistics for the $q,t$-enumeration of $(a,b)$-Dyck paths. Section~\ref{sec.notation} sets the stage by introducing key combinatorial concepts and statistics associated to $(a,b)$-Dyck paths.  In Section~\ref{sec:skew_length} we investigate the \emph{skew length} statistic $\sl(\P)$, originally defined in the context of $(a,b)$-cores in \cite{AHJ14}.  The original definition of skew length seems to depend on the ordering of $a$ and $b$; we show that skew length is in fact independent of this choice.  The main tools we develop involve a row length filling of the boxes under the $(a,b)$-Dyck path $\P$ and above the main diagonal, along with the idea of skew inversions and flip skew inversions.   Section~\ref{sec:conjugate} shows that \emph{skew length} is preserved under conjugation.  
 
\section{Background and Notation}
\label{sec.notation}

\begin{definition}
An {\em $(a,b)$-lattice path} $\P$ is a lattice path in $\ZZ^2$ consisting of north and east steps starting from the origin and ending at the point $(b,a)$.

We call $\P$ an {\em $(a,b)$-Dyck path} if $\P$ remains (weakly) above the diagonal line connecting the origin to $(b,a)$.  Equivalently, the lattice points $(x,y)$ along $\P$ satisfy $ax\leq by$. We draw $(a,b)$-Dyck paths in an $a\times b$ grid, where the lower left corner is the origin. 

We denote the full collection of $(a,b)$-Dyck paths by $\DD_{a,b}$, or simply $\DD$ if there is no confusion about the values of $a$ and $b$.
\end{definition}

We use the English notation for Young diagrams, drawing the largest row at the top.  The \emph{hook length} of a box $B$ in the Young diagram of a partition is the number of boxes in the \emph{hook} of boxes directly below or directly to the right of $B$, including the box $B$ itself.  An \emph{$a$-core partition} (or simply $a$-core) is a partition for which its Young diagram has no boxes with hook length equal to~$a$.  Similarly, a \emph{simultaneous $(a,b)$-core partition} (or $(a,b)$-core for short) has no hooks equal to $a$ or $b$.

Anderson proved that when $a$ and $b$ are relatively prime there are finitely many $(a,b)$-cores \cite{And02} by finding a bijection with the set of $(a,b)$-Dyck paths; these are counted by the formula
\[
\frac{1}{a+b}\binom{a+b}{a}.
\]
This formula seems to have been discovered at various times; the earliest reference we know of is~\cite{DM47} in 1947.  In 1954, Bizley considered the general case of rectangular Dyck paths of which this formula is a special case~\cite{Biz54}.

Bizley's counting method starts from the full set of lattice paths from $(0,0)$ to $(b,a)$, and considers the orbit of the cyclic group $C_{a+b}$ acting by \emph{cyclic shifts} on paths.  In the case where $a$ and $b$ are relatively prime, there is a unique Dyck path in each such orbit.

\begin{example} Let $N$ and $E$ represent a north step and an east step, respectively.  Throughout this paper, we will use as our running example the $(5,8)$-Dyck path \[\P=NNNENEEENEEEE,\] shown in Figure~\ref{fig:example58}.   
\end{example}

\begin{figure}
  \begin{center}
  \includegraphics[width=0.8\textwidth]{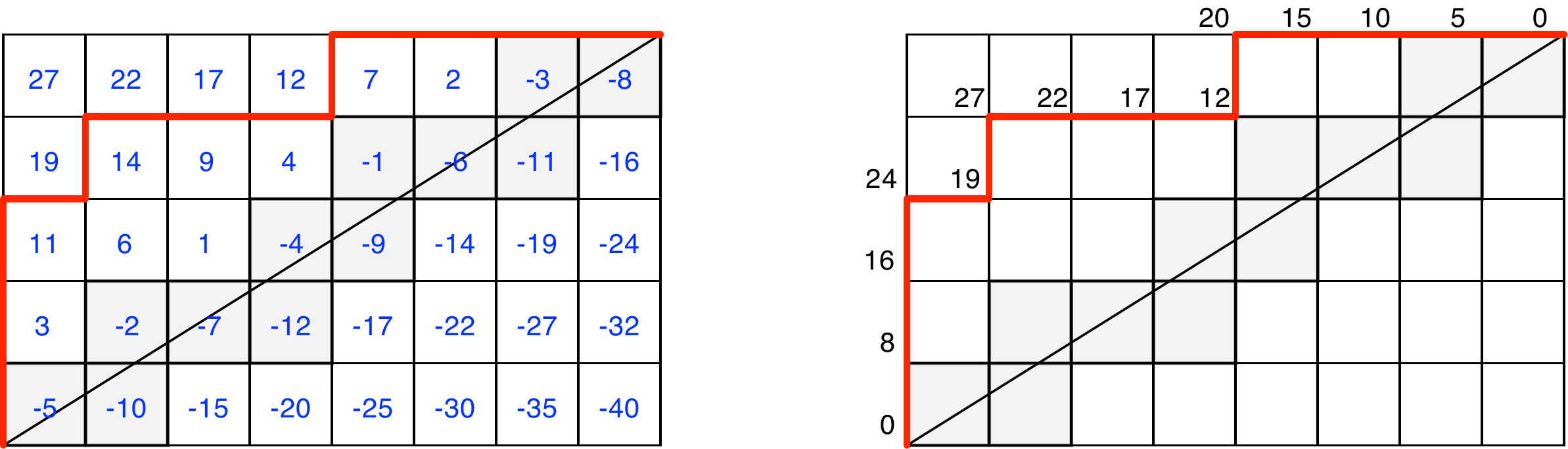} 
  \end{center}
  \caption{(Left) A lattice path $\P$ when $a=5$ and $b=8$.  The hook filling is given by the numbers in the center of the boxes.  The boxes above the path show that the partition bounded by $\P$ is $(4,1)$. \newline(Right) The levels of the lattice points along the path.}
  \label{fig:example58}
\end{figure}

\newpage\subsection{Dictionary of notation}\

We keep track of numerous bits of data associated to an $(a,b)$-Dyck path $\P$. 

\begin{enumerate}
\item {\bf General constructions:}

\begin{itemize}
\item The \emph{hook filling} of the boxes in the square lattice is obtained by filling the box with lower-right lattice point $(b,0)$ with the number $-ab$ and increasing by $a$ for every one box west and increasing by $b$ for every one box north. A box is above the main diagonal if and only if the corresponding hook is positive. (See Figure~\ref{fig:example58}.)

\begin{figure}
  \begin{center}
  \includegraphics[width=0.7\textwidth]{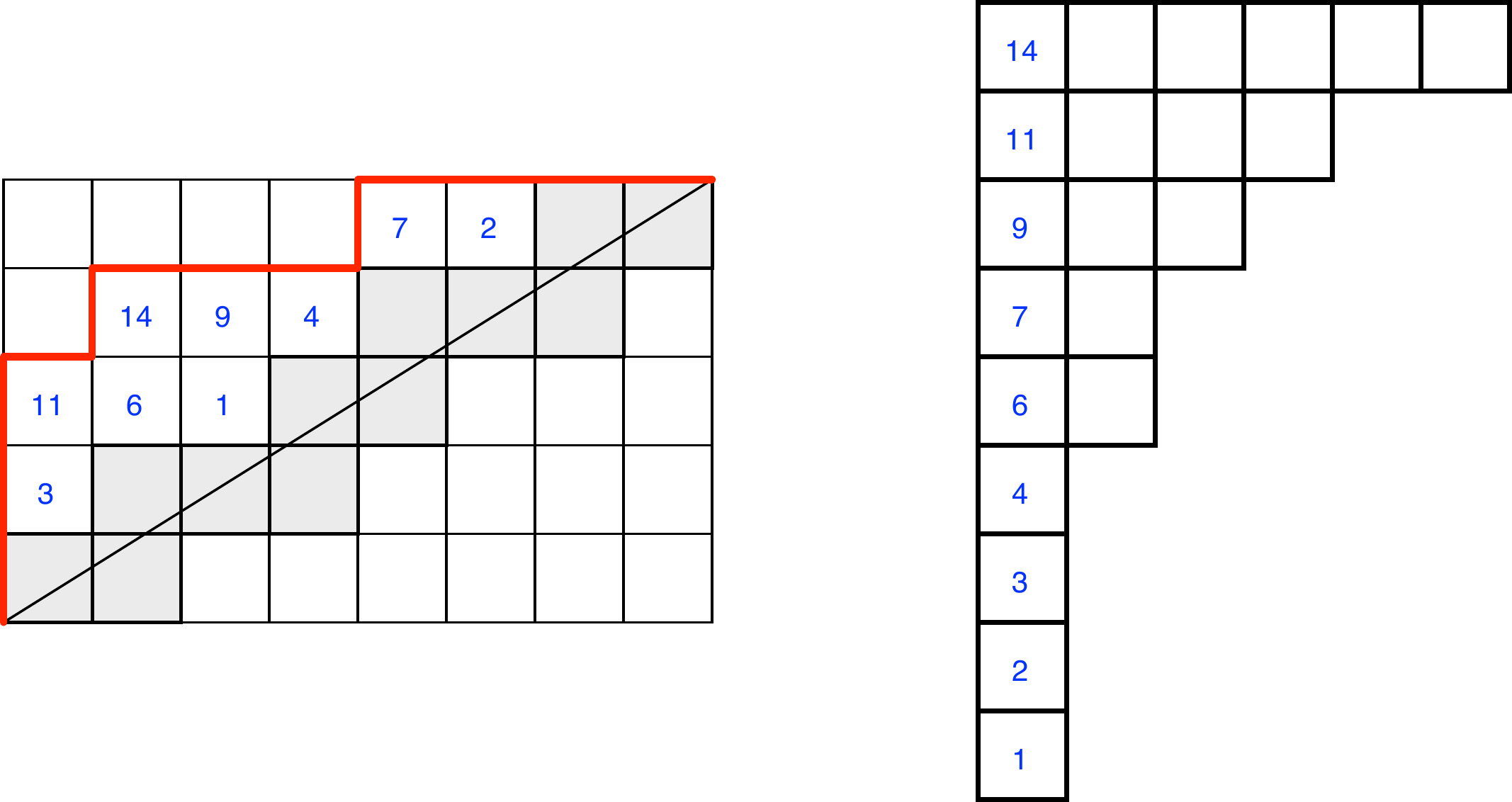}
  \end{center}
  \caption{Anderson's bijection gives a correspondence between $(a,b)$-Dyck paths and $(a,b)$-core partitions.  Corresponding to $\P=NNNENEEENEEEE$ is the $(5,8)$-core $(6,4,3,2,2,1,1,1,1)$.}
  \label{fig:AndersonsBijection}
\end{figure}

\item The \emph{positive hooks} of $\P$ are the numbers in the hook filling below the path but greater than zero.  (Elsewhere these have been called {\em beta numbers} or {\em bead numbers}.)  

\item We denote by $\core(\P)$ the $(a,b)$-core corresponding to $\P$ under Anderson's bijection. 
The hook lengths of the boxes in the first column of $\core(\P)$, its {\em leading hooks}, are precisely the positive hooks of $\P$. An example of Anderson's bijection is illustrated in Figure~\ref{fig:AndersonsBijection}.

\item The \emph{row length filling} of $\P$ are numbers placed in the boxes under $\P$. They correspond to the number of boxes in the row of $\core(\P)$ with the given hook. This will be developed in Section~\ref{sec:length_filling}.  (See Figure~\ref{fig:rowLength}.)
\item The \emph{partition bounded by} $\P$ is the partition whose Young diagram is the collection of boxes above the path $\P$.
\end{itemize}

\item {\bf Combinatorial statistics:}
\begin{itemize}
\item The \emph{area} of $\P$, denoted $\area(\P)$ is the number of positive hooks of $\P$. Equivalently, this is the number of rows in $\core(\P)$.

\item The \emph{rank} of $\P$, denoted $\rk(\P)$ is the number of rows in the partition bounded by $\P$.
\item The \emph{skew length} of $\P$, denoted $\sl(\P)$ is a statistic that we discuss in detail in Section~\ref{sec:skew_length}.
\end{itemize}
\item {\bf Sets and sequences of numbers associated to $\P$:}
\begin{itemize}

\item The \emph{levels} of $\P$ are labels associated to the lattice points of $\P$ defined by Armstrong, Loehr, and Warrington in \cite{ALW-parking,ALW-sweep}.  Assign level $0$ to $(0,0)$ and label the other lattice points of $\P$ by adding $b$ after each north step and subtracting $a$ after each east step.  Equivalently, this is the value of the hook filling in the box to the northwest of the lattice point.  Note that the label of the northeast-most lattice point $(b,a)$ is once again $0+a\cdot b-a \cdot b=0$.

\item The path $\P$ has two reading words obtained by reading the levels in order.  The \emph{reading word} of $\P$, denoted $L(\P)$ (for `levels'), is obtained by reading the levels that occur along the path from southwest to northeast, excluding the final $0$.  (One can imagine assigning to each north and east step of the path the level of the step's initial lattice point.)

The \emph{reverse reading word}, denoted $M(\P)$, is obtained by reading from northeast to southwest, excluding the final $0$. (One can imagine $\P$ as a path from $(b,a)$ to $(0,0)$ consisting of west and south steps, once again assigning to each step  the level of its initial lattice point.)

Reading along $\P$ in Figure~\ref{fig:example58} shows that
\[L(\P)=(0,8,16,{24},19,{27},{22},{17},12,{20},{15},{10},{5})\] and 
\[M(\P)=({0},{5},{10},{15},20,{12},{17},{22},27,{19},24,16,8).\]
When $a$ and $b$ are relatively prime, no value occurs more than once in $L(\P)$ or $M(\P)$.

\item The set of levels of $\P$ is partitioned into the set of \emph{north levels} $\N(\P)$ and \emph{east levels} $\E(\P)$, where when reading from southwest to northeast, levels of lattice points starting north steps of $\P$ are in $\N(\P)$ and levels of lattice points starting east steps of $\P$ are in $\E(\P)$. We order these levels in decreasing order.  In our running example, the north levels of $\P$ are 
\[\N(\P)=\{19,16,12,8,0\},\] 
and the east levels of $\P$ are 
\[\E(\P)=\{27,24,22,20,17,15,10,5\}.\]
\end{itemize}

\item {\bf Permutations associated to $\P$:}
Throughout the paper we use square brackets to write permutations in one-line notation, and round parentheses for permutations in cycle notation.\smallskip

\begin{itemize}

\item The \emph{reading permutation} of $\P$ is a permutation $\sigma$ in $S_{a+b}$ that encodes the relative order of the levels recorded in $L(\P)$. The \emph{reverse reading permutation} of $\P$, denoted~$\tau(\P)$, encodes the relative order of the values in~$M(\P)$. In our running example, the one-line notation for $\sigma(\P)$ and $\tau(\P)$ are 
\[\sigma(\P)=[1,3,7,{12},9,{13},{11},{8},5,{10},{6},{4},{2}]\] 
and 
\[\tau(\P)=[{1},{2},{4},{6},10,{5},{8},{11},13,{9},12,7,3].\] 

\item Let $\gamma(\P)$ be the permutation in $S_{a+b}$ that when written in cycle notation starting with $1$ has the same order of entries as $\sigma(\P)$ written in one-line notation.  In our running example $\P$ we have
\begin{align*} 
\gamma(\P)&= (1,3,7,{12},9,{13},{11},{8},5,{10},{6},{4},{2})\\ 
 &=  [3,1,7,2,10,4,12,5,13,6,8,9,11].
\end{align*}
\end{itemize}
\end{enumerate}

\renewcommand*{\thefootnote}{\fnsymbol{footnote}}
\setcounter{footnote}{1}

\begin{remark}\label{rem:sigma}
The path $\P$ can be recovered knowing only $\sigma(\P)$ (or $\tau(\P)$ or $\gamma(\P)$).  
The east steps of~$\P$ correspond exactly to the right (cyclic) descents\footnote{A descent of a permutation occurs when $\sigma(i)>\sigma(i+1)$.  A cyclic descent is defined in the same way, but considering the indices modulo $a+b$, allowing a descent in the last position of $\sigma$.} of $\sigma$; whereas, the north steps of $\P$ correspond to the right (cyclic) ascents of $\sigma$. In our running example, the right (cyclic) descents of $\sigma(\P)$ occur in positions $4$, $6$, $7$, $8$, $10$, $11$, $12$, and $13$, which are exactly the positions of the east steps in $\P$.
\end{remark}

\section{Skew length}\label{sec:skew_length}

In~\cite{AHJ14} the \emph{skew length} statistic is proposed as a $q$-statistic for $(a,b)$-Dyck paths and a related construction is investigated in \cite[Section 4]{ALW-parking}. In this section, we present the original definition of skew length on cores and two equivalent interpretations on $(a,b)$-Dyck paths using length fillings and skew inversions.  We show that these interpretations are indeed equivalent to the original definition and, as a consequence, we prove that skew length is independent of the ordering of $a$ and $b$.  Further interpretations of skew length are presented in terms of the zeta map in Section~\ref{sec:zeta_map}.

\subsection{Skew length on cores and polynomial motivation}\

We begin with an observation on ordinary core partitions before discussing simultaneous core partitions.

\begin{definition}[{\cite[Definition 2.7]{AHJ14}}]
Let $\kappa$ be an $a$-core partition.  Consider the hook lengths of the boxes in the first column of $\kappa$.  Find the largest hook length of each residue modulo $a$.  The \emph{$a$-rows} of $\kappa$ are the rows of $\kappa$ corresponding to these hook lengths. The \emph{$a$-boundary} of $\kappa$ consists of all boxes in its Young diagram with hook length less than $a$.  \end{definition}

\begin{proposition}
\label{prop:arows}
Let $\kappa$ be an $a$-core partition.  The number of boxes in the $a$-rows of $\kappa$ equals the number of boxes in the $a$-boundary of $\kappa$.
\end{proposition}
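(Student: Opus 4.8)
The plan is to pass to the abacus picture of $\kappa$. Write the hook lengths of the first-column boxes of $\kappa$ as $\beta_1>\beta_2>\dots>\beta_r$, where $r$ is the number of rows of $\kappa$, and put $H=\{\beta_1,\dots,\beta_r\}$; for a residue $\rho\in\{0,1,\dots,a-1\}$ let $c_\rho$ be the number of elements of $H$ congruent to $\rho$ modulo $a$. The first step is to unwind the $a$-core condition in the form ``$\beta\in H$ and $\beta\ge a$ imply $\beta-a\in H$'': iterating this forces $c_0=0$ and forces the elements of $H$ of residue $\rho$ (for each $\rho\ge 1$ with $c_\rho\ge 1$) to be exactly $\rho,\rho+a,\dots,\rho+(c_\rho-1)a$. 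In particular the $a$-row of residue $\rho$ is the row with first-column hook $m_\rho:=\rho+(c_\rho-1)a$, and every nonnegative integer of residue $\rho$ that is at most $m_\rho$ already lies in $H$.

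Second, I would use the standard fact that $H$ determines all the hook lengths of $\kappa$: calling a nonnegative integer not in $H$ a \emph{gap}, the boxes of the row with first-column hook $\beta$ correspond bijectively to the gaps $q$ with $0\le q<\beta$, in such a way that the box matched to $q$ has hook length $\beta-q$. Thus every box of $\kappa$ is encoded by a pair consisting of a \emph{bead} $\beta\in H$ and a gap $q<\beta$. An $a$-core has no box of hook length divisible by $a$, so a box of the $a$-boundary has $\beta-q\in\{1,\dots,a-1\}$, hence $\beta$ and $q$ of distinct residues; a box of an $a$-row has $\beta=m_\rho$ of residue $\rho$ and, by the last remark of the previous paragraph, $q$ of residue different from $\rho$. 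Either way the box carries a well-defined unordered \emph{residue pair}.

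The heart of the argument is a bijection carried out one residue pair at a time. Fix $\{i,j\}$ with $i<j$. An $a$-boundary box with residue pair $\{i,j\}$ is encoded by a bead $\beta$ and a gap $q<\beta$ with $\beta-q\in\{1,\dots,a-1\}$. If $\beta$ has residue $j$ then $\beta-q\equiv j-i$ modulo $a$, so $\beta-q=j-i$ since both lie in $\{1,\dots,a-1\}$, giving $\beta=j+ta$ and $q=i+ta$ (type A); if instead $\beta$ has residue $i$, then symmetrically $\beta=i+ta$ and $q=j+(t-1)a$ (type B). Tracking when $\beta$ is genuinely a bead and $q$ genuinely a nonnegative gap, type A is indexed exactly by $t\in\{c_i,\dots,c_j-1\}$ and type B by $t\in\{c_j+1,\dots,c_i-1\}$. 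On the $a$-rows side, the boxes with residue pair $\{i,j\}$ are the residue-$i$ gaps below $m_j$, indexed by $s\in\{c_i,\dots,c_j-1\}$, together with the residue-$j$ gaps below $m_i$, indexed by $s\in\{c_j,\dots,c_i-2\}$. So type A has the same index set as the residue-$i$ gaps below $m_j$, while type B's index set is a shift by one of that of the residue-$j$ gaps below $m_i$; the maps $t\mapsto t$ and $t\mapsto t-1$ are therefore bijections on the respective pieces, and their union over all residue pairs $\{i,j\}$ is the required bijection between the $a$-boundary and the union of the $a$-rows.

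I expect the most delicate part to be the bookkeeping: verifying that types A and B exhaust every $a$-boundary box exactly once (which is precisely where the $a$-core condition is used, to pin down the hook value as $j-i$ or $a-(j-i)$ and hence the index $t$), handling runners with $c_\rho=0$ (there is no $a$-row of that residue, and the paired index sets degenerate to the empty set consistently), and not forgetting that $0$ is always a gap. Once the residues of the bead and of the gap of each box are identified, the remaining work is a routine comparison of finite intervals of integers.
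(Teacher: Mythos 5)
Your proof is correct, but it takes a genuinely different route from the paper's. The paper proves the single identity that the number of $a$-boundary boxes in the row with leading hook $h$ equals $\len(h)-\len(h-a)$ (by splitting the hook $h$ into a hook of length $a$ and one of length $h-a$, which also shows $h-a$ is again a leading hook), and then telescopes this difference over each residue class modulo $a$, leaving exactly the lengths of the $a$-rows. You instead work entirely on the abacus: you encode every box by a bead--gap pair, attach to each box the unordered pair of residues of its bead and its gap, and construct an explicit bijection between the $a$-boundary and the $a$-rows one residue pair at a time, via the interval computations for your types A and B. Your index-set bookkeeping checks out (including the degenerate cases $c_\rho=0$ and the shift by one in type B), and the underlying structural input is the same in both arguments — that the beads on each runner form an initial segment, equivalently that $h-a$ is a leading hook whenever $h>a$ is. What the paper's telescoping buys is brevity: the whole proof is a few lines. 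What your argument buys is a refinement the paper does not state: an explicit bijection, and in particular the equality of the two counts restricted to each unordered residue pair $\{i,j\}$, which is strictly more information than the global equality. If you write it up, the one point worth making explicit is that $c_0=0$ follows from iterating the bead-removal property down to a first-column hook equal to $a$, contradicting the $a$-core condition.
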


\begin{proof}
Let $\len(h)$ be the number of boxes in the row of $\kappa$ with leading hook $h$.

We first observe that if $h>a$ is a leading hook of $\kappa$, then $h-a$ is also a leading hook of $\kappa$. For this, decompose $h$ into two hooks of lengths $h-a$ and $a$ as illustrated in Figure~\ref{fig:skewLength_proof}, such that the boxes in the row with leading hook $h$ that are intersected by the hook $a$ are exactly the boxes in the $a$-boundary in that row. This guarantees that the right-end box of the hook $h-a$ is in~$\kappa$, and therefore that $h-a$ is also a leading hook.

Now, the number of $a$-boundary boxes in the row of $\kappa$ corresponding to $h$ is $\len(h)-\len(h-a)$. Summing over all rows gives the number of $a$-boundary boxes; telescoping over residues modulo $a$ gives the number of boxes in the $a$-rows of $\kappa$.
\end{proof}

\begin{figure}
  \begin{center}
  \includegraphics[scale=0.5]{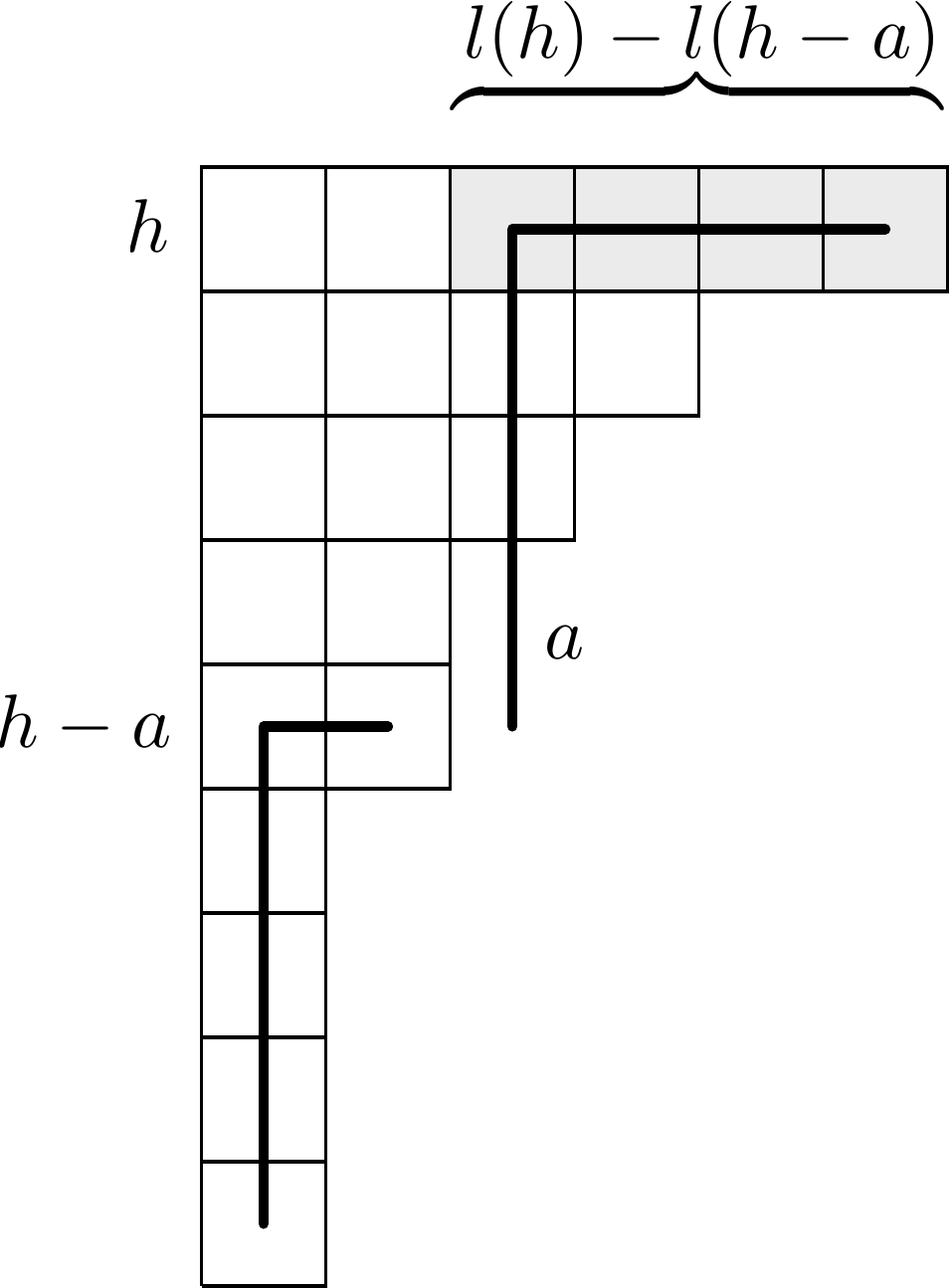}
  \end{center}
  \caption{The number of $a$-boundary boxes in the row of $\kappa$ corresponding to a leading hook $h$ is $\len(h)-\len(h-a)$.}
  \label{fig:skewLength_proof}
\end{figure}

\begin{corollary}
The number of boxes in the $a$-rows of $\kappa$ equals the number of boxes in the $a$-rows of $\kappa^c$
\end{corollary}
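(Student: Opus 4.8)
The plan is to deduce the statement directly from Proposition~\ref{prop:arows} by observing that the quantity that proposition is really counting---the number of boxes in the $a$-boundary---is manifestly invariant under conjugation. Recall that $\kappa^c$ denotes the conjugate (transpose) partition, and that transposing a Young diagram sends the box in row $i$, column $j$ to the box in row $j$, column $i$, while preserving its hook length. In particular, if $\kappa$ is an $a$-core then so is $\kappa^c$, since $\kappa^c$ has a box of hook length $a$ exactly when $\kappa$ does; this is what makes the phrase ``the $a$-rows of $\kappa^c$'' meaningful.

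First I would record the key observation: the $a$-boundary of $\kappa$ and the $a$-boundary of $\kappa^c$ contain the same number of boxes. Indeed, conjugation restricts to a bijection between the boxes of $\kappa$ and the boxes of $\kappa^c$ that preserves hook length, hence it carries the set of boxes of $\kappa$ of hook length less than $a$ bijectively onto the set of boxes of $\kappa^c$ of hook length less than $a$; by definition these are precisely the $a$-boundaries of $\kappa$ and of $\kappa^c$.

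Then I would apply Proposition~\ref{prop:arows} twice. Applied to the $a$-core $\kappa$, it gives that the number of boxes in the $a$-rows of $\kappa$ equals the number of boxes in its $a$-boundary; applied to the $a$-core $\kappa^c$, it gives the analogous equality for $\kappa^c$. Chaining these with the observation of the previous paragraph yields
\[
\#\{\text{boxes in the }a\text{-rows of }\kappa\}
= \#\{\text{boxes in the }a\text{-boundary of }\kappa\}
= \#\{\text{boxes in the }a\text{-boundary of }\kappa^c\}
= \#\{\text{boxes in the }a\text{-rows of }\kappa^c\},
\]
which is exactly the claim.

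There is essentially no obstacle here: the only point that deserves a moment of care is confirming that $\kappa^c$ is again an $a$-core, so that Proposition~\ref{prop:arows} is applicable to it, and this is immediate from the hook-length-preserving property of conjugation. It is worth emphasizing where the content lies: unlike the $a$-boundary, the definition of the $a$-rows involves residues modulo $a$ of the leading (first-column) hooks and so is not visibly symmetric under exchanging rows and columns; Proposition~\ref{prop:arows} is precisely the bridge that lets us replace that asymmetric description by the conjugation-symmetric count of $a$-boundary boxes.
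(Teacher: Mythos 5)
Your proof is correct and matches the argument the paper intends (the corollary is left without an explicit proof there, being an immediate consequence of Proposition~\ref{prop:arows} together with the fact that conjugation preserves hook lengths and hence the $a$-boundary count). Nothing is missing.
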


\begin{remark}
For readers familiar with the abacus diagram interpretation, hook lengths correspond to beads on the abacus; the $a$-rows correspond to the largest bead on each runner of the $a$-abacus.  Proposition~\ref{prop:arows} gives a way to count the number of boxes in the $a$-boundary of an $a$-core by adding the number of gaps that appear on the abacus before each of these largest beads.
\end{remark}

\begin{definition}[{\cite[Definition 2.7]{AHJ14}}]
\label{def:skewlen}
Let $\kappa$ be an $(a,b)$-core partition. The \emph{skew length} of~$\kappa$, denoted $\sl(\kappa)$, is the number of boxes simultaneously located in the $a$-rows and the $b$-boundary of~$\kappa$.
\end{definition}

\begin{example}
The core partition shown in Figure~\ref{fig:skewLength_cores} is the $(5,8)$-core $\kappa=\core(\P)$ corresponding to the path $\P$ in our running example from Figures~\ref{fig:example58} and \ref{fig:AndersonsBijection}. 

\begin{figure}
  \begin{center}
  \includegraphics[scale=0.6]{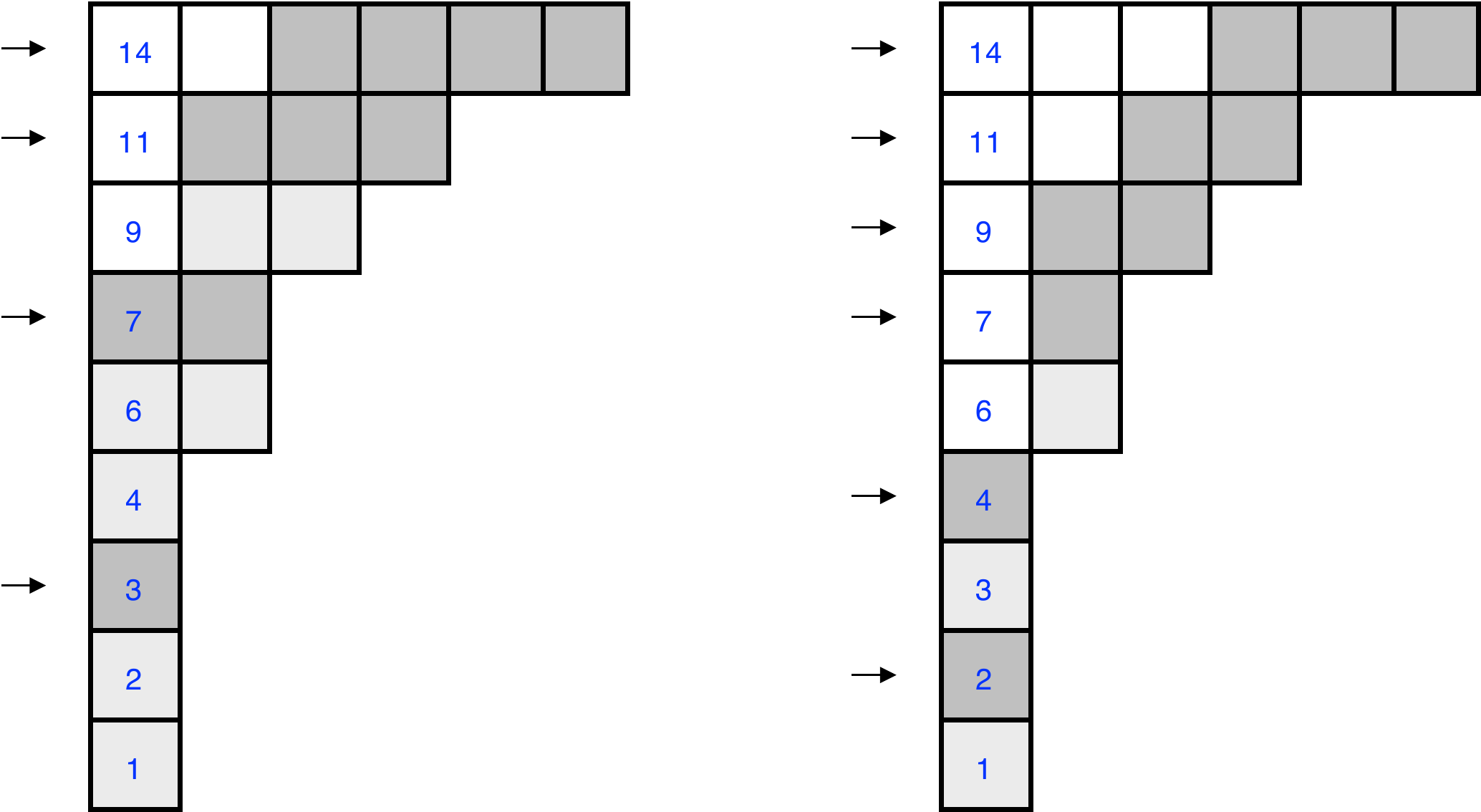}
  \end{center}
  \caption{(Left) The $8$-boundary boxes of our favorite $(5,8)$-core $\kappa$ are shaded; those in the $5$-rows of $\kappa$ are darker. 
  (Right) The $5$-boundary boxes of $\kappa$ are shaded; those in the $8$-rows of $\kappa$ are darker.
  Surprisingly, the number of darkly shaded boxes on the left $4+3+2+1=10$ is equal to the number of darkly shaded boxes on the right $3+2+2+1+1+1=10$. (See Corollary~\ref{cor.absl}.)}
  \label{fig:skewLength_cores}
\end{figure}

On the left, the $5$-rows of $\kappa$ are the rows with leading hook lengths 14, 11, 7, and 3.   The darkly shaded boxes are those boxes in the $5$-rows with hook length less than 8.  The skew length is equal to $4+3+2+1=10$.  

On the right, we compute of the skew length of $\kappa$ when considered as an $(8,5)$-core. The $8$-rows of $\kappa$ are the rows with leading hook lengths 14, 11, 9, 7, 4, and 2. The shaded boxes are those boxes in the $8$-rows with hook length less than 5. The skew length is equal to $3+2+2+1+1+1=10$.

We will see in Corollary~\ref{cor.absl} that it is not a coincidence that these two numbers are the same.  

The number of boxes in the $8$-boundary (shaded boxes, left) equals the number of boxes in the $8$-rows (marked rows, right) and the number of boxes in the $5$-boundary (shaded boxes, right) equals the number of boxes in the $5$-rows (marked rows, left), as proved in general in Proposition~\ref{prop:arows}.
\end{example}

The skew length statistic was found by Armstrong; he conjectures it as a key statistic involved in the $q$- and $q,t$-enumeration of $(a,b)$-cores (or $(a,b)$-Dyck paths). Recall that the rank $\rk(\kappa)$ of an $(a,b)$-core $\kappa$ is the number of rows in its corresponding Young diagram. 

\begin{conjecture}\cite[Conjecture 2.8]{AHJ14}
\label{conj.qcount}
Let $a$ and $b$ relatively prime positive integers. 
The expression
\[
f_{a,b}(q) = \frac{1}{[a+b]_q}\begin{bmatrix}a+b\\ a\end{bmatrix}_q
\]
is equal to the polynomial \[g_{a,b}(q) = \sum_\kappa q^{\sl(\kappa)+\rk(\kappa)},\] where the sum is over all $(a,b)$-cores $\kappa$.
\end{conjecture}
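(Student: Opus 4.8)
The plan is to transport $g_{a,b}(q)$ through Anderson's bijection onto $(a,b)$-Dyck paths and to recognise the result as the rational $q$-Catalan number $f_{a,b}(q) = \frac{1}{[a+b]_q}\qbinom{a+b}{a}$. The first step is to rewrite $\sl(\core(\P)) + \rk(\P)$ as a statistic depending on the path $\P$ alone, using the length-filling and skew-inversion descriptions of skew length developed in Section~\ref{sec:skew_length}; write $g_{a,b}(q) = \sum_{\P\in\DD_{a,b}} q^{s(\P)}$ for this statistic $s$. As a preliminary consistency check, $f_{a,b}(q)$ is symmetric under $a\leftrightarrow b$ and so is $g_{a,b}(q)$: the set of $(a,b)$-cores coincides with the set of $(b,a)$-cores, $\rk(\kappa)$ is simply the number of rows of $\kappa$, and $\sl(\kappa)$ is independent of the ordering of $a$ and $b$ by the results of Section~\ref{sec:skew_length}. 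Any viable argument should at least be compatible with this symmetry.

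For the identification $\sum_\P q^{s(\P)} = f_{a,b}(q)$ I would first try a $q$-analogue of the Dvoretzky--Motzkin cycle lemma, mirroring Bizley's derivation of $\frac{1}{a+b}\binom{a+b}{a}$. Consider the $\binom{a+b}{a}$ lattice words with $a$ north and $b$ east steps, on which $C_{a+b}$ acts freely with exactly one Dyck path per orbit (Section~\ref{sec.notation}), and look for a word statistic $\mu$ with $\sum_w q^{\mu(w)} = \qbinom{a+b}{a}$ whose $a+b$ values around each orbit form a block of consecutive integers $s(\P), s(\P)+1, \dots, s(\P)+a+b-1$, where $\P$ is the Dyck path of that orbit. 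Each orbit would then contribute $[a+b]_q\,q^{s(\P)}$, and dividing by $[a+b]_q$ yields $\sum_\P q^{s(\P)} = f_{a,b}(q)$ exactly. The \emph{levels} of a lattice path are the natural raw material for such a $\mu$, since cyclic shifts act on levels in a transparent way; one would read off $s(\P)$ as the minimum of $\mu$ over the orbit and hope it matches $\sl(\core(\P)) + \rk(\P)$.

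An alternative is to route through the zeta map: skew length behaves like the ``$\dinv$-side'' companion of area, so one expects an identity $s(\P) = t(\zeta(\P))$ for an explicit statistic $t$ on $\DD_{a,b}$, readable from the laser and interval-intersection models of $\zeta$ in Section~\ref{sec:zeta_map}; granting Conjecture~\ref{conj.main} this would give $g_{a,b}(q) = \sum_\Q q^{t(\Q)}$, to be matched against $f_{a,b}(q)$. One might also try to read $f_{a,b}(q)$ as the Coxeter-length generating function of the $(a,b)$-cores viewed as minimal coset representatives in the affine symmetric group, and aim to prove that $\sl(\kappa)+\rk(\kappa)$ equals that length.

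I expect the substance of the problem to lie exactly in making this last matching precise. Unlike $\area$, $\dinv$, or $\bounce$, whose generating functions over $\DD_{a,b}$ are the Carlitz-type $q$-Catalan numbers, $f_{a,b}(q)$ is the symmetric, unimodal MacMahon-type $q$-Catalan, so there is no equidistribution with a transparent statistic to lean on, and $\sl+\rk$ must be shown to realise a statistic several combinatorial steps away from anything visible on $\P$ itself. In the cycle-lemma route the delicate point is to engineer $\mu$ so that it is simultaneously Mahonian, increments by a single unit around each orbit, and restricts on the Dyck representative to precisely $\sl(\core(\P)) + \rk(\P)$; the indirectness of the definition of $\sl$ (through the $a$-rows and $b$-boundary of the core rather than through the word) is what makes this hard. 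In the zeta route one inherits the open status of the bijectivity of $\zeta$. Either way, pinning down the meaning of $\sl+\rk$ in terms of the levels of the path is the crux.
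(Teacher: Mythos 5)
This statement is a \emph{conjecture}, both in the cited source and in this paper: the authors state it as Conjecture~\ref{conj.qcount} and offer no proof of it anywhere in the text (it remains open as of the paper's writing), so there is no ``paper proof'' against which to compare your argument. What the paper does establish around it are only supporting facts --- that $f_{a,b}(q)$ is a polynomial with nonnegative integer coefficients when $\gcd(a,b)=1$ (Proposition~\ref{prop.qdef}, citing Haiman and BEG), and that $\sl$ is well defined independently of the ordering of $a$ and $b$ and is preserved under conjugation --- none of which amounts to the equidistribution claim.

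Your proposal, read as a proof, has a genuine gap: none of the three routes is carried to completion, and each one terminates exactly at the open point. In the cycle-lemma route you need a Mahonian word statistic $\mu$ whose values around each $C_{a+b}$-orbit form a run of consecutive integers starting at $\sl(\core(\P))+\rk(\P)$; you do not construct such a $\mu$, and no such construction is known (this is essentially equivalent to the conjecture itself). In the zeta route you explicitly condition on Conjecture~\ref{conj.main}, and even granting bijectivity you would still need to identify the pushed-forward statistic $t$ with something whose generating function is visibly $f_{a,b}(q)$, which is again the open matching. The affine-symmetric-group route likewise requires proving $\sl(\kappa)+\rk(\kappa)$ equals a Coxeter length, which is not established. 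To your credit, you correctly diagnose where the difficulty lies (the MacMahon-type $q$-Catalan has no known transparent statistic on rational Dyck paths, unlike the Carlitz-type ones), and your consistency check via the $a\leftrightarrow b$ symmetry of $\sl$ (Corollary~\ref{cor.absl}) is sound. But as it stands this is a research plan, not a proof, and the paper treats the statement accordingly --- as an open conjecture whose resolution would give a combinatorial interpretation of the coefficients of $f_{a,b}(q)$.
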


Haiman \cite[Propositions 2.5.2 and 2.5.3]{Haiman93} proved that $f_{a,b}(q)$ is a polynomial if and only if $a$ and $b$ are relatively prime. 
\cite[Theorem 1.10]{BEG03} provides a proof that $f_{a,b}(q)$ has non-negative coefficients involving representation theory of rational Cherednik algebras, see also~\cite[Section 1.12]{gordon_catalan_2012}. A proof of Conjecture~\ref{conj.qcount} would provide a combinatorial interpretation for the coefficients of $f_{a,b}(q)$.

\begin{proposition}\cite{Haiman93,BEG03}
\label{prop.qdef}
 The expression
\[
f_{a,b}(q) = \frac{1}{[a+b]_q}\begin{bmatrix}a+b\\ a\end{bmatrix}_q
\]
is a polynomial if and only if $\gcd(a,b)=1$.  Furthermore, when $a$ and $b$ are relatively prime, the resulting polynomial has integer coefficients.
\end{proposition}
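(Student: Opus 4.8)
The plan is to reduce both claims to counting cyclotomic factors. Unwinding the definition of the $q$-binomial coefficient, I would first rewrite
\[
f_{a,b}(q) \;=\; \frac{[a+b]_q!}{[a+b]_q\,[a]_q!\,[b]_q!} \;=\; \frac{[a+b-1]_q!}{[a]_q!\,[b]_q!},
\]
and then use the standard cyclotomic factorization $[n]_q = \prod_{d \mid n,\; d \ge 2} \Phi_d(q)$ of the $q$-integer, where $\Phi_d$ denotes the $d$-th cyclotomic polynomial. Taking the product over $k = 1, \dots, n$ of these identities gives $[n]_q! = \prod_{d \ge 2} \Phi_d(q)^{\lfloor n/d \rfloor}$, so that
\[
f_{a,b}(q) \;=\; \prod_{d \ge 2} \Phi_d(q)^{e_d}, \qquad\text{where}\qquad e_d \;=\; \Big\lfloor \tfrac{a+b-1}{d} \Big\rfloor - \Big\lfloor \tfrac{a}{d} \Big\rfloor - \Big\lfloor \tfrac{b}{d} \Big\rfloor .
\]

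Since the $\Phi_d$ are pairwise distinct monic irreducible polynomials in $\ZZ[q]$, the rational function on the right is a polynomial if and only if every exponent $e_d$ is nonnegative, and in that case it automatically lies in $\ZZ[q]$ (being a product of monic integer polynomials). Thus both assertions of the proposition follow once one shows that $e_d \ge 0$ for all $d \ge 2$ if and only if $\gcd(a,b) = 1$.

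This final step is an elementary floor computation. Writing $a \equiv r$ and $b \equiv s \pmod d$ with $0 \le r, s \le d-1$, one has $\lfloor a/d \rfloor = (a-r)/d$ and $\lfloor b/d \rfloor = (b-s)/d$, while $a + b - 1 = d(\lfloor a/d\rfloor + \lfloor b/d\rfloor) + (r + s - 1)$. A short case analysis on the value $r+s \in \{0, 1, \dots, 2d-2\}$ then yields $e_d = -1$ when $r = s = 0$, $e_d = 0$ when $1 \le r + s \le d$, and $e_d = 1$ when $d < r + s$. In particular $e_d < 0$ occurs for some $d \ge 2$ precisely when some integer $d \ge 2$ divides both $a$ and $b$, i.e.\ precisely when $\gcd(a,b) > 1$, which is exactly what is required.

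I do not anticipate a genuine obstacle here: the argument is self-contained granted only the cyclotomic factorization of $[n]_q$, and the one delicate point is keeping track of the boundary cases in the floor identities. It is worth stressing what this argument does \emph{not} deliver, namely nonnegativity of the coefficients of $f_{a,b}(q)$ when $\gcd(a,b)=1$; that is the substantially deeper statement of \cite{BEG03}, obtained via the representation theory of rational Cherednik algebras, and it is not needed elsewhere in this paper, which only uses that $f_{a,b}$ is a well-defined polynomial (conjecturally equal to $g_{a,b}$).
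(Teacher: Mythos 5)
Your proof is correct and complete. Note that the paper itself offers no proof of this proposition: it is stated with citations to \cite{Haiman93,BEG03} only, so there is no in-text argument to compare against. Your cyclotomic-factorization argument is the standard one (and is essentially the content of Haiman's Propositions 2.5.2--2.5.3 in the cited reference): the reduction $f_{a,b}(q)=[a+b-1]_q!/([a]_q!\,[b]_q!)$, the exponent formula $e_d=\lfloor (a+b-1)/d\rfloor-\lfloor a/d\rfloor-\lfloor b/d\rfloor$, and the three-way case analysis on $r+s$ are all right; the only values taken are $e_d\in\{-1,0,1\}$, with $e_d=-1$ exactly when $d$ divides both $a$ and $b$, which gives the equivalence with $\gcd(a,b)=1$, and integrality is immediate from the $\Phi_d$ being monic with integer coefficients. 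You are also right to flag that this says nothing about nonnegativity of the coefficients, which is the genuinely hard part of \cite{BEG03} and is not claimed in this proposition.
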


Define the \emph{co-skew length} of an $(a,b)$-core $\kappa$ as 
\[
\sl'(\kappa):=\frac{(a-1)(b-1)}{2}-\sl(\kappa).
\]
Armstrong conjectures that rank and co-skew length give a $q,t$-enumeration of the $(a,b)$-cores, subject to the following symmetry:
\begin{conjecture}\cite[Conjecture 2.9]{AHJ14}
\label{conj.qtsym}
The following $q,t$-polynomials are equal:
\[
\sum q^{\rk(\kappa)}t^{\sl'(\kappa)} = \sum q^{\sl'(\kappa)}t^{\rk(\kappa)}
\]
where the sum is over all $(a,b)$-cores $\kappa$.
\end{conjecture}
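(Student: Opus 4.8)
The plan is not to construct an involution on $(a,b)$-cores directly --- this is a $q,t$-symmetry of the sort that has resisted every bijective or elementary attack, even in the classical case $b=a+1$ --- but to reduce the statement to the rational $q,t$-Catalan symmetry and then invoke the algebraic input available there.

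\emph{Step 1 (cores to Dyck paths).} Using Anderson's bijection $\kappa=\core(\P)$, so that $\rk(\kappa)=\rk(\P)$, together with the description of skew length in terms of the zeta map developed in Section~\ref{sec:zeta_map} and the fact (footnote) that $\zeta$ is a bijection of $\DD_{a,b}$, I would rewrite $\sum_{\kappa}q^{\rk(\kappa)}t^{\sl'(\kappa)}$ as $\sum_{\P\in\DD_{a,b}}q^{\alpha(\P)}t^{\beta(\P)}$ for an explicit pair of statistics $\alpha,\beta$ on Dyck paths. One expects $\{\alpha,\beta\}$ to be, up to the complementation built into $\sl'$, the pair $(\area,\dinv)$, so that the right-hand side is a normalisation of the rational $q,t$-Catalan polynomial $C_{a,b}(q,t)$ and the identity to be proven becomes $C_{a,b}(q,t)=C_{a,b}(t,q)$. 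Along the way I would pin down the statistic dictionary exactly and check that $\max\rk$ and $\max\sl'$ agree with the bidegree bounds of $C_{a,b}$, since this is what makes the exchange an honest $q\leftrightarrow t$ swap and is exactly what the shift $\sl'=\tfrac{(a-1)(b-1)}{2}-\sl$ is designed to produce; the same dictionary simultaneously reduces Conjecture~\ref{conj.qcount} to a specialisation of the same polynomial.

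\emph{Step 2 (the symmetry) and the main obstacle.} The entire weight of the proof sits in establishing $C_{a,b}(q,t)=C_{a,b}(t,q)$. No combinatorial involution on $\DD_{a,b}$ exchanging $\area$ and $\dinv$ is known --- this is open already for ordinary Dyck paths --- so the symmetry must be imported: by identifying $C_{a,b}$, up to normalisation, with the bigraded component of the diagonal coinvariants isolated by the rational (compositional) shuffle theorem, where the symmetry descends from the elliptic Hall algebra action; or with the triply graded HOMFLY--PT homology of the $(a,b)$-torus knot, whose Poincar\'e polynomial specialises to $C_{a,b}(q,t)$; or with a bigraded refinement of the finite-dimensional module $L_{b/a}(\mathrm{triv})$ over the rational Cherednik algebra $\mathbb{H}_{b/a}(S_a)$. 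In the classical case $b=a\pm1$ one may instead transport the Garsia--Haiman theorem on the symmetry of the bigraded Frobenius characteristic of the diagonal harmonics through the zeta-map dictionary, and the Fuss--Catalan case $b=am\pm1$ goes the same way once the statistics are matched via the bounce-path descriptions of \cite{Loehr,GMII} and Section~\ref{sec:9}. The obstruction is thus twofold: for general relatively prime $(a,b)$ the required $q\leftrightarrow t$ symmetry is only reachable through this deep external machinery, and making Step~1 fully rigorous demands that the link between $\sl$ and the zeta-map statistics be tight enough to identify $\sum_\kappa q^{\rk}t^{\sl'}$ with $C_{a,b}$ on the nose, constant shift included --- which is entangled with the still-implicit description of $\zeta^{-1}$ that the rest of the paper is devoted to.
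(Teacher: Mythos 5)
The statement you are asked to prove is presented in the paper as a \emph{conjecture} (Armstrong's Conjecture 2.9 from \cite{AHJ14}); the paper offers no proof of it, and indeed the surrounding text treats the $q,t$-symmetry of the rational $q,t$-Catalan numbers as open. So there is no proof of record to compare yours against, and the real question is whether your proposal closes the gap. It does not.

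Your Step 1 is sound, and in fact easier than you make it: under Anderson's bijection $\rk(\kappa)$ is the number of rows of $\core(\P)$, which the paper's dictionary identifies with $\area(\P)$, and Remark~\ref{rem:dinv} gives $\sl'(\P)=\dinv(\P)$ directly, so $\sum_\kappa q^{\rk(\kappa)}t^{\sl'(\kappa)}=\sum_{\P\in\DD_{a,b}}q^{\area(\P)}t^{\dinv(\P)}$ without any appeal to $\zeta$ or to its (conjectural at the time, and in any case unnecessary here) bijectivity. The genuine gap is Step 2, which carries the entire content of the conjecture and is not an argument but a list of places the symmetry might be imported from. Each import fails to close: the identification of the combinatorial sum $\sum q^{\area}t^{\dinv}$ with the algebraically symmetric object (the elliptic-Hall/diagonal-coinvariant bigraded character, the Cherednik-algebra module, the torus-knot homology) is precisely the rational shuffle conjecture, which was itself open for general coprime $(a,b)$ when this paper was written; and the Garsia--Haiman and Fuss--Catalan results you cite only cover $b=am\pm 1$. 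So your proposal reduces one open conjecture to another open conjecture plus an unproven statistic-matching, rather than proving it. As a statement of strategy and of where the difficulty lives it is accurate and well informed, but it cannot be accepted as a proof of Conjecture~\ref{conj.qtsym}.
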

These $q,t$-polynomials are called the \emph{rational $q,t$-Catalan numbers}.

\subsection{Skew length on Dyck paths via the row length filling}\label{sec:length_filling}\

We now provide a new method to calculate the skew length of an $(a,b)$-Dyck path $\P$ which uses a {\em row length filling} of the boxes below~$\P$. Our method recovers with the skew length statistic discovered by Armstrong for $(a,b)$-cores. As a consequence, we show that skew length of an $(a,b)$-core is independent of the ordering of $a$ and $b$.

We provide two equivalent definitions of the {\em row length filling}.  

\begin{definition}\label{def:rlf}
Let $\P$ be an $(a,b)$-Dyck path.  The \emph{row length filling} of $\P$ is an assignment of numbers to each box below the path~$\P$.  

For a box $B$ with positive hook filling $h$, define the row length of $B$ to be the length of the row in $\core(\P)$ with leading hook $h$.  Alternatively, define the row length of $B$ to be $h-p_h$, where $p_h$ is the number of positive entries in the hook  filling strictly less than~$h$.  

For a box $B$ with non-positive hook filling $h$, define the row length of $B$ to be zero.

For any hook $h$ in the hook filling of $\P$, we denote by~$\len(h)$ the corresponding value of the row length filling of $\P$.
\end{definition}

Figure~\ref{fig:rowLength} shows in red in the upper left corner the row length of the boxes corresponding to the positive hooks of $\P$.
\begin{figure}
  \begin{center}
  \includegraphics[scale=0.6]{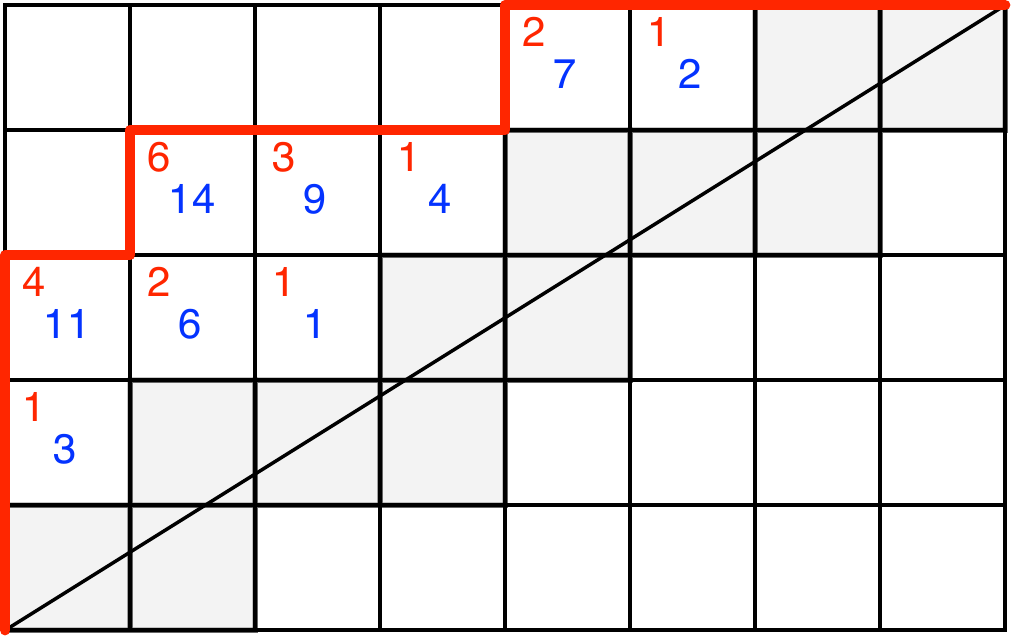}
  \end{center}
  \caption{The row length filling of boxes below the path $\P$ is given in red in the upper left corner.  The values correspond to the length of the rows of $\core(\P)$ in Figure~\ref{fig:AndersonsBijection}.}
  \label{fig:rowLength}
\end{figure}

\begin{lemma}
The two definitions of row length filling in Definition~\ref{def:rlf} are equivalent.
\end{lemma}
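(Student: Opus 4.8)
The plan is to show that for a box $B$ below $\P$ with positive hook filling $h$, the length of the row of $\core(\P)$ with leading hook $h$ equals $h - p_h$, where $p_h$ counts the positive entries of the hook filling that are strictly less than $h$. First I would recall the structure of Anderson's bijection: the positive hooks of $\P$ are exactly the leading hooks (i.e., the first-column hook lengths) of $\core(\P)$, and for an $a$-core (indeed any core), the hook length of a box in the first column of a given row, together with the set of all first-column hook lengths, determines the entire row. Specifically, a box with first-column hook length $h$ lies in a row whose boxes have hook lengths that are precisely those values $h, h-1, h-2, \ldots$ down to $1$ that are \emph{not} first-column hook lengths of any strictly lower row — equivalently, the arm length of that row equals the number of integers in $\{1, 2, \ldots, h-1\}$ that are themselves hook lengths of boxes in that same row, while the integers in $\{1,\dots,h-1\}$ that are leading hooks of other rows correspond to the ``gaps.''

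The cleanest route is via the abacus/bead model already hinted at in the Remark following Proposition~\ref{prop:arows}. Place beads at the positions given by the positive hooks of $\P$ (the leading hooks of $\core(\P)$) on a single runner, i.e., a subset of $\ZZ_{>0}$, with gaps at the non-leading-hook positions. For a bead at position $h$, the length of the corresponding row of $\core(\P)$ equals the number of gaps strictly below $h$, i.e., the number of positive integers less than $h$ that are not positive hooks of $\P$. Call this quantity $g_h$. Then $g_h = (h-1) - |\{h' : 0 < h' < h,\ h' \text{ a positive hook}\}| = (h-1) - p_h$ if we count only strictly smaller positive hooks... but one must be careful with the off-by-one: the standard bead/row correspondence gives row length $= h - 1 - (\text{number of smaller beads})$ only after fixing the normalization of the first-column hooks. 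The key identity to nail down is therefore that, in the normalization coming from Anderson's bijection as set up in this paper, the row length is exactly $h - p_h$; I would verify this by checking that the largest positive hook $h_{\max}$ gives $p_{h_{\max}} = \area(\P) - 1 = \rk$-of-first-row-related count, so that $\len(h_{\max}) = h_{\max} - (\area(\P)-1)$, which matches the known formula for the longest row of $\core(\P)$, and then observe that both definitions satisfy the same recursion as $h$ decreases through the positive hooks.

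Concretely, I would argue by the following recursion on the sorted list of positive hooks $h_1 < h_2 < \cdots < h_k$ (so $k = \area(\P)$). Under Anderson's bijection the rows of $\core(\P)$ sorted by decreasing length correspond to $h_k > h_{k-1} > \cdots > h_1$, and the difference in consecutive row lengths is controlled by the gap between consecutive beads: passing from the bead at $h_{i}$ to the bead at $h_{i+1}$, the row length increases by exactly $(h_{i+1} - h_i) - 1$ (the number of gaps strictly between them) plus possibly $1$ depending on adjacency — again the bead model makes this exact. The first definition's value $\len(h_i)$ and the second definition's value $h_i - p_{h_i} = h_i - (i-1)$ both satisfy $\len(h_{i+1}) - \len(h_i) = (h_{i+1} - h_i) - 1$, and they agree at the base case (the smallest positive hook $h_1$: its row length is $h_1 - 1 - 0$, matching $h_1 - p_{h_1} = h_1 - 0$... here I must reconcile whether $p_h$ counts $h$ itself or the genuine strict inequality, which the definition settles as strict). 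Matching base case plus matching increments forces the two fillings to coincide on all positive hooks, and they coincide trivially (both zero) on non-positive hooks.

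The main obstacle I anticipate is purely bookkeeping: pinning down the correct normalization constant so that the abacus/bead translation of ``row length = number of gaps below the bead'' lands on the nose at $h - p_h$ rather than $h - p_h \pm 1$, given the specific choice $-ab$ for the hook filling of the box with lower-right lattice point $(b,0)$ and the resulting identification of positive hooks with leading hooks of $\core(\P)$. I would handle this by verifying the identity on the running $(5,8)$-example of Figure~\ref{fig:rowLength} and Figure~\ref{fig:AndersonsBijection} — where the positive hooks and the row lengths of $\core(\P) = (6,4,3,2,2,1,1,1,1)$ are both displayed — to fix the constant, and then the general argument is the base-case-plus-increment induction above.
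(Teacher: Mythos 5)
Your approach is in substance the same as the paper's: the paper simply observes that the $i$th smallest positive hook $h$ is the leading hook of the $i$th shortest row of $\core(\P)$, whose first-column box has $i-1$ boxes below it, so the row length is $h-(i-1)=h-p_h$; your abacus formulation ("row length equals the number of gaps below the bead at $h$") is the same fact, and your recursion-plus-base-case scaffold is a valid, if longer, way to package it. The one genuine issue is that you leave the off-by-one unresolved and propose to settle it by checking the running example, which does not constitute a proof. You should resolve it directly: the gap count must be taken over the positions $\{0,1,\dots,h-1\}$ (position $0$ is always a gap, since $0$ is not a positive hook), which gives $h-p_h$ gaps, not $h-1-p_h$. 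Equivalently, for the base case note that the shortest row of $\core(\P)$ has no boxes below its first-column box, so its leading hook \emph{equals} its length, i.e.\ $\len(h_1)=h_1=h_1-p_{h_1}$; combined with your (correct) increment identity $\len(h_{i+1})-\len(h_i)=(h_{i+1}-h_i)-1$ this closes the argument without any appeal to the example.
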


\begin{proof}
When ordered in increasing order, the entries in the hook filling of $\P$ correspond to the hook lengths of the boxes in the first column of $\core(\P)$ from shortest to longest. Suppose the first box of the $i$th shortest row has hook length $h$. Then the length of the $i$th shortest row is $h-(i-1)$, which is exactly the corresponding entry in the row length filling.
\end{proof}

\begin{remark}
For readers familiar with the abacus diagram interpretation, the row length filling associates to each bead on the abacus the number of gaps that appear before it on the abacus.
\end{remark}

The row length filling is very useful for reading off common core statistics 
from the Dyck path.  For example, we can immediately see that:
\begin{corollary}
The sum of the entries of the row length filling of $\P$ is equal to the number of boxes of the core $\core(\P)$.
\end{corollary}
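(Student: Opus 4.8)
The plan is simply to unwind the definition of the row length filling and match it term by term against the rows of $\core(\P)$. First I would discard the boxes below $\P$ whose hook filling value $h$ is non-positive, since by Definition~\ref{def:rlf} each such box contributes $0$; thus the sum in question equals $\sum_h \len(h)$, where $h$ now ranges only over the \emph{positive} hooks of $\P$.

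Next I would invoke the facts recorded in Section~\ref{sec.notation}: the positive hooks of $\P$ are precisely the leading hooks of $\core(\P)$, i.e. the hook lengths of the boxes in the first column of $\core(\P)$. Since each row of a partition contains exactly one first-column box, and distinct rows have distinct first-column hook lengths, the assignment sending a positive hook $h$ to the row of $\core(\P)$ whose first-column box has hook length $h$ is a bijection from the set of positive hooks of $\P$ onto the set of rows of $\core(\P)$. By the very definition of $\len(h)$ in Definition~\ref{def:rlf}, $\len(h)$ is the length of exactly this row.

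Putting these together, $\sum_h \len(h)$ (summed over positive hooks $h$) equals the sum of the lengths of all rows of $\core(\P)$, which is by definition $|\core(\P)|$, the number of boxes of $\core(\P)$. The only step needing any care — and it is entirely routine here — is confirming that the correspondence between positive hooks and rows is a genuine bijection, namely that no two rows share a leading hook and that every positive hook is realized as one; both are immediate from the description of Anderson's bijection already given. I expect no substantive obstacle; if desired, one could cross-check using the alternative (``$h - p_h$'') formulation of the row length filling together with the equivalence lemma just proved.
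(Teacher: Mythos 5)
Your argument is correct and is exactly the unwinding of Definition~\ref{def:rlf} that the paper itself relies on (the paper states this corollary without proof, as an immediate consequence of the definition). The bijection between positive hooks of $\P$ and rows of $\core(\P)$ is precisely the content of Anderson's bijection as described in Section~\ref{sec.notation}, so nothing further is needed.
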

Furthermore, because the $a$-rows of $\core(\P)$ correspond to the westmost boxes under $\P$ and the $b$-rows of $\core(\P)$ correspond to the northmost boxes under $\P$, the number of boxes in $\core(\P)$ with hook length less than $a$ or less than $b$ can be determined from the row length filling as a direct consequence of Proposition~\ref{prop:arows}.
\begin{corollary}
\label{cor.boundaryBoxes}
The number of boxes in the $a$-boundary of an $(a,b)$-core $\core(\P)$ is equal to the sum of the row length fillings of the westmost boxes under $\P$.  Likewise, the number of boxes in the $b$-boundary of $\core(\P)$ is equal to the sum of the row length fillings of the northmost boxes under $\P$.
\end{corollary}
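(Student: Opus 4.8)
\textbf{Proof plan for Corollary~\ref{cor.boundaryBoxes}.}
The statement follows by combining the geometry of Anderson's bijection with Proposition~\ref{prop:arows} applied twice, once with modulus $a$ and once with modulus $b$. The plan is to translate the core-theoretic objects appearing in Proposition~\ref{prop:arows} — the $a$-rows and the $a$-boundary — into the lattice-path language of the row length filling, and then simply invoke that proposition. First I would recall that under Anderson's bijection the positive hooks of $\P$ (equivalently, the leading hooks of $\core(\P)$) are recorded in the boxes below $\P$ by the hook filling, and that the row length filling records, in the box with positive hook $h$, the quantity $\len(h)$, the length of the row of $\core(\P)$ with leading hook $h$. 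Thus the sum of all entries of the row length filling is $\sum_h \len(h) = |\core(\P)|$, which is the preceding corollary and sets the pattern for the argument.

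Next I would identify the westmost boxes under $\P$ with the $a$-rows of $\core(\P)$. The hook filling increases by $a$ for each step west and by $b$ for each step north, so two boxes in the same ``west column'' position (relative to the path) have hook lengths differing by a multiple of $a$ coming from the $b$-direction offsets, but more precisely: the boxes in a fixed residue class modulo $a$ that lie below $\P$ form a chain under the operation $h \mapsto h-a$, and by the argument in the proof of Proposition~\ref{prop:arows} (``if $h>a$ is a leading hook then so is $h-a$''), the leading hooks of $\core(\P)$ in each residue class mod $a$ are exactly $r, r+a, r+2a, \dots$ up to the largest one. The box with the largest hook in its residue class mod $a$ is precisely the westmost box of $\P$ in its row of the grid, because moving one box west adds $a$ to the hook and keeps it below $\P$ only while we remain under the path. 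Hence the westmost boxes under $\P$ correspond exactly to the $a$-rows of $\core(\P)$ — the rows realizing the largest leading hook in each residue class mod $a$. By the telescoping computation in the proof of Proposition~\ref{prop:arows}, the number of $a$-boundary boxes equals the number of boxes in the $a$-rows, i.e.\ $\sum_{h \text{ an } a\text{-row hook}} \len(h)$, which is exactly the sum of the row length fillings of the westmost boxes under $\P$. The identical argument with $a$ and $b$ interchanged — using that moving one box north adds $b$ to the hook filling, so the northmost boxes under $\P$ realize the largest leading hook in each residue class mod $b$ — gives the statement for the $b$-boundary.

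The one point that needs a little care, and which I expect to be the main (mild) obstacle, is verifying cleanly that ``westmost box under $\P$ in a given grid row'' $=$ ``row of $\core(\P)$ achieving the maximal leading hook in its residue class mod $a$,'' rather than merely ``some row with leading hook in that class.'' This is where the hypothesis that we are looking below a genuine Dyck path matters: staying weakly above the diagonal forces the collection of boxes below $\P$ in each residue class mod $a$ to be a down-closed chain under $h\mapsto h-a$, so it has a unique maximum, and that maximum sits in the westmost column of its grid row. Once this identification is in hand, the corollary is immediate from Proposition~\ref{prop:arows} with no further computation; the remark following Proposition~\ref{prop:arows}, phrased in abacus language (the $a$-rows are the largest bead on each runner, and one counts gaps before them), is exactly this statement and can be cited as confirmation.
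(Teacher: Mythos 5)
Your proposal is correct and follows the same route as the paper: the paper justifies this corollary in one sentence by noting that the $a$-rows of $\core(\P)$ correspond to the westmost boxes under $\P$ (and the $b$-rows to the northmost boxes) and then invoking Proposition~\ref{prop:arows}. Your write-up simply makes explicit the identification the paper takes for granted, including the correct observation that the $a$ grid rows realize the $a$ distinct residue classes modulo $a$ and that the westmost positive box in each grid row carries the maximal hook in its class.
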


In the same vein, the skew length of $\P$ can also be easily computed, as follows: 

\begin{theorem}
\label{thm.skewLengthCompute}
The skew length of an $(a,b)$-core $\core(\P)$ may be computed from the row length filling of~$\P$ by adding all lengths at peaks of $\P$ and subtracting all lengths at valleys of $\P$.
\end{theorem}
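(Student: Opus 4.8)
The plan is to compute the skew length directly from the row length filling by understanding which boxes of $\core(\P)$ lie in both the $a$-rows and the $b$-boundary, and to show that the alternating sum (peaks minus valleys) of the row length filling counts exactly those boxes. First I would recall that, by Corollary~\ref{cor.boundaryBoxes} and the discussion preceding it, the $a$-rows of $\core(\P)$ correspond to the westmost boxes under $\P$ (those boxes $B$ below $\P$ whose immediate western neighbor is not below $\P$, i.e.\ whose west side lies on $\P$), and the $b$-boundary is read off via $\len(h)-\len(h-a)$ as in the proof of Proposition~\ref{prop:arows}. Combining these, the skew length is $\sum_{B}\bigl(\len(h_B)-\len(h_B-a)\bigr)$, where the sum is over westmost boxes $B$ under $\P$ with positive hook $h_B$, and where we set $\len(h)=0$ whenever $h$ is non-positive (consistent with Definition~\ref{def:rlf}).

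The key combinatorial step is then to reinterpret this sum over (westmost box, westmost box shifted by $a$) pairs as an alternating sum over peaks and valleys of $\P$. A box $B$ below $\P$ is westmost exactly when the lattice point at its northwest corner is preceded along $\P$ by a north step (or is the start of the path); equivalently, writing $h$ for the hook of $B$, the box with hook $h+a$ (one step west, same row) is \emph{not} below $\P$. I would organize the westmost boxes by rows of the $a\times b$ grid: in each row of the grid the westmost box below $\P$ is the leftmost box under $\P$ in that row, and its hook $h$ together with the hooks $h+a, h+2a,\dots$ running west within the row are precisely the hooks appearing on the horizontal ``laser''-type segment in that row. The crucial observation is that $h-a$ (the argument of the subtracted term) is the hook of the box one step \emph{east} of $B$; that box is below $\P$ unless $B$'s eastern neighbor lies on $\P$, which happens exactly when the lattice point between them is the bottom of a north step, i.e.\ when $B$ sits immediately left of an $NE$-corner of the grid boundary traced by the path's staircase. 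Matching up the $+\len(h)$ contributions (indexed by north steps / westmost boxes) against the $-\len(h-a)$ contributions (indexed by the box just east, which is itself a westmost box of the next row unless blocked) produces a telescoping in which the surviving positive terms are the row lengths at the boxes just below peaks of $\P$ (where an $E$ is followed by an $N$) and the surviving negative terms are the row lengths at boxes just below valleys (where an $N$ is followed by an $E$). Concretely I would set up, for each maximal horizontal run of the path, the bijection between west-steps of that run and the pairs $(B, B')$ with $B'$ the box east of $B$, and check that consecutive runs glue so that only peak/valley boundary terms remain; the valley terms acquire the minus sign precisely because at a valley the ``shift by $a$'' argument crosses the path and the corresponding $\len$ is being subtracted rather than cancelled.

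The main obstacle I anticipate is bookkeeping the boundary cases at the ends of the path and making the telescoping rigorous: one must handle the first westmost box (start of $\P$, where there is no preceding step) and the rows where the ``box east of $B$'' would be the box with hook $h-a\le 0$, so that $\len(h-a)=0$ and the term simply vanishes rather than cancelling against another. I would make this precise by fixing the bijection between valleys of $\P$ and those westmost boxes $B$ whose eastern neighbor lies on $\P$, and between peaks and the westmost boxes $B$ whose western neighbor lies on $\P$ but which are not themselves a ``$-\len(h-a)$'' term of any earlier box; a short induction on the number of rows, or equivalently on reading $\P$ from southwest to northeast and maintaining the running partial alternating sum, should close the argument. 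A useful sanity check along the way is Corollary~\ref{cor.boundaryBoxes} together with the example in Figure~\ref{fig:rowLength}: the row lengths at peaks of the running $(5,8)$ example should sum with signs to $10=\sl(\core(\P))$.
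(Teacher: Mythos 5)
Your proposal follows the paper's strategy in outline (write the skew length as a sum of row-length differences over the distinguished boxes, then telescope so that only peak and valley terms survive), but the central identity you start from is the wrong one, and the error propagates into a telescoping that does not actually occur. The skew length is the number of boxes in the $a$-rows \emph{and the $b$-boundary}; since the $a$-rows correspond to the westmost boxes under $\P$, the relevant count in the row with leading hook $h$ is the number of $b$-boundary boxes, which is $\len(h)-\len(h-b)$, not $\len(h)-\len(h-a)$. The quantity $\len(h)-\len(h-a)$ counts the $a$-boundary boxes of that row, so your starting formula $\sum_{B\ \mathrm{westmost}}\bigl(\len(h_B)-\len(h_B-a)\bigr)$ computes the number of boxes in the $a$-rows and the $a$-boundary --- a different statistic. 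In the running example it evaluates to $0+1+2+3+1=7$, whereas $\sl(\core(\P))=10$.

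With the correct formula the telescoping is vertical, not horizontal: $h-b$ is the hook of the box directly \emph{south} of $B$, and that box is itself a westmost box exactly when the two sit along the same maximal run of north steps of $\P$. Summing over such a vertical stack leaves $\len$ of the top box (the box southeast of a peak) minus $\len$ of the box just below the bottom of the stack (the box southeast of a valley, or a box of non-positive hook with $\len=0$ for the initial run starting at the origin). Your pairing instead subtracts the east neighbor of each westmost box; but the east neighbor of a westmost box lies in the same grid row and so is never another westmost box, hence nothing cancels. (The horizontal, east-neighbor telescoping is the one attached to the \emph{northmost} boxes via $\sum\bigl(\len(h)-\len(h-a)\bigr)$, which is the paper's ``equivalent argument'' from the $b$-rows.) You have also reversed the definitions of peak and valley (a peak is an $N$ followed by an $E$). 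Actually carrying out the sanity check you propose at the end --- $(2+6+4)-(2+0)=10$ against the value $7$ produced by your formula --- would have exposed the problem.
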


\begin{proof}
By the argument in the proof of Proposition~\ref{prop:arows}, we see that when $h$ is a positive hook of an $(a,b)$-Dyck path $\P$ (so that $h-a$ is the hook of the box directly east of the box with hook $h$ and $h-b$ is the hook of the box directly south of the box with hook $h$), then
\begin{enumerate}[(i)]
\item The number of $a$-boundary boxes in the row of $\core(\P)$ corresponding to $h$ is $\len(h)-\len(h-a)$.
\item The number of $b$-boundary boxes in the row of $\core(\P)$ corresponding to $h$ is $\len(h)-\len(h-b)$.
\end{enumerate}
By restricting to the $a$-rows or $b$-rows, we see that the skew length of $\core(\P)$ is given by:
\begin{equation}\label{eqn.lenSum_a}
\sum \len(h)-\len(h-b),
\end{equation}
where the sum is over all westmost boxes under $\P$, or alternatively the skew length of $\core(\P)$ is given by:
\begin{equation}\label{eqn.lenSum_b}
\sum \len(h)-\len(h-a),
\end{equation}
where the sum is over all northmost boxes under $\P$.
When one westmost box under $\P$ is directly north of another, Formula~\eqref{eqn.lenSum_a} telescopes.  After cancelling terms, we are left with the lengths at peaks of $\P$ minus the lengths at valleys of $\P$.  An equivalent argument can be made from Formula~\eqref{eqn.lenSum_b}.
\end{proof}

\begin{example}
In Figure~\ref{fig:rowLength}, we see that the sum of the row length fillings is 21, which is the number of boxes of $\core(\P)$.  Adding the row lengths of the westmost boxes under $\P$ gives $2+6+4+1+0=13$ boxes in the $5$-boundary of $\core(\P)$, while adding the row lengths of the northmost boxes under $\P$ gives $4+6+3+1+2+1+0+0=17$ boxes in the $8$-boundary of $\core(\P)$, as expected from Figure~\ref{fig:skewLength_cores}. Our path $\P$ has three peaks with row lengths 2, 6, and 4 and two valleys with row lengths 2 and 0. 
The skew length of our path is then
\[\sl(\P)=(2+6+4)-(2+0)=10.\]  
\end{example}

When computing skew length directly from the core, it is not obvious that the number of boxes in $a$-rows and the $b$-boundary should be equal to the number of boxes in $b$-rows and the $a$-boundary (see Figure~\ref{fig:skewLength_cores}).  But the method of computing the skew length given by Theorem~\ref{thm.skewLengthCompute} is independent of the ordering of $a$ and $b$: Switching $a$ and $b$ flips the rectangle to a $b\times a$ rectangle in which peaks are still peaks, valleys are still valleys, and the hook filling and row length filling are otherwise unaffected.

\begin{corollary}
\label{cor.absl}
The skew length of an $(a,b)$-core $\kappa$ is independent of the ordering of $a$ and $b$.
\end{corollary}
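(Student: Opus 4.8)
The plan is to extract the corollary from Theorem~\ref{thm.skewLengthCompute}, which already expresses $\sl(\core(\P))$ purely in terms of the $(a,b)$-Dyck path $\P$: it is $\sum\len(h)$ over the hooks $h$ at the peaks of $\P$ minus $\sum\len(h)$ over the hooks at the valleys. By Definition~\ref{def:rlf}, $\len(h)$ is the length of the row of $\core(\P)$ with leading hook $h$, so it sees only the partition $\kappa=\core(\P)$ and not the ordering of $a$ and $b$. Hence it suffices to show that the multiset of hooks occurring at peaks of $\P$, and the multiset occurring at valleys, are unchanged when $a$ and $b$ are interchanged.

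To see this I would identify the $(b,a)$-Dyck path $\P'$ attached to the same core $\kappa$ under Anderson's bijection, and argue that $\P'$ is the transpose of $\P$: reverse the sequence of steps of $\P$ and interchange $N$ with $E$. This is again a Dyck path, now in the $b\times a$ rectangle — if the last $k$ steps of $\P$ run from $(b-j,a-i)$ to $(b,a)$, the inequality $a(b-j)\le b(a-i)$ witnessing that $(b-j,a-i)$ lies on or above the $(a,b)$-diagonal is precisely the inequality $bi\le aj$ witnessing that $(i,j)$ lies on or above the $(b,a)$-diagonal. The transpose fixes the set of leading hooks, hence the core; reading step sequences it sends corners $N$-then-$E$ to corners $N$-then-$E$ and corners $E$-then-$N$ to corners $E$-then-$N$, i.e.\ peaks to peaks and valleys to valleys; and the box in column $c$, row $r$ of the $a\times b$ grid, which carries hook $b(r-1)-ac$, is sent to the box in column $a-r+1$, row $b-c+1$ of the $b\times a$ grid, whose hook $a\big((b-c+1)-1\big)-b(a-r+1)$ again equals $b(r-1)-ac$. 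So corresponding peaks and valleys carry identical hooks, and Theorem~\ref{thm.skewLengthCompute} then computes $\sl(\kappa)$ from data symmetric in $a$ and $b$. The only real work is the bookkeeping: confirming that ``the box at a peak (valley)'' is the westmost box below $\P$ in the relevant row, as it drops out of the telescoping in the proof of Theorem~\ref{thm.skewLengthCompute}; checking that the transpose re-indexes boxes exactly as stated; and noting that peaks or valleys whose box has non-positive hook contribute $0$ and may be ignored.

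An alternative that makes the symmetry manifest without introducing $\P'$ is to characterize the relevant hooks directly on $\kappa$: among leading hooks $h$ of $\kappa$, the hook $h$ occurs at a peak of $\P$ with positive hook precisely when neither $h+a$ nor $h+b$ is a leading hook, and at a valley precisely when $h+a$ and $h+b$ are leading hooks but $h+a+b$ is not. (Geometrically, in the first case the box with hook $h$ is below $\P$ while the boxes immediately west and immediately north of it lie above $\P$; in the second case that box, its western neighbor, and its northern neighbor are below $\P$ while the box diagonally above-left of it lies above $\P$.) Both conditions are patently invariant under exchanging $a$ and $b$, so Theorem~\ref{thm.skewLengthCompute} gives the corollary at once.
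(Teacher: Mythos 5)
Your proposal is correct and follows the paper's own route: the paper likewise deduces the corollary from Theorem~\ref{thm.skewLengthCompute} by observing that switching $a$ and $b$ flips the rectangle, preserving peaks, valleys, the hook filling, and the row length filling. You simply carry out in detail the coordinate and hook checks that the paper states in one sentence (and your second, hook-theoretic characterization of peaks and valleys is a correct, slightly more intrinsic way of packaging the same symmetry).
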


\subsection{Skew length via skew inversions}\

This section presents another interpretation of the skew length of an $(a,b)$-Dyck path $\P$ in terms of the number of its {\em skew inversions} or the number of its {\em flip skew inversions}. 

Recall that the north levels of $\P$ are the levels $\N(\P)=\{n_1,\dots,n_a\}$ of the initial lattice points of the north steps in the path, and that the east levels of $\P$ are the levels $\E(\P)=\{e_1,\dots,e_b\}$ of the initial lattice points of the east steps.

\begin{definition}
A \emph{skew inversion} of $\P$ is a pair of indices $(i,j)$ such that $n_i>e_j$.  
A \emph{flip skew inversion} of $\P$ is a pair of indices $(i,j)$ with $n_i+b<e_j-a$. 
\end{definition}

\begin{theorem}\label{thm:skewLength-skewInversions} Let $\P$ be an $(a,b)$-Dyck path. The skew length of $\P$ equals the number of skew inversions of $\P$, which is equal to the number of flip skew inversions of $\P$.
\end{theorem}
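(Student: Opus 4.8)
The plan is to establish the two claimed equalities separately, chaining through the row-length description of skew length from Theorem~\ref{thm.skewLengthCompute}.

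\textbf{Step 1: Skew length equals the number of skew inversions.}
I would translate Theorem~\ref{thm.skewLengthCompute} into a statement about levels. Each box $B$ below $\P$ with positive hook $h$ sits immediately to the south-east of a lattice point of $\P$; concretely the westmost boxes under $\P$ correspond to north steps and their hooks are related to the north levels, while moving east from such a box decreases the hook by $a$. The key observation is that the row length $\len(h)$ counts, among all positive hooks, how many are $\le h$ — equivalently $\len(h) = \#\{\text{positive hooks} \le h\}$ adjusted by the telescoping in the proof of Theorem~\ref{thm.skewLengthCompute}. After the telescoping argument collapses Formula~\eqref{eqn.lenSum_a} to peaks minus valleys, I want to reinterpret ``lengths at peaks minus lengths at valleys'' directly. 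A peak of $\P$ is a lattice point where a north step is followed by an east step; at such a point the level is both the largest among a maximal run and initiates an east step, so its level lies in $\E(\P)$ but the box below-left has hook coming from the preceding north run. A valley is where an east step is followed by a north step. Unwinding which hooks survive the telescoping, the surviving positive contribution from a peak at level $e_j$ counts how many north levels $n_i$ exceed $e_j$ (these are exactly the north steps whose associated boxes lie in that row but outside the $b$-boundary cutoff), and the valley subtraction removes the overcount. Summing over all peaks and valleys, each pair $(i,j)$ with $n_i > e_j$ is counted exactly once. I expect this bookkeeping — matching ``peak length minus valley length'' term-by-term with the cardinality $\#\{(i,j): n_i > e_j\}$ — to be the main obstacle, and I would do it by an explicit local analysis at each maximal north-run / east-run, possibly with a small figure tracking one row of $\core(\P)$.

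\textbf{Step 2: The number of skew inversions equals the number of flip skew inversions.}
Here I would exhibit a bijection between the set of pairs $(i,j)$ with $n_i > e_j$ and the set of pairs with $n_i + b < e_j - a$, i.e. $e_j - n_i > a + b$. The natural candidate comes from the conjugation/flip symmetry already invoked in the paper: conjugating $\P$ (reversing the path, which swaps the roles of north and east steps and negates/reflects levels) should send north levels to $-e_j$-type quantities and east levels to $-n_i$-type quantities in a way that converts the strict inequality $n_i > e_j$ into $e_{j'} - n_{i'} > a+b$. More precisely, since the levels of $\P$ lie in a bounded window and the full multiset of levels is $\{0, a, 2a, \dots\} \pmod{a+b}$-structured, there is an involution on levels $\ell \mapsto a+b-\ell$ (or $\ell \mapsto -\ell$ read cyclically) that interchanges $\N$ and $\E$ appropriately; applying it translates ordinary inversions into flip inversions. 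Alternatively, and perhaps more cleanly, I would observe that both counts equal $\sl$ via Step~1 applied to two different but related paths (e.g.\ $\P$ and a reflected copy), so that once Step~1 is in hand the second equality follows by symmetry without a separate bijection. I would check small cases (the running $(5,8)$ example, where $\sl(\P) = 10$) to confirm the correct form of the level involution before committing to it.

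\textbf{Assembly.}
Combining Steps~1 and~2 gives $\sl(\P) = \#\{(i,j): n_i > e_j\} = \#\{(i,j): n_i + b < e_j - a\}$, which is the statement. I would present Step~1 as the substantive lemma, state the level-reinterpretation of peaks and valleys as an intermediate claim, and then dispatch Step~2 quickly using the flip symmetry of the construction already noted after Corollary~\ref{cor.absl}. The verification on the running example would serve as a sanity check rather than part of the proof proper.
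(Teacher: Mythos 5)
Your overall shape is right, but the heart of the proof is missing. The substantive content of the theorem is the single identity
\[
\len(h)-\len(h-b)\;=\;\#\{\,j : e_j < n_h\,\}
\]
for each westmost box $h$ under $\P$ (and its $a\leftrightarrow b$ analogue for northmost boxes), and your write-up explicitly defers this as ``the main obstacle'' without supplying the argument. The paper proves it by a short counting step that you never reach: since $\len(h)=h-p_h$, one gets $\len(h)-\len(h-b)=b-\#\{g : h-b\le g<h\}$ where $g$ ranges over positive hooks; any two such $g$ in the same column would differ by a multiple of $b$, so each lies in a distinct column and is matched to the unique northmost box $\bar g$ of that column with $\bar g\ge h-b$; the complementary count is then $\#\{\bar g : \bar g< h-b\}$, which via $e_{\bar g}=\bar g+a+b$ and $n_h=h+a$ is exactly the number of east levels below $n_h$. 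Working instead from the already-telescoped ``peaks minus valleys'' form, as you propose, only makes this harder: the clean per-row interpretation lives in the un-telescoped sum $\sum\bigl(\len(h)-\len(h-b)\bigr)$, and re-expanding the telescoping is where your ``valley subtraction removes the overcount'' hand-waving would have to become precise. Also note your indexing is muddled: a skew inversion is naturally attached to a north level (how many east levels it exceeds), not to a peak's east level.

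For Step 2, your primary route is likely to fail and risks circularity. The involution $\ell\mapsto a+b-\ell$ (or $\ell\mapsto m-\ell$) does not map the level set of $\P$ to itself; the computation $n_i\mapsto m-n_i-b$, $e_j\mapsto m-e_j+a$ relates skew inversions of $\P$ to flip skew inversions of $\P^c$, not of $\P$, and closing that loop requires $\sl(\P)=\sl(\P^c)$, which the paper proves \emph{using} this theorem. Your fallback is the correct move and is essentially what the paper does: flip skew inversions of $\P$ are skew inversions of the reflected $(b,a)$-path, equivalently they count the boxes in the $b$-rows and $a$-boundary of $\core(\P)$, and this equals the skew length by the $a\leftrightarrow b$ symmetry already established in Corollary~\ref{cor.absl}. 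So both counts equal $\sl(\P)$ independently; no bijection between the two inversion sets is needed.
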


The key to the proof of Theorem~\ref{thm:skewLength-skewInversions} is recognizing the relationship between westmost boxes under $\P$ and north levels in $\N(\P)$ and the relationship between northmost boxes under $\P$ and east levels in $\E(\P)$. 

\begin{remark}
\label{rem:hook-north-east}
Figure~\ref{fig:hook-north-east} shows that when $h$ is the hook filling of a westmost box under $\P$, then the associated north level $n_h$ (corresponding to the lattice point at its southwest corner) is $h+a$.  When $h$ is the hook filling of a northmost box under $\P$, then the associated east level $e_h$ (corresponding to the lattice point at its northwest corner) is $h+a+b$. 
\begin{figure}
  \begin{center}
  \includegraphics[scale=0.5]{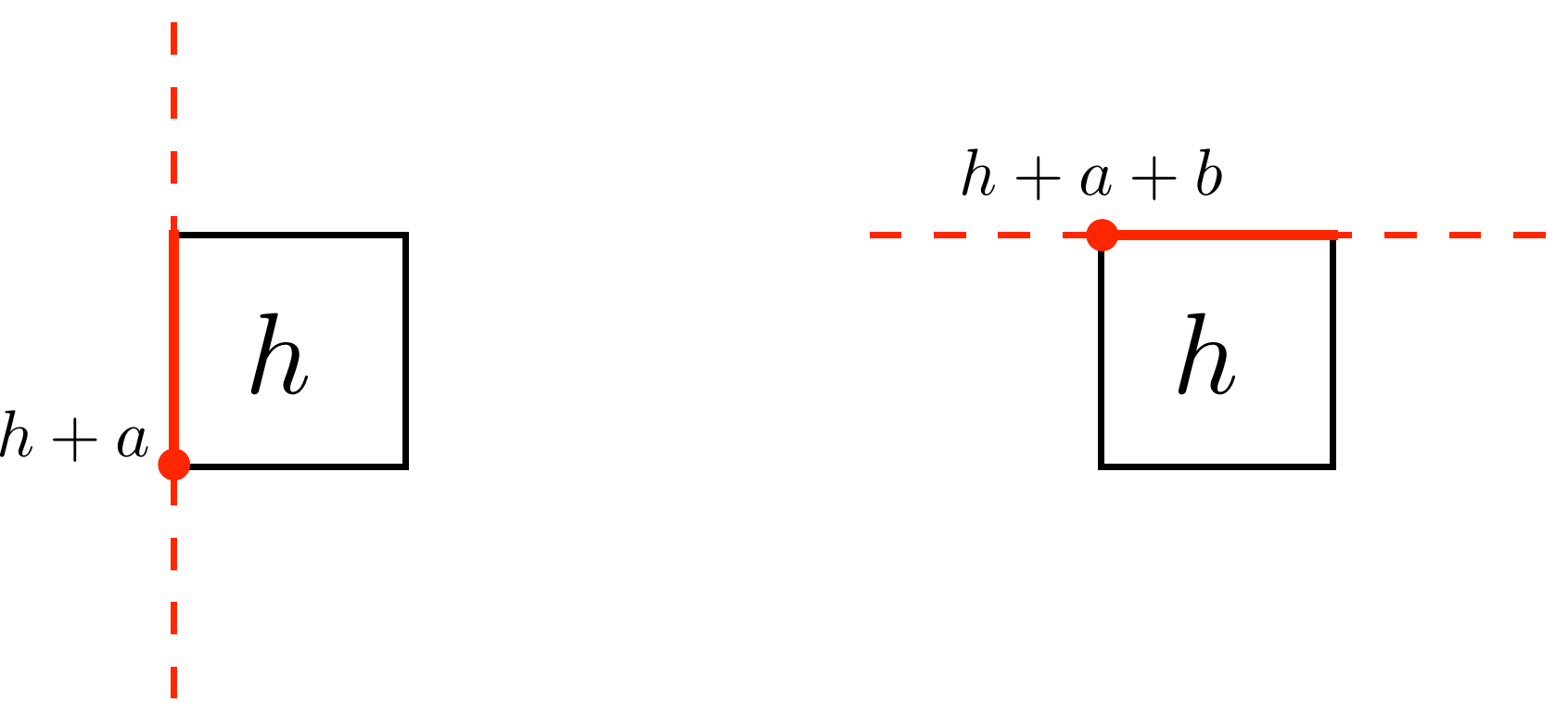}
  \end{center}
  \caption{When $h$ is the hook filling of a westmost box under $\P$, the associated north level is $n_h=h+a$.  When $h$ is the hook filling of a northmost box under $\P$, the associated east level is $e_h=h+a+b$.}
  \label{fig:hook-north-east}
\end{figure}
\end{remark}

\begin{lemma}\label{lem:skew_inversions_length_dif}
Let $h$ be the hook filling of a westmost box under an $(a,b)$-Dyck path $\P$. The length difference $\len(h)-\len(h-b)$ is equal to the number of skew inversions involving the associated north level $n_h$, which equals the number of $b$-boundary boxes in the $a$-row corresponding to $h$.
\end{lemma}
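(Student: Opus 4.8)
The plan is to establish the two equalities in turn; the second one (the ``which equals'') is little more than a restatement of what was shown for Theorem~\ref{thm.skewLengthCompute}, while the first rests on Remark~\ref{rem:hook-north-east} together with a residue-class count. Throughout, assume first that $h>0$; if $h\le 0$ then the westmost box lies below the diagonal, $\len(h)=\len(h-b)=0$, there is no $a$-row with leading hook $h$, and since every east level is at least $a$ while $n_h=h+a\le a$ no east level is below $n_h$, so all three quantities vanish. For $h>0$, the row of $\core(\P)$ with leading hook $h$ is an $a$-row (the $a$-rows correspond to westmost boxes, as noted before Corollary~\ref{cor.boundaryBoxes}), namely the $a$-row corresponding to $h$; and item~(ii) in the proof of Theorem~\ref{thm.skewLengthCompute}, applied to the positive hook $h$ and the hook $h-b$ of the box directly south of our westmost box, says this row has exactly $\len(h)-\len(h-b)$ boxes of hook length less than $b$. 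This yields the equality of the first and third quantities.

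For the first equality I would pass through Remark~\ref{rem:hook-north-east}. It gives $n_h=h+a$, and it says that the east levels of $\P$ are precisely the numbers $h''+a+b$ with $h''$ ranging over the hook fillings of the northmost boxes under $\P$ (there are $b$ of these, one per column, in bijection with the east steps of $\P$). Hence the number of skew inversions involving $n_h$ equals the number of northmost-box hooks $h''$ with $h''<h-b$, and it remains to check that this equals $\len(h)-\len(h-b)$. For the count I would record the following. In each column $x$ the hook fillings of the boxes under $\P$ form an arithmetic progression of common difference $b$ topped by the northmost-box hook $h''_x$, so that a positive integer $v$ is a positive hook of $\P$ precisely when $v\le h''_x$, where $x$ is the unique column with $h''_x\equiv v\pmod b$; the $b$ numbers $h''_x$ are pairwise incongruent modulo $b$ because $\gcd(a,b)=1$; each satisfies $h''_x\ge -b$, because the lattice point just past the east step of column $x$ lies weakly above the diagonal; and the one congruent to $0$ equals $-b$, since it comes from the last column, whose east step enters $(b,a)$ at height $a$. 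Finally, by Definition~\ref{def:rlf}, $\len(h)-\len(h-b)$ is the number of integers in the interval $[\max(0,h-b),\,h-1]$ that are not positive hooks of $\P$.

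Writing $h''(r)$ for the northmost-box hook in residue class $r$ modulo $b$, I would then argue residue class by residue class. For a class $r$ whose representative $j$ falls in $[\max(0,h-b),h-1]$: since $j\equiv h''(r)\pmod b$ and $j\ge\max(0,h-b)$, the integer $j$ fails to be a positive hook of $\P$ exactly when $h''(r)<h-b$ --- the only case needing care is $j=0$ (which forces $h\le b$), where one invokes $h''(0)=-b<h-b$. For a class $r$ with no representative in that interval --- which happens only when $h\le b$, for $r\in\{h,\dots,b-1\}$ --- one has $h''(r)\ge -b$ and $h''(r)\equiv r\pmod b$, hence $h''(r)\ge r-b\ge h-b$, so the class contributes nothing. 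Summing these one-integer-at-a-time statements over all $b$ residue classes matches the number of northmost-box hooks below $h-b$ with the number of non-positive-hooks in $[\max(0,h-b),h-1]$, i.e.\ with $\len(h)-\len(h-b)$, which completes the argument.

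The step I expect to be the main obstacle is exactly this residue bookkeeping and its boundary behavior: one must split off the case $h>b$ (where $[h-b,h-1]$ is a full set of residue representatives modulo $b$) from the case $h\le b$ (where $[0,h-1]$ is not, and where the representative $0$ must be matched to $h''(0)=-b$, using that every $(a,b)$-Dyck path ends with an east step into $(b,a)$). Granting the listed facts about the northmost-box hooks, everything else is elementary arithmetic with the arithmetic progressions of hook fillings.
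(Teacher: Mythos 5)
Your proof is correct and follows essentially the same route as the paper's: write $\len(h)-\len(h-b)$ as a count of non-hooks in a window of length $b$ below $h$, match these column-by-column (equivalently, residue class by residue class mod $b$) against the northmost-box hooks, and convert to east levels via Remark~\ref{rem:hook-north-east}, so that the skew inversions involving $n_h$ correspond to northmost hooks below $h-b$. If anything, your version is more careful at the boundary: the paper's intermediate identity $\len(h)-\len(h-b)=b-(p_h-p_{h-b})$ tacitly assumes $h>b$ (Definition~\ref{def:rlf} sets $\len(h-b)=0$ rather than $h-b-p_{h-b}$ when $h-b\le 0$), whereas your interval $[\max(0,h-b),h-1]$, the observation $h''(0)=-b$, and the treatment of unrepresented residue classes handle the case $h\le b$ explicitly.
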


\begin{proof}
Recall that $\len(h)=h-p_h$, where $p_h$ is the number of positive hooks in the hook filling of $\P$ less than $h$.  Then:
\begin{eqnarray*}
\len(h)-\len(h-b)&=&h-p_h - (h-b) + p_{h-b}\\
  &=&b-(p_h-p_{h-b})\\
  &=&b-\#\{g\mid h-b\leq g< h\}.
\end{eqnarray*}

Each box with hook filling $g$ satisfying the inequalities $h-b\leq g< h$ is in a distinct column of the diagram of $\P$. If two were in the same column, then the difference of their hooks would be a multiple of $b$, so that both could not satisfy the inequality.  As a result, we may add a multiple of $b$ to each $g$ satisfying the inequalities to obtain a unique northmost box  under $\P$ with hook filling $\bar{g}$ satisfying $h-b\leq \bar{g}$. Conversely, for every northmost box under $\P$ with hook filling $\bar{g}$ satisfying this inequality there is a unique box with hook filling $g$ in the same column satisfying $h-b\leq g< h$. Therefore,

\begin{equation*}
\begin{aligned}
\len(h)-\len(h-b)&=b-\#\{\bar{g}\mid h-b\leq \bar{g}\}\\
&=\#\{\bar{g}\mid h-b> \bar{g}\}.
\end{aligned}
\end{equation*}

By Remark~\ref{rem:hook-north-east}, this is equivalent to $\len(h)-\len(h-b)=\#\{e_j\mid n_h> e_j\}$, as desired.  The last clause of the statement of the lemma is given in the proof of Theorem~\ref{thm.skewLengthCompute}.
\end{proof}

Similar arguments prove the following.
\begin{lemma}\label{lem:skew_inversions_length_dif_b}
Let $h$ be the hook filling of a northmost box under an $(a,b)$-Dyck path $\P$. The length difference $\len(h)-\len(h-a)$ is equal to the number of flip skew inversions involving the associated east level $e_h$, which equals the number of $a$-boundary boxes in the $b$-row corresponding to $h$.
\end{lemma}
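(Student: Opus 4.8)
The plan is to mirror the argument in the proof of Lemma~\ref{lem:skew_inversions_length_dif}, swapping the roles of $a$ and $b$ and of north/east, and making the necessary adjustments to the relevant congruence/inequality bookkeeping. By Definition~\ref{def:rlf} we have $\len(h) = h - p_h$, where $p_h$ counts positive hooks in the hook filling strictly less than $h$. First I would compute
\begin{equation*}
\len(h) - \len(h-a) = h - p_h - (h-a) + p_{h-a} = a - \#\{g \mid h-a \le g < h\},
\end{equation*}
exactly as in the previous lemma; the last clause (this quantity equals the number of $a$-boundary boxes in the $b$-row corresponding to $h$) is already established in the proof of Theorem~\ref{thm.skewLengthCompute}, via statement (i) there, so that part needs no new work.

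Next I would analyze the set $\{g \mid h-a \le g < h\}$. Since this range has length $a$, the hooks $g$ in it lie in distinct \emph{rows} of the diagram of $\P$: two hooks in the same row differ by a multiple of $a$, so at most one can satisfy the inequality. (This is the dual of the distinct-columns observation in Lemma~\ref{lem:skew_inversions_length_dif}, using that moving one box north changes the hook filling by $b$ and one box west by $a$.) Consequently, each such $g$ can be shifted by a suitable multiple of $a$ — staying in its row, moving west to the first column — to produce a westmost box under $\P$, but it will be cleaner to follow the template and instead shift to produce the object whose level we want to compare against. Concretely, I would show that for each $g$ with $h-a \le g < h$ there is a unique westmost box under $\P$ in the same row with hook filling $\bar g \ge h-a$, and conversely; this gives
\begin{equation*}
\len(h) - \len(h-a) = a - \#\{\bar g \mid h - a \le \bar g\} = \#\{\bar g \mid \bar g < h - a\},
\end{equation*}
where $\bar g$ ranges over hook fillings of westmost boxes under $\P$ (of which there are exactly $a$).

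The last step is to translate this count into flip skew inversions via Remark~\ref{rem:hook-north-east}. There, if $h$ is the hook of a northmost box its east level is $e_h = h + a + b$, and if $\bar g$ is the hook of a westmost box its north level is $n_{\bar g} = \bar g + a$. So $\bar g < h - a$ is equivalent to $\bar g + a < h$, i.e. $n_{\bar g} < h = e_h - a - b$, i.e. $n_{\bar g} + b < e_h - a$, which is precisely the condition for $(\,\cdot\,, h)$ to be a flip skew inversion. Hence $\len(h) - \len(h-a)$ counts exactly the flip skew inversions involving $e_h$, completing the proof. The main obstacle I anticipate is getting the direction of the multiple-of-$a$ shift and the resulting inequality on $\bar g$ exactly right — in particular keeping straight that here we shift within a row (multiples of $a$) to land on westmost boxes, which is the transpose of the previous lemma's column shift (multiples of $b$) landing on northmost boxes — and verifying the boundary case $\bar g = h-a$ is handled consistently on both sides. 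Once the congruence bookkeeping is pinned down, the rest is the same telescoping-style accounting already used above.
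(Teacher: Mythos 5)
Your proposal is correct and is precisely the argument the paper intends: the paper gives no separate proof of this lemma, stating only that "similar arguments" to those for Lemma~\ref{lem:skew_inversions_length_dif} apply, and your write-up carries out that dualization correctly (same telescoping computation of $\len(h)-\len(h-a)$, the shift by multiples of $a$ within rows landing on westmost boxes, and the translation $n_{\bar g}=\bar g+a$, $e_h=h+a+b$ turning $\bar g<h-a$ into the flip skew inversion condition $n_{\bar g}+b<e_h-a$).
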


Theorem~\ref{thm:skewLength-skewInversions} now follows directly from Definition~\ref{def:skewlen} by summing over all westmost boxes in Lemma~\ref{lem:skew_inversions_length_dif} and all northmost boxes in Lemma~\ref{lem:skew_inversions_length_dif_b}.

\begin{example}\label{ex:skew_inversions}
In our running example, the north levels are~$\N=\{19,16,12,8,0\}$ and the east levels are~$\E=\{27,24,22,20,17,15,10,5\}$. There are 10 skew inversions because there are 4 east levels less than $n_1=19$, 3 east levels less than~$n_2=16$, 2 east levels less than~$n_3=12$, 1 east level less than~$n_4=8$, and 0 east levels less than~$n_5=0$. The total number of skew inversions is then $4+3+2+1+0=10$. 
These numbers correspond to the number of $b$-boundary boxes in the $a$-rows of the core $\core(\P)$ in Figure~\ref{fig:skewLength_cores}.

To calculate the flip skew inversions, consider the sets $\N+b=\{27,24,20,16,8\}$ and $\E-a=\{22,19,17,15,12,10,5,0\}$.  There are 10 flip skew inversions because there are 3 elements of the form~$n_i+b$ less than~$e_1-a=22$, there are 2 less than~$e_2-a=19$, 2 less than~$e_3-a=17$, 1 less than~$e_4-a=15$, 1 less than~$e_5-a=12$, 1 less than~$e_6-a=10$, 0 less than~$e_7-a=5$, and 0 less than~$e_8-a=0$. The total number of flip skew inversions is then $3+2+2+1+1+1+0+0=10$. 
These numbers correspond to the number of $a$-boundary boxes in the $b$-rows of the of the core $\core(\P)$.
\end{example}

\begin{remark} Skew inversions in an $(a,b)$-Dyck path arise from pairs of north levels and east levels where $n_i>e_j$.  Note that $n_i+b$ is the level of the terminal lattice point of the corresponding north step (instead of initial lattice point), while $e_j-a$ is the level of the terminal lattice point of the corresponding east step.  So flip skew inversions are best understood by a reverse reading of $\P$ as a sequence of west and south steps, counting the pairs where the south level is less than the west level.  Alternatively, flip skew inversions of $\P$ correspond to skew inversions of $\P$ when $\P$ is reflected (flipped) to be a $(b,a)$-Dyck path. 
\end{remark}

\section{The conjugate map}
\label{sec:conjugate}

For any partition $\kappa$, its conjugate partition $\kappa^c$ is obtained by reflecting along its main diagonal. (See Figure~\ref{fig:conjugate_cores}.)  Since hook lengths are preserved under this reflection, when $\kappa$ is an $(a,b)$-core, so is $\kappa^c$.  When $a$ and $b$ are relatively prime, there is a natural conjugate map on $(a,b)$-Dyck paths~$\P$.  Apply cyclic shifts to the path $\P$ until we encounter a path strictly \emph{below} the diagonal, the conjugate path $\P^c$ is the result of rotating this path~$180^\circ$. (See Figure~\ref{fig:conjugate_paths}.) The first main result of this section (Theorem~\ref{thm:conjugation}) shows that these conjugations are equivalent under Anderson's bijection, and the second (Theorem~\ref{thm.slconj}) shows that conjugation preserves skew length. 
These two results were simultaneously found in independent work by Xin in~\cite{xin_rank_2015}.  Lemmas \ref{lem:conjugate_hooks} and \ref{lem:conjugate_positive_hooks} mirror the notion of conjugation of the semimodule of leading hooks presented by Gorsky and Mazin \cite{GMII}.

\begin{figure}
  \begin{center}
  \includegraphics[scale=0.5]{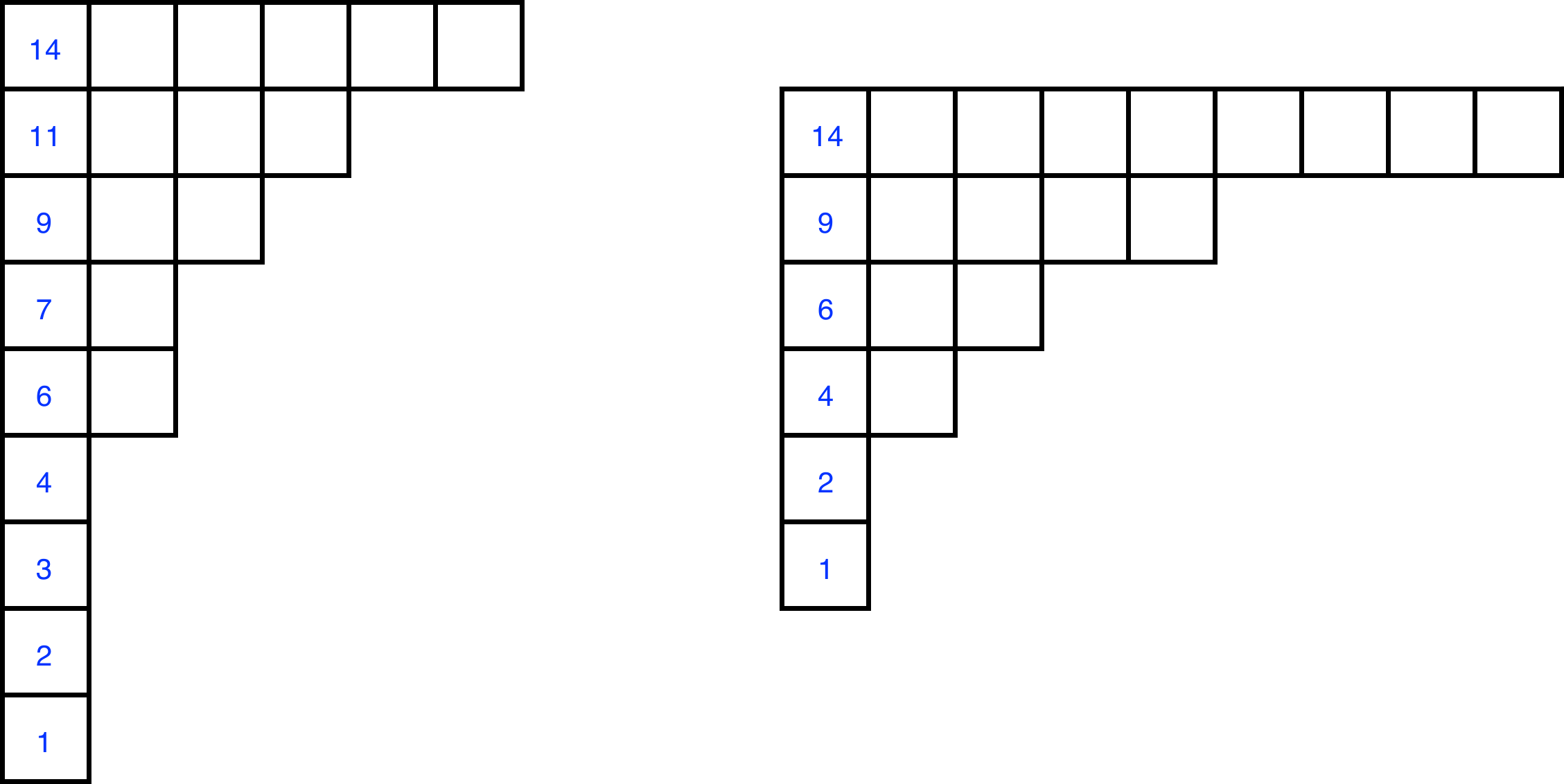}
  \end{center}
  \caption{The conjugate map on $(a,b)$-cores.}
  \label{fig:conjugate_cores}
\end{figure}

\begin{figure}
  \begin{center}
  \includegraphics[scale=0.5]{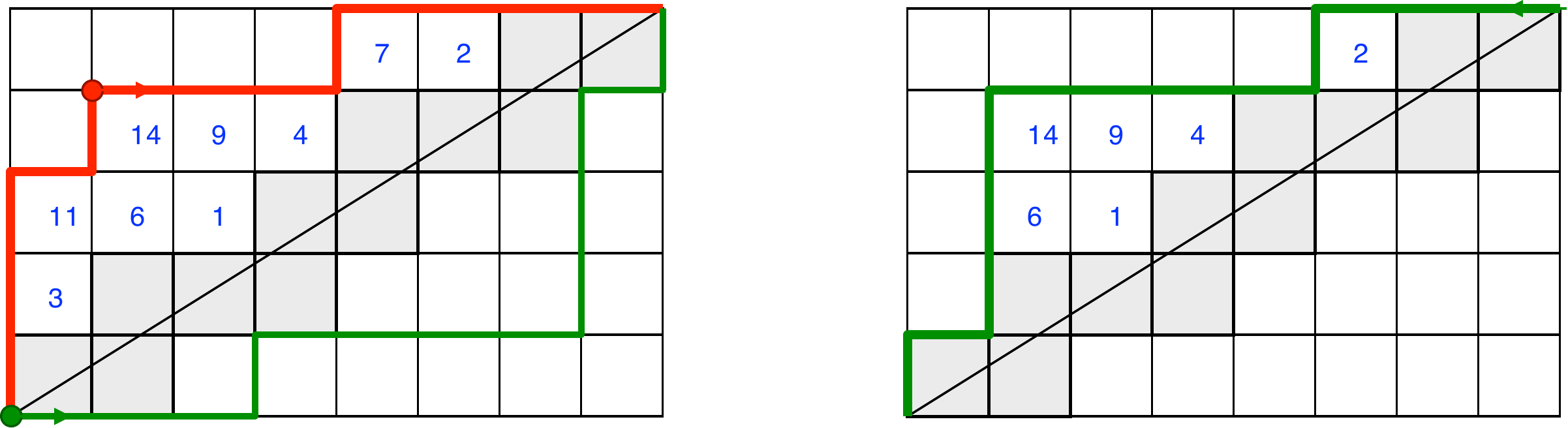}
  \end{center}
  \caption{The conjugate map on $(a,b)$-Dyck paths.}
  \label{fig:conjugate_paths}
\end{figure}

\begin{theorem}\label{thm:conjugation}
Conjugation on $(a,b)$-cores coincides with conjugation on $(a,b)$-Dyck paths via Anderson's bijection:
\[
\core(\P)^c = \core(\P^c).
\]
\end{theorem}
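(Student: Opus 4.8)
The plan is to pin down what Anderson's bijection does in terms of the hook filling, and then show conjugation of paths corresponds exactly to conjugation of cores at the level of hooks. Recall that $\core(\P)$ is determined by its set of leading hooks, which are precisely the positive hooks of $\P$ (the positive entries of the hook filling lying below $\P$). So the statement $\core(\P)^c=\core(\P^c)$ will follow if I can prove two things: (1) conjugating a core $\kappa$ has a clean description on the level of leading hooks, and (2) the cyclic-shift-and-rotate operation defining $\P^c$ transforms the set of positive hooks in exactly the matching way. I would isolate (1) as a lemma (this is the ``Lemma~\ref{lem:conjugate_hooks}'' the text alludes to): for a partition drawn with the box in position $(b,0)$ carrying hook value $-ab$, reflecting the Young diagram across its main diagonal sends the hook value $h$ of a box to $-h$ (equivalently, negates the hook filling), because reflection swaps the ``east'' direction (adding $a$) with the ``north'' direction (adding $b$) only after one also accounts for the sign; more precisely, a box above the diagonal with hook $h$ maps to a box below the diagonal whose complementary hook is $h$, and the hook filling convention makes that value $-h$. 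The upshot: the leading hooks of $\kappa^c$ are obtained from the leading hooks of $\kappa$ by a fixed affine/negation rule on $\ZZ$, intersected with the positivity constraint.

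Next I would analyze the path side. Cyclically shifting $\P$ by one step corresponds to deleting the first step, appending it at the end, and shifting every level by $\mp b$ or $\pm a$ depending on the step type — equivalently it shifts the whole hook filling by a constant and relabels which boxes are ``below the path.'' Iterating cyclic shifts until the path dips strictly below the diagonal, then rotating $180^\circ$, produces $\P^c$; I would track precisely which integers become the positive hooks of the resulting path. The key computation is that this composite operation, read on the set of hook values below the path, is the very same negation/affine rule from step (1). Here the Bizley/$C_{a+b}$ picture mentioned in the background is the right bookkeeping device: cyclic shifts generate the $C_{a+b}$ action, and the $180^\circ$ rotation is the order-reversing involution on the cyclic word of $N$'s and $E$'s, which on the hook filling is exactly negation.

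Finally I would assemble: by the lemma, $\core(\P)^c$ has leading hook set equal to [negation rule applied to positive hooks of $\P$]; by the path analysis, $\core(\P^c)$ has leading hook set equal to the same thing; since a core is determined by its leading hooks, the two cores coincide. The main obstacle I anticipate is step (2) — getting the cyclic-shift-then-rotate bookkeeping exactly right, in particular correctly identifying, among all cyclic shifts, the unique one landing strictly below the diagonal and verifying that after the $180^\circ$ rotation the set of boxes ``below the new path'' carries hook values that are precisely the negatives of those below $\P$, with the positivity cutoff landing in the right place. This is the one spot where off-by-$a$, off-by-$b$, or off-by-$ab$ errors in the hook-filling normalization can creep in, so I would verify it on the running example ($\P=NNNENEEENEEEE$, $\core(\P)=(6,4,3,2,2,1,1,1,1)$) as a sanity check before declaring victory.
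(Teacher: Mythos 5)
Your overall architecture is exactly the paper's: identify $\core(\P)$ with its set of leading hooks (equal to the positive hooks of $\P$), prove one lemma describing conjugation of cores at the level of leading hooks and a matching lemma describing the cyclic-shift-plus-rotation at the level of positive hooks, then compare. But the transformation rule you propose for both lemmas is misstated, and the error is not a normalization issue that example-checking would merely tune away. The correct statement (Olsson's lemma, the paper's Lemma~\ref{lem:conjugate_hooks}) is: if $H$ is the set of leading hooks and $m=\max(H)$, then the leading hooks of the conjugate are $\{m-n : n\in\{0,1,\dots,m\}\setminus H\}$ --- the reversal $n\mapsto m-n$ applied to the \emph{complement} of $H$ in $\{0,\dots,m\}$, not to $H$ itself. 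Your formulation (``the leading hooks of $\kappa^c$ are obtained from the leading hooks of $\kappa$ by a fixed affine/negation rule \dots intersected with the positivity constraint'') applies the rule to $H$; on the running example $H=\{1,2,3,4,6,7,9,11,14\}$ with $m=14$, so $m-H=\{0,3,5,7,8,10,11,12,13\}$, whereas the correct answer is $m-\bigl(\{0,\dots,14\}\setminus H\bigr)=\{1,2,4,6,9,14\}$. The complementation is the real content of the lemma (each column of $\kappa$ corresponds to a gap $n\notin H$), and no choice of affine normalization recovers it from $H$ alone.

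The same issue infects your path-side step (2): you propose to track ``the set of hook values below the path'' through the cyclic shift and $180^\circ$ rotation, but the positive hooks of $\P^c$ do not arise as images of the positive-hook boxes of $\P$. In the paper's argument they arise from the boxes to the \emph{left} of $\P$ lying below the diagonal translated through the maximal-hook box --- precisely the boxes carrying the complementary hook values $\{0,\dots,m\}\setminus H$ --- and it is that region which the shift-and-rotate carries onto the area region of $\P^c$, sending hook $n$ to $m-n$. So the missing idea in both halves is the same: pass to the complementary set (respectively, the complementary region) before applying the reversal $n\mapsto m-n$. Once that is in place, the assembly step you describe goes through exactly as in the paper.
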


This follows directly by showing the equivalence between the leading hooks of $\core(\P)^c$ and the positive hooks of $\P^c$.  A result of Olsson gives the leading hooks of $\core(\P)^c$; we include a proof for completeness.

\begin{lemma}\cite[Lemma 2.2]{Olsson93}\label{lem:conjugate_hooks}
 Let $\kappa$ be any partition with leading hooks given by the set $H$, with $m=\max(H)$. The conjugate partition $\kappa^c$ has leading hooks given by $\{m-n : n\in \{0,1,\dots ,m\}\setminus H\}$. 
\end{lemma}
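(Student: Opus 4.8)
The plan is to reduce the statement to the classical behavior of the \emph{bead set} (abacus encoding) of a partition under conjugation. Write $\kappa=(\lambda_1\ge\lambda_2\ge\cdots\ge\lambda_\ell>0)$, so that the leading hooks are $h_i=\lambda_i+\ell-i$ for $1\le i\le\ell$ and $H=\{h_1>h_2>\cdots>h_\ell\}$ with $m=h_1=\lambda_1+\ell-1$. Since $H\subseteq\{1,\dots,m\}$, the set $\{0,1,\dots,m\}\setminus H$ has $m+1-\ell=\lambda_1$ elements, matching the number of rows of $\kappa^c$, which is a useful first consistency check. To $\kappa$ I would associate the bead set $\mathrm{Bead}(\kappa):=\{\lambda_i-i:i\ge1\}$ (with $\lambda_i=0$ for $i>\ell$); a one-line computation separating the cases $i\le\ell$ and $i>\ell$ shows $\mathrm{Bead}(\kappa)=(H-\ell)\sqcup\{-\ell-1,-\ell-2,\dots\}$.

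The key input, which I expect to be the only step needing genuine care, is the classical fact that conjugation acts on bead sets by reflection-and-complementation:
\[\mathrm{Bead}(\kappa^c)=\{-1-j:\ j\in\ZZ\setminus\mathrm{Bead}(\kappa)\}.\]
Geometrically this is just the statement that transposing a Young diagram reverses its boundary lattice path and interchanges the two step directions; for readers comfortable with abaci (which the paper already invokes in several remarks) it can simply be quoted, and otherwise it admits a short direct proof comparing $\lambda^c_k-k$ with the positions missing from $\mathrm{Bead}(\kappa)$.

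The rest is bookkeeping. I would first compute $\ZZ\setminus\mathrm{Bead}(\kappa)=\{g-\ell:\ g\ge0,\ g\notin H\}$, using that the infinite tail of $\mathrm{Bead}(\kappa)$ absorbs every integer below $-\ell$; plugging this into the displayed identity gives $\mathrm{Bead}(\kappa^c)=\{\ell-1-g:\ g\ge0,\ g\notin H\}$. Then I would feed $\kappa^c$ — which has exactly $\lambda_1$ rows — back into the description of the previous paragraph: its leading-hook set $H'$ satisfies $H'-\lambda_1=\mathrm{Bead}(\kappa^c)\cap\{j:\ j\ge-\lambda_1\}$. Since $\ell-1-g\ge-\lambda_1$ exactly when $g\le\ell-1+\lambda_1=m$, that intersection equals $\{\ell-1-g:\ 0\le g\le m,\ g\notin H\}$, so $H'=\{m-g:\ g\in\{0,1,\dots,m\}\setminus H\}$, which is the asserted formula. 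The single delicate point is juggling the two normalizations ($\ell$ rows for $\kappa$ versus $\lambda_1$ rows for $\kappa^c$) together with the infinite tails, so that the finite window $\{0,\dots,m\}$ falls out cleanly; as a sanity check, in the running example $H=\{14,11,9,7,6,4,3,2,1\}$ and $m=14$ give $\{0,\dots,14\}\setminus H=\{0,5,8,10,12,13\}$ and $\{14-n\}=\{1,2,4,6,9,14\}$, which are indeed the leading hooks of $\core(\P)^c=(9,5,3,2,1,1)$.
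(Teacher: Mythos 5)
Your proof is correct; I checked the normalizations and the bookkeeping (in particular $h_i=\lambda_i+\ell-i$, the identification $\mathrm{Bead}(\kappa)=(H-\ell)\sqcup\{-\ell-1,-\ell-2,\dots\}$, and the window condition $\ell-1-g\ge-\lambda_1\iff g\le m$), and the final identity $H'=\{m-g: g\in\{0,\dots,m\}\setminus H\}$ falls out as you say. However, your route is genuinely different from the paper's. The paper argues directly on the Young diagram: the leading hooks of $\kappa^c$ are the hooks of the boxes in the \emph{first row} of $\kappa$; the columns of $\kappa$ are in bijection with the gaps $n\in\{0,1,\dots,m\}\setminus H$ (reading the boundary path of $\kappa$, the vertical steps sit at positions $H$ and the horizontal steps at the complementary positions below $m$), and the hook of the top box of the column labelled $n$ is visibly $m-n$. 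That is a one-step pictorial observation requiring no auxiliary normalization. You instead pass through the bead set and invoke the classical reflection--complementation formula $\mathrm{Bead}(\kappa^c)=\{-1-j: j\in\ZZ\setminus\mathrm{Bead}(\kappa)\}$, then translate back. What your approach buys is robustness and generality: the complementation identity is shift-invariant, so the same computation handles arbitrary abacus normalizations and makes the self-duality of the statement (applying it twice returns $H$) transparent; what it costs is that you import an external fact (or must prove it, which amounts to re-deriving the paper's boundary-path observation in disguise), and you must juggle the two row counts $\ell$ and $\lambda_1$ plus the infinite tails, which the paper's finite, purely diagrammatic argument avoids entirely. Either proof is acceptable; the paper's is the shorter and more self-contained of the two.
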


\begin{proof}
Let $\kappa$ be any partition with leading hooks (hooks in the first column) given by the set $H$, with $m=\max(H)$. The leading hooks of its conjugate partition are the hooks in the top row of~$\kappa$. This partition has one column for each number $n$ in the set $\{0,1,\dots ,m\}\setminus H$. The hook of the upper box in the column corresponding to $n$ is equal to $m-n$ as illustrated in Figure~\ref{fig:conjugate_proof}.  
\end{proof}

\begin{figure}
  \begin{center}
  \includegraphics[width=0.9\textwidth]{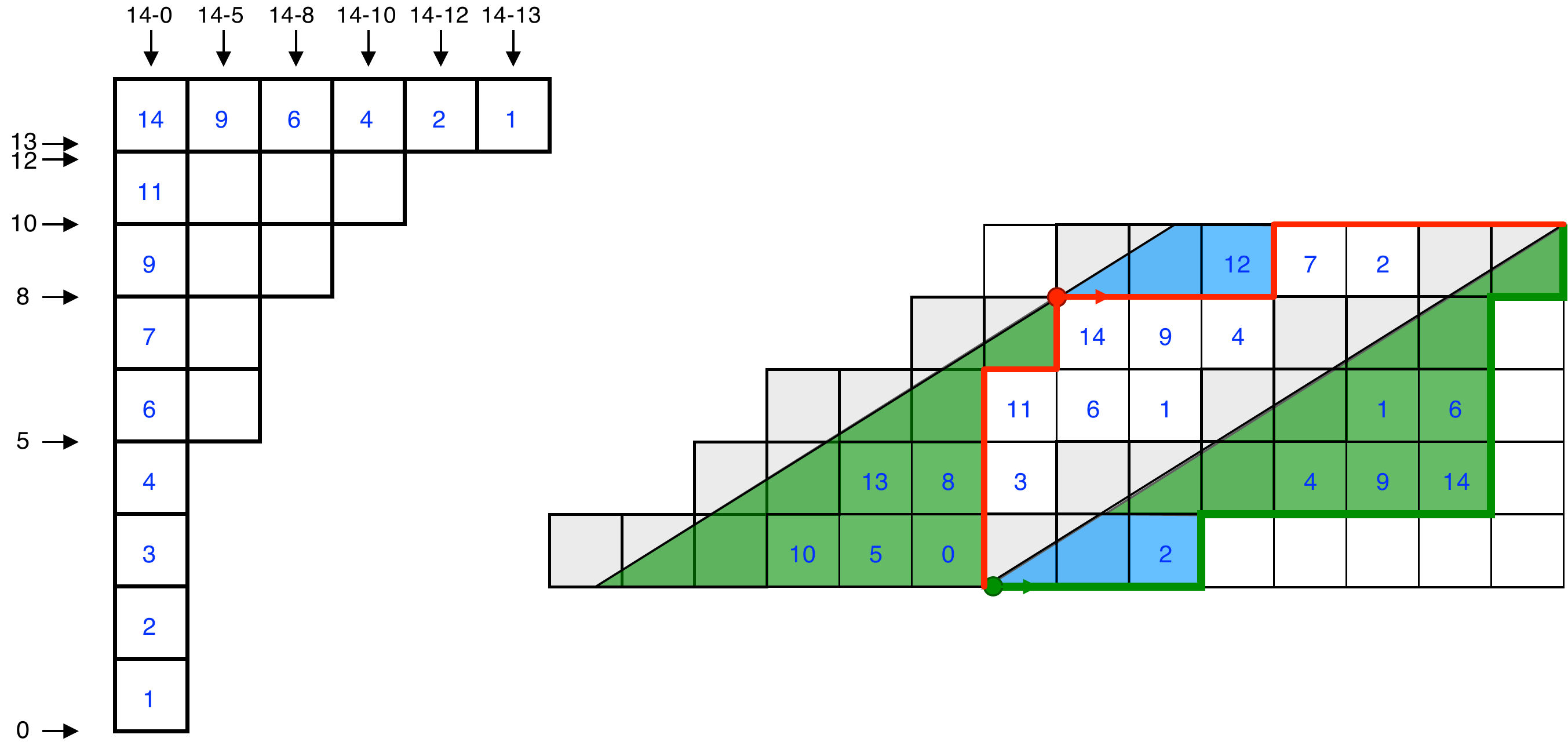}
  \end{center}
  \caption{Illustration of the proof of Lemmas~\ref{lem:conjugate_hooks} and \ref{lem:conjugate_positive_hooks}}
  \label{fig:conjugate_proof}
\end{figure}

\begin{lemma}\label{lem:conjugate_positive_hooks}
 Let $\P$ be an $(a,b)$-Dyck path with positive hooks given by $H$, with $m=\max(H)$. The conjugate path $\P^c$ has positive hooks given by $\{m-n : n\in \{0,1,\dots ,m\}\setminus H\}$.
\end{lemma}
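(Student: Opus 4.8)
\emph{Overview and first step.} The idea is to encode everything in the set $\N(\P)$ of \emph{north levels} of $\P$ (the levels of the lattice points that begin north steps) and then finish with elementary set arithmetic. Throughout I would use that an $(a,b)$-Dyck path must begin with a north step and must contain exactly one north step starting at each height $0,1,\dots,a-1$; hence $0\in\N(\P)$, every element of $\N(\P)$ is $\ge 0$, and, since $\gcd(a,b)=1$, $\N(\P)$ is a complete system of residues modulo $a$. The first step is to prove the formula
\[
H=\{\,n-ja \;:\; n\in\N(\P),\ j\ge 1,\ n-ja>0\,\},
\qquad\text{equivalently}\qquad
\ZZ_{\ge 0}=H\sqcup V(\P),\quad V(\P):=\N(\P)+a\ZZ_{\ge 0}.
\]
This is a row-by-row computation in the $a\times b$ grid: in each row the boxes below $\P$ form a right-justified run; by Remark~\ref{rem:hook-north-east} the leftmost of them has hook $n-a$, where $n\in\N(\P)$ is the north level opening that row; the hooks then decrease by $a$ from column to column, and the box in the last column always has negative hook, so that row contributes exactly $\{n-a,n-2a,\dots\}\cap\ZZ_{>0}$ to $H$. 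Summing over rows gives the displayed formula (the $n$'s that occur are precisely the elements of $\N(\P)$), and a one-line check per residue class upgrades it to $\ZZ_{\ge 0}=H\sqcup V(\P)$ (this is where $0\in\N(\P)$ and $\N(\P)\subseteq\ZZ_{\ge 0}$ are used). In particular, assuming $H\neq\emptyset$ (the case $H=\emptyset$ being trivial), $m=\max\N(\P)-a$ and $\{m+1,m+2,\dots\}\subseteq V(\P)$.

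\emph{Conjugation on north levels.} Next I would show $\N(\P^c)=(m+a)-\N(\P)$. By definition $\P^c=\rho(\P^{\flat})$, where $\P^{\flat}$ is the unique cyclic shift of $\P$ lying strictly below the diagonal and $\rho$ is the $180^{\circ}$ rotation of the rectangle. A cyclic shift has the same lattice-point levels as $\P$ up to subtracting the original level of its new basepoint, and the shift landing strictly below the diagonal is the one based at the (unique) lattice point of $\P$ of maximal level $m^{\ast}$, so $\N(\P^{\flat})=\N(\P)-m^{\ast}$. The rotation $\rho$ reverses the path and negates levels, so a north step of $\P^{\flat}$ with initial level $n$ (hence terminal level $n+b$) becomes a north step of $\rho(\P^{\flat})$ with initial level $-(n+b)$; therefore $\N(\P^c)=\{-n-b:n\in\N(\P^{\flat})\}=(m^{\ast}-b)-\N(\P)$. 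Finally $m^{\ast}$ is attained at a peak, i.e., at the top of a north step, so $m^{\ast}=\max\N(\P)+b$; together with $m=\max\N(\P)-a$ this gives $m^{\ast}-b=m+a$, as required.

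\emph{Assembling the two steps.} Applying the first step to $\P^c$, the positive hooks of $\P^c$ equal $\ZZ_{>0}\setminus V(\P^c)$ with $V(\P^c)=\N(\P^c)+a\ZZ_{\ge 0}$. By the second step, $V(\P^c)=\big((m+a)-\N(\P)\big)+a\ZZ_{\ge 0}=m-W$, where $W:=\N(\P)-a\ZZ_{\ge 1}$. Since $\N(\P)$ is a complete residue system modulo $a$, checking one residue class at a time shows $W\sqcup V(\P)=\ZZ$; combined with $V(\P)\subseteq\ZZ_{\ge 0}$ and $\ZZ_{\ge 0}=H\sqcup V(\P)$ (and $0\notin H$) this forces $W=\ZZ_{<0}\sqcup H$. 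Hence
\[
V(\P^c)=m-W=\{m+1,m+2,\dots\}\ \sqcup\ (m-H),
\qquad m-H\subseteq\{0,1,\dots,m-1\},
\]
and so the positive hooks of $\P^c$ are
\[
\ZZ_{>0}\setminus V(\P^c)=\{1,\dots,m\}\setminus(m-H)=\{\,m-n \;:\; n\in\{0,1,\dots,m\}\setminus H\,\},
\]
which is the assertion of the lemma; here $m\in H$ is exactly what makes the endpoints $0$ and $m$ come out correctly.

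\emph{Main obstacle.} I expect the middle step to be the delicate one: composing the cyclic shift with the $180^{\circ}$ rotation while correctly tracking which lattice point serves as the new basepoint, and verifying both that the relevant translation constant is the maximal level $m^{\ast}$ and that $m^{\ast}-b=m+a$. The first step is essentially Remark~\ref{rem:hook-north-east} read across all rows of the grid, and the last step is routine manipulation of arithmetic progressions, whose only genuine subtlety is keeping the role of $m=\max H$ straight so that $0$ (which lies in $V(\P^c)$, hence is not a positive hook of $\P^c$) is accounted for correctly.
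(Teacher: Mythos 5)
Your proof is correct, but it follows a genuinely different route from the paper's. The paper argues geometrically: it observes that the boxes to the left of $\P$ carry exactly the hook fillings $\{0,1,\dots,m\}\setminus H$ (via the same complete-residue-system-mod-$a$ fact you use), collects those with hook at most $m$ into a region $A$ bounded by a translate of the diagonal through the corner of the box with hook $m$, and then notes that the cyclic shift followed by the $180^{\circ}$ rotation carries $A$ onto the region between the diagonal and $\P^c$, sending the box with hook $n$ to the box with hook $m-n$. You instead encode $H$ arithmetically through the north levels, via the decomposition $\ZZ_{\ge 0}=H\sqcup\bigl(\N(\P)+a\ZZ_{\ge 0}\bigr)$, establish the transformation rule $\N(\P^c)=(m+a)-\N(\P)$, and finish with residue-class bookkeeping. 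Both arguments rest on the same two facts --- conjugation is shift-plus-rotate, and this operation negates levels up to translation by the maximal level $m^{\ast}=m+a+b$ --- but the paper tracks hook fillings of boxes directly, while you track levels of lattice points and reconstruct the hooks afterwards. Your version is longer but every step is mechanically checkable, and it produces $\N(\P^c)=(m+a)-\N(\P)$ as a reusable byproduct (essentially the conjugation of semimodules of Gorsky and Mazin alluded to just before the lemma); the paper's version is shorter and makes the symmetry $n\mapsto m-n$ visually transparent, at the cost of leaning on Figure~\ref{fig:conjugate_proof}. The supporting claims you flag as delicate all hold: since $\gcd(a,b)=1$ the levels along $\P$ are distinct, so the unique cyclic shift landing strictly below the diagonal is the one based at the lattice point of maximal level; that maximum equals $\max\N(\P)+b$ because it is attained at the top of a north step; and $m=\max\N(\P)-a$ whenever $H\neq\emptyset$.
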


\begin{proof}
Let $\P$ be an $(a,b)$-Dyck path with positive hook set given by $H$ and where $m=\max(H)$. Fill all the boxes on the left of the path with the hooks that are less than~$m$. Hooks appearing in the same row are equivalent mod $a$. Furthermore, the rows contain all the residues $0,1,\dots, a-1$ modulo $a$ because $a$ and $b$ are relatively prime and, as a consequence, the filled hooks contain all the numbers from 0 to $m$. 

Draw a diagonal parallel to the main diagonal passing through the upper left corner of the box  below $\P$ with the largest hook $m$. Consider the area $A$ below this diagonal directly on the left of $\P$ as illustrated in Figure~\ref{fig:conjugate_proof}. The boxes in $A$ are exactly the boxes on the left of the path with hook length~$n$ less than $m$. Applying cyclic shifts to $P$ to obtain a path below the main diagonal transforms the area $A$ to the area between the main diagonal and the shifted path. Since this transformation maps the box with hook length $m$ to the box with hook length 0 (when rotated~$180$ degrees), the hook length $n$ gets transformed to the hook length $m-n$.
\end{proof}

\begin{example}
In both Figure~\ref{fig:conjugate_cores} and Figure~\ref{fig:conjugate_paths}, the set of hooks on the left is $H=\{1,2,3,4,6,7,9,11,14\}$, with $m=14$. The set $\{0,1,\dots , m\}\setminus H=\{0,5,8,10,12,13\}$, and subtracting these numbers from~$14$ we get that the leading and positive hooks of the conjugate are $\{14,9,6,4,2,1\}$ as desired.
\end{example}

\begin{theorem}
\label{thm.slconj}
The skew length of $\P$ is equal to the skew length of $\P^c$.
\end{theorem}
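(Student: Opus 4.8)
The plan is to use the skew-inversion description of skew length (Theorem~\ref{thm:skewLength-skewInversions}) together with an explicit formula for the north and east levels of the conjugate path. Write $\mu$ for the largest level attained by a lattice point of $\P$, where the level of $(x,y)$ equals $by-ax$. The first step is to prove
\[
\N(\P^c)=\{\,\mu-b-n:n\in\N(\P)\,\}
\qquad\text{and}\qquad
\E(\P^c)=\{\,\mu+a-e:e\in\E(\P)\,\}.
\]
Granting this, for $n\in\N(\P)$ and $e\in\E(\P)$ the associated north and east levels $\mu-b-n$ and $\mu+a-e$ of $\P^c$ satisfy $\mu-b-n>\mu+a-e$ if and only if $n+b<e-a$; that is, a pair indexes a skew inversion of $\P^c$ precisely when it indexes a \emph{flip} skew inversion of $\P$. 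Hence the skew inversions of $\P^c$ are in bijection with the flip skew inversions of $\P$, and applying Theorem~\ref{thm:skewLength-skewInversions} to $\P^c$ and to $\P$ yields
\[
\sl(\P^c)=\#\{\text{skew inversions of }\P^c\}=\#\{\text{flip skew inversions of }\P\}=\sl(\P).
\]

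To establish the level formulas I would unwind the definition of conjugation, which builds $\P^c$ by first cyclically shifting $\P$ to the unique representative $\P'$ lying below the diagonal and then rotating $180^\circ$. A cyclic shift that brings the lattice point $p_k$ of $\P$ to the origin subtracts the level of $p_k$ from every level; since the levels along $\P$ are nonnegative, making them all nonpositive (weakly below the diagonal) forces $p_k$ to have level $\mu$ (and $p_k$ is unique because $\gcd(a,b)=1$ makes the levels distinct modulo $a+b$), so $\P'$ has level set $\{\ell-\mu:\ell\text{ a level of }\P\}$ while its cyclic sequence of step directions is that of $\P$. The rotation $q\mapsto(b,a)-q$ about $(b/2,a/2)$ keeps each step's direction but reverses the order of traversal, and because the level is an affine function vanishing at the corner $(b,a)$, the start point of a step of $\P^c$ is the image of the \emph{end} point of the corresponding step of $\P'$ and has level equal to minus its level in $\P'$. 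Thus an $N$ step of $\P'$ coming from an $N$ step of $\P$ with start level $n\in\N(\P)$ has in $\P'$ start level $n-\mu$ and end level $n-\mu+b$, so the matching $N$ step of $\P^c$ has start level $\mu-b-n$; the analogous computation for $E$ steps gives start level $\mu+a-e$. This proves the two displayed identities.

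The point requiring the most care is this last bookkeeping: the $180^\circ$ rotation fixes step directions yet reverses the direction of traversal, so ``start'' and ``end'' of each step are swapped, while the cyclic-shift stage contributes the global shift by $-\mu$; keeping both of these straight is where sign errors would creep in. Everything else is a one-line computation. A reassuring check is the running example, where $\N(\P)=\{19,16,12,8,0\}$, $\E(\P)=\{27,24,22,20,17,15,10,5\}$ and $\mu=27$ give $\N(\P^c)=\{19,11,7,3,0\}$ and $\E(\P^c)=\{27,22,17,15,12,10,8,5\}$: these are exactly the north and east levels of $\P^c=NENNNEEEENEEE$, and $\P^c$ has $10$ skew inversions, in agreement with $\sl(\P)=10$ and with the count of flip skew inversions in Example~\ref{ex:skew_inversions}. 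A shorter but less self-contained alternative would route through cores: since $\sl(\P)=\sl(\core(\P))$ and $\core(\P^c)=\core(\P)^c$ by Theorem~\ref{thm:conjugation}, it suffices to show that skew length is invariant under conjugation of an $(a,b)$-core, which combined with Corollary~\ref{cor.absl} becomes a statement about a single core and its conjugate; I would present the level computation above as the main argument.
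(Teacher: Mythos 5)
Your proof is correct and follows essentially the same route as the paper's: both use Theorem~\ref{thm:skewLength-skewInversions} together with the level formulas $n'=m-n-b$, $e'=m+a-e$ for the conjugate path to match skew inversions of one path with flip skew inversions of the other. The only difference is that you carefully derive those level formulas from the cyclic-shift-plus-rotation definition of conjugation (a useful verification the paper omits), whereas the paper simply asserts them.
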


\begin{proof}
Let $n_i>e_j$ be a skew inversion for the path $\P$, with largest level $m$. The north and east steps of the conjugate path are in correspondence with the north and east steps in the original path, respectively. The corresponding north and east levels are given by $n_i'=m-n_i-b$ and $e_j'=m-e_j+a$. A simple calculation shows that these satisfy $n_i'+b<e_j'-a$, giving a flip skew inversion for $\P^c$. Thus, there is a one-to-one correspondence between skew inversions for $\P$ and flip skew inversions in $\P^c$ (and a similar correspondence between flip skew inversions for $\P$ and skew inversions in $\P^c$). The result follows directly from Theorem~\ref{thm:skewLength-skewInversions}. 
\end{proof}

\begin{remark}\label{rem:conj_flip}
As explained in the proof of Theorem~\ref{thm.slconj} the number of skew inversions of $\P^c$ is equal to the number of flip skew inversions of $\P$.  Therefore, the skew length of a conjugate path may be thought of as the skew length of the original path when flipped to a $(b,a)$-Dyck path. 
\end{remark}

Consider the hook lengths of the boxes in the first row of an $(a,b)$-core partition $\kappa$.  Find the largest hook length of each residue modulo $a$.  The \emph{$a$-columns} of $\kappa$ are the columns of $\kappa$ corresponding to these hook lengths. Theorem~\ref{thm.slconj} implies the following result, which is illustrated in Figure~\ref{fig:conjugate_cores_skewlenght}.

\begin{corollary}\label{cor:slconj_cores}
Let $\kappa$ be an $(a,b)$-core partition. The number of boxes in the $a$-rows and $b$-boundary of $\kappa$ is equal to the number of boxes in the $a$-columns and $b$-boundary of $\kappa$.
\end{corollary}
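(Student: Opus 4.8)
The plan is to derive this directly from Theorem~\ref{thm.slconj} together with Definition~\ref{def:skewlen} and the elementary fact (already used in Section~\ref{sec:conjugate}) that conjugation of Young diagrams preserves hook lengths. By Definition~\ref{def:skewlen}, the left-hand side of the claimed equality is exactly $\sl(\kappa)$, so it suffices to show that the right-hand side equals $\sl(\kappa^c)$; then Theorem~\ref{thm.slconj}, which gives $\sl(\kappa)=\sl(\kappa^c)$, closes the argument.

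To identify the right-hand side with $\sl(\kappa^c)$, I would track how conjugation interacts with the two ingredients defining skew length: the $a$-rows and the $b$-boundary. Conjugation is the reflection across the main diagonal; it is an involution on the set of boxes, it carries the $i$-th row of $\kappa$ to the $i$-th column of $\kappa^c$ and vice versa, and it preserves the hook length of every box. Hence the boxes of $\kappa^c$ with hook length less than $b$ — the $b$-boundary of $\kappa^c$ — are precisely the images under conjugation of the boxes of $\kappa$ with hook length less than $b$, i.e.\ of the $b$-boundary of $\kappa$.

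For the $a$-rows, note that the multiset of hook lengths in the first column of $\kappa^c$ equals the multiset of hook lengths in the first row of $\kappa$ (conjugation sends the first row of $\kappa$ to the first column of $\kappa^c$ and preserves hooks). The $a$-rows of $\kappa^c$ are, by definition, the rows of $\kappa^c$ whose leading hooks realize the largest hook of each residue class modulo $a$ among these first-column hooks; transporting back through conjugation, these are exactly the columns of $\kappa$ whose top hooks realize the largest hook of each residue class modulo $a$ in the first row of $\kappa$ — that is, the $a$-columns of $\kappa$ in the sense defined just before the Corollary. Combining this with the previous paragraph, a box of $\kappa^c$ lies in its $a$-rows and $b$-boundary if and only if its conjugate box lies in the $a$-columns and $b$-boundary of $\kappa$, so these two sets of boxes are in bijection and have the same cardinality.

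Putting the pieces together: the number of boxes in the $a$-columns and $b$-boundary of $\kappa$ equals $\sl(\kappa^c)$, which equals $\sl(\kappa)$ by Theorem~\ref{thm.slconj}, which in turn is the number of boxes in the $a$-rows and $b$-boundary of $\kappa$ by Definition~\ref{def:skewlen}. I do not expect a genuine obstacle; the only point warranting a careful sentence is the identification of the $a$-rows of $\kappa^c$ with the $a$-columns of $\kappa$, and this rests solely on hook-length invariance under conjugation and on the two ``largest hook per residue mod $a$'' selections being applied to the same multiset of numbers, namely the first-row hook lengths of $\kappa$.
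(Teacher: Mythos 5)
Your proof is correct and follows essentially the same route as the paper's: identify the left-hand side with $\sl(\kappa)$, the right-hand side with $\sl(\kappa^c)$, and invoke the invariance of skew length under conjugation. The only small elision is that Theorem~\ref{thm.slconj} is stated for paths, so transferring it to cores formally requires Theorem~\ref{thm:conjugation} (Anderson's bijection intertwines the two conjugations), which the paper cites explicitly; your more detailed justification that the $a$-rows of $\kappa^c$ correspond to the $a$-columns of $\kappa$ under the hook-preserving reflection is a welcome addition that the paper leaves implicit.
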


\begin{proof}
The number of boxes in the $a$-rows and $b$-boundary of $\kappa$ is equal to the skew length of $\kappa$. The number of boxes in the $a$-columns and $b$-boundary of $\kappa$ is equal to the skew length of $\kappa^c$. The result then follows from Theorem~\ref{thm:conjugation} and Theorem~\ref{thm.slconj} by applying Anderson's bijection.
\end{proof}

\begin{figure}
  \begin{center}
  \includegraphics[scale=0.5]{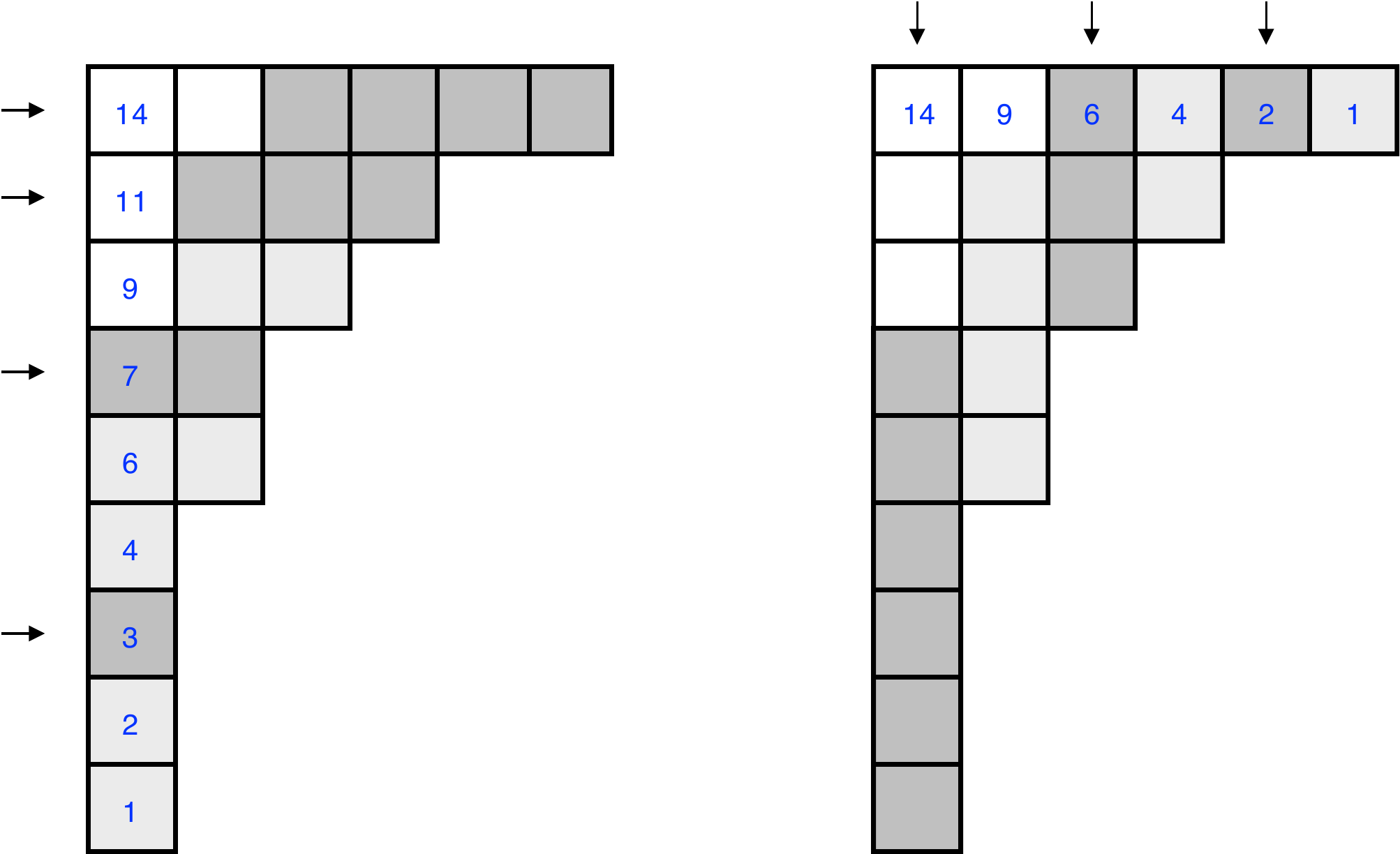}
  \end{center}
  \caption{(Left) The $8$-boundary boxes of our favorite $(5,8)$-core $\kappa$ are shaded; those in the $5$-rows of $\kappa$ are darker. 
  (Right) The $8$-boundary boxes of $\kappa$ are shaded; those in the $5$-columns of $\kappa$ are darker.
  The number of darkly shaded boxes on the left $4+3+2+1=10$ is equal to the number of darkly shaded boxes on the right~$6+3+1=10$. (See Corollary~\ref{cor:slconj_cores}.)}
  \label{fig:conjugate_cores_skewlenght}
\end{figure}

\section{The zeta map (and eta)}
\label{sec:zeta_map}

The zeta map is an intriguing map from $\DD_{a,b}$ to $\DD_{a,b}$ which can be defined in a wide variety of ways. See, for example, \cite{AHJ14,ALW-sweep,ALW-parking,GMII}, with equivalence of many definitions given in~\cite{ALW-sweep}.  The precise description of zeta depends on making some choices; in our experience, these choices always resolve into one of two distinct maps, which we call zeta and eta.  The eta map  can be interpreted as the zeta map applied to the conjugate of $\P$, as reproved in Proposition~\ref{prop:eta_zeta_conjugate} by appealing to skew inversions. The joint dynamics of zeta and eta will be used to present a combinatorial description of the inverse of zeta in Section~\ref{sec:inverse}.

In this section we present four combinatorial descriptions for computing the zeta and eta maps, starting with an interpretation involving core partitions implicit in \cite{AHJ14}, followed by with an equivalent description via the sweep maps considered in~\cite{ALW-sweep}.  Our main contributions are two new combinatorial descriptions of the zeta map involving interval intersections and a laser filling, along with the study of the eta map in all four contexts.  

\subsection{Zeta and eta via cores}\

Drew Armstrong conjectured a combinatorial interpretation for the zeta map by way of core partitions, drawing inspiration from Lapointe and Morse's bounded partitions \cite{LM}, after learning of Loehr and Warrington's sweep map discussed in the next section.  We present his definition and provide a parallel definition for the eta map.

\begin{definition}\label{def:zeta_eta}
Let $\P$ be an  $(a,b)$-Dyck path and let $\core(\P)$ be its corresponding $(a,b)$-core. From~$\P$ define two partitions $\lam(\P)$ and $\mu(\P)$ and corresponding lattice paths $\zeta(\P)$ and $\eta(\P)$:
\begin{itemize}
\item $\lambda(\P)=(\lambda_1, \dots , \lambda_a)$ is the partition that has parts equal to the number of $b$-boundary boxes in the $a$-rows of $\core(\P)$. 
\item $\mu(\P)=(\mu_1,\dots,\mu_b)$ is the partition that has parts equal to the number of  $a$-boundary boxes in the $b$-rows of $\core(\P)$.
\item $\zeta(\P)$ is the $(a,b)$-Dyck path that bounds the partition $\lambda(\P)$. 
\item $\eta(\P)$ is the $(a,b)$-Dyck path that bounds the conjugate of the partition $\mu(\P)$.  
\end{itemize}
The \emph{zeta map} $\zeta:\DD_{a,b}\row\DD_{a,b}$ is defined by $\zeta:\P\mapsto\zeta(\P)$.  The \emph{eta map} $\eta:\DD_{a,b}\row\DD_{a,b}$ is defined by $\eta:\P\mapsto\eta(\P)$.   
\end{definition}

One can see from the definition of zeta and eta, via the sweep map described below, that $\zeta(\P)$ and $\eta(\P)$ are indeed paths that stay above the main diagonal. We refer to~\cite{ALW-sweep} for a proof.

An alternative method for calculating $\lambda(\P)$ and $\mu(\P)$ follows from Lemmas~\ref{lem:skew_inversions_length_dif} and~\ref{lem:skew_inversions_length_dif_b}.

\begin{lemma}\label{lem:lambda_mu_inversions}
The entries of the partitions $\lambda(\P)$ and $\mu(\P)$ satisfy:
\begin{enumerate}[(i)]
\item $\lambda_i$ is the number of skew inversions of $\P$ involving the north level $n_i$.
\item $\mu_j$ is the number of flip skew inversions of $\P$ involving the east level $e_j$.
\end{enumerate}
\end{lemma}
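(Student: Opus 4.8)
The plan is to derive this statement directly from the length-filling interpretations established earlier, rather than redeveloping anything from scratch. Recall from Definition~\ref{def:zeta_eta} that $\lambda(\P)$ records, part by part, the number of $b$-boundary boxes in the $a$-rows of $\core(\P)$, and that $\mu(\P)$ records the number of $a$-boundary boxes in the $b$-rows of $\core(\P)$. So the content of the lemma is simply that these row-by-row counts agree with the inversion counts attached to the individual north and east levels. The connection is already fully worked out in Lemmas~\ref{lem:skew_inversions_length_dif} and~\ref{lem:skew_inversions_length_dif_b}; the only real task is to match the indexing conventions.

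For part (i): the $a$-rows of $\core(\P)$ are exactly the rows with leading hook $h$ a westmost box under $\P$, by the observation used in the proof of Theorem~\ref{thm.skewLengthCompute} (westmost boxes under $\P$ correspond to $a$-rows, northmost boxes to $b$-rows). Lemma~\ref{lem:skew_inversions_length_dif} states that for such an $h$, the quantity $\len(h)-\len(h-b)$ simultaneously equals (a) the number of $b$-boundary boxes in the $a$-row corresponding to $h$, and (b) the number of skew inversions involving the associated north level $n_h$. By Remark~\ref{rem:hook-north-east}, the correspondence $h \mapsto n_h = h+a$ is a bijection between westmost boxes under $\P$ and the north levels $\N(\P) = \{n_1,\dots,n_a\}$. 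Hence, reading off the parts of $\lambda(\P)$ indexed by $i$ amounts to evaluating this common count at $n_i$: $\lambda_i$ is the number of skew inversions of $\P$ involving $n_i$. (One should note the ordering: the north levels are ordered decreasingly as in Section~\ref{sec.notation}, and larger $n_i$ yields more skew inversions, so the resulting sequence is genuinely a partition, consistent with $\lambda(\P)$ being defined as a partition.) Part (ii) is the mirror argument: the $b$-rows of $\core(\P)$ correspond to northmost boxes under $\P$, Lemma~\ref{lem:skew_inversions_length_dif_b} identifies $\len(h)-\len(h-a)$ with both the number of $a$-boundary boxes in the $b$-row corresponding to $h$ and the number of flip skew inversions involving the associated east level $e_h = h+a+b$, and Remark~\ref{rem:hook-north-east} makes $h \mapsto e_h$ a bijection with $\E(\P) = \{e_1,\dots,e_b\}$.

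I would write this up as a short paragraph invoking the two lemmas and the remark, being careful about one point that is the closest thing to an obstacle here: verifying that the partition $\lambda(\P)$ from Definition~\ref{def:zeta_eta} and the sequence $(\lambda_1,\dots,\lambda_a)$ indexed by north levels are the same object with the same indexing, i.e.\ that sorting the $a$-rows of $\core(\P)$ by leading hook length corresponds to sorting the north levels decreasingly, and likewise for $\mu$. Since $n_h = h + a$ is order-preserving in $h$, and similarly $e_h = h + a + b$, this is immediate, but it deserves an explicit sentence so the reader sees why $\lambda_i$ (resp.\ $\mu_j$) is well-defined by the stated rule. No new combinatorics is needed; the lemma is a repackaging of results already in hand, and the writeup should be only a few lines.
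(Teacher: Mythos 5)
Your proposal is correct and takes essentially the same route as the paper, which gives no explicit proof and simply states that the lemma ``follows from'' Lemmas~\ref{lem:skew_inversions_length_dif} and~\ref{lem:skew_inversions_length_dif_b}; your writeup fills in exactly the intended details (the westmost/northmost box correspondence and the order-preserving bijections $h\mapsto n_h$, $h\mapsto e_h$ from Remark~\ref{rem:hook-north-east}).
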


In the $(n,n+1)$ case, the zeta map specializes to the map studied in~\cite{haglund2008q} for classical Dyck paths, which sends the $\dinv$ and $\area$ statistics considered by Haiman to the $\area$ and $\bounce$ statistics considered by Haglund. One of the main interests on the zeta map is the fact that it sends skew length to co-area, or equivalently, co-skew length to area. 

\begin{corollary}
The skew length of $\P$ is equal to the co-area of $\zeta(\P)$.
\end{corollary}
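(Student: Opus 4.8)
The plan is to unwind both sides of the claimed identity through the definitions already established and reduce it to a statement about the two partitions $\lambda(\P)$ and $\mu(\P)$. By Definition~\ref{def:zeta_eta}, $\zeta(\P)$ is the $(a,b)$-Dyck path bounding the partition $\lambda(\P)$, so the partition bounded by $\zeta(\P)$ is exactly $\lambda(\P)$. The co-area of an $(a,b)$-Dyck path $\Q$ is, by definition, $\coarea(\Q) = \frac{(a-1)(b-1)}{2} - \area(\Q)$, and $\area(\Q)$ counts the positive hooks of $\Q$, equivalently the number of boxes above the main diagonal that lie below $\Q$. So $\coarea(\zeta(\P))$ equals the number of boxes above the main diagonal that lie \emph{strictly above} $\zeta(\P)$ — that is, the number of boxes above the diagonal inside the Young diagram of $\lambda(\P)$.

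First I would compute $|\lambda(\P)|$, the total number of boxes in the Young diagram of $\lambda(\P)$. By Definition~\ref{def:zeta_eta}, $\lambda_i$ is the number of $b$-boundary boxes in the $a$-rows of $\core(\P)$, so $|\lambda(\P)| = \sl(\P)$ by Definition~\ref{def:skewlen} (together with Theorem~\ref{thm:skewLength-skewInversions}, which lets us freely pass between the core and the Dyck-path formulation). Next I would count how many of the boxes of $\lambda(\P)$ lie \emph{on or below} the main diagonal of the $a \times b$ rectangle; subtracting this from $\sl(\P)$ — and the total count of diagonal-and-below boxes in the rectangle being $\binom{a+b}{2}$-type bookkeeping, more precisely the number of boxes weakly below the diagonal is $ab - \frac{(a-1)(b-1)}{2}$ — I can solve for the number of boxes of $\lambda(\P)$ strictly above the diagonal. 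The key lemma here should be that the partition $\lambda(\P)$, being read off as the sequence of skew-inversion counts $\lambda_i = \#\{j : n_i > e_j\}$ (Lemma~\ref{lem:lambda_mu_inversions}), has $\lambda_i$ bounded in a way that forces each row of its Young diagram to reach exactly up to the diagonal in the appropriate place; said differently, one shows that $\zeta(\P)$ is genuinely an $(a,b)$-Dyck path (which the excerpt grants us, citing~\cite{ALW-sweep}) so every box of $\lambda(\P)$ weakly above the diagonal is counted and the boxes below the diagonal are never part of the diagram.

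The cleaner route, which I would actually carry out: since $\zeta(\P) \in \DD_{a,b}$, every box of the Young diagram bounded by $\zeta(\P)$ lies weakly above the main diagonal, so \emph{all} $\sl(\P)$ boxes of $\lambda(\P)$ are above-diagonal boxes not below $\zeta(\P)$; hence the number of above-diagonal boxes below $\zeta(\P)$ is $\frac{(a-1)(b-1)}{2} - \sl(\P)$, which is $\area(\zeta(\P))$, and therefore $\coarea(\zeta(\P)) = \sl(\P)$. The main obstacle is purely that I must be certain the definition of $\coarea$ used in the paper is $\frac{(a-1)(b-1)}{2} - \area$ and that $\frac{(a-1)(b-1)}{2}$ is precisely the number of lattice boxes strictly above the diagonal in the $a\times b$ grid (this is where $\gcd(a,b)=1$ enters, so no box is cut by the diagonal); modulo that normalization check, the corollary is immediate from Definition~\ref{def:zeta_eta} and the fact (quoted from~\cite{ALW-sweep}) that $\zeta(\P)$ stays above the diagonal.
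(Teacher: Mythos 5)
Your proposal is correct and follows essentially the same route as the paper: both reduce the claim to the identity $|\lambda(\P)|=\sl(\P)$, which follows from Definition~\ref{def:zeta_eta} together with Lemma~\ref{lem:lambda_mu_inversions} and Theorem~\ref{thm:skewLength-skewInversions}. The only difference is that the paper takes ``co-area of $\zeta(\P)$ equals the number of boxes in $\lambda(\P)$'' as the definition, whereas you derive it from the normalization $\coarea=\tfrac{(a-1)(b-1)}{2}-\area$ using the fact that $\zeta(\P)$ stays above the diagonal; both conventions agree, so your extra check is harmless.
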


\begin{proof}
The co-area of $\zeta(\P)$ is by definition equal to the number of boxes in the partition $\lambda$. By Lemma~\ref{lem:lambda_mu_inversions}, this number of boxes counts the number of skew inversions of $\P$, and thus is equal to the skew length of $\P$.  
\end{proof}

\begin{remark}\label{rem:dinv}
The $\dinv$ statistic for classical Dyck paths can be generalized to the rational Catalan case as the number of boxes $B$ above the path satisfying 
\[
\frac{\text{arm}(B)}{\text{leg}(B)+1} \leq \frac{b}{a} < \frac{\text{arm}(B)+1}{\text{leg}(B)},
\]
where arm denotes the number of boxes directly on the right of $B$ above the path, and leg denotes the number of boxes directly below $B$ above the path. This intriguing statistic also satisfies~\mbox{$\dinv(P)=\area(\zeta(P))$}, see~\cite[Theorem~16]{loehr_continuous_2009} and~\cite{GMI}. As a consequence the co-skew length and dinv statistics are the same,
\begin{equation}
\sl'(P) = \dinv(P).
\end{equation}
Note that the definition of $\dinv$ is preserved by flipping an $(a,b)$-Dyck path to a $(b,a)$-Dyck path, and therefore skew length is preserved by flipping (as alternatively proved in Corollary~\ref{cor.absl}).
By Remark~\ref{rem:conj_flip}, the skew length of the conjugate of $\P$ is equal to the skew length of $\P$ when flipped to a $(b,a)$-Dyck path. This provides an alternative proof that skew length is preserved under conjugation (Theorem~\ref{thm.slconj}). 
\end{remark}

The work of Gorsky and Mazin~\cite[Theorem 8]{GMII} and of Armstrong, Loehr, and Warrington~\cite[Table~1]{ALW-sweep} include the following proposition; we present a new proof involving skew inversions.

\begin{proposition}[\cite{GMII,ALW-sweep}]\label{prop:eta_zeta_conjugate} Let $\P$ be an $(a,b)$-Dyck path.  Then
\[
\eta(\P) = \zeta(\P^c).
\]
\end{proposition}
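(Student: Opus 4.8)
The plan is to reduce the statement to an identity between the partitions that the two paths bound. An $(a,b)$-Dyck path is recovered from the partition of boxes lying above it, so it suffices to show that $\eta(\P)$ and $\zeta(\P^c)$ bound the same partition. By Definition~\ref{def:zeta_eta} these partitions are $\mu(\P)^c$ and $\lambda(\P^c)$ respectively, so the goal becomes
\[
\mu(\P)^c = \lambda(\P^c).
\]

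First I would encode flip skew inversions in a matrix. Writing $\N(\P)=\{n_1>\cdots>n_a\}$ and $\E(\P)=\{e_1>\cdots>e_b\}$, let $A$ be the $a\times b$ zero-one matrix with $A_{ij}=1$ exactly when $(i,j)$ is a flip skew inversion of $\P$, i.e.\ $n_i+b<e_j-a$. Because both sequences are strictly decreasing, this condition is monotone, in the sense that $A_{ij}=1$ forces $A_{i'j'}=1$ whenever $i'\ge i$ and $j'\le j$. Hence, after reversing the order of the rows (sending row $i$ to row $a+1-i$), the support of $A$ becomes the Young diagram of a partition $\rho=(\rho_1\ge\cdots\ge\rho_a)$, where $\rho_k$ is the number of flip skew inversions of $\P$ involving $n_{a+1-k}$; its conjugate $\rho^c$ has parts $\rho^c_j$ equal to the $j$-th column sum of $A$, namely the number of flip skew inversions of $\P$ involving $e_j$. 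By Lemma~\ref{lem:lambda_mu_inversions}(ii) this column-sum vector is precisely $\mu(\P)$, so $\mu(\P)=\rho^c$; it therefore remains to identify $\lambda(\P^c)$ with $\rho$.

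For this I would invoke the conjugation correspondence recorded in the proof of Theorem~\ref{thm.slconj}: the north and east steps of $\P^c$ are in bijection with those of $\P$, the corresponding levels being $n_i'=m-n_i-b$ and $e_j'=m-e_j+a$ for the appropriate constant $m$. A one-line computation gives $n_i'>e_j'$ if and only if $n_i+b<e_j-a$, so a pair of steps is a skew inversion of $\P^c$ exactly when the corresponding pair of steps is a flip skew inversion of $\P$. Since $n_1>\cdots>n_a$ forces $n_1'<\cdots<n_a'$ (and likewise $e_1'<\cdots<e_b'$), the $k$-th largest north level of $\P^c$ is $n_{a+1-k}'$; tracing the inequalities through, the number of skew inversions of $\P^c$ involving its $k$-th largest north level equals the number of flip skew inversions of $\P$ involving $n_{a+1-k}$, which is $\rho_k$. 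By Lemma~\ref{lem:lambda_mu_inversions}(i) the left-hand count is $\lambda(\P^c)_k$, so $\lambda(\P^c)=\rho=\mu(\P)^c$, which proves the proposition. (Via Theorem~\ref{thm:conjugation} one could alternatively phrase everything on the core $\core(\P)$ using $a$-columns, but the skew-inversion route keeps the bookkeeping cleaner.)

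The main obstacle I anticipate is exactly that bookkeeping: three reversals are in play — conjugating the north levels, conjugating the east levels, and passing from $A$ to a genuine Young diagram — and one must verify that they combine consistently and that the parts of $\lambda$ and $\mu$ in Definition~\ref{def:zeta_eta} are indexed in the sorted order used above. One also needs the elementary but easily overlooked monotonicity of the flip-skew-inversion condition that makes $A$ (after row reversal) a Young diagram, which is what turns the equality of row and column sums into conjugacy of partitions. The genuinely geometric content — that flip skew inversions of $\P$ are in bijection with skew inversions of $\P^c$ — is already in hand from the proof of Theorem~\ref{thm.slconj}; the work here is only to upgrade that equinumerosity to an equality of the full partitions rather than just of their sizes.
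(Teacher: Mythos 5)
Your proposal is correct and follows the same route as the paper's proof: the paper likewise invokes the bijection between skew inversions of $\P^c$ and flip skew inversions of $\P$ from the proof of Theorem~\ref{thm.slconj} and then applies Lemma~\ref{lem:lambda_mu_inversions} to conclude $\lambda(\P^c)=\mu(\P)^c$. The only difference is that you spell out the bookkeeping (the monotonicity of the flip-skew-inversion condition and the index reversals) that the paper compresses into ``one deduces.''
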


\begin{proof}
There is a one-to-one correspondence between the skew inversions of $\P^c$ and the flip skew inversions of $\P$, as shown in the proof of Theorem~\ref{thm.slconj}.  Through Lemma~\ref{lem:lambda_mu_inversions}, one deduces that~$\lambda(\P^c)$ is the conjugate of $\mu(\P)$. As a consequence, $\zeta(\P^c)=\eta(\P)$.
%
\end{proof}

\begin{remark}
In~\cite{GMII}, conjugation is considered in terms of normalized dual semimodules. The zeta and eta maps correspond to the maps $G_{m,n}$ and $G_{n,m}$ in~~\cite[Section~2.3]{GMII}. 
\end{remark}

Denote by $\P^\flip$ the result of flipping an $(a,b)$-Dyck path $\P$ to a $(b,a)$-Dyck path. The example corresponding to the path $\P$ in Figure~\ref{fig:zetaEta_cores} and the following result are illustrated in Figure~\ref{fig:zetaEta_flip}. This result can also be essentially found in~\cite[Table~1]{ALW-sweep}.

\begin{proposition}[\cite{ALW-sweep}]
Let $\P$ be an $(a,b)$-Dyck path. Then,
\begin{align*}
 \zeta(\P^\flip) &= \eta(\P)^\flip, \\
 \eta(\P^\flip) &= \zeta(\P)^\flip. 
\end{align*}
\end{proposition}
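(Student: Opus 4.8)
The plan is to show that the two sides of each identity bound the same partition. I would first record two facts about the flip. Writing $\P$ as a word $s_1\cdots s_{a+b}$, the path $\P^\flip$ is obtained by reversing the word and swapping $N\leftrightarrow E$; in particular $(\P^\flip)^\flip=\P$, and the lattice points of $\P^\flip$ are exactly the points $(a-y,b-x)$ for $(x,y)$ a lattice point of $\P$. The map $(x,y)\mapsto(a-y,b-x)$ is an isometry carrying the $a\times b$ rectangle onto the $b\times a$ rectangle and the northwest corner of the one onto the northwest corner of the other, so it transposes a Young diagram anchored there; hence for every Dyck path $R$, the partition bounded by $R^\flip$ is the conjugate of the partition bounded by $R$. (This is visible in the running example: $\P$ bounds $(4,1)$ while $\P^\flip=NNNNENNNENEEE$ bounds $(2,1,1,1)=(4,1)^c$.) Also, a short computation with levels gives $\N(\P^\flip)=\{e-a:e\in\E(\P)\}$ and $\E(\P^\flip)=\{n+b:n\in\N(\P)\}$; equivalently, the skew inversions of $\P^\flip$ are in bijection with the flip skew inversions of $\P$ and conversely, as in the remark following Example~\ref{ex:skew_inversions}.

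First I would prove $\zeta(\P^\flip)=\eta(\P)^\flip$. Since $\P^\flip\in\DD_{b,a}$, the path $\zeta(\P^\flip)$ is the $(b,a)$-Dyck path bounding $\lambda(\P^\flip)$, and by Lemma~\ref{lem:lambda_mu_inversions} its $i$-th part counts the skew inversions of $\P^\flip$ that involve its $i$-th largest north level, which equals $e_i-a$ when $e_1>e_2>\cdots$ are the east levels of $\P$. Because the east levels of $\P^\flip$ are $\{n+b:n\in\N(\P)\}$, such a skew inversion is a pair with $n_k+b<e_i-a$, that is, a flip skew inversion of $\P$ involving $e_i$; there are $\mu(\P)_i$ of those by Lemma~\ref{lem:lambda_mu_inversions}. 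As both partitions are weakly decreasing in $i$, this gives $\lambda(\P^\flip)=\mu(\P)$. On the other hand $\eta(\P)$ bounds $\mu(\P)^c$ by definition, so the first fact shows $\eta(\P)^\flip$ bounds $(\mu(\P)^c)^c=\mu(\P)$. Hence $\zeta(\P^\flip)$ and $\eta(\P)^\flip$ bound the same partition and are therefore equal.

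The identity $\eta(\P^\flip)=\zeta(\P)^\flip$ then follows formally: applying the identity just proved (with the roles of $a$ and $b$ swapped) to $\P^\flip$ in place of $\P$ gives $\zeta(\P)=\zeta((\P^\flip)^\flip)=\eta(\P^\flip)^\flip$, and applying $\flip$ to both sides finishes it. (One could instead repeat the second paragraph with $\lambda$ and $\mu$, north and east levels, and skew and flip skew inversions interchanged.) The one step needing a little care is the first fact, that the flip conjugates the bounded partition; granting it, everything else is bookkeeping with Lemma~\ref{lem:lambda_mu_inversions} and the level identities, and I do not anticipate a real obstacle.
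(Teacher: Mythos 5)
Your argument is correct and follows essentially the same route as the paper's proof: both identify the skew inversions of $\P^\flip$ with the flip skew inversions of $\P$ via the level correspondence $\N(\P^\flip)=\E(\P)-a$, $\E(\P^\flip)=\N(\P)+b$, conclude $\lambda(\P^\flip)=\mu(\P)$ from Lemma~\ref{lem:lambda_mu_inversions}, and then compare bounded partitions. The only cosmetic differences are that you make explicit the (correct) fact that flipping conjugates the bounded partition, and you deduce the second identity formally from the first rather than by a symmetric argument.
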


\begin{proof}
The skew inversions of $\P^\flip$ are in correspondence with the flip skew inversions of $\P$, and therefore $\lambda(\P^\flip)=\mu(P)$. As a consequence, $\zeta(\P^\flip) = \eta(\P)^\flip$. A similar argument shows that~$\mu(\P^\flip)=\lambda(P)$ and $\eta(\P^\flip) = \zeta(\P)^\flip$.
\end{proof}

\begin{figure}
  \begin{center}
  \includegraphics[scale=0.55]{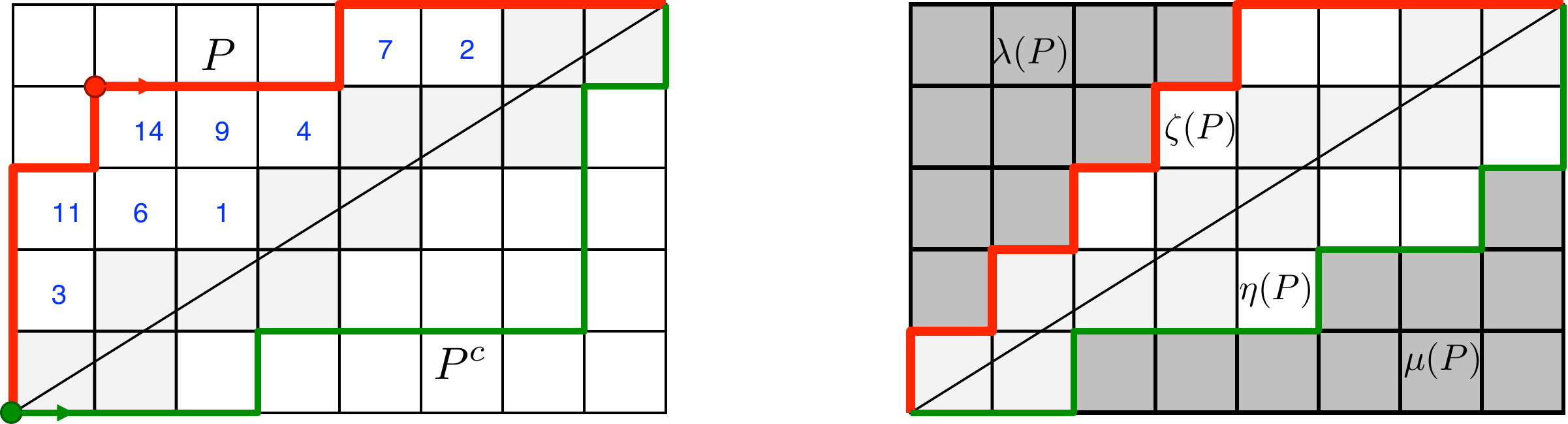}
  \end{center}
  \caption{In our running example, $\zeta(\P)$ bounds the partition $\lambda(\P)=(4,3,2,1,0)$ and $\eta(\P)$ bounds the conjugate of the partition $\mu(\P)=(3,2,2,1,1,1,0,0)$. }
  \label{fig:zetaEta_cores}
\end{figure}

\begin{example}
\label{ex:zetaEta_cores}
Figure~\ref{fig:zetaEta_cores} illustrates an example of the zeta map and the eta map applied to our running example path $\P$. From Example~\ref{ex:skew_inversions}, the $8$-boundary boxes in the $5$-rows of $\core(\P)$ give $\lambda(\P)=(4,3,2,1,0)$ and the $5$-boundary boxes in the $8$-rows of the core $\core(\P)$ give $\mu(\P)=(3,2,2,1,1,1,0,0)$.  Then $\zeta(\P)$ is the path that bounds $\lam(\P)$ and $\eta(\P)$ is the path that bounds the conjugate partition $\mu(\P)^c=(6,3,1,0,0)$.  We often combine $\lambda(\P)$, $\zeta(\P)$, $\mu(\P)$, and $\eta(\P)$ as on the right hand side of Figure~\ref{fig:zetaEta_intervals}.

The core partition $\core(\P^c)$ corresponding to the conjugate path $\P^c$ is illustrated in the right part of Figure~\ref{fig:conjugate_cores}. The $a$-rows of this core are the rows with leading hooks 14, 6, and 2. Counting the number of $b$-boundary boxes in these rows shows that $\lambda(\P^c)=(6,3,1,0,0)$, which equals $\mu(\P)^c$. We see that $\eta(\P)=\zeta(\P^c)$. 
\end{example}

\begin{figure}
  \begin{center}
  \includegraphics[scale=0.55]{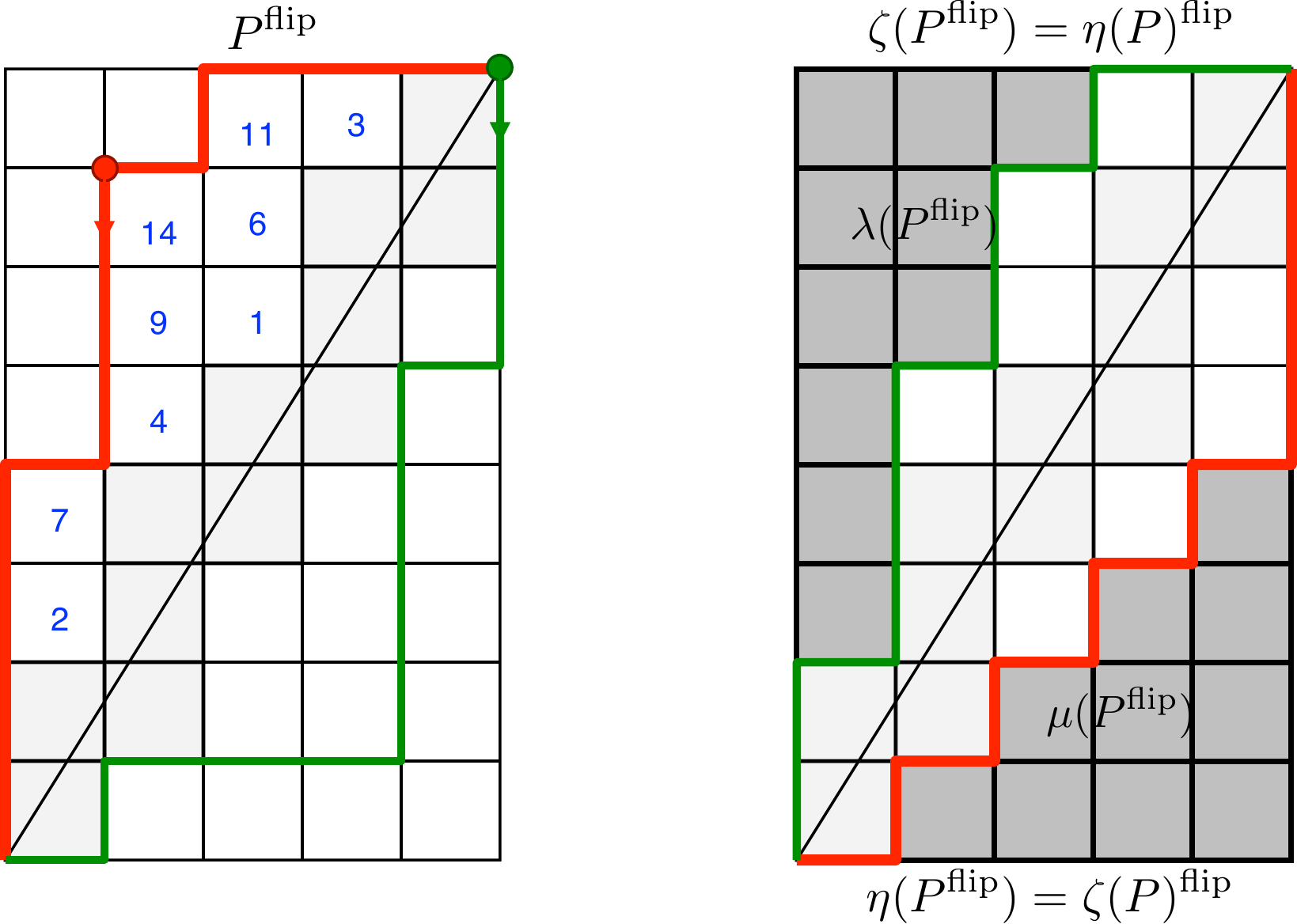}
  \end{center}
  \caption{Zeta and eta applied to the flipped Dyck path of our running example path $P$. }
  \label{fig:zetaEta_flip}
\end{figure}

\subsection{Zeta and eta via sweep maps}\label{sec:zetaEta_sweep}\

\renewcommand*{\thefootnote}{\fnsymbol{footnote}}
This section presents the combinatorial description of the zeta map on rational Dyck paths as a \emph{sweep map} created by Loehr and Warrington in~\cite{ALW-sweep}. 

Heuristically, this map `sweeps' the line of fixed slope $\frac{a}{b}$  across $\P$ starting on the main diagonal moving to the northwest, recording north and east steps in the order in which they are met. 
Analogously, the eta map `sweeps' the line of slope $\frac{a}{b}$ across $\P$ starting at the farthest point from the main diagonal moving to the southeast, recording south and west steps in the order in which they are met\footnote{These definitions exhibit the choice of `east-north' or `west-south' convention in~\cite{ALW-sweep}.}. This procedure is illustrated for our running example in Figure~\ref{fig:zetaEta_sweep}.

\begin{figure}
  \begin{center}
  \includegraphics[scale=0.55]{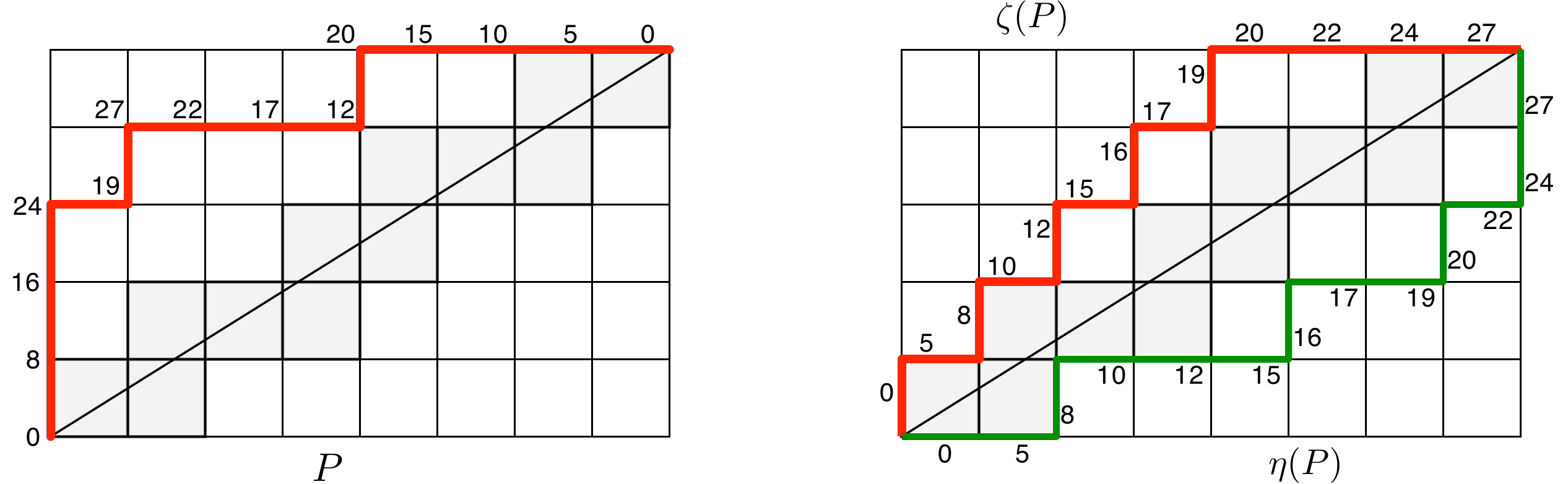}
  \end{center}
  \caption{Zeta and eta via sweep maps. 
  The steps of $\zeta(\P)$ are labeled by the levels of the lattice points of $\P$ in order, recording whether they correspond to north or east levels.
  The steps of $\eta(\P)$ are labeled by the levels of the lattice points of $\P$ in reverse order starting from the upper right corner, recording whether they correspond to south or west levels.}
  \label{fig:zetaEta_sweep}
\end{figure}

Recall that the reading word $L(\P)$ is obtained by reading the levels that occur along the path from southwest to northeast, excluding the final $0$, and the reverse reading word $M(\P)$ is obtained by reading from northeast to southwest, excluding the final $0$. 

\begin{theorem}[{\cite{ALW-sweep}}]\label{thm:zeta_sweep}
The zeta map can be computed as follows:
\begin{enumerate}
\item[$(1)$] Place a bar over each of the entries of $L(\P)$ corresponding to an east step; these occur exactly at the \emph{right (cyclic) descents} of $\sigma$.
\item[$(2)$] Sort $L(\P)$ in increasing order, keeping track of the bars on various values.
\item[$(3)$] Read the resulting sequence of labels (bars and non-bars) to produce a new northeast lattice path, which we denote $\zeta(\P)$.
\end{enumerate}
\end{theorem}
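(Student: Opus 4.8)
The plan is to show that the two operations agree by matching each step of the core-partition description in Definition~\ref{def:zeta_eta} with the sorting operation on $L(\P)$ described in the statement. The crucial bridge is Lemma~\ref{lem:lambda_mu_inversions}(i): $\lambda_i$ equals the number of skew inversions $n_i > e_j$. So the partition $\lambda(\P)$ that $\zeta(\P)$ bounds is completely determined by the relative order of the north levels $\N(\P)$ and east levels $\E(\P)$ inside the combined level set. The strategy is therefore to read off directly, from the sorted sequence of labeled levels, which partition the resulting lattice path bounds, and verify it is $\lambda(\P)$.

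First I would recall that, by Remark~\ref{rem:sigma}, the east steps of $\P$ occur exactly at the right (cyclic) descents of $\sigma(\P)$, which justifies step $(1)$: the barred entries of $L(\P)$ are precisely the east levels $\E(\P)$, and the unbarred entries are the north levels $\N(\P)$. (One must double-check the treatment of the final $0$: since $L(\P)$ omits the terminal $0$ but $0 \in \N(\P)$, one accounts for this boundary by noting the path starts with a north step, equivalently $0$ is the smallest level and is a north level; I would handle this as a small bookkeeping remark.) Next, sorting $L(\P)$ in increasing order (step $(2)$) produces a sequence whose $k$-th smallest entry carries a bar iff the $k$-th smallest level is an east level. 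Reading bars as east steps and non-bars as north steps (step $(3)$) yields a lattice path $Q$ from $(0,0)$ to $(b,a)$, since there are $a$ north levels and $b$ east levels.

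It remains to identify $Q$ with $\zeta(\P)$, i.e.\ to show $Q$ bounds $\lambda(\P)$. Order the north levels decreasingly as $n_1 > \dots > n_a$. In the sorted sequence, the number of barred entries (east levels) that appear \emph{after} (i.e.\ are larger than) $n_i$ but this is not quite what we want; rather, count the east levels \emph{smaller than} $n_i$: this is exactly $\#\{j : n_i > e_j\} = \lambda_i$ by Lemma~\ref{lem:lambda_mu_inversions}. Now in the path $Q$, reading from the bottom, the $i$-th north step (counting from the top, since larger levels are read later) is preceded by exactly those east steps whose level is smaller than $n_i$, so the height of the box to the left of that north step, measured from the diagonal, is governed by precisely $\lambda_i$. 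Translating this count into the statement ``$Q$ is the Dyck path bounding the partition with parts $\lambda_1 \geq \dots \geq \lambda_a$'' is a direct unwinding of the correspondence between a partition $\lambda$ contained in the $a \times b$ staircase and the lattice path bounding it: the $i$-th north step from the top is immediately preceded by $\lambda_i - \lambda_{i+1}$ consecutive east steps (with the convention $\lambda_{a+1}=0$), which is exactly what the sorted-and-labeled sequence produces. Since $\lambda(\P)$ is visibly a partition (the counts $\#\{j : n_i > e_j\}$ are weakly decreasing in $i$ because the $n_i$ are decreasing), $Q = \zeta(\P)$.

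The main obstacle I anticipate is purely notational: carefully pinning down the off-by-one conventions — which end of $L(\P)$ is ``increasing,'' the role of the omitted terminal $0$, and whether ``$i$-th north step'' is counted from the bottom or the top — so that the inequality $n_i > e_j$ lines up with ``barred entry precedes unbarred entry in the sorted word.'' None of this is deep, but it is where an incorrect proof would go wrong, so I would state the partition-to-path dictionary explicitly before invoking it. Everything substantive is already contained in Lemma~\ref{lem:lambda_mu_inversions} and Remark~\ref{rem:sigma}; this theorem is essentially their repackaging, with the sweep-map formulation of \cite{ALW-sweep} cited for the fact that the output genuinely lies in $\DD_{a,b}$ (already noted after Definition~\ref{def:zeta_eta}).
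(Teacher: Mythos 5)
Your proposal is correct and follows essentially the same route as the paper: both arguments identify the barred entries with the east levels, observe that the north step at level $n_i$ in the sorted word is preceded by exactly the east levels smaller than $n_i$, and invoke Lemma~\ref{lem:lambda_mu_inversions}(i) to conclude that the resulting path bounds $\lambda(\P)$ as in Definition~\ref{def:zeta_eta}. The extra bookkeeping you flag (the omitted terminal $0$ and the partition-to-path dictionary) is handled implicitly in the paper's shorter write-up and does not change the argument.
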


\begin{theorem}\label{thm:eta_sweep}
The eta map can be computed as follows:
\begin{enumerate}
\item[$(1')$] Place a bar over each of the entries of $M(\P)$ corresponding to a {west} step; these occur exactly at the \emph{right (cyclic) {ascents}} of $\tau$.
\item[$(2')$] Sort $M(\P)$ in increasing order, keeping track of the bars on various values.
\item[$(3')$] Read the resulting sequence of labels (bars and non-bars) to produce a new southwest lattice path from $(b,a)$ to $(0,0)$, which we denote $\eta(\P)$.
\end{enumerate}
\end{theorem}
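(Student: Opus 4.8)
The plan is to deduce Theorem~\ref{thm:eta_sweep} from Theorem~\ref{thm:zeta_sweep} by way of the identity $\eta(\P)=\zeta(\P^c)$ established in Proposition~\ref{prop:eta_zeta_conjugate}. Concretely, I will show that the ``bar, sort, read'' procedure of steps $(1')$--$(3')$, carried out on $\P$, produces exactly the path that Theorem~\ref{thm:zeta_sweep} assigns to the conjugate Dyck path $\P^c$; since that path is $\zeta(\P^c)=\eta(\P)$, this proves the theorem. Everything then reduces to comparing the barred word and reading convention used for $\P^c$ in Theorem~\ref{thm:zeta_sweep} with the barred word $M(\P)$ and the southwest reading convention used in Theorem~\ref{thm:eta_sweep}.

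First I would record how conjugation acts on the relevant data. Writing the level of a lattice point $(x,y)$ as $by-ax$, a $180^\circ$ rotation $(x,y)\mapsto(b-x,a-y)$ negates every level, cyclic shifts translate every level by one common integer constant, and passing from the southwest-to-northeast reading to the northeast-to-southwest reading turns the word $L$ into the word $M$. Combining these three observations with the description of $\P^c$ in Section~\ref{sec:conjugate} (cyclically shift $\P$ until the path drops strictly below the diagonal, then rotate $180^\circ$), I obtain that, \emph{as a barred word}, $L(\P^c)$ is gotten from $M(\P)$ by negating every label, adding a global constant, and cyclically rotating the positions; moreover the bars, which in Theorem~\ref{thm:zeta_sweep} are placed on the east steps of $\P^c$, land precisely on the entries of $M(\P)$ coming from west steps of the reverse reading of $\P$, i.e.\ on the right cyclic ascents of $\tau(\P)$. (Checking that east steps of $\P^c$ correspond to west steps of the reverse reading of $\P$ amounts to tracking a single step through the $180^\circ$ rotation and the reversal of traversal direction, and it is the mirror image of the fact in Remark~\ref{rem:sigma} that the east steps of $\P$ are the right cyclic descents of $\sigma(\P)$.)

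It then remains to check that these operations do not change the outcome of the sweep. Sorting the labels in increasing order forgets where the cyclic reading began, so a cyclic rotation of the barred word leaves the resulting sequence of bars unchanged, and adding a constant to every label clearly does not affect the sorted order either. Negating every label does reverse the sorted order, hence reverses the output sequence of bars --- but this is exactly undone by step $(3')$, which reads the result as a \emph{southwest} path from $(b,a)$ to $(0,0)$ rather than a northeast path from $(0,0)$ to $(b,a)$: traversing a southwest path backwards while exchanging west$\leftrightarrow$east and south$\leftrightarrow$north is precisely a reversal of its step sequence, so the two reversals cancel. Assembling the pieces, the path produced by $(1')$--$(3')$ on $\P$ equals the path produced by Theorem~\ref{thm:zeta_sweep} on $\P^c$, namely $\zeta(\P^c)=\eta(\P)$, which is the claim.

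The main obstacle is entirely one of bookkeeping of conventions: one must pin down every sign and orientation so that negation of levels is seen to be exactly compensated by the northeast-versus-southwest reading convention, that the ``bar the east steps / right cyclic descents of $\sigma$'' rule for $\zeta$ transforms into the ``bar the west steps / right cyclic ascents of $\tau$'' rule for $\eta$, and that the cyclic shift hidden inside the definition of $\P^c$ is harmless because the sorting step is cyclic-shift invariant. A self-contained alternative that avoids the conjugate altogether is to rerun the argument behind Theorem~\ref{thm:zeta_sweep} verbatim with the sweeping line swept in the opposite direction; this is conceptually the same computation, and I would present the conjugate-based argument above as the shorter route.
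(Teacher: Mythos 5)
Your argument is correct, but it takes a genuinely different route from the paper's. The paper proves Theorem~\ref{thm:eta_sweep} directly and in exact parallel with its proof of Theorem~\ref{thm:zeta_sweep}: it observes that in the sorted barred word the number of boxes below a given west step equals the number of south levels smaller than the corresponding west level, identifies this count with the number of flip skew inversions involving the east level $e_j$, and invokes Lemma~\ref{lem:lambda_mu_inversions}~(ii) to conclude that the output path bounds (the rotation of) $\mu(\P)$, matching Definition~\ref{def:zeta_eta}. You instead reduce the statement to Theorem~\ref{thm:zeta_sweep} applied to $\P^c$ via the identity $\eta(\P)=\zeta(\P^c)$ of Proposition~\ref{prop:eta_zeta_conjugate}; this is not circular, since that proposition is established in Section~5.1 by the skew-inversion correspondence from the proof of Theorem~\ref{thm.slconj}, independently of either sweep theorem. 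Your bookkeeping is right: the entries of $L(\P^c)$ are $m$ minus the corresponding entries of $M(\P)$ (a north step starting at level $n$ contributes $m-n-b=m-(n+b)$ and an east step $m-e+a=m-(e-a)$, where $n+b$ and $e-a$ are exactly the $M(\P)$ labels of the terminal lattice points), the positions differ by the cyclic rotation hidden in the definition of $\P^c$, sorting is insensitive to translation and rotation, and the order reversal caused by negation is cancelled by reading the output as a southwest path. One can confirm this on the running example, where both procedures yield $NNENEENEEENEE$. What the paper's route buys is brevity and symmetry with the zeta case through a single lemma; what yours buys is that the counting need not be redone and the compatibility between conjugation and the two sweep conventions is made fully explicit, at the cost of delicate sign and orientation tracking.
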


\begin{example}
In our running example in Figure~\ref{fig:zetaEta_sweep}, we mark the reading word 
\[
L(\P)=(0,8,16,\bar{24},19,\bar{27},\bar{22},\bar{17},12,\bar{20},\bar{15},\bar{10},\bar{5}),
\] 
which sorts to $(0,\bar{5},8,\bar{10},12,\bar{15},16,\bar{17},19,\bar{20},\bar{22},\bar{24},\bar{27})$. Thus $\zeta(\P)$ is the path 
\[NENENENENEEEE.\]
We mark the reverse reading word
\[M(\P)=(\bar{0},\bar{5},\bar{10},\bar{15},20,\bar{12},\bar{17},\bar{22},27,\bar{19},24,16,8),\] 
which sorts to  
$(\bar{0},\bar{5},8,\bar{10},\bar{12},\bar{15},16,\bar{17},\bar{19},20,\bar{22},24,27)$.  Thus $\eta(\P)$ is the path 
\[WWSWWWSWWSWSS \textup{, which is equivalent to } NNENEENEEENEE.\]
\end{example}

\begin{remark}
Note that both computations in Theorem~\ref{thm:zeta_sweep} and Theorem~\ref{thm:eta_sweep} can be performed just as easily on the standardization $\sigma(\P)$ of $L(\P)$, since only the relative values of the labels matter.
\end{remark}

\begin{proof}[Proof of Theorems~\ref{thm:zeta_sweep} and~\ref{thm:eta_sweep}]
Consider the path $\zeta(\P)$ described in Theorem~\ref{thm:zeta_sweep}. The number of boxes on the left of the north step corresponding to a north level $n_i$ of $\P$ is equal to the number of east levels smaller that $n_i$. This number is equal to the number of skew inversions involving~$n_i$, which coincides with $\lambda_i$ by Lemma~\ref{lem:lambda_mu_inversions}~$(i)$. Therefore the described algorithm to compute $\zeta(\P)$ coincides with the definition of zeta in Definition~\ref{def:zeta_eta}.

Consider the (rotation of the) path $\eta(\P)$ described in Theorem~\ref{thm:eta_sweep}. The number of boxes below a given west step of $\P$ is equal to the number of south levels smaller than the corresponding west level. This number is equal to the number of flip skew inversions involving the corresponding level~$e_j$, which coincides with $\mu_j$ by Lemma~\ref{lem:lambda_mu_inversions}~$(ii)$. Therefore, the described algorithm to compute $\eta(\P)$ coincides with the definition of eta in Definition~\ref{def:zeta_eta}.
\end{proof}

\subsection{Zeta and eta via the laser filling}\

This section presents a new interpretation of zeta and eta that are read from a \emph{laser filling} in the boxes below the path $\P$ and above the main diagonal. Our main result in this section describes the partitions $\lambda$ and $\mu$ in terms of the laser filling. 
This result will be used in Section~\ref{sec:square} to give a new combinatorial description of the inverse of the zeta map in the square case without the use of bounce paths.  

Figure~\ref{fig:zetaEta_laser} illustrates the following definition.

\begin{figure}
  \begin{center}
  \includegraphics[scale=0.55]{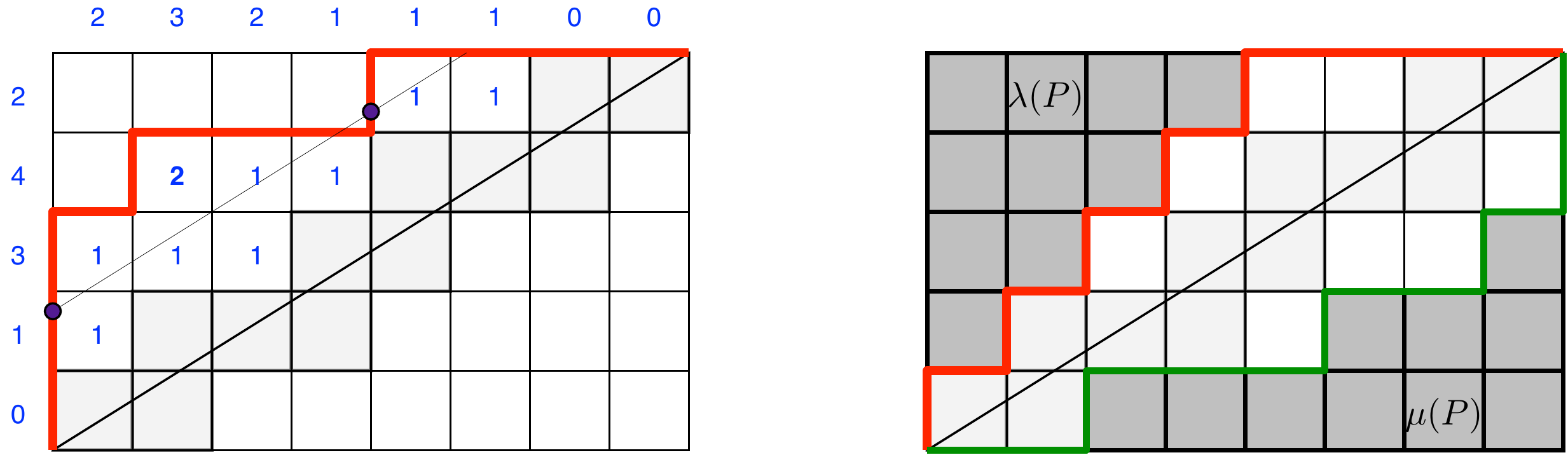}
  \end{center}
  \caption{Laser filling of a path $\P$. The laser pointed from the lower right corner of the box with filling 2 crosses two vertical walls of the path, while all other lasers cross only one. 
  The entries of the partition $\lambda(\P)=(4,3,2,1,0)$ are the sums of the laser fillings on the rows. The entries of the partition $\mu(\P)=(3,2,2,1,1,1,0,0)$ are the sums of the laser fillings on the columns. 
  }
  \label{fig:zetaEta_laser}
\end{figure}

\begin{definition}
Let $\P$ be an $(a,b)$-Dyck path and let $B$ be a box below $\P$ and above the line $y=\frac{a}{b}x$.  Draw the line of slope $\frac{a}{b}$ through the southeast corner of $B$ (a bi-directional laser).  The {\em laser filling} of $B$ is equal to the number of vertical walls of $P$ crossed by the laser. Equivalently, it is equal to the number of horizontal walls of $P$ crossed by the laser. 
\end{definition}

\begin{remark}
Lasers also appear in Armstrong, Rhoades, and Williams's \cite{ARW13}.  Their 
lasers stop at the first wall they meet; by contrast, our
lasers traverse (and count!)\ the walls of the path~$\P$.
\end{remark}

\begin{theorem}\label{thm:laser}
The partitions $\lambda$ and $\mu$ associated to $\P$ can be computed as follows:
\begin{enumerate}[(i)]
\item The parts of $\lambda(\P)$ are the sums of the laser fillings in the rows.
\item The parts of $\mu(\P)$ are the sums of the laser fillings in the columns.
\end{enumerate}
\end{theorem}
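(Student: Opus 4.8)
The plan is to establish part (i); part (ii) then follows by the flip symmetry (reflecting $\P$ to a $(b,a)$-Dyck path swaps rows with columns, swaps $\lambda$ with $\mu$ by the Proposition on $\zeta(\P^\flip)$ and $\eta(\P^\flip)$, and — crucially — preserves the laser filling, since a line of slope $\frac{a}{b}$ through a southeast corner becomes a line of slope $\frac{b}{a}$ through the appropriate corner under the reflection, and "vertical walls crossed" and "horizontal walls crossed" are interchanged). So I would state (ii) as a consequence of (i) and concentrate all the work on (i).

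For (i), I would fix a north step of $\P$ with initial lattice point $p$, lying in some row $r$ of the $a\times b$ grid, and aim to show that the sum of the laser fillings of the boxes below $\P$ in row $r$ equals $\lambda_{(r)}$, the number of east levels of $\P$ smaller than the north level $n$ attached to that north step (this is $\lambda_i$ by Lemma~\ref{lem:lambda_mu_inversions}(i)). The key geometric observation is the standard correspondence between levels and the line of fixed slope: the ordering of the lattice points of $\P$ by level is exactly the order in which the sweeping line of slope $\frac{a}{b}$ (moving from the main diagonal toward the northwest) meets them — equivalently, a lattice point $q$ has smaller level than $p$ iff the line of slope $\frac{a}{b}$ through $q$ lies strictly to the southeast of (i.e.\ below) the line of slope $\frac{a}{b}$ through $p$. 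I would make this precise by noting that the level of a lattice point $(x,y)$ is $by-ax$ plus a constant, so "smaller level" is literally "the slope-$\frac{a}{b}$ line through the point has smaller $y$-intercept."

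With that in hand, here is the counting step. Each box $B$ below $\P$ in row $r$ has a laser through its southeast corner; that laser is a segment of a slope-$\frac{a}{b}$ line, and its laser filling counts the vertical walls (east steps) of $\P$ it crosses. Every east step of $\P$ that the laser of $B$ crosses corresponds to an east-step lattice point whose slope-$\frac{a}{b}$ line separates appropriately; summing the laser fillings over all boxes $B$ in row $r$ below $\P$ therefore counts, with multiplicity, pairs (box $B$ in row $r$, east step crossed by laser of $B$). I would reorganize this double count by east step: a given east step of $\P$, sitting at height $h$ with initial lattice point $e$, is crossed by the laser of box $B$ in row $r$ precisely when the slope-$\frac{a}{b}$ line through the southeast corner of $B$ separates the two endpoints of that east step, and the number of boxes $B$ below $\P$ in row $r$ whose laser does this is either $0$ or $1$ according to whether $n > e$ in level (one checks that the southeast corners of the row-$r$ boxes below $\P$ have slope-$\frac{a}{b}$ intercepts interpolating between the diagonal and the intercept of $p$, so exactly one of them — or none — falls in the interval cut out by the east step). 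Hence the row sum equals $\#\{j : n > e_j\} = \lambda_i$, as desired.

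The main obstacle I anticipate is the bookkeeping in that last step: carefully verifying that "the laser of exactly one box $B$ in row $r$ below $\P$ crosses a given east step iff that east step's level is below $n$" — in particular handling the boundary behavior near the main diagonal (where the lasers emanate), near the point $p$ itself, and the possibility (visible in Figure~\ref{fig:zetaEta_laser}, where one laser crosses two vertical walls) that a single laser contributes to several distinct east steps simultaneously. The cleanest route is probably to phase the argument entirely through intercepts of slope-$\frac{a}{b}$ lines: assign to each relevant lattice point and each box-corner its intercept, observe these are all distinct (as $\gcd(a,b)=1$), and reduce the whole statement to an elementary claim about how many of a prescribed set of intercepts lie in a prescribed interval. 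Once the problem is linearized this way, the telescoping/interpolation becomes transparent, and both the "crosses two walls" phenomenon and the diagonal boundary are handled uniformly.
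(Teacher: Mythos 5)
Your argument for (i) is essentially the paper's own proof: both identify $\lambda_i$ with the number of east levels below $n_i$ via Lemma~\ref{lem:lambda_mu_inversions}, and then double-count pairs (box in the row, east step crossed by its laser), observing that each east step strictly below the slope-$\frac{a}{b}$ line through the north step's initial point is crossed by exactly one laser from that row and no other east step is crossed at all. The only cosmetic difference is that the paper proves (ii) "similarly" while you deduce it from (i) by the flip symmetry, which is a valid shortcut.
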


\begin{proof}
The entries of $\lambda$ count the skew inversions involving the north levels of each of the vertical steps in the path. For a given vertical step, this number is equal to the number of horizontal steps in the path that are strictly below the laser through its starting point. Each of these horizontal steps is crossed by exactly one of the lasers through the lower right corners of the boxes below the path that are in the same row of the vertical step in consideration. Statement {\em(i)} follows and Statement {\em (ii)} is proved similarly. 
\end{proof}

\begin{corollary}
The skew length of $\P$ is equal to the sum of the laser fillings of $\P$.
\end{corollary}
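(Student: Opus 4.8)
The plan is to derive this as an immediate bookkeeping consequence of Theorem~\ref{thm:laser} together with the results already established linking the partition $\lambda(\P)$ (equivalently $\mu(\P)$) to skew length. First I would sum the identity in Theorem~\ref{thm:laser}~$(i)$ over all rows: since each part $\lambda_i$ of $\lambda(\P)$ equals the sum of the laser fillings in the $i$th row, adding over $i=1,\dots,a$ shows that $\sum_i \lambda_i$ equals the total sum of the laser fillings of $\P$ over all boxes below $\P$ and above the line $y=\frac{a}{b}x$.

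Next I would identify $\sum_i \lambda_i$ with the skew length. By Lemma~\ref{lem:lambda_mu_inversions}~$(i)$, $\lambda_i$ counts the skew inversions of $\P$ involving the north level $n_i$, so $\sum_i \lambda_i$ is the total number of skew inversions of $\P$; by Theorem~\ref{thm:skewLength-skewInversions} this number is exactly $\sl(\P)$. Combining the two displays gives $\sl(\P)=\sum_i\lambda_i=\sum_{B}(\text{laser filling of }B)$, which is the claim. (One could equally run the argument through the columns: Theorem~\ref{thm:laser}~$(ii)$ gives $\sum_j \mu_j$ as the column-sum of laser fillings, Lemma~\ref{lem:lambda_mu_inversions}~$(ii)$ identifies it with the number of flip skew inversions, and Theorem~\ref{thm:skewLength-skewInversions} again equates this with $\sl(\P)$; this reflects the symmetry already noted in the row/column laser picture of Figure~\ref{fig:zetaEta_laser}.)

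There is essentially no obstacle here: all of the substantive work — translating laser counts into skew inversions via the one-laser-per-crossed-wall argument, and translating skew inversions into skew length — has already been done in Theorem~\ref{thm:laser} and Theorem~\ref{thm:skewLength-skewInversions}. The only thing to be slightly careful about is that the laser filling is a priori defined only on boxes strictly below $\P$ and strictly above the diagonal, so I would note explicitly that boxes on or below the diagonal contribute nothing and that this set of boxes is exactly the union, over rows (or columns), of the boxes appearing in Theorem~\ref{thm:laser}; then the double count is unambiguous.
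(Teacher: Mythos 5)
Your proposal is correct and follows essentially the same route as the paper: the paper's proof likewise notes that $\sl(\P)$ equals the area of $\lambda(\P)$ (which it has already identified with the number of skew inversions via Lemma~\ref{lem:lambda_mu_inversions} and Theorem~\ref{thm:skewLength-skewInversions}) and then sums Theorem~\ref{thm:laser} over the rows. Your version merely makes those intermediate citations explicit, which is fine.
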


\begin{proof}
The skew length of $\P$ is equal to the area of $\lambda(\P)$. By the previous theorem, this area is equal to the sum of all laser fillings of $\P$.
\end{proof}

\subsection{Zeta and eta via interval intersections}\label{sec:zetaEta_intervals}\

This section presents a second new combinatorial interpretation of zeta and eta in terms of {\em interval intersections}.  Each step of the path $\P$ has an associated closed interval whose endpoints are the levels of its starting and ending points.  When the intervals are ordered in increasing order, zeta and eta can be directly determined.

\begin{definition}
\label{def:intervals}
Let $\P$ be an $(a,b)$-Dyck path.  Let $\N(\P)$ be the north levels of $\P$ and $\E(\P)$ be the east levels of $\P$.  Define the {\em north intervals} of $\P$ to be the set $\calI_N=\{[n_i,n_i+b] \text{ for } n_i\in\N(\P)\}$ and the {\em east intervals} of $\P$ to be the set $\calI_E=\{[e_j-a,e_j] \text{ for } e_j\in\E(\P)\}$.
\end{definition}

\begin{theorem}\label{cor:zetaEta_intervals}
Create an $a\times b$ grid.  Label the rows of the grid by the north intervals of $\P$ increasing from bottom to top, and the columns of the grid by the east intervals of $\P$ increasing from left to right.  Fill in the boxes in this grid when the corresponding row and column intervals do not intersect.  The boundary path of the shaded boxes above the main diagonal is $\zeta(\P)$ and the boundary path of the shaded boxes below the main diagonal is $\eta(\P)$, rotated 180 degrees.
\end{theorem}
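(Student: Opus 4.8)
\textbf{Proof proposal for Theorem~\ref{cor:zetaEta_intervals}.}

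The plan is to reduce the interval-intersection description to the skew-inversion description of $\lambda$ and $\mu$ given in Lemma~\ref{lem:lambda_mu_inversions}, and then appeal to the fact that $\zeta(\P)$ bounds $\lambda(\P)$ and $\eta(\P)$ bounds $\mu(\P)^c$. The first step is to translate the ``boxes are filled when row and column intervals do not intersect'' condition into an inequality on levels. The north interval attached to $n_i \in \N(\P)$ is $[n_i, n_i+b]$ and the east interval attached to $e_j \in \E(\P)$ is $[e_j-a, e_j]$. Since $a$ and $b$ are relatively prime, no level is repeated, so two such intervals either are disjoint or overlap in an interval of positive length; one checks that $[n_i,n_i+b]$ and $[e_j-a,e_j]$ are disjoint precisely when either $e_j < n_i$ (the east interval lies entirely to the left) or $e_j - a > n_i + b$, i.e. $n_i + b < e_j - a$ (the east interval lies entirely to the right). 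The first case is exactly a skew inversion $n_i > e_j$, and the second is exactly a flip skew inversion $n_i + b < e_j - a$. A small argument — essentially that a skew inversion and a flip skew inversion cannot occur for the same pair $(i,j)$, since $n_i > e_j$ forces $n_i + b > e_j - a$ — shows these two cases are mutually exclusive, so each filled box is counted exactly once by one of the two types.

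The second step is a bookkeeping argument about where the two regions of filled boxes sit relative to the main diagonal. Order the rows by north interval (equivalently by $n_i$) increasing from bottom to top, and the columns by east interval (equivalently by $e_j$) increasing from left to right. I claim that a filled box coming from a skew inversion $n_i > e_j$ always lies weakly above the main diagonal of the $a\times b$ grid, while a filled box coming from a flip skew inversion $n_i + b < e_j - a$ always lies strictly below it. The cleanest way to see this is to note that the filled boxes form a ``staircase'' shape: in a fixed row $i$, the columns $j$ with $e_j < n_i$ are precisely the smallest east levels and so form an initial segment of columns (giving $\lambda_i$ boxes pushed to the left), whereas the columns $j$ with $e_j - a > n_i + b$ are the largest east levels and form a terminal segment of columns. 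Since $\lambda_i$ equals the number of skew inversions involving $n_i$ by Lemma~\ref{lem:lambda_mu_inversions}(i), the left staircase in row $i$ has exactly $\lambda_i$ boxes, and as $i$ decreases (rows going down) $\lambda_i$ is weakly decreasing because $\lambda(\P)$ is a partition; hence the left staircase is exactly the Young diagram of $\lambda(\P)$ drawn in the grid, whose boundary is $\zeta(\P)$ by Definition~\ref{def:zeta_eta}. Dually, reading by columns and using Lemma~\ref{lem:lambda_mu_inversions}(ii), the right staircase has $\mu_j$ boxes in column $j$, so it is the Young diagram of $\mu(\P)$ anchored at the right edge; its complementary boundary, read as a path from $(b,a)$ to $(0,0)$ and rotated $180^\circ$, is the path bounding $\mu(\P)^c$, which is $\eta(\P)$.

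The third step is to confirm that these two staircases are separated by the main diagonal and together tile (the relevant part of) the grid, so that ``boundary of the shaded boxes above the diagonal'' and ``boundary of the shaded boxes below the diagonal'' are unambiguous. For this I would count: the left staircase has $|\lambda(\P)| = \sl(\P)$ boxes and the right staircase has $|\mu(\P)| = \sl'(\P)\cdot(\text{something})$—more precisely, the number of boxes strictly below the main diagonal in an $a \times b$ grid is $\binom{a+b}{2}$-type combinatorial quantity, and one matches $|\mu(\P)|$ against it using that $\mu(\P)$ is the partition bounded (on the complement) by $\eta(\P)$, which by construction is an $(a,b)$-Dyck path. Since every box is filled iff its row and column intervals are disjoint iff it contributes a skew or flip-skew inversion, and the skew ones land above and flip-skew ones land below, the two staircases are exactly the filled boxes in the two halves. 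The main obstacle I expect is precisely this third step: rigorously pinning down that a skew-inversion box is always above the diagonal and a flip-skew-inversion box always below, rather than merely that they occupy left/right staircases. I would handle it by a direct level computation using Remark~\ref{rem:hook-north-east} — relating $n_i$ to a hook $h = n_i - a$ of a westmost box and $e_j$ to a hook $h' = e_j - a - b$ of a northmost box — which converts the diagonal condition into the statement that the corresponding box lies above or below the lattice diagonal, something already encoded in the hook filling being positive or not. Everything else is routine once the interval-disjointness dichotomy is in hand.
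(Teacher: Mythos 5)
Your proposal is correct and takes essentially the same route as the paper, whose entire proof is the observation that the statement is a straightforward consequence of Lemma~\ref{lem:lambda_mu_inversions}; your first two steps (interval disjointness $\Leftrightarrow$ skew inversion or flip skew inversion, then the staircase shapes giving $\lambda$ and $\mu$) are exactly the details the paper leaves implicit. The only wobble is your third step: the box-count matching of $|\mu(\P)|$ against the area below the diagonal is not the right mechanism (those quantities need not agree), but the fact you actually need — that the $\lambda$-staircase sits above the diagonal and the $\mu$-staircase below — is precisely the already-established fact that $\zeta(\P)$ and $\eta(\P)$ are Dyck paths, so no new argument is required there.
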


\begin{proof}
This theorem is a straightforward consequence of Lemma~\ref{lem:lambda_mu_inversions}.
\end{proof}

\begin{figure}
  \begin{center}
  \includegraphics[scale=0.55]{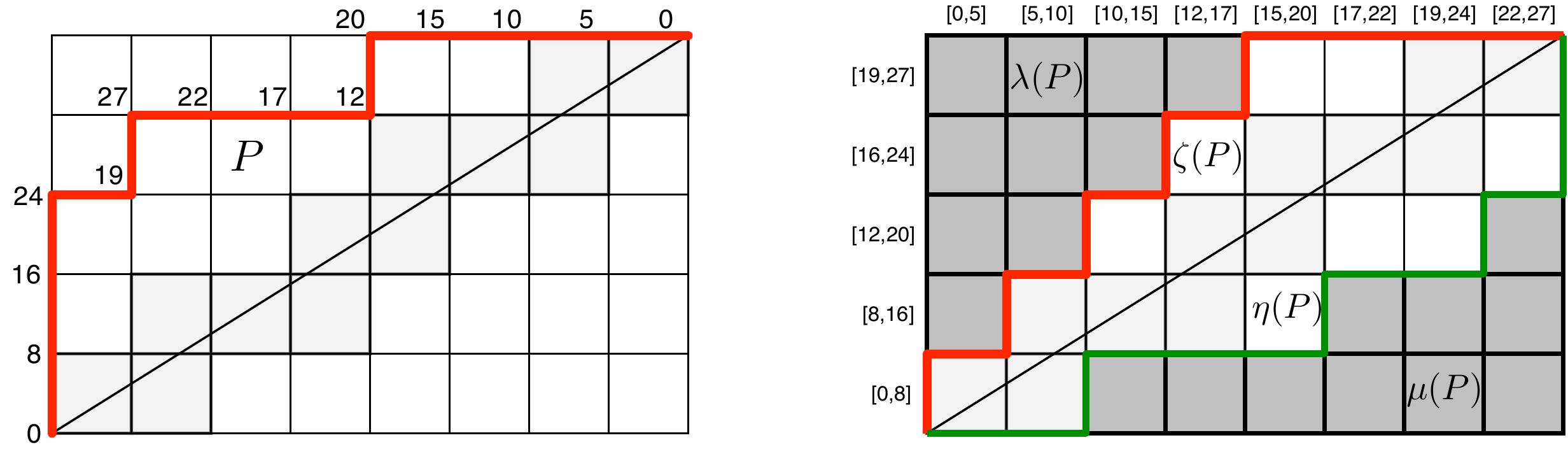}
  \end{center}
  \caption{Zeta and eta via interval intersections. The intervals on the left correspond to the ordered level intervals of the vertical steps in the path. The intervals on the top correspond to the level intervals of the horizontal steps. The shaded boxes of $\lambda$ and $\mu$ are the boxes whose corresponding row and column intervals do not intersect.}
  \label{fig:zetaEta_intervals}
\end{figure}

\begin{example}
  For our running example path $\P$, the north intervals are $[0,8]$, $[8,16]$, $[12,20]$, $[16,24]$, and $[19,27]$, which can be read directly from the north steps of $\P$, or calculated from the north levels as in Definition~\ref{def:intervals}.  Similarly, the east intervals of $\P$ are $[0,5]$, $[5,10]$, $[10,15]$, $[12,17]$, $[15,20]$, $[17,22]$, $[19,24]$, and $[22,27]$.  Labeling the rows of a $5\times 8$ grid with the north levels and the columns with the east levels gives the right side of Figure~\ref{fig:zetaEta_intervals}.  The shaded boxes are the those where the corresponding row interval does not intersect the corresponding column interval, from which $\lambda(\P)$, $\mu(\P)$, $\zeta(\P)$, and $\eta(\P)$ can be read posthaste.
\end{example}

\section{Pairing the zeta map with the eta map}\label{sec:inverse}

By considering the zeta map together with the eta map, we gain two new ideas: a new approach for proving that the zeta map is a bijection and (if $\zeta$ is a bijection) a new area-preserving involution on the set of $(a,b)$-Dyck paths.  For clarity and consistency, we have decided to use the letter $P$ to denote a path that is in the domain of $\zeta$ and use the letter $Q$ to denote a path that is in the image of $\zeta$.

\subsection{Inverse of the zeta map knowing eta}\label{sec:zetaeta}\

 For the image $\calZ$ under the pair of maps \[(\zeta,\eta):\DD\rightarrow\DD\times\DD,\] we define a map $\iota:\calZ\rightarrow\DD$ such that $(\zeta,\eta)\circ\iota$ is the identity map.  
Further, we conjecture that for every $(a,b)$-Dyck path $\Q$ that appears as the image of $\zeta$, there exists a unique $(a,b)$-Dyck path $\R$ such that $(\Q,\R)\in \calZ$.  This would imply that in~$\calZ$ every element of $\DD$ appears exactly once as the initial entry in the pair, from which it would follow that the zeta map is a bijection.

\begin{definition} 
A pair of $(a,b)$-Dyck paths $(\Q,\R)$ is an {\em admissible pair} if $(\Q,\R)=\big(\zeta(\P),\eta(\P)\big)$ for some $(a,b)$-Dyck path $\P$.  The {\em set of admissible pairs} $\calZ\subset\DD\times\DD$ is the image under the pair of maps $(\zeta,\eta):\DD\rightarrow\DD\times\DD$.  
\end{definition}

We now describe a simple combinatorial description of the inverse map $\iota$ that recovers $\P$ from the pair~$(\Q,\R)$ or, equivalently, from the pair of partitions ($\lambda$, $\mu$) they bound.  

\begin{definition} \label{def:iota}
Let $(\Q,\R)$ be an admissible pair.  Define $\iota(\Q,\R)$ as follows.
\begin{enumerate}
\item[(1)] Draw the path $\Q$ above the diagonal and rotate the path $\R$ 180 degrees so that it embeds below the diagonal in the same diagram.  Label the steps of each path from~$1$ to~$a+b$ starting at the bottom-left corner and ending at the top-right corner in the order in which they appear in the path. 
\item[(2)] Create the permutation $\gamma:[a+b]\to [a+b]$ as follows. If $l$ is a label of a horizontal step in $\Q$, define $\gamma(l)$ to be the label of the horizontal step in $R$ that is in the same column of~$l$.  If $l$ is a label of a vertical step in $\Q$, define $\gamma(l)$ to be the label of the vertical step in $R$ that is in the same row of $l$.
\item[(3)] For admissible pairs $(\Q,\R)$, $\gamma$ is a cycle permutation.  Interpret $\gamma$ in cycle notation as $(\sigma_1,\sigma_2,\hdots,\sigma_{a+b})$, fixing $\sigma_1=1$.
Define $\P=\iota(\Q,\R)$ to be the path whose east steps correspond to the cyclic descents of $\sigma$.\footnote{A descent occurs when $\sigma_i>\sigma_{i+1}$.  A cyclic descent is defined in the same way, but considering the indices modulo~$a+b$, allowing a descent in the last position of $\sigma$.}
\end{enumerate}
\end{definition}

\begin{theorem}
\label{thm:inverse}
$\gamma$ is a cycle permutation and the map $\iota$ is the inverse map for the pair $(\zeta,\eta)$.
\end{theorem}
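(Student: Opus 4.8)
The plan is to prove the theorem by reversing the explicit construction of $\zeta$ and $\eta$ given in terms of levels and reading words in Theorems~\ref{thm:zeta_sweep} and~\ref{thm:eta_sweep}. The key observation is that the permutation $\gamma(\P)$ defined in Section~\ref{sec.notation} should match the permutation $\gamma$ built from the admissible pair $(\zeta(\P),\eta(\P))$ in Definition~\ref{def:iota}, and that the path $\P$ is recoverable from $\gamma(\P)$ by Remark~\ref{rem:sigma}. So it suffices to establish two things: (a) starting from $(\Q,\R)=(\zeta(\P),\eta(\P))$, the permutation $\gamma$ produced in step (2) of Definition~\ref{def:iota} is a single $(a+b)$-cycle, and equals $\gamma(\P)$ up to the normalization $\sigma_1=1$; and (b) the path recovered in step (3) from the cyclic descents of $\sigma$ is then exactly $\P$, which is immediate from Remark~\ref{rem:sigma} once (a) is in place.

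First I would set up the correspondence of labels. Label the steps of $\zeta(\P)$ by $1,\dots,a+b$ as in step (1); by the sweep-map description (Theorem~\ref{thm:zeta_sweep}), the $\ell$-th step of $\zeta(\P)$ carries the $\ell$-th smallest level of $\P$, and it is a north step exactly when that level lies in $\N(\P)$ and an east step when it lies in $\E(\P)$. Likewise, rotating $\R=\eta(\P)$ by $180^\circ$ and labeling its steps, the sweep description of $\eta$ (Theorem~\ref{thm:eta_sweep}, read on the reverse word $M(\P)$) shows that the steps of the rotated $\R$ also correspond to the levels of $\P$ — but now each north step of $\R$ corresponds to a level $n_i\in\N(\P)$ shifted by the step geometry, and similarly for east steps $e_j\in\E(\P)$. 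The crucial point I need is the precise matching of columns and rows between $\Q$ and the rotated $\R$: two horizontal steps sit in the same column of the diagram iff their associated east levels $e_j$ (from $\Q$) and $e_{j'}$ (from $\R$) are "cyclically adjacent'' in the sorted circular sequence of all $a+b$ levels, and analogously for vertical steps and north levels in the same row. This is exactly the combinatorial content encoded by $\gamma(\P)$: reading the one-line notation of $\sigma(\P)$ as a cycle, $\gamma(\P)$ sends the position of a level to the position of the next level reached along $\P$, which under the sweep maps translates to the column/row matching between $\Q$ and $\R$. I would make this precise by tracking, for each level $v$ of $\P$, which step of $\P$ starts at $v$ and which step ends at $v$ (equivalently, the step starting at $v\pm a$ or $v\pm b$), and checking that the "same column'' / "same row'' relation between $\Q$ and the rotated $\R$ reproduces the successor function of the cycle $\gamma(\P)$.

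Once the map $\ell \mapsto$ (label of the step in $\R$ in the same column/row) is identified with the successor function of $\gamma(\P)$, it is automatically a single $(a+b)$-cycle, since $\gamma(\P)$ is by construction a cycle; normalizing $\sigma_1=1$ and reading cyclic descents recovers $\P$ by Remark~\ref{rem:sigma}. That $(\zeta,\eta)\circ\iota$ is the identity then follows because $\iota(\zeta(\P),\eta(\P))=\P$ for every $\P$, and conversely any element of $\calZ$ is of this form by definition of admissible pair.

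The main obstacle I anticipate is the bookkeeping in the "same column / same row'' step: one must show that for an \emph{admissible} pair the column-matching of horizontal steps and the row-matching of vertical steps are each bijections that splice together into one single $(a+b)$-cycle (rather than a product of several cycles), and that this cycle is precisely $\gamma(\P)$. The injectivity/bijectivity of each matching is easy from the sweep description, but verifying that the two matchings interleave into a \emph{single} cycle — and that admissibility is exactly what guarantees this — is the delicate part; I would prove it by exhibiting the explicit bijection with the levels of $\P$ and showing the cycle structure is transported from the known cycle $\gamma(\P)$, rather than by an abstract counting argument.
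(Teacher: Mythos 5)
Your overall strategy is the same as the paper's: label the steps of $\Q=\zeta(\P)$ and of the rotated $\R=\eta(\P)$ by the levels of $\P$ via the sweep descriptions, show that the column/row matching of Definition~\ref{def:iota} realizes the successor function along $\P$, conclude that $\gamma$ is the single cycle $\gamma(\P)$, and recover $\P$ from its cyclic descents via Remark~\ref{rem:sigma}. So the architecture is right, and your final sentence about ``the step starting at $v\pm a$ or $v\pm b$'' is pointing at exactly the identity the paper uses.

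However, the one concrete characterization you commit to is false: two horizontal steps in the same column do \emph{not} correspond to levels that are cyclically adjacent in the sorted sequence of all $a+b$ levels. The correct statement, which is the crux of the argument, is that the horizontal step of $\R$ in the same column as the horizontal step of $\Q$ carrying east level $e_j$ carries the level $e_j-a$, and the vertical step of $\R$ in the same row as the vertical step of $\Q$ carrying north level $n_i$ carries $n_i+b$. (In the running example, the column pairing $(17,12)$ is of the form $(e,e-a)$, but $17$ and $12$ are separated by $15$ and $16$ in the sorted list of levels, so adjacency fails.) This shift identity holds because the barred entries of $M(\P)$ are precisely the terminal levels $e_j-a$ of the east steps, so sorting $L(\P)$ and $M(\P)$ places the $j$-th smallest east level $e_{(j)}$ and the $j$-th smallest value $e_{(j)}-a$ in the $j$-th column, and similarly for rows. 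Once this is in place, the part you flag as delicate --- that the two matchings splice into a single $(a+b)$-cycle equal to $\gamma(\P)$ --- is immediate and requires no separate admissibility argument: on levels, $\gamma$ sends $e\mapsto e-a$ and $n\mapsto n+b$, i.e., each level to the level of the next lattice point along $\P$, and $\P$ visits all $a+b$ distinct levels before returning to $0$, so $\gamma$ is the cycle given by $L(\P)$; standardizing yields $\gamma(\P)$. You should replace the ``cyclically adjacent'' claim with this shift identity and prove it; the rest of your outline then goes through as written.
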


\begin{proof}
Suppose $(\Q,\R)$ is an admissible pair, so that there exists a $\P\in\DD$ such that $(\Q,\R)=(\zeta(\P),\eta(\P))$.  Label the steps of $\Q$ and $\R$ with the levels of $\P$ as determined by the sweep map algorithm given in Theorems~\ref{thm:zeta_sweep} and \ref{thm:eta_sweep} (as illustrated in Figure~\ref{fig:zetaEta_sweep}).  The definition of the permutation $\gamma$ using these labels instead of on $[a+b]$ induces a permutation on the set of levels of the lattice points of $\P$. We will prove that this permutation is the cycle permutation given by the reading word $L(\P)$ of $\P$. 

Because of the relationship between the forward reading word $L(\P)$ and the reverse reading word $M(\P)$, the labels of the vertical steps of $\R$ are exactly the labels of the vertical steps of $\Q$ plus $b$, while the labels of the horizontal steps of $\R$ are exactly the labels of the horizontal steps of $\Q$ minus $a$. This implies that the permutation $\gamma$ maps the level of a lattice point in $\P$ to the level of the next lattice point along $\P$, forming a permutation on the set of labels that is a cycle ordered by the reading word~$L(\P)$. 

Since the level labels appear in order as we walk along $\Q$, only the relative order of the labels matters; returning all labels to the numbers from~$1$ up to $a+b$ recovers $\gamma(\P)$, which when interpreted as a permutation in one line notation is the reading permutation $\sigma(\P)$.  By Remark~\ref{rem:sigma}, we recover $\P$ directly from $\sigma(\P)$ and the result follows. 
\end{proof}

Taken with Theorem~\ref{thm:inverse}, the following conjecture would imply that $\zeta$ is a bijection.

\begin{conjecture}
Suppose that $\Q\in\DD_{a,b}$.  There exists at most one $\R\in\DD_{a,b}$ such that $(\Q,\R)\in\calZ$.
\end{conjecture}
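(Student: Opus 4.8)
The first observation is that this conjecture is equivalent to the injectivity---hence bijectivity---of $\zeta$: if $\zeta(\P_1)=\zeta(\P_2)=\Q$ then $(\Q,\eta(\P_1))$ and $(\Q,\eta(\P_2))$ both lie in $\calZ$, so the conjecture forces $\eta(\P_1)=\eta(\P_2)$, and then Theorem~\ref{thm:inverse} gives $\P_1=\iota(\Q,\eta(\P_1))=\P_2$; conversely, injectivity of $\zeta$ makes the conjecture immediate. So the plan is to prove injectivity of $\zeta$ directly, working with the level description of the sweep map. Setting $\N=\N(\P)$ and $\E=\E(\P)$, Theorems~\ref{thm:zeta_sweep} and~\ref{thm:eta_sweep} say that $\zeta(\P)$ is exactly the $\{N,E\}$-word recording, along the sorted union $\N\cup\E$, which values are north levels and which are east levels; likewise $\eta(\P)$ is determined, in the same way, by the sorted union $(\N+b)\cup(\E-a)$. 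Thus $\Q=\zeta(\P)$ amounts to knowing this word $w$, and the whole problem is to reconstruct the \emph{sets} $\N$ and $\E$ from $w$---equivalently the successor-on-levels permutation $\gamma$, which sends $x\mapsto x+b$ for $x\in\N$ and $x\mapsto x-a$ for $x\in\E$, and from which $\P$ is recovered by Remark~\ref{rem:sigma}---subject to the single constraint that $(\N,\E)$ comes from a genuine Dyck path: the deterministic walk that moves from a level $v$ to $v+b$ when $v\in\N$ and to $v-a$ when $v\in\E$ must, starting at $0$, run through all of $\N\cup\E$ while staying nonnegative and then return to $0$, i.e.\ $\gamma$ must be a single $(a+b)$-cycle.

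The next step would be to make this reconstruction effective, leaning on the partial results later in the paper: use the statistic $\delta$ and the rational bounce construction of Section~\ref{sec:9} to pin down the lowest levels of $\P$ from $\Q$; use the decompositions of $\zeta^{-1}(\Q)$ and of $\chi(\Q)$ available when $\Q$ visits a level-$1$ lattice point (Section~\ref{sec:inductive_zeta_inverse}) to peel off a smaller sub-path; and induct on $a+b$, so that the base cases are the families already settled---the $(a,am\pm 1)$-Dyck paths through the classical bounce path, and $a\le 4$. The content of the inductive step would be to show that the location of the level-$1$ point (or, more generally, of the valleys on the smallest levels) inside the path being reconstructed is forced by the word $w$ alone; that is the rigidity one is missing.

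The main obstacle is exactly that last point: constructing a \emph{rational bounce path} in general, equivalently recovering the level multiset from the word $w$. There is a concrete reason this is delicate. As in the proof of Theorem~\ref{thm.slconj}, conjugation sends north levels by the order-reversing affine map $n\mapsto m-n-b$ and east levels by the \emph{different} order-reversing affine map $e\mapsto m-e+a$, where $m$ is the largest level of $\P$; hence the $\eta$-word is genuinely not a function of the $\zeta$-word, and two paths sharing the same $\zeta$ could a priori interleave their conjugate levels differently---only the global requirement that $\gamma$ be a single $(a+b)$-cycle can exclude this. I would expect that removing this obstruction in full generality needs an idea beyond the row-length, laser, and interval machinery assembled here; this is presumably what the proof of N.\ Williams mentioned in the footnote of Section~\ref{sec.intro} provides.
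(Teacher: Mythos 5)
The statement you are addressing is stated in the paper as a \emph{conjecture}, and the paper offers no proof of it; the only surrounding commentary is the observation that, combined with Theorem~\ref{thm:inverse}, it would imply that $\zeta$ is a bijection. So there is no ``paper proof'' to compare against. Your opening reduction is correct and in fact slightly sharpens the paper's remark: you verify both directions of the equivalence between the conjecture and injectivity of $\zeta$ (injectivity giving bijectivity since $\DD_{a,b}$ is finite and $\zeta$ maps it to itself), whereas the paper only records the implication from the conjecture to bijectivity. Your description of what $\zeta(\P)$ and $\eta(\P)$ encode in terms of the sorted unions $\N\cup\E$ and $(\N+b)\cup(\E-a)$, and of the single-cycle condition on $\gamma$, is also consistent with Section~\ref{sec:zetaeta}.

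That said, what you have written is a research plan, not a proof, and you say so yourself: the ``rigidity'' step --- showing that the word $w=\zeta(\P)$ alone forces the level multiset, or equivalently that $\delta(\P)$ is computable from $\zeta(\P)$ in general --- is exactly the open content of the conjecture, and it is left unresolved. The inductive scheme you sketch (peel off the level-$1$ point via Section~\ref{sec:inductive_zeta_inverse}, use the initial bounce path of Section~\ref{sec:9}, fall back on the known $(a,am\pm1)$ and small-$a$ cases) presupposes that the location of the level-$1$ point is determined by $w$, which is not established anywhere in the paper and is the heart of the difficulty. So the proposal does not prove the statement; it correctly diagnoses why it is hard and where the missing idea lies, which matches the paper's own treatment of it as an open conjecture (resolved only later, by other methods, in the work of Williams cited in the introduction).
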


\begin{example}
Figure~\ref{fig:zetaEta_inverse} illustrates  the procedure outlined in Definition~\ref{def:iota} for the pair~$(\Q,\R)=(\zeta(\P),\eta(\P))$ from our running example $\P$. After labeling the paths $\Q=\zeta(\P)$ and $\R=\eta(\P)$ from $1$ to $13$, we see that $\gamma(1)=3$, $\gamma(2)=1$, $\gamma(3)=7$, etc.  Writing $\gamma$ in cycle notation gives \[\gamma=(1,3,7,{\bf 12},9,{\bf 13},{\bf 11},{\bf8},5,{\bf10},{\bf6},{\bf4},{\bf2}).\]  If we instead interpret this sequence of numbers as the one line notation of a permutation $\sigma$, the cyclic descents of $\sigma$ are bolded and correspond to the east steps of $\iota(Q,R)$. We see that $\iota(Q,R)=\P$.
\begin{figure}
  \begin{center}
  \includegraphics[scale=0.5]{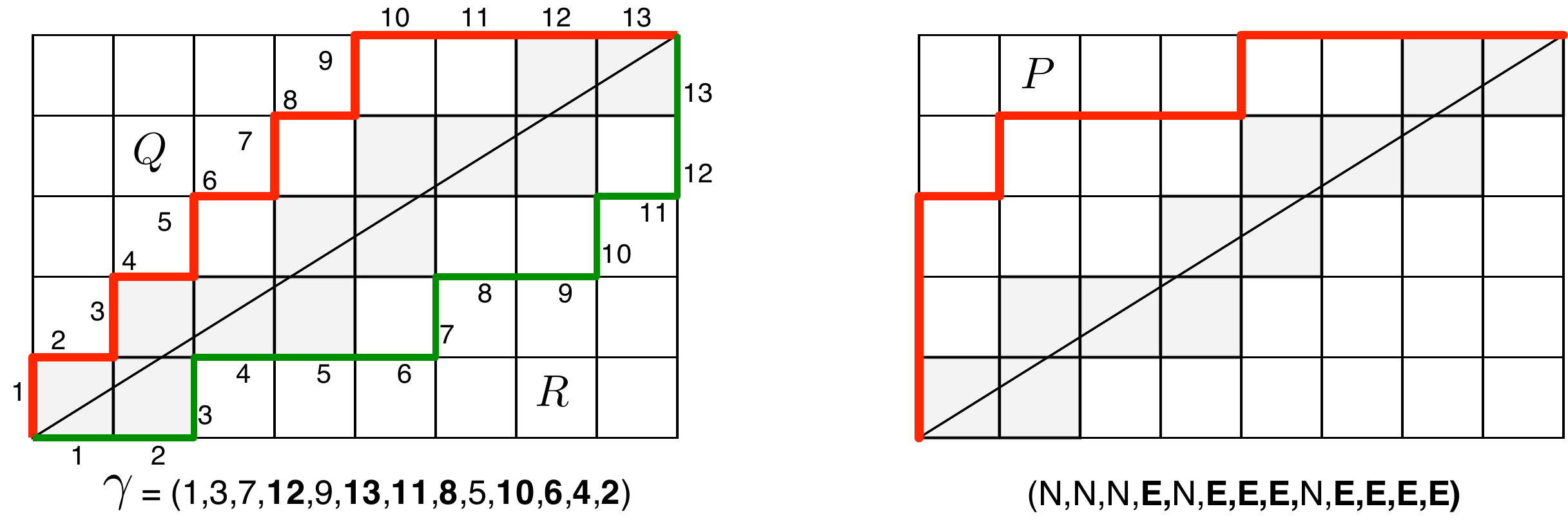}
  \end{center}
  \caption{Calculating $\P=\iota(\Q,\R)$ using the method in Definition~\ref{def:iota}.}
  \label{fig:zetaEta_inverse}
\end{figure}
\end{example}

\begin{remark}
The essence of the proof of Theorem~\ref{thm:inverse} is that the $\zeta$ and $\eta$ maps track the positions of the right cyclic descents of $L(\P)$ and  $M(\P)$.  Using these two sets of data, and the precise relationship between $L(\P)$ and $M(\P)$, we are able to solve for the levels of $\P$.  Interestingly, $\zeta(\P)$ does not obviously contain enough information to reconstruct $\P$. We cannot construct a unique permutation solely from its collection of right descents, and need additional information to recover $\P$.  In the standard Catalan case, this additional information is essentially implied by the particular structure of the $n\times (n+1)$ rectangle; for the general case, we obtain the extra information necessary from $\eta(\P)$.
\end{remark}

\begin{remark}
When pairing arbitrary $\Q$ and $\R$ paths a number of things can go wrong.  First, Theorem~\ref{thm.slconj} implies that in order to come from an actual path, we must have $\area(\Q)=\area(\R)$.  Second, we know that $\gamma$ must have a single cycle; it is simple to construct examples where this does not occur.  It is also possible to find pairs $(\Q,\R)$ where $\gamma$ has a single cycle, but the labels $l_i$ obtained from the reverse bijection are in the wrong relative order.  In other words, we may have $\zeta(\iota(\Q,\R))\neq \Q$. 
\end{remark}

We propose the problem of characterizing all possible permutations $\gamma(P)$. As a straightforward consequence of the description of this permutation in terms of the pair $Q$ and $R$, we conclude Proposition 6.8 without proof.

\begin{proposition}
\label{prop.exc}
The positions of the exceedences of $\gamma(\P)$ give the collection of north steps in~$\zeta(\P)$, and the values of the exceedences of $\gamma(\P)$ are the north steps in~$\eta(\P)$ when rotated $180^\circ$.
\end{proposition}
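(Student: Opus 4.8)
The plan is to unwind the definition of $\gamma(\P)$ given in Definition~\ref{def:iota} and match each of its structural features against the two paths $\Q = \zeta(\P)$ and $\R = \eta(\P)$. Recall from the proof of Theorem~\ref{thm:inverse} that if we relabel the steps of $\Q$ and of the $180^\circ$-rotation of $\R$ by the levels of $\P$ (as dictated by the sweep-map descriptions in Theorems~\ref{thm:zeta_sweep} and~\ref{thm:eta_sweep}), then $\gamma$, read on level labels, sends the level of a lattice point of $\P$ to the level of the next lattice point along $\P$. So in one-line notation $\gamma(\P) = \sigma(\P)$, the reading permutation, and an exceedence of $\gamma(\P)$ at position $i$ is precisely a lattice point of $\P$ whose level increases to the next lattice point, i.e.\ a \emph{north} step of $\P$.

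First I would argue the claim about positions. Since we read the labels in the order they appear along $\Q = \zeta(\P)$ and label steps $1,\dots,a+b$ from bottom-left to top-right, position $i$ in one-line notation corresponds to the $i$-th step of $\zeta(\P)$. Now $\gamma$ takes the label of a step of $\Q$ to the label of the step of $\R$ lying in the same row (for vertical steps) or column (for horizontal steps). By the relation established in the proof of Theorem~\ref{thm:inverse} — vertical steps of $\R$ carry the labels of vertical steps of $\Q$ plus $b$, and horizontal steps of $\R$ carry those of horizontal steps of $\Q$ minus $a$ — we get $\gamma(i) > i$ exactly when the $i$-th step of $\zeta(\P)$ is vertical, and $\gamma(i) < i$ exactly when it is horizontal. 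Hence the positions of the exceedences of $\gamma(\P)$ are exactly the positions of the north steps of $\zeta(\P)$.

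For the claim about values, I would observe symmetrically that the \emph{value} $\gamma(i)$ is the label, in the top-right-to-bottom-left numbering, of a step of the rotated path $\R$. An exceedence is a pair $(i,\gamma(i))$ with $\gamma(i)>i$, which by the previous paragraph occurs iff the relevant step is vertical; and a vertical step of $\Q$ maps under $\gamma$ to the vertical step of $\R$ in the same row, whose label is the one we must track. Collecting the values $\gamma(i)$ over all exceedences therefore collects exactly the labels of the vertical steps of $\R$; since $\R = \eta(\P)$ is drawn rotated $180^\circ$ below the diagonal, these are precisely the north steps of $\eta(\P)$ after that rotation. This finishes the proof.

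The main obstacle, such as it is, is purely bookkeeping: one must be careful about the two different step-labelings (bottom-left-to-top-right for $\Q$, versus the numbering induced on the rotated $\R$) and about the $180^\circ$ rotation converting south/west steps of $\R$ into north/east steps — getting the exceedence-versus-deficiency and the north-versus-east correspondence aligned consistently. All the substantive content is already contained in the proof of Theorem~\ref{thm:inverse}; no new ideas are needed, which is why the authors state the proposition ``without proof.''
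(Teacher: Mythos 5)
Your argument is correct and is exactly the unwinding of Definition~\ref{def:iota} that the paper has in mind when it states the proposition ``without proof'': a step of $\Q$ is vertical iff its partner in $\R$ carries a strictly larger level (level $+\,b$ versus level $-\,a$), hence a strictly larger label, so exceedence positions are the north steps of $\zeta(\P)$ and exceedence values are the labels of the vertical steps of the rotated $\eta(\P)$. One small correction: $\gamma(\P)$ is \emph{not} equal to $\sigma(\P)$ in one-line notation (rather, the cycle notation of $\gamma$ starting at $1$ matches the one-line notation of $\sigma$; e.g.\ the exceedences of $\sigma(\P)=[1,3,7,12,\dots]$ are in the wrong positions), but this slip is harmless since the rest of your proof uses only the correct functional description of $\gamma$ as ``rank of a level $\mapsto$ rank of the next level along $\P$.''
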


\subsection{An area-preserving involution on rational Dyck paths}\
\label{sec:perp}

If $\zeta$ is invertible, we can use $\eta$ to define a new area-preserving involution on the set of $(a,b)$-Dyck paths, induced by the conjugate map under~$\zeta$ which we call the conjugate-area map.  This involution sends the path $\zeta(\P)$ to the path $\eta(\P)=\zeta(\P^c)$ and is predictable for certain families of $(a,b)$-Dyck paths. 

\begin{definition}
The \emph{conjugate-area map} 
applied to an $(a,b)$-Dyck path $\Q$ is the path \[\chi(\Q):=\zeta \circ c \circ \zeta^{-1}(\Q).\] 
If $\lambda$ is the partition bounded by $\Q$, we define $\chi(\lambda)$ to be the partition bounded by $\chi(\Q)$.
\end{definition}

\begin{figure}[h]
\begin{tikzpicture}
  \matrix (m) [matrix of math nodes, row sep=4em, column sep=6em]
    { \P & \zeta(\P)   \\
     \P^c & \eta(\P)  \\ };
  { [start chain] \chainin (m-1-1);
  \chainin (m-1-2) [join={node[above,labeled] {\text{zeta}}}]; 
  \chainin (m-2-2) [join={node[right,labeled] {\text{conj-area}}}]; }
  { [start chain] \chainin (m-1-1);
  \chainin (m-2-1) [join={node[left,labeled] {\text{conjugate}}}];
  \chainin (m-2-2) [join={node[below,labeled] {\text{zeta}}}]; }
  { [start chain] \chainin (m-1-1);
  \chainin (m-2-2) [join={node[right,labeled] {\raisebox{.1in}{\text{\tiny eta}}}}]; }
\end{tikzpicture}
\caption{Diagrammatic description of the conjugate-area involution.}
\label{fig:alpha}
\end{figure}
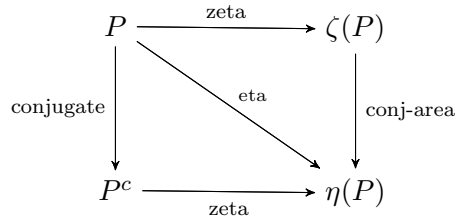

\begin{remark}
For partitions $\lambda$ and $\mu$ bounded by $\zeta(\P)$ and $\eta(\P)$ we have
$\chi(\lambda) = \mu^c$.
\end{remark}

\begin{proposition}
If the zeta map is a bijection then the conjugate-area map is an area-preserving involution on the set of $(a,b)$-Dyck paths.
\end{proposition}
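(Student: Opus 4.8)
The plan is to verify the two required properties — that $\chi$ is area-preserving and that $\chi$ is an involution — directly from the definition $\chi(\Q) = \zeta \circ c \circ \zeta^{-1}(\Q)$, using only facts already established in the excerpt. First, for the area-preserving claim: write $\P = \zeta^{-1}(\Q)$, so $\chi(\Q) = \zeta(\P^c)$. The area of $\chi(\Q)$ is the number of boxes bounded by $\zeta(\P^c)$, i.e.\ the number of positive hooks of $\zeta(\P^c)$. Here I would invoke the relationship between area and skew length together with conjugation-invariance of skew length: by the Corollary following Lemma~\ref{lem:lambda_mu_inversions}, the skew length of $\P$ equals the co-area of $\zeta(\P)$, and by Theorem~\ref{thm.slconj} skew length is preserved under conjugation, so $\sl(\P^c) = \sl(\P)$, hence $\operatorname{coarea}(\zeta(\P^c)) = \operatorname{coarea}(\zeta(\P))$, and therefore $\area(\chi(\Q)) = \area(\zeta(\P)) = \area(\Q)$. (The co-area of a path in an $a\times b$ grid is $\binom{(a-1)(b-1)}{2}$ minus its area, so equality of co-areas is equality of areas.)

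Second, for the involution claim: compute $\chi(\chi(\Q))$. Writing $\P = \zeta^{-1}(\Q)$ again, we have $\chi(\Q) = \zeta(\P^c)$, so $\zeta^{-1}(\chi(\Q)) = \P^c$ (this step uses that $\zeta$ is a bijection, which is the standing hypothesis). Then
\[
\chi(\chi(\Q)) = \zeta\bigl(c(\zeta^{-1}(\chi(\Q)))\bigr) = \zeta\bigl((\P^c)^c\bigr) = \zeta(\P) = \Q,
\]
where the penultimate equality is that conjugation of $(a,b)$-Dyck paths is itself an involution — $(\P^c)^c = \P$ — which follows from Theorem~\ref{thm:conjugation} (conjugation on paths corresponds under Anderson's bijection to conjugation on cores, $\kappa \mapsto \kappa^c$, which is visibly an involution since reflecting a Young diagram twice along its main diagonal is the identity). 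Finally, $\chi$ maps $\DD_{a,b}$ to $\DD_{a,b}$: $\zeta^{-1}(\Q)$ lies in $\DD_{a,b}$, its conjugate lies in $\DD_{a,b}$, and $\zeta$ maps $\DD_{a,b}$ to itself, so the composite is well-defined on all of $\DD_{a,b}$.

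The only genuinely delicate point is bookkeeping around the word ``area-preserving.'' The statement labeled ``conjugate-area'' in the diagram and the Corollary after Lemma~\ref{lem:lambda_mu_inversions} naturally produce co-area equality, so I should state explicitly that in a fixed $a\times b$ grid area and co-area determine each other, making co-area-preservation equivalent to area-preservation; alternatively, and perhaps cleaner, I could observe that $c$ is area-preserving on cores (conjugation does not change the number of boxes of a Young diagram), so $\core(\P^c)$ and $\core(\P)$ have equal size, and then translate this through the identity $\sl'(\cdot) = \dinv(\cdot)$ from Remark~\ref{rem:dinv} together with $\dinv(\P) = \area(\zeta(\P))$, giving $\area(\zeta(\P^c)) = \area(\zeta(\P))$ on the nose. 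Everything else is a two-line manipulation; no step should present any real obstacle once the bijectivity of $\zeta$ is granted as a hypothesis, since that is precisely what makes $\zeta^{-1}$ and hence $\chi$ well-defined.
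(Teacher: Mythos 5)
Your proof is correct and follows essentially the same route as the paper's: conjugation being an involution gives $\chi\circ\chi=\mathrm{id}$, and combining Theorem~\ref{thm.slconj} with the fact that $\zeta$ sends skew length to co-area gives area preservation. (One small typo: the maximal area of an $(a,b)$-Dyck path is $\frac{(a-1)(b-1)}{2}$, not $\binom{(a-1)(b-1)}{2}$, but this does not affect the argument.)
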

\begin{proof}
Since conjugation is an involution, we see that applying the operator $\zeta \circ c \circ \zeta^{-1}$ twice is equal to the identity, and therefore $\chi(\chi(\Q)) = \Q$. Furthermore, conjugation preserves skew length~(Theorem~\ref{thm.slconj}), which is mapped to co-area via the zeta map. Thus, $\chi$ must be an area-preserving involution.
\end{proof}

One possible approach to prove that $\zeta$ is a bijection would be to directly construct the involution~$\chi$.  In Section~\ref{sec:square} we show that in the square case $\chi$ is exactly the map that reverses the path~$\P$; equivalently one finds $\chi(\lambda)$ by simple conjugation.  In the rational case, conjugation must fail in general because conjugates of partitions may not sit above the main diagonal.  Although, Proposition~\ref{prop:justified} exhibits our empirical observation that for `small' partitions $\lambda$, $\chi(\lambda)$ is often the conjugate.

We have found that $\chi$ is predictable in other families of examples as well; in Section~\ref{sec:inductive_zeta_inverse} we present an inductive combinatorial description of the inverse of the zeta map and of the area-preserving involution for a nice family of examples.

\begin{example}[Left-justified and up-justified partitions]
Consider two families of partitions whose Young diagrams fit above the main diagonal in the $a\times b$ grid.  Let $n\in \mathbb{N}$ be a number no bigger than the number of boxes above the main diagonal in the $a\times b$ grid.  Define the {\em left-justified partition}~$\lambda^n$ to be the unique partition whose Young diagram has $n$ boxes as far to the left as possible and the {\em up-justified} partition $\nu^n$ to be the unique partition whose Young diagram has $n$ boxes as far up as possible.  Figure~\ref{fig:leftDown_partitions} shows $\lam^8=(3,2,2,1)$ embedded above the diagonal and $\nu^8=(6,2)$ rotated 180 degrees and embedded below the diagonal.  We use the notation $\nu^n$ because it is the conjugate of what one might expect if we called it $\mu^n$, as pointed out by an astute referee.

\begin{proposition}
\label{prop:justified}
The left-justified and up-justified partitions are related by the conjugate-area map:
\[
\chi(\lambda^n) = \nu^n. 
\]
Moreover, $\zeta^{-1}(\lambda^n)$ is the path with area $n$ containing the first $n$ positive hooks in the grid.
\end{proposition}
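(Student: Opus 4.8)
The plan is to prove the two claims in Proposition~\ref{prop:justified} essentially together, using the interval-intersection description of $\zeta$ from Theorem~\ref{cor:zetaEta_intervals} together with the characterization of the inverse via $\iota$ in Definition~\ref{def:iota} and Theorem~\ref{thm:inverse}. First I would identify the path $\P_n := \zeta^{-1}(\lambda^n)$ directly: let $\P_n$ be the $(a,b)$-Dyck path whose positive hooks are the $n$ smallest positive entries in the hook filling of the $a\times b$ grid. (This path exists and is a genuine Dyck path because the set of the $n$ smallest positive hooks is closed under subtracting $a$ or $b$ whenever the result is still a positive hook, which is exactly the condition for the corresponding boxes to form a ``staircase'' region below a lattice path; concretely, $\P_n$ is obtained by greedily adding boxes to the empty region in order of increasing hook length.) I would then verify that $\zeta(\P_n) = \lambda^n$ using Lemma~\ref{lem:lambda_mu_inversions}: the entries of $\lambda(\P_n)$ count skew inversions, and I would compute the north and east levels of $\P_n$ explicitly (using $n_h = h+a$ for westmost boxes and $e_h = h+a+b$ for northmost boxes, Remark~\ref{rem:hook-north-east}) to see that the multiset of parts of $\lambda(\P_n)$ is exactly $(n, n-1, \dots)$ truncated appropriately — i.e., the left-justified partition with $n$ boxes. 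This gives the ``Moreover'' clause, and hence $\zeta^{-1}(\lambda^n) = \P_n$ once bijectivity of $\zeta$ is assumed (or: $\zeta$ restricted to these paths is injective, which suffices here).

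Next, for $\chi(\lambda^n) = \nu^n$, I would compute $\eta(\P_n)$ and show it bounds $(\nu^n)^{c\,c} = \nu^n$, i.e.\ that $\mu(\P_n)^c = \nu^n$, equivalently $\mu(\P_n) = (\nu^n)^c$. By Lemma~\ref{lem:lambda_mu_inversions}(ii), $\mu_j$ counts flip skew inversions involving $e_j$; since flip skew inversions of $\P_n$ are the skew inversions of $\P_n$ reflected to a $(b,a)$-Dyck path (Remark~\ref{rem:conj_flip}), and since $\P_n^{\flip}$ is again the ``$n$-smallest-positive-hooks'' path but now in the $b\times a$ grid, the partition $\mu(\P_n)$ is the left-justified partition with $n$ boxes in the $b\times a$ grid. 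Conjugating that left-justified partition and re-embedding gives precisely the up-justified partition $\nu^n$ in the $a\times b$ grid — this is just the observation that ``left-justify $n$ boxes, transpose'' yields the up-justified shape. Hence $\chi(\lambda^n) = \eta(\P_n)$'s bounded partition $= \mu(\P_n)^c = \nu^n$, using $\chi(\Q) = \eta(\zeta^{-1}(\Q))$.

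An alternative, perhaps cleaner route that avoids re-deriving levels is to run $\iota$ in reverse: take $\Q = \zeta(\P_n)$ bounding $\lambda^n$ and $\R$ bounding $\nu^n$, and check directly that $(\Q,\R)$ is admissible with $\iota(\Q,\R) = \P_n$, by confirming that the permutation $\gamma$ built in Definition~\ref{def:iota} from the column-matching (horizontal steps) and row-matching (vertical steps) is a single cycle whose induced one-line permutation $\sigma$ has cyclic descent set equal to the east-step positions of $\P_n$. For the highly structured shapes $\lambda^n$ (a block of columns) and $\nu^n$ rotated, the matching $\gamma$ is explicit and its cycle structure can be read off by hand. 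I expect the main obstacle to be bookkeeping rather than conceptual: precisely pinning down which parts $\lambda^n$ and $\nu^n$ have when $n$ is not ``triangular'' (so the last nonzero part is a partial row/column) and making sure the truncation on the anti-diagonal boundary of the $a\times b$ grid is handled correctly — in particular checking that $\P_n$ genuinely stays a Dyck path for every admissible $n$, and that the level computations for its peaks and valleys telescope the way Theorem~\ref{thm.skewLengthCompute} predicts. Everything else follows formally from the already-established identities $\chi(\lambda)=\mu^c$ and $\eta(\P)=\zeta(\P^c)$.
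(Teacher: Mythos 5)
Your plan is sound and reaches the right conclusions, but it takes a genuinely different computational route from the paper. The paper's proof is a one‑liner built on the laser filling (Theorem~\ref{thm:laser}): since the boxes under $\P^n$ are exactly those between the main diagonal and a parallel line through the highest lattice point of the path, every laser crosses exactly one vertical wall, so all laser fillings equal $1$; the parts of $\lambda(\P^n)$ and $\mu(\P^n)$ are then literally the row and column counts of the strip of the $n$ smallest positive hooks, which are $\lambda^n$ and $(\nu^n)^c$. You instead compute $\lambda(\P_n)$ via skew inversions (Lemma~\ref{lem:lambda_mu_inversions}) and obtain $\mu(\P_n)$ by the flip symmetry $\mu(\P_n)=\lambda(\P_n^{\flip})$, noting that $\P_n^{\flip}$ is again the ``$n$ smallest positive hooks'' path in the $b\times a$ grid; this is correct and the flip observation is elegant, but it costs you an explicit computation of north and east levels that the laser argument makes unnecessary. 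Two small cautions: your parenthetical that the parts of $\lambda(\P_n)$ are ``$(n,n-1,\dots)$ truncated'' is false as stated (e.g.\ $\lambda^8=(3,2,2,1)$ in the $5\times 8$ grid); what you actually need, and correctly name afterwards, is that the sorted row counts of the near-diagonal strip give the left-justified partition. Note that this identification of strip row/column counts with $\lambda^n$ and $(\nu^n)^c$ is the one piece of bookkeeping that both your plan and the paper's proof assert rather than verify, so if you write this up you should supply that step. Your third route via $\iota$ and the cycle permutation $\gamma$ would also work but proves more than is needed here.
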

\begin{proof}
Let $\P^n$ be the path containing the first $n$ positive hooks in the grid. This path consists of all the boxes below a line parallel to the main diagonal sitting in the highest level of the path, and therefore all the labels in the laser filling are equal to 1. Adding the labels in the rows and the columns we obtain the partitions $\lambda^n$ and $(\nu^n)^c$.
\end{proof}

\begin{figure}
  \begin{center}
  \includegraphics[scale=0.5]{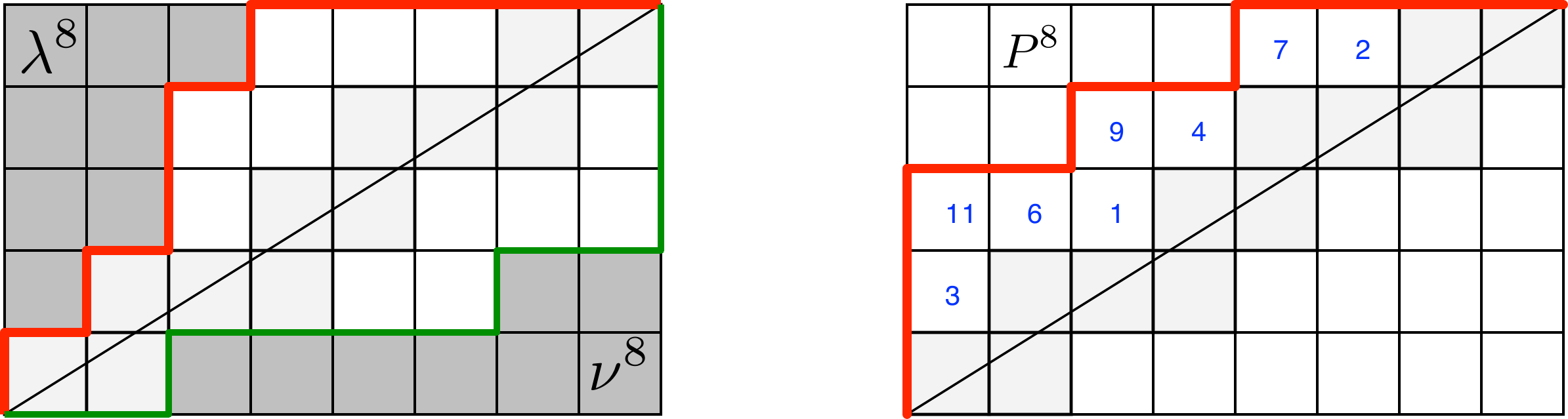}
  \end{center}
  \caption{The left-justified partition $\lambda^8$, up-justified partition $\nu^8$, and corresponding path~$\P^8$.}
  \label{fig:leftDown_partitions}
\end{figure}

Figure~\ref{fig:leftDown_partitions} illustrates and example of left-justified and up-justified partitions $\lambda^n$ and $\nu^n$ together with their corresponding path $\P^n$ for $n=8$. The reader is invited to verify that $\zeta(\P^8)$ and $\eta(\P^8)$ are given by the paths bounding $\lambda^8$ and $\nu^8$ using any of the methods described in Section~\ref{sec:zeta_map}, as well as to verify that the inverse map $\iota$ presented in Section~\ref{sec:inverse} gives $\P^8$ when applied to the paths bounding $\lambda^8$ and $\nu^8$.
\end{example}

\section{The square case}\label{sec:square}

In this section, we consider $(n,n+1)$-Dyck paths, lattice paths in an $n\times (n+1)$ grid staying above the main diagonal.  They are in bijection with classical Dyck paths in an $n\times n$ grid by simply forgetting the last east step of the path.
Haglund and Haiman~\cite{haglund2008q} discovered a beautiful description of the inverse of the zeta map in this case using a bounce path that completely characterizes the area sequence below the path. We present a new combinatorial description of the inverse of the zeta map in this case in terms of an area-preserving involution. This approach opens a new direction in proving that the zeta map is a bijection in the general $(a,b)$ case. 

\subsection{The conjugate-area involution, conjugate partitions and reverse paths.}\

Let $Q$ be an $(n,n+1)$-Dyck path. The area-preserving involution $\chi$ conjugates the partition~$\lambda$ bounded by the path $Q$.  This was proved in \cite[Theorem 9]{GMII}; we provide a new proof using our laser interpretation of zeta and eta.  For simplicity, denote by ${\Q}^r$ the path whose bounded partition is $\lambda^c$. We refer to $P^r$ as the \emph{reverse path} of $\Q$. Forgetting the last east step of the path, the reverse operation acts by reversing the path in the $n\times n$ grid. An example of the conjugate-area involution, conjugate partition and reverse path is illustrated in Figure~\ref{fig:square_perp}. 

\begin{figure}
  \begin{center}
  \includegraphics[scale=0.5]{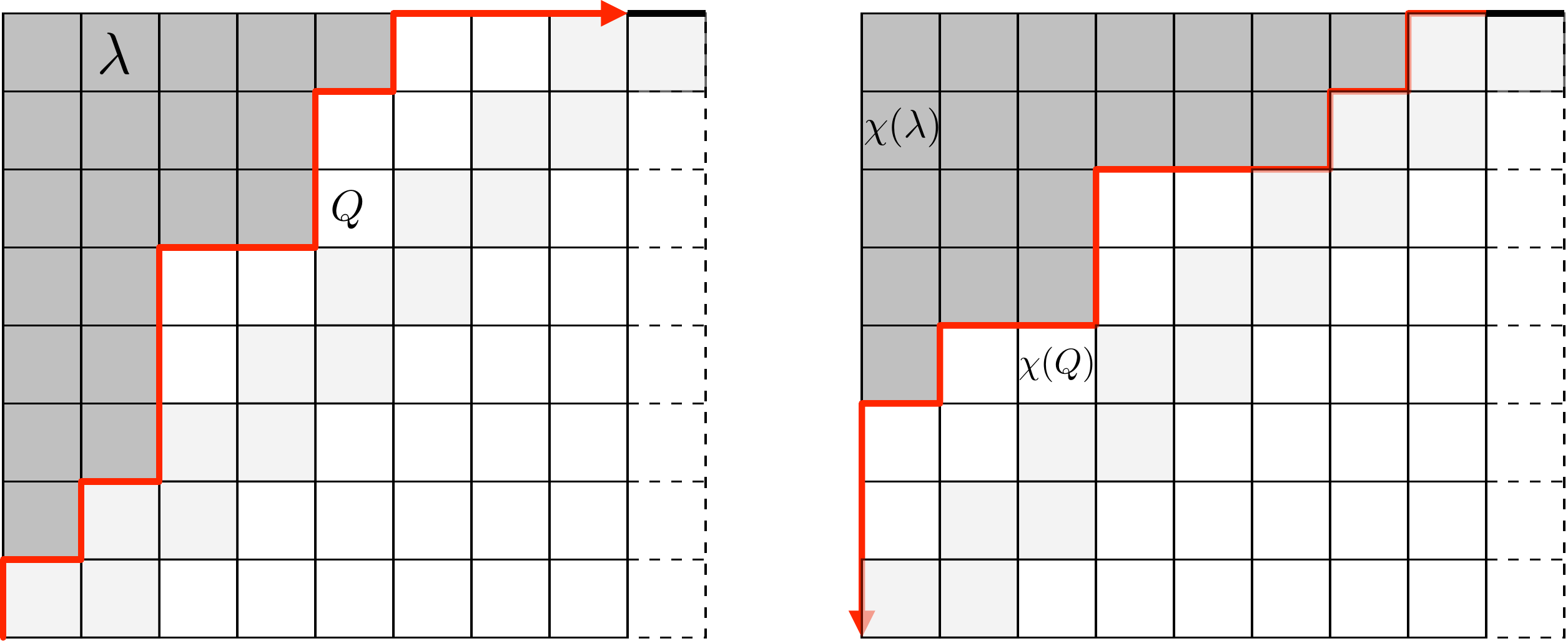}
  \end{center}
  \caption{The conjugate-area involution in the $(n,n+1)$ case. }
  \label{fig:square_perp}
\end{figure}

\begin{theorem}[\cite{GMII}]\label{thm:square_perp}
For a Dyck path $\Q$ and the partition $\lambda$ it bounds, we have $\chi(\Q) = \Q^r$ and~$\chi(\lambda) = \lambda^c$. 
\end{theorem}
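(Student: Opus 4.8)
The plan is to show that for an $(n,n+1)$-Dyck path $\Q$ bounding partition $\lambda$, we have $\chi(\lambda) = \lambda^c$; the path-level statement $\chi(\Q) = \Q^r$ is then immediate from the definition of $\Q^r$. Since $\chi(\lambda) = \mu^c$ where $\mu = \mu(\zeta^{-1}(\Q))$ (by the Remark preceding Proposition~\ref{prop:justified}), it suffices to prove $\mu^c = \lambda^c$, i.e.\ $\mu = \lambda$, where $\lambda = \lambda(\P)$ and $\mu = \mu(\P)$ for $\P = \zeta^{-1}(\Q)$. In other words, the crux is: for any $(n,n+1)$-Dyck path $\P$, the partition $\lambda(\P)$ bounded by $\zeta(\P)$ equals the partition $\mu(\P)$ whose conjugate is bounded by $\eta(\P)$. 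I would prove this directly from the laser-filling description in Theorem~\ref{thm:laser}: $\lambda(\P)$ is read off as the row sums of the laser filling of $\P$, and $\mu(\P)$ as the column sums.

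First I would exploit the special geometry of the $n\times(n+1)$ grid. In this rectangle the slope is $\tfrac{n}{n+1}$, and one checks (this is the standard reason the classical case is special) that the bi-directional laser through the southeast corner of any box $B$ strictly below $\P$ and above the diagonal crosses \emph{exactly one} wall of $\P$ — there is never room for a laser to cross two vertical walls, because the width exceeds the height by exactly one. Hence every box below $\P$ and above the diagonal has laser filling equal to $1$, so $\lambda_i$ equals the number of such boxes in row $i$ and $\mu_j$ equals the number of such boxes in column $j$. But the set of boxes strictly below $\P$ and strictly above the main diagonal is precisely the Young diagram of the partition $\area$-complementary to the partition bounded by $\P$ (shifted appropriately), read in two orthogonal ways; its row-count vector and column-count vector are conjugate partitions of each other. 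Therefore $\lambda(\P) = \mu(\P)$ as claimed. (One must be slightly careful about boundary boxes sitting on the diagonal line itself, which in the coprime case the line passes through no interior lattice points, so no ambiguity arises.)

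Having established $\lambda(\P) = \mu(\P)$, I would then assemble the conclusion: $\chi(\lambda) = \mu(\P)^c = \lambda(\P)^c = \lambda^c$, since $\lambda = \lambda(\P)$ is by definition the partition bounded by $\Q = \zeta(\P)$. For the path statement, $\chi(\Q)$ is by definition the $(n,n+1)$-Dyck path bounding $\chi(\lambda) = \lambda^c$, which is exactly $\Q^r$ by the definition of the reverse path. Finally I would observe that, under the bijection forgetting the last east step, conjugating $\lambda$ inside the $n\times(n+1)$ rectangle corresponds to reversing the Dyck path in the $n\times n$ grid, so $\chi$ is genuinely the path-reversal map, recovering the classical picture.

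The main obstacle I anticipate is the bookkeeping in the step showing every laser filling equals $1$ in the $n\times(n+1)$ case — i.e.\ proving rigorously that no bi-directional laser through a corner of an interior box crosses two vertical (equivalently two horizontal) walls of $\P$. This is geometrically clear but needs a clean argument, probably by a counting/pigeonhole comparison between the number of lattice columns the laser can pass through and the vertical extent available above the diagonal, using $b - a = 1$ crucially. Once that is nailed down, identifying the box set below $\P$ with a Young diagram whose two marginal vectors are mutually conjugate is routine, and the rest is formal manipulation of the definitions of $\chi$, $\zeta$, $\eta$, and $\Q^r$.
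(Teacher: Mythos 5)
Your reduction to showing $\lambda(\P)=\mu(\P)$ for $\P=\zeta^{-1}(\Q)$ is exactly the right first move (and is how the paper proceeds), but the key geometric claim you rest the rest on is false: in the $(n,n+1)$ case the laser fillings are \emph{not} all equal to $1$. Concretely, take $n=4$, $b=5$ and $\P=NNENNEEEE$. The box with corners $(1,3)$ and $(2,4)$ (hook filling $7$) lies below $\P$ and above the diagonal; the laser of slope $\tfrac45$ through its southeast corner $(2,3)$ meets the line $x=1$ at height $2.2$ and the line $x=0$ at height $1.4$, so it crosses the north steps $(1,2)\to(1,3)$ and $(0,1)\to(0,2)$ --- two vertical walls, hence laser filling $2$. (Having $b-a=1$ bounds the slope, but a laser can still pass behind several consecutive north steps of a path with tall ``columns''.) The paper's proof of Proposition~\ref{prop:justified} makes the all-ones laser filling a special feature of the particular paths $\P^n$ there, not of arbitrary square-case paths.

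There is a second, independent problem: even if every filling were $1$, your conclusion would not follow. You would then have $\lambda$ equal to the row-count vector and $\mu$ equal to the column-count vector of the region between $\P$ and the diagonal; if those vectors were ``conjugate partitions of each other'' as you assert, that would give $\lambda=\mu^c$, hence $\chi(\lambda)=\mu^c=\lambda$ --- the identity, not conjugation. What is actually needed is the \emph{equality} $\lambda=\mu$, and the region in question is not a Young diagram, so neither conjugacy nor equality of its marginals is automatic (in the example above the box counts are $(2,1,1,0)$ by rows and $(2,1,1,0,0)$ by columns, while $\lambda=\mu=(3,1,1,0,\dots)$). The special geometry you should be exploiting is different: because lasers have slope $\tfrac{n}{n+1}$, any two boxes lying on a common diagonal of slope $1$ with unobstructed line of sight have the \emph{same} laser filling. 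The paper uses this to pair each row with a specific column (shoot a slope-$1$ ray northeast from the start of that row's north step until it hits the end of an east step) and shows the laser labels along the row coincide with those along the paired column, whence the row sums and column sums agree as multisets and $\lambda=\mu$. Your final assembly of $\chi(\Q)=\Q^r$ from $\lambda=\mu$ is fine, but the middle of the argument needs to be replaced along these lines.
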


\begin{proof}
We need to show that the partitions $\lambda$ and $\mu$ bounded by the images $\zeta(\P)$ and $\eta(\P)$ of any $(n,n+1)$-Dyck path $\P$ satisfy 
\[
\chi(\lambda) = \mu^c = \lambda^c. 
\]
Equivalently, we need to show that $\lambda=\mu$. 
The entries of the partitions $\lambda$ and $\mu$ are the sums of the labels in the laser filling of $\P$ over the rows and columns respectively (Theorem~\ref{thm:laser}). We will show that the values of the sums over the rows are in correspondence with the values of the sums over the columns, and therefore $\lambda=\mu$. This correspondence is illustrated for an example in Figure~\ref{fig:square_perp_proof}.

\begin{figure}
  \begin{center}
  \includegraphics[scale=0.5]{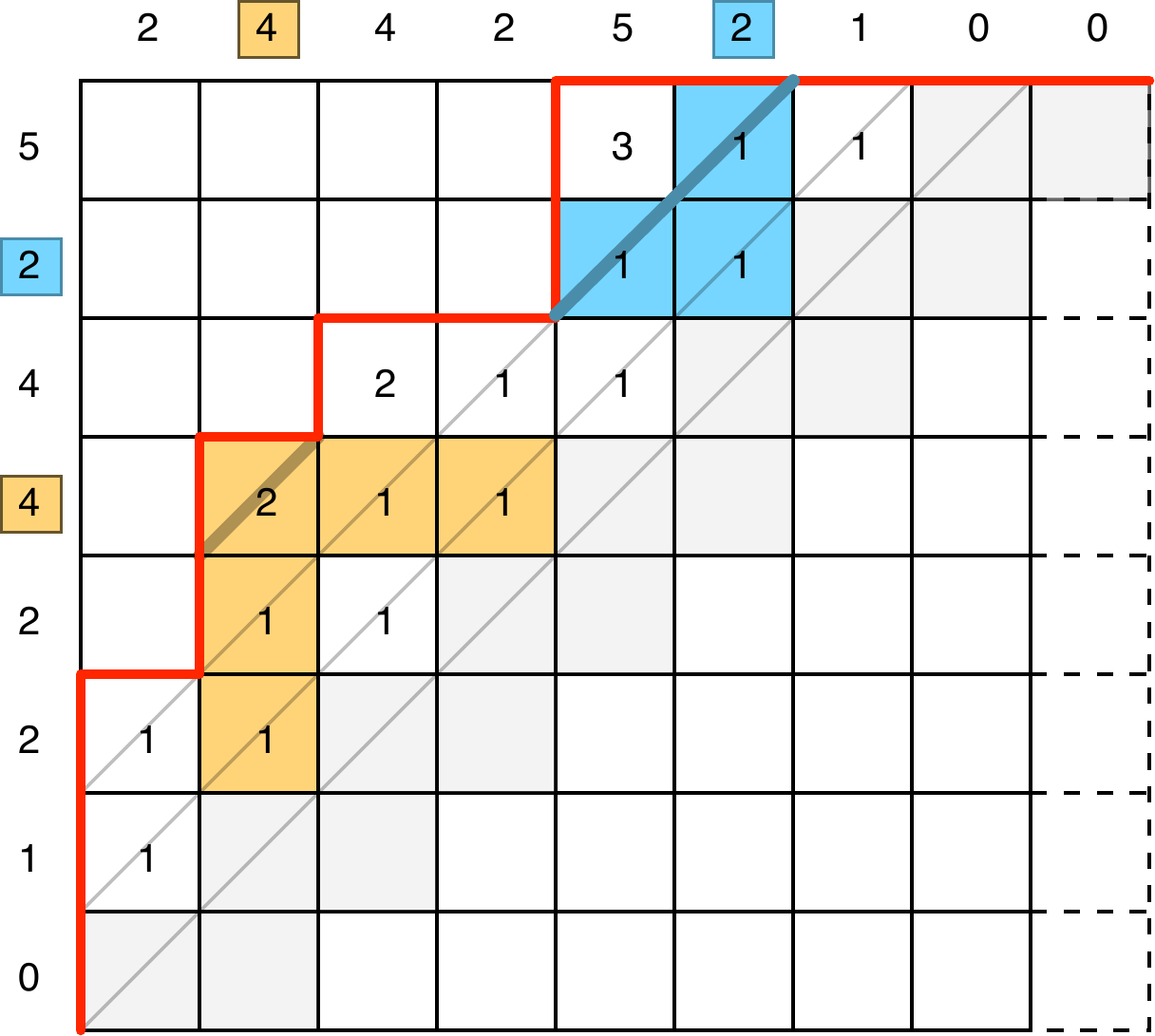}
  \end{center}
  \caption{Argument in the proof of Theorem~\ref{thm:square_perp}. The values of the sums over the rows are in correspondence with the values of the sums over the columns, and therefore $\lambda=\mu$.}
  \label{fig:square_perp_proof}
\end{figure}

For every row, draw a line of slope 1 in the northeast direction pointing from the starting point of the north step in that row. This line hits the path for the first time in the ending point of an east step of the path. The labels of the laser filling in the boxes in the column corresponding to this east step are exactly the same as the labels of the laser filling in the row in consideration.  (This is because the lasers are lines with slope $\frac{n}{n+1}$, which implies that for any two boxes on the same diagonal of slope 1 that are not interrupted in line of sight by the path $\P$, they will have the same laser filling.)  Thus, their corresponding sums are equal.  Doing this for all the rows gives the desired correspondence between the entries of the partition $\lambda$ and the entries of the partition $\mu$.  
\end{proof}

\subsection{The inverse of the zeta map}\

Because Theorem~\ref{thm:square_perp} provides the explicit formula for $\chi$, the method to find inverse of the zeta map in the $(n,n+1)$ case follows as a direct consequence of Theorem~\ref{thm:inverse}. The description of the map $\iota$ is presented in Definition~\ref{def:iota}.

\begin{theorem}\label{thm_square_inverse}
Let $\Q$ be an $(n,n+1)$-Dyck path. Then, $\zeta^{-1}(\Q)=\iota(\Q,\Q^r)$.
\end{theorem}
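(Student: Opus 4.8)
The plan is to combine two facts already established in the excerpt: Theorem~\ref{thm:inverse}, which says that $\iota$ inverts the pair $(\zeta,\eta)$, and Theorem~\ref{thm:square_perp}, which identifies $\chi(\Q)=\Q^r$ in the $(n,n+1)$ case. The key observation is that the pair $(\zeta,\eta)$ applied to $\P:=\zeta^{-1}(\Q)$ produces exactly the pair $(\Q,\Q^r)$, so that feeding this pair into $\iota$ returns $\P$.

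First I would set $\P=\zeta^{-1}(\Q)$, which exists since $\zeta$ is a bijection in the square case (by Haglund--Haiman, or alternatively this is part of what we are reproving). Then $\zeta(\P)=\Q$ by definition. Next I would compute $\eta(\P)$: by the definition of the conjugate-area map, $\chi(\Q)=\zeta(c(\zeta^{-1}(\Q)))=\zeta(\P^c)=\eta(\P)$, where the last equality is Proposition~\ref{prop:eta_zeta_conjugate}. By Theorem~\ref{thm:square_perp}, $\chi(\Q)=\Q^r$. Hence $\eta(\P)=\Q^r$, so $(\Q,\Q^r)=(\zeta(\P),\eta(\P))$ is an admissible pair. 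Finally, applying Theorem~\ref{thm:inverse}, $\iota(\Q,\Q^r)=\iota(\zeta(\P),\eta(\P))=\P=\zeta^{-1}(\Q)$, which is the claim.

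There is essentially no obstacle here — the statement is a corollary assembled from the three preceding results, and the only thing to be careful about is checking that the pair $(\Q,\Q^r)$ genuinely lies in $\calZ$ so that Theorem~\ref{thm:inverse} applies; this is exactly guaranteed by the computation $\eta(\P)=\Q^r$ above. One could also phrase the whole argument in one line by noting $\iota(\Q,\chi(\Q))=\iota(\zeta(\P),\eta(\P))=\P$ and then substituting $\chi(\Q)=\Q^r$. The substantive content lies in Theorems~\ref{thm:inverse} and~\ref{thm:square_perp}; Theorem~\ref{thm_square_inverse} just packages them together.
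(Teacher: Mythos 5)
Your proposal is correct and follows exactly the paper's route: the paper also derives this theorem as an immediate consequence of Theorem~\ref{thm:inverse} once Theorem~\ref{thm:square_perp} identifies $\chi(\Q)=\Q^r$, so that $(\Q,\Q^r)=(\zeta(\P),\eta(\P))$ for $\P=\zeta^{-1}(\Q)$. Your write-up is if anything slightly more explicit than the paper's one-line justification about why the pair is admissible.
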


An example of this result is illustrated in Figure~\ref{fig:square_zeta_inverse1}. The laser filling of the path $\zeta^{-1}(\Q)$ in this example is shown in Figure~\ref{fig:square_perp_proof}. One can verify that the sum of the labels of the laser filling on the rows and columns gives rise to the partitions $\lambda$ and $\mu$ bounded by $\Q$ and $\chi(\Q)$ (Theorem~\ref{thm:laser}).

\begin{figure}
  \begin{center}
  \includegraphics[scale=0.5]{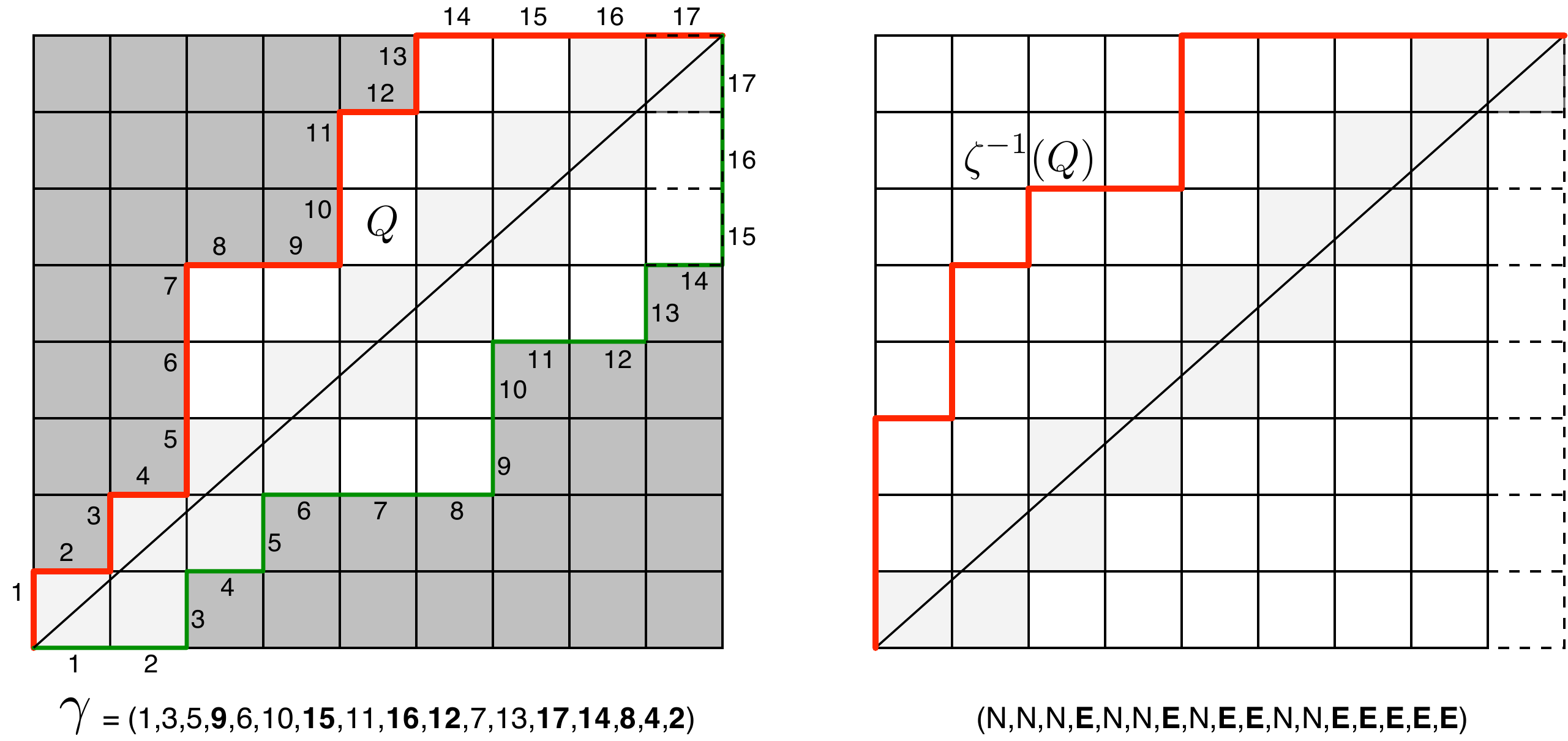}
  \end{center}
  \caption{The inverse of $\zeta$ by way of conjugate partitions.}
  \label{fig:square_zeta_inverse1}
\end{figure}

An alternative way to obtain the cycle permutation $\gamma$ directly from $\Q$ is as follows. Shade the boxes in the $n\times (n+1)$ rectangle that are crossed by the main diagonal as illustrated in Figure~\ref{fig:square_zeta_inverse2}. Move east from a vertical step labeled $i$ until the center of the first shaded box you see, and then move up until hitting an horizontal step of the path. The image $\gamma(i)$ is equal to the label of this horizontal step plus 1. In the example of Figure~\ref{fig:square_zeta_inverse2}, the path starting at the vertical step labeled 7 hits the horizontal step 12, therefore $\gamma(7)=12+1=13$. 

In order to determine $\gamma(i)$ of a label of an horizontal step, we move down until the center the last shaded box we see, and then move left until hitting a vertical step of the path. As before, $\gamma(i)$ is equal to the label of this vertical step plus 1. In the example, $\gamma(15)=10+1=11$.  The image of the label of the first horizontal step of the path is by definition equal to 1. Interpret $\gamma$ in cycle notation as $(\sigma_1,\sigma_2,\dots,\sigma_{2n+1})$ where we fix $\sigma_1=1$. 
As a direct consequence of Theorem~\ref{thm_square_inverse} we get:

\begin{theorem}\label{thm_square_inverse2}
Let $Q$ be an $(n,n+1)$-Dyck path. The inverse $\zeta^{-1}(\Q)$ is the path whose east steps correspond to the cyclic descents of the permutation~$\gamma$ when interpreted in one line notation.
\end{theorem}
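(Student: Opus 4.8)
The plan is to obtain the statement directly from Theorem~\ref{thm_square_inverse} together with Definition~\ref{def:iota}. By Theorem~\ref{thm_square_inverse} we have $\zeta^{-1}(\Q)=\iota(\Q,\Q^r)$, where $\Q^r$ is the reverse path (the one whose bounded partition is $\lambda^c$), which is again an $(n,n+1)$-Dyck path; and by step (3) of Definition~\ref{def:iota} the path $\iota(\Q,\Q^r)$ is exactly the one whose east steps are the cyclic descents of the one-line permutation built from the cycle $\gamma$ constructed in steps (1)--(2) of Definition~\ref{def:iota}. Moreover $(\Q,\Q^r)=(\zeta(\P),\eta(\P))$ for $\P=\zeta^{-1}(\Q)$ by Theorem~\ref{thm:square_perp}, so $(\Q,\Q^r)$ is admissible and, by Theorem~\ref{thm:inverse}, $\gamma$ is a single cycle. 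Hence the theorem reduces to the one claim that the permutation produced by the shaded-box recipe described just before the statement coincides with the permutation $\gamma$ built by $\iota(\Q,\Q^r)$.

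To prove that claim I would first record an explicit description of the path $\Q^r$ after the $180^\circ$ rotation used in Definition~\ref{def:iota}. Since the terminal step of $\Q$ is forced to be east, $\Q$ restricts to an $n\times n$ Dyck path $\widehat{\Q}$ by deleting that step, and a short check shows that $\Q^r$, rotated $180^\circ$ and re-read from the origin, is the path that begins with a single east step and then traces $\widehat{\Q}$ with its north and east steps interchanged (in the same order). From this description, together with the step labelling of Definition~\ref{def:iota}, one reads off two facts: (a) the vertical step of the rotated $\Q^r$ lying in the row whose top edge has height $m$ is labelled $j+1$, where the step of $\Q$ labelled $j$ is the east step of $\widehat{\Q}$ ending at $x$-coordinate $m$; and (b) the horizontal step of the rotated $\Q^r$ lying in column $c\geq 2$ is labelled $j+1$, where the step of $\Q$ labelled $j$ is the north step of $\widehat{\Q}$ whose top edge has height $c-1$, while the horizontal step in column $1$ is labelled $1$.

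Next I would show that the two L-shaped moves of the recipe land precisely on those steps. For a vertical step of $\Q$ in the row of height $m$, moving east to the center of the first shaded box and then up to the first horizontal step of $\Q$ above reaches exactly the east step of $\widehat{\Q}$ ending at $x$-coordinate $m$; symmetrically, for a horizontal step of $\Q$ in column $c$, moving down to the center of the last shaded box and then left to the first vertical step of $\Q$ reaches exactly the north step of $\widehat{\Q}$ whose top edge has height $c-1$ (and the first horizontal step of $\Q$ is always the unique one in column $1$, matching the special value $1$). This is a short computation locating the strip of boxes crossed by the line $y=\frac{n}{n+1}x$, using that $\Q$ stays weakly above that line so that the relevant first or last shaded box is the one sitting immediately under $\Q$. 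Combined with (a) and (b), this yields $\gamma(i)=j+1$ in both cases, which is precisely the recipe, so the reduction is complete.

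The main obstacle — really the only place that needs care — is the bookkeeping of the ``$+1$'' shift: one must track how step labels transform under deleting the terminal east step, under the reversal that interchanges north and east steps, and under the $180^\circ$ rotation, and check that this shift is uniform over all steps and degenerates to the value $1$ on the first horizontal step. One must also confirm that the ``east then up'' and ``down then left'' directions are genuinely the matched constructions for vertical and horizontal inputs, respectively. All of this is elementary case analysis at the boundary of the grid; no conceptual input beyond Theorem~\ref{thm_square_inverse} and the description of $\iota$ is needed.
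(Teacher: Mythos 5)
Your proposal is correct and follows the same route as the paper, which simply states the theorem as a direct consequence of Theorem~\ref{thm_square_inverse} after describing the shaded-box recipe as an alternative computation of the permutation $\gamma$ from Definition~\ref{def:iota}. Your facts (a) and (b), together with the observation that the first shaded box in row $[m-1,m]$ is $[m-1,m]\times[m-1,m]$ (and dually for columns), correctly supply the label-tracking verification that the paper leaves implicit.
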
  

\begin{figure}
  \begin{center}
  \includegraphics[scale=0.5]{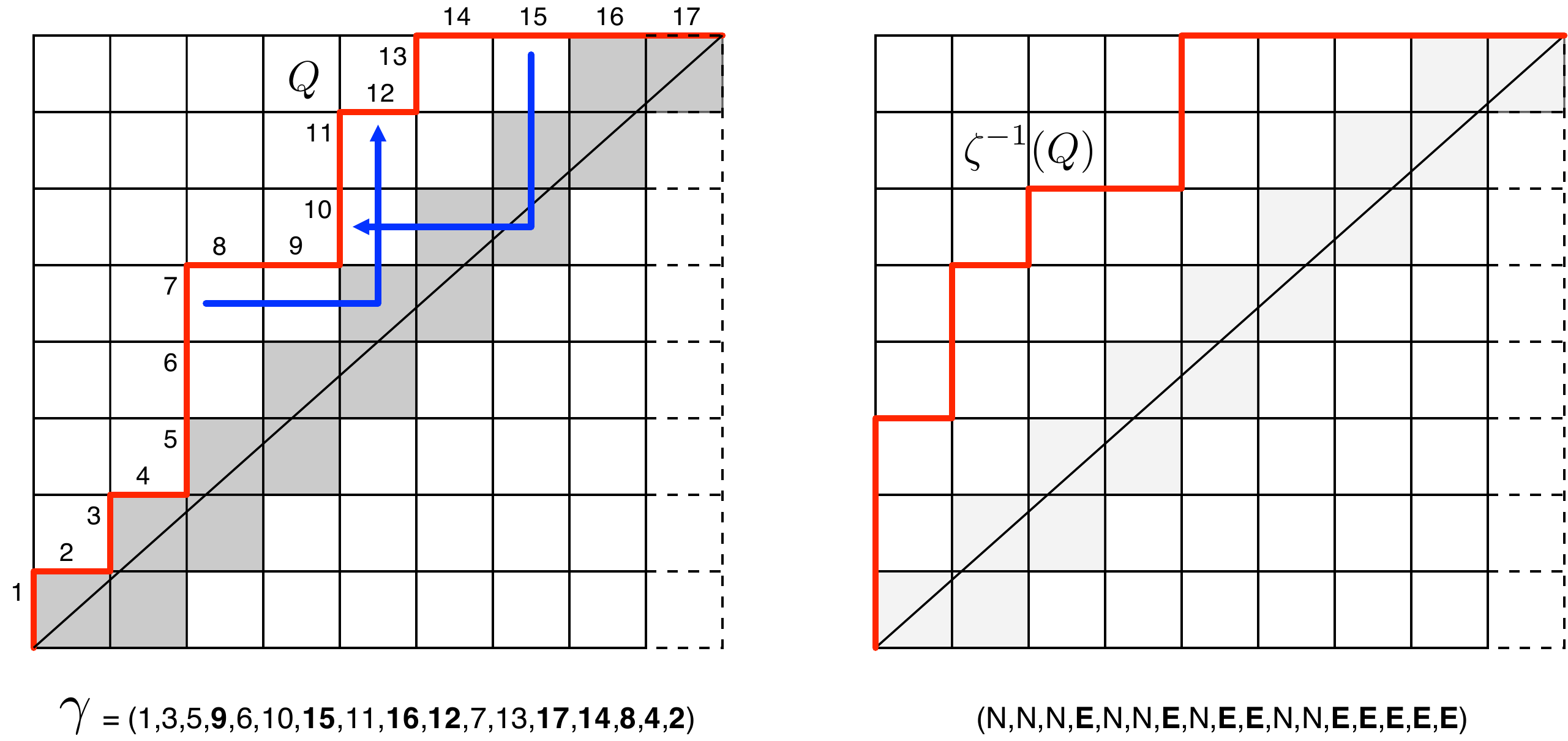}
  \end{center}
  \caption{Alternative description of the cycle permutation $\gamma$.}
  \label{fig:square_zeta_inverse2}
\end{figure}

\section{Zeta inverse and area-preserving involution for a nice family of examples}\label{sec:inductive_zeta_inverse}

In this section we present an inductive combinatorial description of the inverse of the zeta map and of the conjugate-area involution~$\chi$ for a nice family of $(a,b)$-Dyck paths. This family consists of the Dyck paths that contain the lattice point with level $1$. Such Dyck paths are obtained by concatenating two Dyck paths in the $a'\times b'$ and $a'' \times b''$ rectangles illustrated in Figure~\ref{fig:base_induction}. The sides of these two rectangles are the unique positive integers $0<a',a''<a$ and $0<b',b''<b$ such that 
\begin{align*} 
a' b - b' a &=1, \\ 
b'' a - a'' b &=1. 
\end{align*}
As a consequence, $a'$ and $b'$ are relatively prime as well as $a''$ and $b''$, allowing us to apply induction.

\begin{figure}[htbp]
\includegraphics[width=0.6\textwidth]{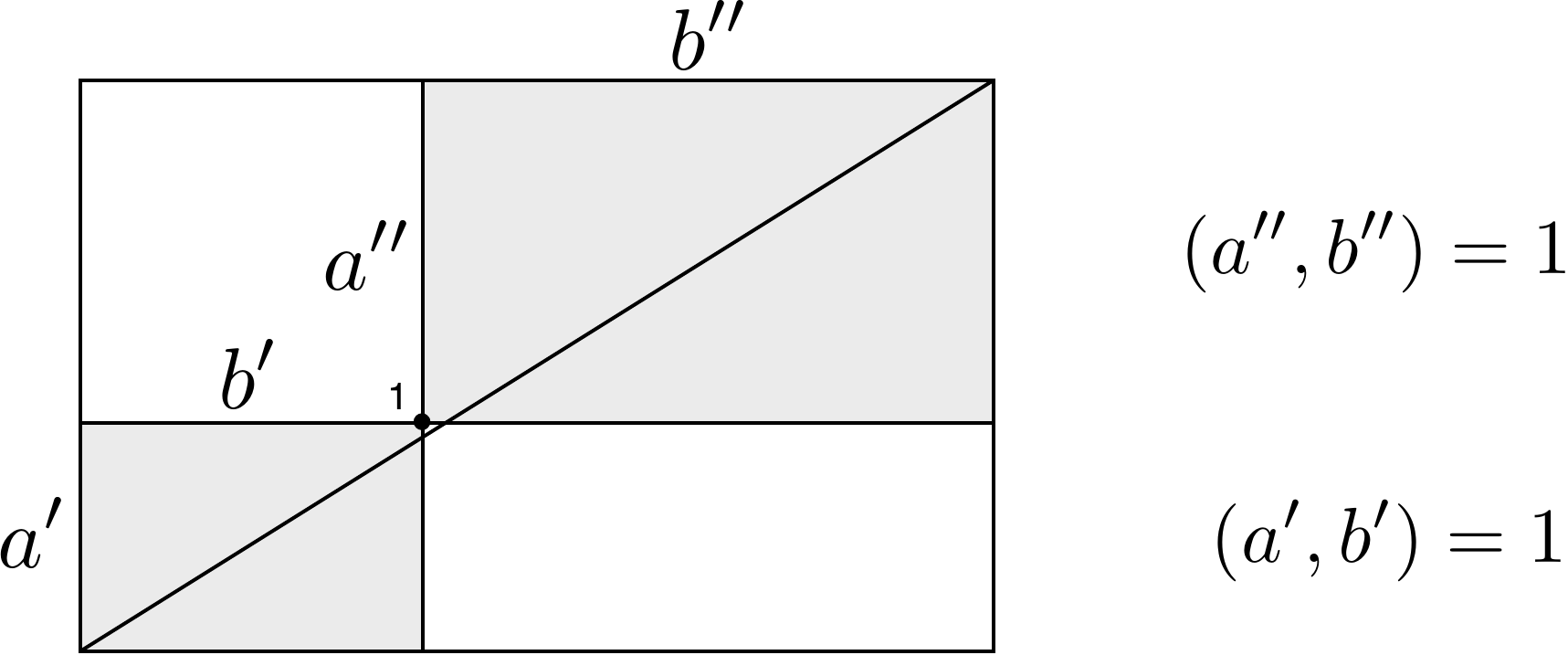}
\caption{Base induction for zeta inverse and the area-preserving involution.}
\label{fig:base_induction}
\end{figure}

\subsection{Zeta inverse}
Let $P$ be an $(a,b)$-Dyck path congaing the lattice point at level $1$, and let $P'$ and $P''$ be the two Dyck paths in the  $a'\times b'$ and $a'' \times b''$ rectangles whose concatenation is equal to~$P$.
Define the \emph{star product} of $P' \star P''$ as the path obtained by cutting~$P'$ at its highest level and infixing $P''$. This special product is illustrated in Figure~\ref{fig:star_product}. Note that the highest level of $P'$ can be equivalently obtained by sweeping the main diagonal of either the $a\times b$ rectangle or the $a'\times b'$ rectangle.

\begin{figure}[htbp]
\includegraphics[width=0.7\textwidth]{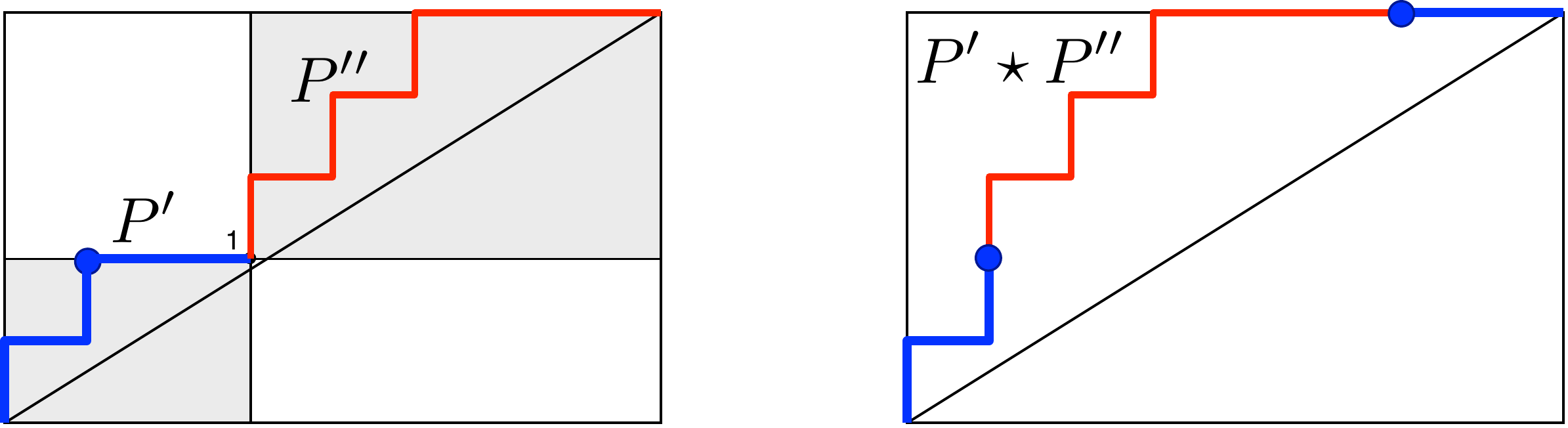}
\caption{Star product of rational Dyck paths.}
\label{fig:star_product}
\end{figure}

\begin{theorem}\label{thm:inductive_inverse}
If $Q$ is an $(a,b)$-Dyck path containing the lattice point at level 1, zeta inverse of $Q$ is equal to the star product of the zeta inverses of $Q'$ and $Q''$:
\[\zeta^{-1}(Q) = \zeta^{-1}(Q') \star \zeta^{-1}(Q''). \]
\end{theorem}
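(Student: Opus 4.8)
Set $P':=\zeta^{-1}(Q')$ and $P'':=\zeta^{-1}(Q'')$, so that $Q'=\zeta(P')$ is an $(a',b')$-Dyck path, $Q''=\zeta(P'')$ is an $(a'',b'')$-Dyck path, and $Q=Q'*Q''$ is their concatenation. It suffices to show that $P:=P'\star P''$ is an $(a,b)$-Dyck path with $\zeta(P)=Q'*Q''=Q$; the stated identity then follows. For a lattice point $(x,y)$ write $\ell(x,y)=by-ax$, $\ell'(x,y)=b'y-a'x$, $\ell''(x,y)=b''y-a''x$ for its level in the $a\times b$, $a'\times b'$, $a''\times b''$ rectangles; since $a=a'+a''$ and $b=b'+b''$ --- so that $a/b$ is the Farey mediant of the neighbours $a'/b'>a/b>a''/b''$, with $a'b''-a''b'=1$ --- one has $\ell=\ell'+\ell''$.

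The key preliminary step is a coherence lemma: on any $(a',b')$-Dyck path placed at the origin the order of the $\ell$-values of its lattice points coincides with the order of their $\ell'$-values, and on any $(a'',b'')$-Dyck path the $\ell$-order coincides with the $\ell''$-order. For the first, if $\ell'(p_1)<\ell'(p_2)$ but $\ell(p_1)\ge\ell(p_2)$, then $\ell=\ell'+\ell''$ makes $\ell''(p_1)>\ell''(p_2)$ as well; reading these as inequalities on the slope $\Delta y/\Delta x$ of the displacement $p_2-p_1$ gives either a direct contradiction with $a''/b''<a'/b'$, or the estimate $a''/b''<\Delta y/\Delta x<a'/b'$, which is impossible because $p_1,p_2$ lie in a box of width $b'<b$ and no fraction of denominator $<b$ lies strictly between the Farey neighbours $a''/b''$ and $a'/b'$ (their mediant $a/b$ already has denominator $b$); the case $\Delta x=0$ is immediate. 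The $(a'',b'')$ statement is symmetric. In particular the ``highest level'' used to define $\star$ is unambiguous, and the cut point $c$ of $P'$ is the strict $\ell$-maximiser of $P'$.

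Next I would decompose $P=P'_1\cdot\widetilde{P''}\cdot\widetilde{P'_2}$, where $P'=P'_1P'_2$ is cut at $c$, $\widetilde{P''}$ is $P''$ translated to begin at $c$, and $\widetilde{P'_2}$ is $P'_2$ translated to begin at $c+(b'',a'')$, and record how the translations act on levels: a point of $P'_1$ keeps its $\ell$-value; a point $c+(u,v)$ of $\widetilde{P''}$ has $\ell$-value $M+(bv-au)$ with $M:=\ell(c)$; a point of $\widetilde{P'_2}$ has its $\ell$-value lowered by exactly $\ell(b'',a'')=ba''-ab''=-1$. A short computation gives $\ell'\ge-1$ and $\ell''\ge0$ on any $(a'',b'')$-Dyck path, with $\ell''\ge1$ away from its two endpoints; hence $bv-au=\ell'+\ell''\ge-1$ on all of $\widetilde{P''}$, so $\widetilde{P''}$ stays at level $\ge M-1\ge0$ and $P$ is a genuine $(a,b)$-Dyck path, while $bv-au\ge0$ at every step-start of $P''$, so every step of $\widetilde{P''}$ starts at level $\ge M$. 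On the other hand every step of $P$ lying in $P'_1$ or in $\widetilde{P'_2}$ starts at level $\le M-1$ (in $P'_1$ its start is a point of $P'$ other than $c$, hence at level $<M$; in $\widetilde{P'_2}$ its start is the $(b'',a'')$-translate of a point of $P'$, hence at level $\le M-1$ since $c$ is the $\ell$-maximiser of $P'$). Therefore, upon sorting the barred level-word of $P$, every step inherited from $P'$ precedes every step inherited from $P''$, so $\zeta(P)$ is the concatenation of the path read off the sorted $P'$-block and the path read off the sorted $P''$-block.

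Finally, identify the blocks. A constant shift preserves relative order, so by the coherence lemma the barred level sequence of the $\widetilde{P''}$-steps is order-isomorphic to the $(a'',b'')$-reading word of $P''$, whence the $P''$-block sorts to $\zeta(P'')=Q''$. For the $P'$-block, the step levels are those of the corresponding steps of $P'$ with the entries from $P'_2$ lowered by $1$; the un-lowered sequence is order-isomorphic to the $(a',b')$-reading word of $P'$ by the lemma, and lowering that suffix by $1$ could create a tie or a transposition only if some step-start $p$ of $P'_1$ and step-start $q$ of $P'_2$ satisfied $\ell(q)=\ell(p)+1$. But $bv-au=1$ has integer solutions only $(u,v)=(b',a')+k(b,a)$, so inside a $b'\times a'$ box the only candidates for $q-p$ are $(b',a')$, which forces $q$ to be the endpoint $(b',a')$ of $P'$ (not a step-start), and $-(b'',a'')$, which forces $q$ to occur strictly before $p$ on the monotone path $P'$ (impossible). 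Hence no such pair exists, the $P'$-block sorts to $\zeta(P')=Q'$, and $\zeta(P)=Q'*Q''=Q$. I expect the coherence lemma (equivalently, the fact quoted in the text that the highest level of $P'$ is the same whether swept in the $a\times b$ or the $a'\times b'$ rectangle) to be the main obstacle; the level bookkeeping in the third paragraph and the short non-collision check above are then routine.
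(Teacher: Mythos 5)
Your proof is correct and follows the same strategy as the paper's: show that sweeping the $a\times b$ diagonal across $P'\star P''$ meets all levels coming from $P'$ before all levels coming from $P''$, so that $\zeta(P'\star P'')=\zeta(P')\ast\zeta(P'')$. The paper's own proof is only a few sentences and leaves implicit exactly the points you verify in detail (the Farey--mediant coherence of level orders between the big and small rectangles, and the $\pm1$ level bookkeeping showing no ties are created), so your write-up is a faithful, fleshed-out version of the same argument.
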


\begin{proof}
We will show that $\zeta(P'\star P'')$ is the concatenation of $\zeta(P')$ and $\zeta(P'')$, the theorem then follows by applying zeta to both sides of the equation. Since the path $P'$ is cut at its highest level, sweeping the main diagonal of the $a\times b$ rectangle crosses the levels of $P'\star P''$ corresponding to the path $P'$ first, followed by all the levels corresponding to the path $P''$. Therefore, $\zeta(P'\star P'')$ is the concatenation of $\zeta(P')$ and $\zeta(P'')$.
\end{proof}

\subsection{Area-preserving involution} The conjugate-area map of $Q$ can be obtained by induction in this case as well.

\begin{lemma}\label{lem:areadif_level}
Let $l$ be the level of a lattice point $p$ in the $a\times b$ grid. If $U_l$ is the rectangle composed by the boxes northwest of $p$ and $\tilde U_l$ is the rectangle composed by the boxes southeast of $p$, then 
\[
\area(\tilde U_l)-\area(U_l) = l.
\]  
\end{lemma}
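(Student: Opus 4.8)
The plan is to compute both areas explicitly in terms of the coordinates of $p$ and the dimensions of the grid, and then use the defining recursion of the level to identify the difference. Suppose $p = (x,y)$ in the $a\times b$ grid (so $0\le x\le b$ and $0\le y\le a$). Recall that the level of $p$ is obtained from the hook filling: it equals the hook value in the box to the northwest of $p$, and following the recursion that assigns $-ab$ to the box with lower-right corner $(b,0)$ and adds $a$ per step west and $b$ per step north, one gets the closed form $l = by - ax$. (I would verify this closed form by checking it on $(0,0)$, where it gives $0$, and on $(b,a)$, where it gives $ba-ab=0$, consistent with the stated behavior of the level.) The rectangle $U_l$ of boxes northwest of $p$ has dimensions $(a-y)\times x$, so $\area(U_l) = x(a-y)$; the rectangle $\tilde U_l$ of boxes southeast of $p$ has dimensions $y\times(b-x)$, so $\area(\tilde U_l) = y(b-x)$.

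Then the computation is immediate:
\[
\area(\tilde U_l) - \area(U_l) = y(b-x) - x(a-y) = by - xy - ax + xy = by - ax = l.
\]
So the whole argument reduces to (a) pinning down the correct closed form $l = by-ax$ for the level of the lattice point $(x,y)$, and (b) correctly reading off the two rectangle dimensions from the picture in Figure~\ref{fig:base_induction}.

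The only real obstacle is bookkeeping: making sure the orientation conventions (English Young-diagram convention, lower-left origin, which corner of $p$ the "northwest rectangle" and "southeast rectangle" touch) are applied consistently, so that the dimensions come out as $(a-y)\times x$ and $y\times(b-x)$ rather than some transposed or reflected variant. This is best handled by anchoring the closed form for the level on the two already-known values (level $0$ at the origin and at $(b,a)$) together with the step-wise recursion "$+b$ after a north step, $-a$ after an east step" stated in the notation section, which forces $l(x,y) = by - ax$ uniquely. Once that is fixed, the identity is a one-line algebraic cancellation and no further input is needed.
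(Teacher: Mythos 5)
Your proposal is correct and is essentially identical to the paper's proof: both write $p=(p_1,p_2)$, note $l=p_2b-p_1a$ from the level recursion, compute the two rectangle areas as $p_1(a-p_2)$ and $(b-p_1)p_2$, and observe the cross terms cancel. No difference in approach worth noting.
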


\begin{proof}
If $p=(p_1,p_2)$, then $l = p_2b-p_1a$. Furthermore,
\[
\area(\tilde U_l) - \area(U_l)= (b-p_1)p_2 - p_1(a-p_2) = p_2b-p_1a =l.
\]
\end{proof}

\begin{theorem}\label{thm:inductive_area}
Let $Q$ is an $(a,b)$-Dyck path containing the lattice point at level 1. The bounded partition of $\chi(Q)$ is the partition whose restriction to the $a'\times b'$ and $a''\times b''$ rectangles gives the bounded partitions of $\chi(Q')$ and $\chi(Q'')$, and which contains all boxes below the main diagonal outside the two rectangles.
\end{theorem}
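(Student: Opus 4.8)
The plan is to leverage Theorem~\ref{thm:inductive_inverse} together with the laser/interval descriptions of $\zeta$ and $\eta$ (Theorems~\ref{thm:laser} and~\ref{cor:zetaEta_intervals}) and the geometric identity in Lemma~\ref{lem:areadif_level}. Write $Q = \zeta(P)$ where, by Theorem~\ref{thm:inductive_inverse}, $P = P' \star P''$ with $P' = \zeta^{-1}(Q')$ and $P'' = \zeta^{-1}(Q'')$. Since $\chi(Q) = \eta(\zeta^{-1}(Q)) = \eta(P' \star P'')$, it suffices to understand how $\eta$ interacts with the star product. Recall $\eta$ sweeps the line of slope $\frac ab$ from the \emph{farthest} point back toward the diagonal, recording south and west steps; equivalently $\eta(P) = \zeta(P^c)$ bounds $\mu(P)^c$. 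Because $P'$ is cut at its highest level to infix $P''$, the reverse sweep of the $a\times b$ rectangle meets all the levels of $P''$ \emph{before} it meets the levels of the portion of $P'$ lying above the infixion point, but meets the lower portion of $P'$ only at the very end — so the reverse reading word $M(P'\star P'')$ decomposes into the part coming from $P''$ sandwiched inside the part coming from $P'$, mirroring the behavior in the proof of Theorem~\ref{thm:inductive_inverse}.

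First I would make precise the claim that the $a'\times b'$ and $a''\times b''$ rectangles sitting against the main diagonal (as in Figure~\ref{fig:base_induction}) partition, together with the boxes of the $a\times b$ grid lying \emph{below} the diagonal outside them, exactly the region available to $\chi(Q)$; this is the content of the two defining equations $a'b - b'a = 1$ and $b''a - a''b = 1$, which force the corner of the first rectangle to be the lattice point of level $1$ and make the two rectangles abut at that point. Next I would compute $\mu(P'\star P'')$. Using Lemma~\ref{lem:lambda_mu_inversions}(ii), $\mu_j$ counts flip skew inversions involving the east level $e_j$; the key point is that the star product re-indexes levels by shifting the $P'$-levels above the cut while leaving relative orders within each block intact, so the flip-skew-inversion count of an east step of $P'$ (resp.\ $P''$) inside $P'\star P''$ agrees with its count inside $P'$ (resp.\ $P''$) — no cross-block flip skew inversions arise, precisely because $P''$ occupies a contiguous band of levels strictly between the two extremes of that block. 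Hence $\mu(P'\star P'')$ is the ``concatenation'' (as a multiset of column-lengths, suitably placed) of $\mu(P')$ and $\mu(P'')$, and dually $\lambda(P'\star P'') = \lambda(P') \sqcup \lambda(P'')$ as already used implicitly. Conjugating, the partition bounded by $\chi(Q) = \eta(P'\star P'')$ restricts to $\mu(P')^c$ on one rectangle and $\mu(P'')^c$ on the other — i.e.\ to the bounded partitions of $\chi(Q')$ and $\chi(Q'')$ — while the remaining cells are exactly the below-diagonal boxes outside both rectangles.

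The bookkeeping that $\chi(Q)$ indeed ``contains all boxes below the main diagonal outside the two rectangles'' is where Lemma~\ref{lem:areadif_level} enters: since $\chi$ is area-preserving and $\area(Q) = \area(Q') + \area(Q'') + (\text{boxes below the diagonal in the complementary region})$ — the last term being computed by applying Lemma~\ref{lem:areadif_level} at the level-$1$ point and at the endpoints of the two rectangles — the count of cells forces the claimed shape once we know the restrictions to the two rectangles. I would close by checking that the resulting cell-set is genuinely a Young diagram of an $(a,b)$-Dyck path (weakly decreasing row lengths across the join), which follows because the below-diagonal filler cells are exactly the ones a path through the level-$1$ point must contain, and because $\chi(Q')$, $\chi(Q'')$ are themselves Dyck-path partitions in their rectangles.

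The main obstacle I anticipate is Step two: proving cleanly that the star product introduces \emph{no} cross-block (flip) skew inversions, i.e.\ that the level intervals of east steps coming from $P'$ and the level intervals of north steps coming from $P''$ (and vice versa) always intersect after the level shift. This is geometrically believable — $P''$ lives in a sub-band of levels pinched between the min and max of the $P'$-block — but turning ``pinched between'' into the interval-intersection inequality of Definition~\ref{def:intervals} requires care with the exact amount by which levels are shifted when $P'$ is cut and $P''$ infixed, and with the fact that the cut happens at $P'$'s \emph{highest} level (which, as the statement of the star product notes, is the same whether measured in the $a\times b$ or the $a'\times b'$ rectangle). Once that non-interaction is established, the conjugation and the area count are routine.
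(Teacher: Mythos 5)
Your reduction to understanding $\eta(P'\star P'')$ via flip skew inversions is a reasonable strategy, but the claim on which your whole Step two rests --- that the star product introduces \emph{no} cross-block flip skew inversions and leaves the within-block counts unchanged --- is false, and it is not a technicality: the cross-block inversions are precisely what produces the ``all boxes below the main diagonal outside the two rectangles'' component of the statement. A simple count shows the inconsistency. If $\mu(P'\star P'')$ were just the concatenation of $\mu(P')$ and $\mu(P'')$, then $\sl(P'\star P'')=|\mu(P'\star P'')|$ would equal $\sl(P')+\sl(P'')$. But $\sl(P'\star P'')=\coarea\bigl(\zeta(P'\star P'')\bigr)$, and since $\zeta(P'\star P'')$ is the concatenation of $\zeta(P')$ and $\zeta(P'')$ through the level-$1$ point, its co-area is $\sl(P')+\sl(P'')+|U_1|$, where $|U_1|=b'a''>0$ whenever the level-$1$ point is interior to the grid. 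Those $|U_1|=|\hat U_1|$ extra cells must come from interactions between the blocks. Concretely, take $a=2$, $b=3$, $Q=NENEE$, so $P=\zeta^{-1}(Q)=NNEEE=NE\star NEE$ with levels $(0,3,6,4,2)$: the unique flip skew inversion is the pair $(n,e)=(0,6)$, pairing the first north step (from $P'$) with an east step of the infixed $P''$ --- a cross-block pair --- and it accounts for the single box of $\hat U_1$. A secondary problem with the same step: the defining inequality $n_i+b<e_j-a$ uses the ambient rectangle's dimensions, so even for within-block pairs ``relative order preserved'' does not by itself transfer the inversion count from the $a'\times b'$ (or $a''\times b''$) rectangle to the $a\times b$ one; you would need to track the exact level shifts (the pre-cut $P'$-levels are unchanged, the $P''$-levels are shifted up by the cut level, and the post-cut $P'$-levels drop by exactly $1$).

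The approach could in principle be repaired by computing the cross-block contribution explicitly and showing that, after sorting east levels and conjugating, it fills in exactly $\hat U_1$ while the within-block contributions land in the two sub-rectangles --- but that is the entire content of the theorem, not the routine bookkeeping you defer to the end. The paper avoids this computation altogether: after the area check via Lemma~\ref{lem:areadif_level} (which you do have), it forms the cycle permutation $\gamma$ of Definition~\ref{def:iota} for the pair $(Q,\chi(Q))$ with $\chi(Q)$ \emph{defined} by the claimed partition, shows that $\gamma$ is obtained from $\gamma'$ by infixing $\gamma''$ (shifted by $a'+b'$) just before the highest value of $\gamma'$, and reads off from the cyclic descents that $\iota(Q,\chi(Q))=\zeta^{-1}(Q')\star\zeta^{-1}(Q'')=\zeta^{-1}(Q)$ by Theorem~\ref{thm:inductive_inverse}. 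If you want to keep the flip-skew-inversion route, you must prove a corrected version of your Step two in which the cross-block inversions are counted, not wished away.
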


\begin{proof}
We first observe that $\chi(Q)$ defined this way has the same area of $Q$. This is equivalent to show that the bounded partitions of $\chi(Q)$ and $Q$ have the same area, when restricted to the complement of the $a'\times b'$ and $a''\times b''$ rectangles. These restrictions are exactly the rectangle $\tilde U_1$ after removing the box on its upper left corner, and the rectangle $U_1$. The claim then follows by Lemma~\ref{lem:areadif_level}. 

 Now, let $\gamma'$ and $\gamma''$ be the cycle permutations arising from the pairs $(Q',\chi(Q'))$ and $(Q'',\chi(Q''))$, and $\gamma$ be the cycle permutation of $(Q,\chi(Q))$.
 The cycle permutation $\gamma$ can be obtained by cutting~$\gamma'$ exactly before its highest value and putting $\gamma''$ in between, with all its values increased by $a'+b'$. In the example in Figure~\ref{fig:inductive_area} we get
\begin{align*}
 \gamma' &= ({\color{blue} 1, 3\ |\ 5, 4, 2}) \\
 \gamma'' &= ({\color{red} 1, 3, 7, 5, 8, 6, 4, 2}) \\
 \gamma &= ( {\color{blue}1, 3,} \ {\color{red}6, 8, 12, 10, 13, 11, 9, 7} \ {\color{blue} 5, 4, 2})
\end{align*}

The cyclic descents of $\gamma$ correspond exactly to the cyclic descents of $\gamma'$ and $\gamma''$. Moreover, the descent at the highest value of $\gamma'$ corresponds to the east step at the highest level of $\zeta^{-1}(Q')$. Thus, replacing cyclic descents in $\gamma$ by east steps and ascents by north steps gives rise to the start product~$\zeta^{-1}(Q')\star \zeta^{-1} (Q'')$, which is equal to $\zeta^{-1}(Q)$ by Theorem~\ref{thm:inductive_inverse}. 
\end{proof}

\begin{figure}[htbp]
\includegraphics[width=0.3\textwidth]{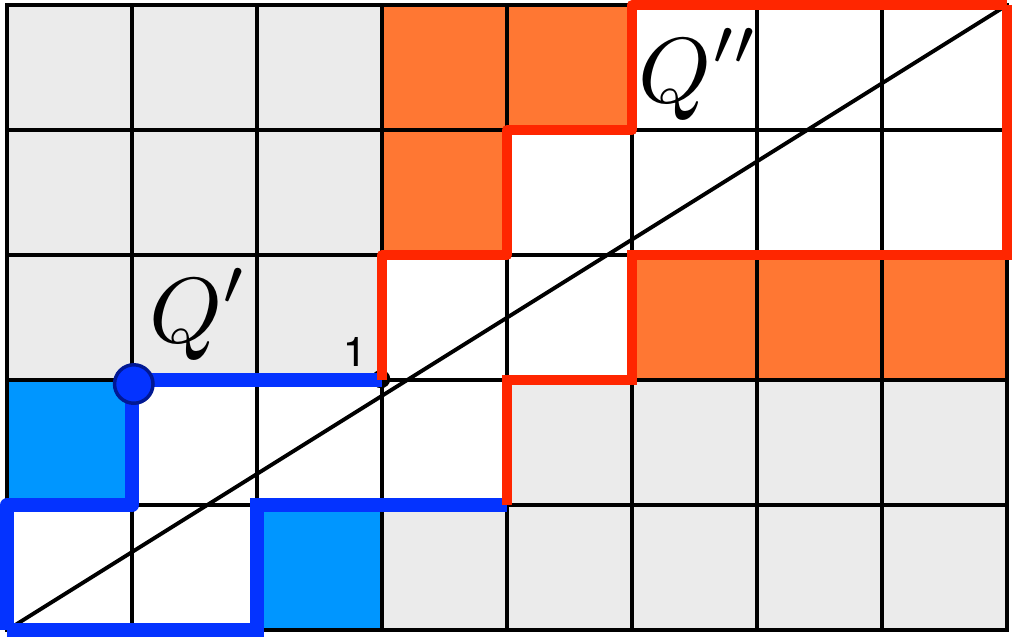}
\caption{The inductive conjugate-area map for paths containing level 1.}
\label{fig:inductive_area}
\end{figure}

\subsection{$k$th valley Dyck paths}
One interesting family of $(a,b)$-Dyck paths is the family of \emph{$k$th valley Dyck paths}, paths $Q_k$ with valleys at levels $0,1,2,\dots , k$ for some $k < a$. The area-conjugate map for these paths behaves very nice and can be described in terms of the rectangles $U_l$ and $\tilde U_l$ in Lemma~\ref{lem:areadif_level}.

For $0<l<a$, consider the collections of boxes $V_l$ and $\hat V_l$ defined by
\[
 V_l = U_l \smallsetminus \bigcup_{i=1}^{l-1} U_i, \hspace{1cm}
 \hat V_l = \hat U_l \smallsetminus \bigcup_{i=1}^{l-1} \hat U_i,
\]
where $\hat U_i$ is composed of the boxes of  $\tilde U_i$ that are below the main diagonal. Equivalently, $\hat U_i$ is the result of removing the box in the upper left corner of  $\tilde U_i$. An example is illustrated in Figure~\ref{fig:kthDyckpaths}.

\begin{lemma}
For $0<l<a$, the area of $V_l$ is equal to the area of $\hat V_l$.
\end{lemma}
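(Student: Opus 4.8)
The plan is to reduce the identity to a one‑line lattice‑geometry fact. Work in the $a\times b$ grid and assume $b>a$ (the case $b<a$ is symmetric). For $0<\ell<a$ there is a unique lattice point $p(\ell)$ of level $\ell$; if $y_\ell$ denotes its $y$‑coordinate then $p(\ell)=(\lfloor y_\ell b/a\rfloor,\,y_\ell)$, since $p(\ell)$ is the only point of row $y_\ell$ whose level lies in $[0,a)$. As $y\mapsto\lfloor yb/a\rfloor$ is weakly increasing, no $p(i)$ lies weakly northwest of another $p(j)$; in particular the box $b_i$ removed from $\tilde U_i$ to form $\hat U_i$ — the box having $p(i)$ as its northwest corner — lies in no other $\tilde U_j$, so $\bigcup_{i\le\ell}\hat U_i=\bigl(\bigcup_{i\le\ell}\tilde U_i\bigr)\setminus\{b_1,\dots,b_\ell\}$ and likewise $\hat V_\ell=\tilde V_\ell\setminus\{b_\ell\}$, where $\tilde V_\ell:=\tilde U_\ell\setminus\bigcup_{i<\ell}\tilde U_i$.

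First I would determine the shapes of $V_\ell$ and $\tilde V_\ell$. The region $\bigcup_{i<\ell}U_i$ is a Young‑diagram‑shaped region flush with the left and top sides of the grid, whose width in a given row equals $\lfloor yb/a\rfloor$ for the largest $y_i$ $(i<\ell)$ that is at most that row (and $0$ if there is none). Put $q=y_\ell$, and let $p<q<r$ be the two neighbours of $q$ in the set $\{0,a\}\cup\{y_i:i<\ell\}$; note that every row strictly between $p$ and $q$, or between $q$ and $r$, has level $>\ell$, since the rows of level $\le\ell$ are exactly $\{y_1,\dots,y_\ell\}$. A direct check now shows that $V_\ell$ is the rectangle occupying rows $q,\dots,r-1$ and columns $\lfloor pb/a\rfloor,\dots,\lfloor qb/a\rfloor-1$, while $\tilde V_\ell$ is the rectangle occupying rows $p,\dots,q-1$ and columns $\lfloor qb/a\rfloor,\dots,\lfloor rb/a\rfloor-1$ (interpreting the column range appropriately when $p=0$ or $r=a$). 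Since $\hat V_\ell=\tilde V_\ell\setminus\{b_\ell\}$ we get $\area(\hat V_\ell)=\area(\tilde V_\ell)-1$, so the lemma is equivalent to $\area(\tilde V_\ell)-\area(V_\ell)=1$.

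Writing $\phi(y)=\lfloor yb/a\rfloor$ and $P_y=(\phi(y),y)$, the two rectangle areas combine to
\[
\area(\tilde V_\ell)-\area(V_\ell)=(q-p)\,\phi(r)+(r-q)\,\phi(p)-(r-p)\,\phi(q),
\]
which is, up to sign, twice the area of the lattice triangle with vertices $P_p,P_q,P_r$. I would finish by proving this triangle is empty, i.e.\ contains no lattice point besides its three vertices, so that Pick's theorem gives it area $\tfrac12$ and the displayed expression equals $1$ (the sign is $+$, as $P_q$ lies on the far side of the segment $P_pP_r$ from the main diagonal). Emptiness follows from a width estimate: each of $P_p,P_q,P_r$ lies at horizontal distance $\lambda_p/a$, $\lambda_q/a$, $\lambda_r/a$ respectively to the left of the main diagonal, where $\lambda_y$ denotes the level of $P_y$ and $\lambda_p,\lambda_r<\ell=\lambda_q$. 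Since horizontal distance to the diagonal is an affine function, every point of the triangle lies within horizontal distance $\ell/a<1$ of the diagonal; hence each horizontal slice of the triangle has length $<1$ and contains at most one integer, and for a row $z$ strictly between $p$ and $r$ the only possible such integer is $\phi(z)$, which lies in the slice precisely when $\lambda_z\le\ell$ — and among these rows that happens only for $z=q$, where the integer is the vertex $P_q$ itself.

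The step I expect to demand the most care is the second paragraph: checking that $V_\ell$ and $\tilde V_\ell$ are exactly those rectangles, which requires a precise description of the staircase $\{p(i)\}$ and attention to the boundary cases $p=0$ and $r=a$. Granting that, the area computation and the empty‑triangle argument are short and self‑contained, and they in fact show that $\area(\tilde V_\ell)-\area(V_\ell)=1$ for every $\ell$ with $0<\ell<a$.
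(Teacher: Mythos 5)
Your route is genuinely different from the paper's. The paper proves this inductively: it reduces $V_\ell,\hat V_\ell$ to $V_1,\hat V_1$ inside the successively smaller $a'\times b'$ and $a''\times b''$ rectangles of Section~8 and invokes Lemma~\ref{lem:areadif_level} at each stage. You instead give a closed-form argument: identify $V_\ell$ and $\tilde V_\ell$ as explicit rectangles determined by the neighbouring levels $p<q<r$, and evaluate $\area(\tilde V_\ell)-\area(V_\ell)$ as twice the area of an empty lattice triangle via Pick's theorem and the width bound $\ell/a<1$. Those two steps (your second and third paragraphs) are correct, and in fact they nowhere use $b>a$: the rectangle descriptions and the triangle argument go through for all coprime $a,b$. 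This is a nice self-contained computation that the paper does not have.

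The gap is in your first paragraph, and "the case $b<a$ is symmetric" does not rescue it. Flipping or rotating the grid negates levels, so it does not carry the configuration $\{U_i,\tilde U_i\}_{i\le\ell}$ for $(a,b)$ to the one for $(b,a)$; there is no evident symmetry to appeal to. Concretely, for $(a,b)=(5,3)$ the level-$1$ and level-$4$ points are $p(1)=(1,2)$ and $p(4)=(1,3)$, so the box $b_1=[1,2]\times[1,2]$ lies in $\tilde U_4$, contradicting your claim that no $b_i$ lies in another $\tilde U_j$; worse, $\tilde U_4$ contains \emph{two} boxes not below the diagonal ($[1,2]\times[1,2]$ and $[1,2]\times[2,3]$), so the identification $\hat U_i=\tilde U_i\setminus\{b_i\}$ that you (and the paper's parenthetical "equivalently") use is simply false when $b<a$ and $i>b$. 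With your deletion convention one gets $\hat V_4=\{[1,2]\times[1,2]\}$ of area $1$ while $V_4=\emptyset$, so your reduction would prove a false identity; with the paper's primary definition ($\hat U_i=$ boxes of $\tilde U_i$ below the diagonal) the lemma is true, but your paragraph~1 no longer establishes $\area(\hat V_\ell)=\area(\tilde V_\ell)-1$. The fix is to drop the deletion description entirely: since $\hat U_i=\tilde U_i\cap D$ with $D$ the set of boxes below the diagonal, one has $\hat V_\ell=\tilde V_\ell\cap D$ directly, and it remains to check that the only box of your rectangle $\tilde V_\ell$ whose northwest corner has positive level is its top-left box (northwest corner $p(\ell)$) --- which follows from the same level bookkeeping you already use, because every row strictly between $p$ and $q$ has level exceeding $\ell$. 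With that repair your proof is complete for all coprime $a,b$.
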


\begin{proof}
Since $V_1=U_1$ and $\hat V_1=\hat U_1$, which is $\tilde U_1$ after removing one box, Lemma~\ref{lem:areadif_level} implies that~$V_1$ and $\hat V_1$ have the same area. The level 2 becomes level 1 in the smaller $a''\times b''$ rectangle, and~$V_2,\hat V_2$ are given by $U_1,\hat U_1$ in this smaller rectangle. Again, Lemma~\ref{lem:areadif_level} implies that $V_2$ and $\hat V_2$ have the same area. Continuing the same argument in the smaller rectangles that appear in the process finishes the proof.
\end{proof}

\begin{figure}[htbp]
\includegraphics[width=0.45\textwidth]{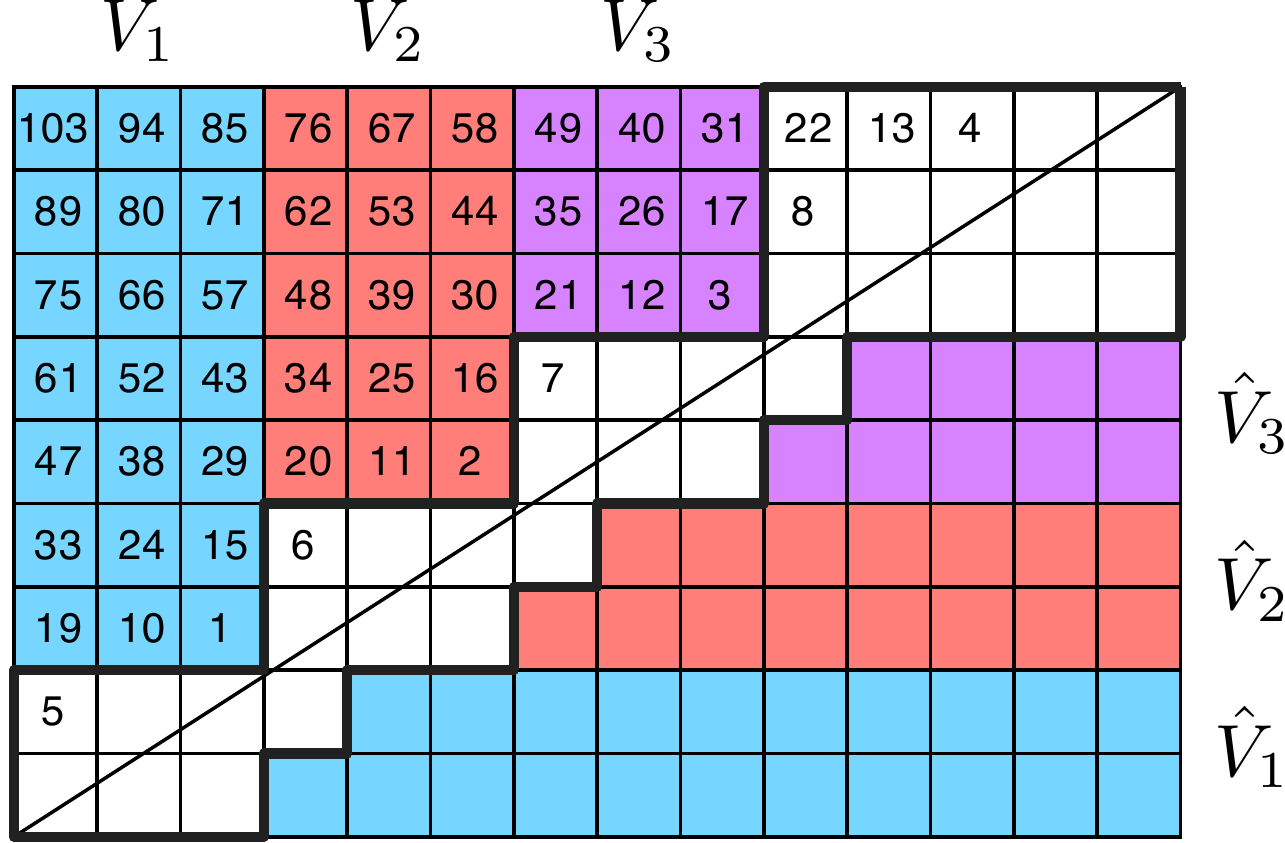}
\caption{Example of the conjugate-area map for $k$th valley Dyck paths for $k=3$. The area of $V_i$ is equal to the area $\hat V_i$.}
\label{fig:kthDyckpaths}
\end{figure}

Note that the bounded partition of $Q_k$ is the (disjoint) union of $V_1,\dots,V_k$. 

\begin{proposition}
The bounded partition of $\chi(Q_k)$ is the (disjoint) union of $\hat V_1,\dots,\hat V_k$.
\end{proposition}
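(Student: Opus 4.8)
The plan is to prove the proposition by induction on $k$, carried out uniformly over all coprime pairs $(a,b)$ with $a>k$, using the inductive description of the conjugate-area map from Theorem~\ref{thm:inductive_area}. For the base case $k=0$, the path $Q_0$ is the unique $(a,b)$-Dyck path with empty bounded partition (it is $N^aE^b$, and it does \emph{not} pass through the level-$1$ point, so Theorem~\ref{thm:inductive_area} does not apply and the case must be handled directly); since $\chi$ is area-preserving, $\chi(Q_0)$ also has empty bounded partition, which is exactly the empty union $\hat V_1\cup\cdots\cup\hat V_0$.

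For the inductive step, assume $k\ge 1$. Then $Q_k$ has a valley at level $1$, hence passes through $p_1=(b',a')$, so it splits as a concatenation $Q_k=Q'Q''$ with $Q'$ an $(a',b')$-Dyck path filling the $a'\times b'$ rectangle and $Q''$ an $(a'',b'')$-Dyck path filling the $a''\times b''$ rectangle. Tracking how levels behave through these sub-rectangles exactly as in the proofs of Theorem~\ref{thm:inductive_inverse} and of Lemma~\ref{lem:areadif_level}, the valleys of $Q_k$ at levels $2,\dots,k$ become valleys at \emph{consecutive} levels $1,\dots,k'$ inside the $a'\times b'$ rectangle and $1,\dots,k''$ inside the $a''\times b''$ rectangle, where $k',k''\ge 0$ and $k'+k''=k-1$; thus $Q'=Q_{k'}$ and $Q''=Q_{k''}$ are lower-order valley Dyck paths (with $k'<a'$, $k''<a''$), so the inductive hypothesis applies to each. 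Applying the identity ``$Q_j$ bounds $V_1\cup\cdots\cup V_j$'' in each sub-rectangle, together with the observation that the bounded partition of $Q_k$ is simply $U_1$ (all boxes northwest of $p_1$, which form the off-diagonal region outside the two rectangles) together with the bounded partitions of $Q'$ and $Q''$, one obtains a bijection between $\{2,\dots,k\}$ and the union of the index sets of the two sub-rectangles that carries each $V_l$ to the corresponding $V'_i$ or $V''_j$.

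To assemble the conclusion, invoke Theorem~\ref{thm:inductive_area}: the bounded partition of $\chi(Q_k)$ is the union of $\hat U_1=\tilde U_1\smallsetminus\{\text{its upper-left box}\}$ (the below-diagonal boxes outside the two rectangles), the bounded partition of $\chi(Q')$ placed in the $a'\times b'$ rectangle, and the bounded partition of $\chi(Q'')$ placed in the $a''\times b''$ rectangle. Here $\hat V_1=\hat U_1$ by definition, and by induction the other two pieces equal $\hat V'_1\cup\cdots\cup\hat V'_{k'}$ and $\hat V''_1\cup\cdots\cup\hat V''_{k''}$. The key point is that the \emph{same} bijection of indices used above for the $V$'s also identifies $\{\hat V_2,\dots,\hat V_k\}$ with $\{\hat V'_i\}\sqcup\{\hat V''_j\}$: for $l\ge 2$ the point $p_l$ lies in one of the two sub-rectangles and plays there the role of its level-$i$ point, while the sets $\tilde U_l,\hat U_l$, and hence $\hat V_l=\hat U_l\smallsetminus\bigcup_{i<l}\hat U_i$, are the same whether built in the whole $a\times b$ grid or in that sub-rectangle---because the only $\hat U_i$ meeting the off-diagonal region is $\hat U_1$, which absorbs it entirely; this is precisely the parallel of the phenomenon used for the $V_l$ in the proof of the lemma preceding this proposition. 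Combining, the bounded partition of $\chi(Q_k)$ equals $\hat U_1\cup(\hat V'_1\cup\cdots\cup\hat V'_{k'})\cup(\hat V''_1\cup\cdots\cup\hat V''_{k''})=\hat V_1\cup\hat V_2\cup\cdots\cup\hat V_k$, completing the induction.

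The main obstacle is the combinatorial bookkeeping in the inductive step: verifying that the valleys of $Q_k$ really descend to valleys at \emph{consecutive} levels in the two sub-rectangles (so that $Q'$ and $Q''$ are genuine lower-order valley Dyck paths), and that for $l\ge 2$ the box-sets $\tilde U_l,\hat U_l,\hat V_l$ are insensitive to whether one computes in the ambient grid or in the relevant sub-rectangle. Both are manifestations of the Euclidean-algorithm/continued-fraction structure already exploited in Theorem~\ref{thm:inductive_inverse} and Lemma~\ref{lem:areadif_level}, but spelling them out carefully---and keeping the $180^\circ$-rotation convention for below-diagonal, $\eta$-type data consistent with the one adopted in Theorem~\ref{thm:inductive_area}---is where the real work lies.
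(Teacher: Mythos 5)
Your proof is correct and follows essentially the same route as the paper's: induct on $k$, observe that $Q_k$ restricted to the $a'\times b'$ and $a''\times b''$ rectangles gives two lower-order valley Dyck paths, and apply Theorem~\ref{thm:inductive_area} together with the identification $\hat V_1=\hat U_1$ and the compatibility of the $\hat V_l$ with the sub-rectangles. You simply spell out (and honestly flag) the bookkeeping that the paper's two-sentence proof leaves implicit.
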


\begin{proof}
Note that the restriction of $Q_k$ to the $a'\times b'$ and $a''\times b''$ rectangles gives two smaller $k$th valley Dyck paths $Q'_{k'}$ and $Q''_{k''}$. The result then follows directly from Theorem~\ref{thm:inductive_area} by induction on~$k$.  
\end{proof}

Figure~\ref{fig:kthDyckpaths_inverse} illustrates an example of the inverse of the zeta map for $k$th valley Dyck paths obtained by applying Theorem~\ref{thm:inverse}.

\begin{figure}[htbp]
\includegraphics[width=0.9\textwidth]{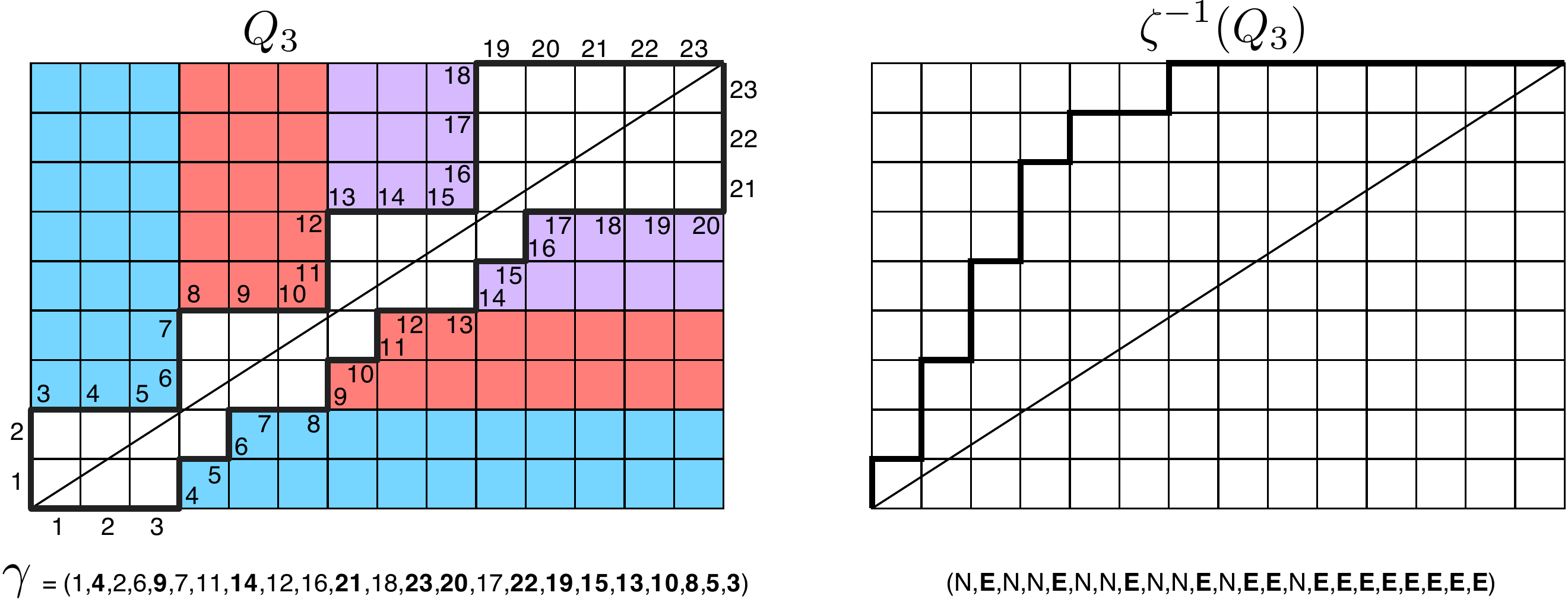}
\caption{Example of the inverse of the zeta map for $k$th valley Dyck paths for $k=3$.}
\label{fig:kthDyckpaths_inverse}
\end{figure}

\section{The delta statistic and initial bounce paths}\label{sec:9}

We have tried a number of approaches for showing that the zeta map is a bijection. Our last approach uses a delta statistic which can be estimated by means of an initial bounce path.

\begin{definition}
Define the \emph{delta statistic} $\delta(\P)$ to be the number of levels $l_i<a+b$ along $\P$. 
\end{definition}

Geometrically, $\delta(\P)$ counts the number of lattice points in $P$ that belong to the diagonal path (closest to the diagonal). Surprisingly, this is sufficient to construct the inverse of the zeta map.

\begin{theorem}\label{cor:delta_inverse}
If $\delta(\P)$ is determined uniquely by $\zeta(\P)$ for all $\P$, then the map $\zeta$ is invertible.
In such case, the inverse path $\P$ is determined from $\gamma(\P)$ as in Proposition~\ref{rem:zetainverse_delta}. 
\end{theorem}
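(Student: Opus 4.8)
The plan is to reduce the invertibility of $\zeta$ to the reconstruction of the reading permutation $\sigma(\P)$, using the delta statistic as the missing piece of data that $\zeta(\P)$ alone does not obviously encode. Recall from Remark~\ref{rem:sigma} and the proof of Theorem~\ref{thm:inverse} that $\P$ is recovered from $\sigma(\P)$ (equivalently from the cycle permutation $\gamma(\P)$), and that $\zeta(\P)$ records precisely the positions of the right cyclic descents of the reading word $L(\P)$. The difficulty, as noted after Theorem~\ref{thm:inverse}, is that a set of descent positions does not pin down a permutation. So the first step is to make precise what extra information $\delta(\P)$ provides: since $\delta(\P)$ counts the levels $l_i < a+b$ occurring along $\P$, and since the levels along $\P$ form a specific arithmetic-progression-like structure determined by $a$ and $b$, knowing $\delta(\P)$ together with the descent data of $L(\P)$ should let us locate, among the $a+b$ level values, exactly which ones are ``small'' (below $a+b$), thereby fixing where in the sorted order the cyclic structure must break.

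**Reconstructing $\gamma(\P)$ from $\zeta(\P)$ and $\delta(\P)$.**
Next I would carry out the reconstruction explicitly. Given $\Q = \zeta(\P)$, the north and east steps of $\Q$ tell us, after sorting the levels of $\P$ into increasing order, which sorted positions carry ``bars'' (east steps of $\P$) — this is exactly the inverse of the sorting step in Theorem~\ref{thm:zeta_sweep}. What remains undetermined is the cyclic starting point, i.e., where the reading word $L(\P)$ begins relative to this sorted list, which amounts to knowing the unsorting permutation. Here is where $\delta(\P)$ enters: the levels along $\P$ that are $< a+b$ are precisely the levels of the lattice points on the lowest ``staircase'' path, and their count $\delta(\P)$ determines the cardinality of the initial block of small levels. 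Combined with the fact (used repeatedly in Section~\ref{sec:zeta_map}) that successive levels along $\P$ differ by $+b$ on north steps and $-a$ on east steps, this should force a unique consistent reconstruction of the sequence $L(\P)$, hence of $\sigma(\P)$ and $\gamma(\P)$. The statement ``the inverse path $\P$ is determined from $\gamma(\P)$ as in Proposition~\ref{rem:zetainverse_delta}'' indicates that the explicit formula is deferred to that proposition, so in this theorem I would only need to argue that the reconstruction is well-defined and unique once $\delta(\P)$ is known.

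**Closing the argument.**
Finally, I would assemble the conditional statement: assuming the hypothesis that $\delta(\P)$ is a function of $\zeta(\P)$, we have for each $\Q$ in the image of $\zeta$ a computable value $\delta = \delta(\zeta^{-1}(\Q))$, and running the reconstruction above produces a unique candidate path $\P$ with $\zeta(\P) = \Q$. Uniqueness of the candidate shows $\zeta$ is injective; since $\zeta$ maps the finite set $\DD_{a,b}$ to itself, injectivity gives bijectivity, and the reconstruction simultaneously exhibits $\zeta^{-1}$. The main obstacle I anticipate is the combinatorial bookkeeping in the middle step: proving that the descent data of the sorted levels together with the single number $\delta(\P)$ really does determine $L(\P)$ uniquely — one must verify that no two distinct reading words can share both the same bar pattern after sorting and the same count of small levels, which likely requires exploiting the coprimality of $a$ and $b$ (so that all $a+b$ levels are distinct) and the rigidity of the $\pm b$, $-a$ step structure. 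The hypothesis itself — that $\delta$ is recoverable from $\zeta(\P)$ — is precisely the conjectural input the theorem quarantines, so I would not attempt to establish it here; the work is entirely in showing it suffices.
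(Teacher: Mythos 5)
Your reduction to reconstructing $\gamma(\P)$ and the closing injectivity-on-a-finite-set argument are fine, but the middle step is a genuine gap — and it is precisely the step you yourself flag as the main obstacle. You claim that the bar pattern encoded by $\Q=\zeta(\P)$ together with the \emph{single} number $\delta(\P)$ forces a unique reconstruction of $L(\P)$. Nothing in the proposal supports this: knowing which sorted ranks of the level set are north levels versus east levels is exactly the content of $\Q$, so asserting that one additional integer pins down the entire level set is essentially a restatement of the theorem with no mechanism behind it, and it is not established anywhere in the paper either. Note also that the hypothesis is that $\delta$ is a function of $\zeta(\P)$ \emph{for all} $\P$; your one-shot reconstruction only ever invokes $\delta$ of the original path, so it does not use the full strength of the hypothesis — a sign that the intended argument must be different.

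The paper's proof is iterative rather than one-shot. It introduces an operation $\P\mapsto\P'$ (the conjugate of removing the maximal box from $\P^c$, i.e.\ adding a box to $\P$ at level $0$) and proves two facts: the effect on $\gamma$ is conjugation by the cycle $\rho_{1,\delta(\P)}$ (Proposition~\ref{prop:predecessor1}), and the effect on $\Q=\zeta(\P)$ — producing the ``$\zeta$-predecessor'' $\Q'$ — is an explicit rearrangement of only the first $\delta(\P)$ steps of $\Q$ (Proposition~\ref{prop:predecessor2}). Under the hypothesis one computes $\delta_1$ from $\Q_1=\Q$, forms $\Q_2$, computes $\delta_2$ from $\Q_2$, and so on until reaching the known terminal path; then $\gamma(\P)=\rho\,\gamma_0\,\rho^{-1}$ with $\rho=\rho_{1,\delta_1}\cdots\rho_{1,\delta_{l-1}}$ as in Proposition~\ref{rem:zetainverse_delta}, and $\P$ is read off from the cyclic descents of $\gamma(\P)$. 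This is why the ``for all $\P$'' in the hypothesis is essential: $\delta$ must be recomputed at every stage of the descent. To repair your argument you would need either to supply the missing uniqueness proof for the one-shot reconstruction (which is not available) or to switch to this recursive scheme.
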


\subsection{Box math and the inclusion poset of rational Dyck paths}\label{sec:82}\
Our approach for proving Theorem~\ref{cor:delta_inverse} 
relies on careful analysis of the poset structure on the set of rational Dyck paths under the usual inclusion relation: we say $\P<\Q$ if the path $\P$ is weakly below the path $\Q$, or, equivalently, if the set of positive hooks of $\P$ is contained in the set of positive hooks of $\Q$.  This poset is graded by the area statistic, with covering relation given by adding a single box. 

\begin{definition}
The \emph{maximal level} $m$ of a path $\P$ is the largest level appearing in the reading word of $L(\P)$.  Likewise, the \emph{maximal box} is the box under the peak of $\P$ labeled by the maximal level~$m$.  For any path with area greater than $0$, we define the \emph{predecessor} of $\P$ as the path obtained by removing the box under the peak of $\P$ farthest from the diagonal. This replaces the maximal level $m$ with~$m-a-b$ in $L(P)$.  
\end{definition}

\begin{lemma}
Suppose that $\P$ is an $(a,b)$-Dyck path with predecessor $\P'$. We have~$\sl(\P')<\sl(\P)$.
\end{lemma}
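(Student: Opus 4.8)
The plan is to track precisely how the north levels and east levels change under the predecessor operation, read off $\sl(\P)-\sl(\P')$ as a single nonnegative count using the flip skew inversion form of skew length (Theorem~\ref{thm:skewLength-skewInversions}), and then argue that count is positive by a short ``level crossing'' observation.

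Let $p_1$ be the peak of $\P$ carrying the maximal level $m$, let $p_0$ be the lattice point just before it, so that $p_0\to p_1$ is a north step and $p_0$ has level $\ell:=m-b$, and set $p_2:=p_1+(1,0)$ and $p_0':=p_0+(1,0)$, whose level is $\ell-a=m-a-b$. Passing to the predecessor replaces the local piece $p_0\to p_1\to p_2$ (a north step then an east step) by $p_0\to p_0'\to p_2$ (an east step then a north step), and changes nothing else. The box that disappears has hook $m-a-b$; since $\P$ has a predecessor this box lies strictly above the diagonal, i.e.\ $\ell-a\ge 1$. The only lattice point of $\P$ absent from $\P'$ is $p_1$ (level $m$), replaced by $p_0'$ (level $\ell-a$), and the only lattice point common to both whose role changes is $p_0$, which begins a north step in $\P$ but an east step in $\P'$ (membership in $\N$ versus $\E$ depends only on the outgoing step). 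Hence
\[
\N(\P')=\bigl(\N(\P)\setminus\{\ell\}\bigr)\cup\{\ell-a\},\qquad
\E(\P')=\bigl(\E(\P)\setminus\{m\}\bigr)\cup\{\ell\},
\]
all other north and east levels being unchanged; also $m=\ell+b$ is the largest level occurring on $\P$.

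Next I would compute $\sl(\P)-\sl(\P')$ using $\sl=\#\{(n,e)\in\N\times\E: n+a+b<e\}$. Writing $\N_0:=\N(\P)\cap\N(\P')$ and $\E_0:=\E(\P)\cap\E(\P')$ and splitting each of $\sl(\P)$, $\sl(\P')$ according to whether the north and east levels lie in $\N_0,\E_0$ or are the distinguished ones, every pairing against $\ell-a$, $\ell$, or $m$ contributes $0$ except those of the form $(n\in\N_0,\ e=m)$ for $\P$ and $(n\in\N_0,\ e=\ell)$ for $\P'$: indeed $m$ is the maximal level while $\ell+a+b=m+a$ exceeds it, and $(\ell-a)+a+b=m$. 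The two surviving terms collapse to
\[
\sl(\P)-\sl(\P')=\#\{n\in\N_0: n<\ell-a\}-\#\{n\in\N_0: n<\ell-a-b\}=\#\{n\in\N(\P): \ell-a-b\le n<\ell-a\}
\]
(the last equality because $\ell\notin[\ell-a-b,\ell-a)$). I expect this bookkeeping to be fiddly but entirely routine; keeping the four-way splits straight is the only subtlety.

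It remains to show the right-hand count is at least $1$, and this is the real content. The path $\P$ starts at level $0<\ell-a$ and later attains level $m>\ell-a$, so it has a first lattice point $v$ of level $\ge\ell-a$. The step entering $v$ raises the level, hence is a north step, so its initial point $u$ lies in $\N(\P)$; and $\mathrm{level}(u)=\mathrm{level}(v)-b$ lies in $[\ell-a-b,\ell-a)$, since $\mathrm{level}(v)\ge\ell-a$ while $\mathrm{level}(u)<\ell-a$ by minimality of $v$. Thus $\N(\P)$ meets the window $[\ell-a-b,\ell-a)$, so $\sl(\P)-\sl(\P')\ge 1$, i.e.\ $\sl(\P')<\sl(\P)$.
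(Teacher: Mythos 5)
Your proof is correct, and it takes a genuinely different route from the paper's. The paper argues in one line via the laser filling: removing the maximal box deletes that box's laser filling and leaves the laser fillings of all other boxes untouched (their lasers sit at levels at most $m-a-b$, below the band swept by the two relocated steps), so the sum of laser fillings --- which equals $\sl$ --- drops by the maximal box's filling. You instead work from the flip-skew-inversion formula of Theorem~\ref{thm:skewLength-skewInversions}, track exactly how $\N(\P)$ and $\E(\P)$ change under the local $NE\to EN$ swap at the maximal peak, and collapse the difference to the single count $\#\{n\in\N(\P):\ell-a-b\le n<\ell-a\}$; your four-way bookkeeping and the maximality of $m$ are used correctly throughout. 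What your version buys is that strictness is made fully explicit: your ``first lattice point of level $\ge\ell-a$'' witness shows the count is at least $1$, whereas the paper's argument yields only $\sl(\P')\le\sl(\P)$ unless one also observes (as the paper leaves implicit) that the maximal box's laser filling is positive --- which is essentially the same crossing observation you make. What the paper's version buys is brevity and the absence of any case analysis on which pairs survive. The two are in fact two faces of the same quantity: the laser filling of the maximal box counts exactly the north steps of $\P$ crossing level $m-a-b$, i.e.\ the north levels in your window $[\ell-a-b,\ell-a)$.
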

\begin{proof}
Since~$\P'$ is obtained by removing the maximal box of~$\P$, the laser filling of $\P'$ is equal to the laser filling of $\P$ when removing the laser filling of its maximal box. Since skew length is equal to the sum of the entries in the laser filling, the result follows.
\end{proof}

Since every path has a unique maximal hook, we induce a spanning tree $T$ in the Hasse diagram of $\DD$ with the property that if $\P'<\P$ in $T$, then $\sl(\P')<\sl(\P)$.
We can also precisely describe the combinatorial effect of removing the maximal box from $\P$ on~$\zeta(\P)$.

\begin{proposition}
All of the following operations are equivalent ways to remove the maximal box:
\begin{enumerate}
\item Remove the box whose associated hook length is greatest (furthest box from the diagonal).
\item Remove the longest row from $\core(\P)$.
\item In the reading word $(l_1, l_2, \ldots, l_{a+b})$ of $\P$, reduce the maximal level $m$ by $(a+b)$, leaving all other levels unchanged.
\item In the standardization $\sigma(\P)$, let $\alpha$ be the number of levels of~$\P$ greater than $m-a-b$ excluding $m$.  Replace the entry $(a+b)$ in $\sigma$ with $a+b-\alpha$, and increase all entries greater than or equal to $a+b-\alpha$ by one.  Equivalently, multiply $\sigma(\P)$ on the left by the cycle permutation $\rho_{a+b-\alpha,a+b}$ with cycle notation $(a+b-\alpha, a+b-\alpha+1, \ldots, a+b)$.
\item Conjugate the permutation $\gamma(\P)$ by the cycle $\rho_{a+b-\alpha,a+b}$ to obtain:
\[\rho_{a+b-\alpha,a+b} \gamma(\P)  \rho_{a+b-\alpha,a+b}^{-1}.\]
\end{enumerate}
\end{proposition}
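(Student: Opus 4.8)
The plan is to prove the proposition as a chain of equivalences anchored at operation (1) (removing the box of largest hook length): I would show (1) produces the same result as (2) and as (3), and that (3) then forces (4) and (5). Write $\P'$ for the path obtained from $\P$ by (1). Recall from Section~\ref{sec.notation} that the level of a lattice point $(x,y)$ is $yb-xa$, that the hook of a unit box equals the level of its southeast corner, and that the reading word $L(\P)$ lists the levels of the initial lattice points of the steps of $\P$ in order; I will use these freely.

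For (1)$\iff$(2): by Anderson's bijection the positive hooks of $\P$ are precisely the leading hooks of $\core(\P)$, and, as in the discussion of the row length filling in Section~\ref{sec:length_filling}, a partition is recovered from its set $S$ of leading hooks by listing $S$ in increasing order as the first-column hook lengths from bottom to top, the $i$th shortest row then having length $h-(i-1)$ with $h$ its leading hook. Deleting $\max S$ from $S$ therefore deletes exactly the longest row, and the partition so produced is a genuine $(a,b)$-core --- namely $\core(\P')$ --- since its leading-hook set is the set of positive hooks of the honest Dyck path $\P'$. Hence (1) and (2) are the same operation.

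For (1)$\iff$(3): the box removed by (1) is the unit box whose left edge is the north step into the maximal peak $p=(x_0,y_0)$ and whose top edge is the east step leaving $p$; removing it turns the local pattern $NE$ at $p$ into $EN$, so $\P'$ is again a monotone lattice path. Its southeast corner is $(x_0+1,y_0-1)$, of hook $H=m-a-b$ where $m$ is the level of $p$; one checks that $m=\max\E(\P)$ and that every positive hook is at most $H$, so this really is the box of largest hook, and since $H\ge 1$ whenever $\area(\P)>0$ we get $m\ge a+b+1$ and the new intermediate point has level $H\ge 1>0$, so $\P'$ is a bona fide $(a,b)$-Dyck path. Reading levels along $\P'$ then shows that $L(\P')$ is $L(\P)$ with the single entry $m$ replaced by $m-a-b$ and all other entries unchanged, which is exactly (3). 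Once (1), (2), (3) are known to yield the same $\P'$, statement (4) becomes simply ``$\sigma(\P')$ equals that operation applied to $\sigma(\P)$'' and (5) the analogous assertion for $\gamma$, so both reduce to a formal computation.

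For (3)$\Rightarrow$(4)$\Rightarrow$(5): since $m$ is the largest level of $\P$, its standardized value in $\sigma(\P)$ is $a+b$; let $\alpha$ be the number of levels of $\P$ strictly between $m-a-b$ and $m$. In $L(\P')$ the value $m-a-b$ is preceded by exactly $a+b-\alpha-1$ smaller entries, so it receives standardized value $a+b-\alpha$, the $\alpha$ levels in $(m-a-b,m)$ --- which had standardized values $a+b-\alpha,\dots,a+b-1$ --- each rise by one, and all other standardized values are fixed; as an action on the \emph{values} of $\sigma(\P)$ this is precisely left-multiplication by the cycle $\rho_{a+b-\alpha,\,a+b}=(a+b-\alpha,\,a+b-\alpha+1,\,\dots,\,a+b)$, which is (4). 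Finally $m\ge a+b+1$ forces $a+b-\alpha\ge 2$, so $\rho_{a+b-\alpha,a+b}$ fixes $1$; since the one-line word of $\sigma(\P)$ begins with $1$ and $\gamma(\P)$ is that word read as a cycle, applying $\rho$ to every letter of the one-line word is the same as conjugating the cycle, giving $\gamma(\P')=\rho_{a+b-\alpha,a+b}\,\gamma(\P)\,\rho_{a+b-\alpha,a+b}^{-1}$, which is (5). I expect this last paragraph to be the main obstacle --- not conceptually, but in the care it demands: pinning down $\alpha$ with the ``excluding $m$'' clause correct, confirming the induced rule on $\sigma$ is left- rather than right-multiplication and that the cycle runs as $(a+b-\alpha,\dots,a+b)$ and not its inverse, and checking the edge inequality $a+b-\alpha\ge 2$ that keeps the conjugated cycle anchored at $1$; the geometric inputs ($m=\max\E(\P)$, $H=m-a-b$, removability of the box, $H\ge 1$) are routine given the hook/level dictionary but must be stated, since they are what makes $\P'$ a legitimate $(a,b)$-Dyck path.
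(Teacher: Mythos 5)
Your proof is correct and follows the same route as the paper's own (very terse) argument: anchor everything at operation (1), get (2) from the leading-hook description of $\core(\P)$, get (3) by tracking how the $NE\to EN$ swap at the maximal peak changes the level word, and then read off (4) and (5) as the induced effect on the standardization and on the cycle $\gamma$. You simply supply the details (the identification of the maximal hook as $m-a-b$, the rank bookkeeping giving $a+b-\alpha$, and the check that $\rho_{a+b-\alpha,a+b}$ fixes $1$) that the paper leaves implicit.
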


\begin{proof}
The second operation follows directly from the definition of $\core(\P)$.  The third method is clear from the effect on the labels $l_i$ of applying the first method.  The fourth item follows from third, and the fifth item follows from the effect of conjugation on the cycle notation of a permutation.
\end{proof}

We can thus try to understand the structure of the tree $T$ by understanding certain conjugations of the permutation $\gamma(\P)$.
We can observe that adding the box at the label $l_1=0$ is equivalent to removing the maximal box from $\P^c$.  This reduces all labels $l_i$ by $a+b$ except for the label $l_1=0$, which remains the same.  As a result, the relative value of the label $0$ is increased from $1$ to 
the number $\delta(P)$ of labels $l_i<a+b$, 
while all other labels $\leq \delta(P)$ are reduced by one.  

\begin{proposition}\label{prop:predecessor1}
Let $\P'$ be the conjugate of the path obtained by removing the maximal box from~$\P^c$. The permutation~$\gamma(\P')$ is the conjugate 
\[
\gamma(\P') = \rho_{1,\delta(\P)}^{-1} \gamma(\P) \rho_{1,\delta(\P)}.
\]
\end{proposition}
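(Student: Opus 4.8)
The plan is to reduce the identity to a statement about how the rank (relative value) of each level of $\P$ changes in passing to $\P'$, and then to extract that statement from the observation recorded just before the Proposition. Write $L(\P)=(l_1,\dots,l_{a+b})$ with $l_1=0$, and for a level $\ell$ of $\P$ let $r_\P(\ell)$ denote its rank among the $a+b$ levels of $\P$, so that $\sigma(\P)$ sends position $i$ to $r_\P(l_i)$. Unwinding the definition of $\gamma(\P)$ from $\sigma(\P)$, this permutation is characterized---independently of where one begins reading the cyclic reading sequence---by $\gamma(\P)\bigl(r_\P(l_i)\bigr)=r_\P(l_{i+1})$ for all $i$ modulo $a+b$. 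Hence $\gamma(\P)$, and likewise $\gamma(\P')$, depends only on the cyclic order of the levels, and it suffices to compare that order for $\P$ and $\P'$.

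By the observation preceding the statement, $\P'$ is obtained from $\P$ by leaving the cyclic sequence of steps in place and changing only the level $l_1=0$, replacing it by $a+b$ (equivalently, up to a global shift that does not affect ranks, decreasing every other level by $a+b$). The level $0$ is the strict minimum of the levels of $\P$---for $\gcd(a,b)=1$ the only lattice points of $\P$ on the diagonal are $(0,0)$ and $(b,a)$---so $r_\P(l_1)=1$; and $a+b$ is larger than precisely those remaining levels that are $<a+b$, of which there are $\delta(\P)-1$, since $\delta(\P)$ counts \emph{all} levels $l_i<a+b$ including $l_1=0$ itself. For the untouched levels all pairwise comparisons are unchanged, the new value $a+b$ exceeds $l_i$ exactly when $l_i>a+b$, and $r_\P(l_i)\le\delta(\P)$ if and only if $l_i<a+b$ (here $a+b$ is never itself a level, since otherwise $\P'$ would have a repeated level). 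Therefore, writing $\rho:=\rho_{1,\delta(\P)}=(1,2,\dots,\delta(\P))$, the rank $r'_i$ in $\P'$ of the level carried by its $i$-th step satisfies
\[
r'_i=\rho^{-1}\bigl(r_\P(l_i)\bigr)\qquad\text{for every }i,
\]
because $\rho^{-1}=(\delta(\P),\delta(\P)-1,\dots,1)$ sends $1$ to $\delta(\P)$, lowers every value in $\{2,\dots,\delta(\P)\}$ by one, and fixes every value above $\delta(\P)$---exactly the transformation just described.

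Finally, since the cyclic successor of the $i$-th step of $\P'$ is the $(i+1)$-st step, the same characterization of $\gamma$ gives $\gamma(\P')(r'_i)=r'_{i+1}$, and combining this with the previous display we get, for every $i$,
\[
\gamma(\P')\bigl(\rho^{-1}(r_\P(l_i))\bigr)=\gamma(\P')(r'_i)=r'_{i+1}=\rho^{-1}\bigl(r_\P(l_{i+1})\bigr)=\rho^{-1}\bigl(\gamma(\P)(r_\P(l_i))\bigr).
\]
As $r_\P(l_i)$ ranges over all of $[a+b]$, this says $\gamma(\P')\circ\rho^{-1}=\rho^{-1}\circ\gamma(\P)$, i.e. $\gamma(\P')=\rho^{-1}\gamma(\P)\rho=\rho_{1,\delta(\P)}^{-1}\,\gamma(\P)\,\rho_{1,\delta(\P)}$, which is the asserted identity. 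The genuinely delicate input, the observation about how the reading word of $\P^c$ behaves under removal of the maximal box, is already established in the text; granting it, the computation is short, and the only place that needs real care is the bookkeeping around $\delta(\P)$---that $l_1=0$ is one of the $\delta(\P)$ levels below $a+b$, so that only $\delta(\P)-1$ \emph{other} levels lie below $a+b$---together with fixing the orientation of $\rho_{1,\delta(\P)}$ versus its inverse. I would check this last point against the running example, where $\delta(\P)=5$ and conjugating $\gamma(\P)$ by $\rho_{1,5}=(1,2,3,4,5)$ does reproduce $\gamma(\P')$.
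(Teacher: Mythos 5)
Your argument is correct and follows the same route the paper takes: the paper justifies this proposition only by the observation in the preceding paragraph (that the operation fixes the cyclic positions of the levels while the label $0$ jumps to relative rank $\delta(\P)$ and the other ranks $\le\delta(\P)$ drop by one), and your proof is a careful fleshing-out of exactly that rank bookkeeping, including the correct identification of the rank map with $\rho_{1,\delta(\P)}^{-1}$ and the rotation-invariance of the cycle notation for $\gamma$. The orientation of the conjugation checks out against the running example ($\delta(\P)=5$), so there is nothing to change.
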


The action of removing the maximal box from~$\P^c$ on~$Q=\zeta(\P)$ is also completely determined by~$\delta(\P)$. For simplicity, we call the path $\Q'=\zeta(P')$ the \emph{$\zeta$-predecessor} of $Q$. 
\begin{proposition}\label{prop:predecessor2}
The $\zeta$-predecessor of $\Q=\zeta(P)$ is completely determined by $\delta=\delta(P)$ as follows:
\begin{enumerate}[1.]
\item All the steps in $\Q$ after the first $\delta$ steps remain unchanged.
\item The first $\delta$ steps are rotated as follows:
\begin{enumerate}[(a)]
\item the first east step that appears is changed to a north step,
\item the first north step is changed to an east step,
\item the first $\delta$ steps are rotated once (rotating the first step to the end of the first delta steps). 
\end{enumerate}
\end{enumerate}
\end{proposition}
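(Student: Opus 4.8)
The plan is to reduce the statement to Proposition~\ref{prop:predecessor1} and Proposition~\ref{prop.exc}, together with a careful analysis of how conjugation by the cycle $\rho := \rho_{1,\delta}$, where $\delta = \delta(\P)$, permutes the exceedance set of $\gamma(\P)$. By Proposition~\ref{prop:predecessor1} we have $\gamma(\P') = \rho^{-1}\gamma(\P)\rho$, and by Proposition~\ref{prop.exc} the north steps of $\zeta(\P)$ occur precisely at the exceedance positions of $\gamma(\P)$, and likewise for $\Q' = \zeta(\P')$ and $\gamma(\P')$. So it suffices to track how the exceedance positions change under conjugation by $\rho$ and then read off the resulting path.

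First I would handle the tail. Since $\rho$ fixes each of $\delta+1,\dots,a+b$, for $i>\delta$ we have $\gamma(\P')(i)=\rho^{-1}(\gamma(\P)(i))$; if $\gamma(\P)(i)>\delta$ this is unchanged, while if $\gamma(\P)(i)\le\delta$ then $i$ is a deficiency of both permutations. Hence the steps of $\Q'$ in positions $\delta+1,\dots,a+b$ agree with those of $\Q$, which is part~1 of the statement.

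For $1\le i\le\delta$ I would use $\gamma(\P')(i)=\rho^{-1}\bigl(\gamma(\P)(\rho(i))\bigr)$ with $\rho(i)=i+1$ for $i<\delta$ and $\rho(\delta)=1$. The decisive observation is that $\rho^{-1}$ preserves the linear order on $\{2,3,\dots,a+b\}$ while moving the value $1$ up to the slot just above $\delta$; consequently the only comparisons ``$i$ versus $\gamma(\P)(i)$'' whose outcome can change are those in which the value $1$ is one of the two numbers. Exactly two such places occur, and I would pin them down. First, $\gamma(\P)(1)$ equals the number of levels of $\P$ not exceeding $b$ (this is how $\gamma(\P)$ records the second level $b$ of $\P$, reached after its forced initial north step); since $b<a+b$ this count is at most $\delta$, and comparing it against the relabelling forces the $\delta$-th step of $\Q'$ to be an east step. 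Second, $\gamma(\P)^{-1}(1)$ equals the number of levels of $\P$ not exceeding $a$ --- immediate from the cycle description $\gamma(\P)=(r_1,\dots,r_{a+b})$, where $r_k$ is the rank of the $k$-th level of $\P$ and $r_1=1$, together with $l_{a+b}=a$ --- and because every east level of $\P$ is at least $a$ with equality only at the last step, this same count is the position of the first east step of $\zeta(\P)$; the sign change there turns that east step into a north step. For all remaining $i<\delta$ the comparison is unaffected, so step $i$ of $\Q'$ equals step $i+1$ of $\Q$.

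Finally I would assemble these facts: beyond position $\delta$ nothing changes; the $\delta$-th step of $\Q'$ is an east step; and for $1\le i<\delta$ step $i$ of $\Q'$ is step $i+1$ of $\Q$, except that the step immediately preceding the first east step of $\Q$ is forced to be north. A short direct check shows this is exactly the three-move recipe in the statement --- change the first east step to a north step, change the first north step (which is step~$1$, since every Dyck path begins with a north step) to an east step, then cyclically rotate the first $\delta$ steps by one. The main obstacle is the second point above: identifying the uniquely flipped position as the first east step of $\zeta(\P)$, which needs both the cycle-notation form of $\gamma(\P)$ and the fact that $a$ is the minimal east level of $\P$ and is attained only at the last step; the rest is bookkeeping about orders and off-by-one shifts.
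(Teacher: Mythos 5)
Your argument is correct, and it lands on exactly the two special positions that drive the paper's proof, but it travels a different road to get there. The paper argues one level of abstraction earlier, directly from the sweep-map description of $\zeta$ (Theorem~\ref{thm:zeta_sweep}): the predecessor operation replaces the level $\ell_1=0$ by $a+b$ while leaving all other levels fixed, so in the sorted list of levels the rank-$1$ entry (a north level, hence the first step of $\Q$) migrates to rank $\delta$ and turns into an east level, the east level $a$ (whose rank is the position of the first east step of $\Q$, since $a$ is the minimal east level) turns into a north level, and all other levels below $a+b$ drop one rank; the three-move recipe is then read off immediately. You instead take Proposition~\ref{prop:predecessor1} and Proposition~\ref{prop.exc} as black boxes and compute how conjugation by $\rho_{1,\delta}$ transforms the exceedance set of $\gamma(\P)$, using that $\rho_{1,\delta}^{-1}$ is order-preserving on $\{2,\dots,a+b\}$ so that only the comparisons involving the value $1$ can flip; your identifications $\gamma(\P)(1)=\mathrm{rank}(b)$ and $\gamma(\P)^{-1}(1)=\mathrm{rank}(l_{a+b})=\mathrm{rank}(a)$ are precisely the paper's ``level $0$'' and ``level $a$'' steps in permutation clothing. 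The trade-off: your version is a self-contained deduction from two earlier numbered propositions (useful if one wants the statement to follow formally from what is already proved), at the cost of the relabelling bookkeeping; the paper's version is shorter and makes the geometric content (which level changes type, and why it is the first N and first E step of $\Q$) visible without passing through $\gamma$.
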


\begin{example}
An example of this procedure is illustrated in Figure~\ref{fig:delta_Q_predecessor} for the path~$\Q=\zeta(\P)$ associated to our running example path $\P$. The number of levels of~$\P$ smaller than $a+b$ is~$\delta(P)=5$. The cycle permutation $\gamma'$ is obtained from $\gamma$ by rotating the labels $1,\dots,5$. 
\end{example}

\begin{figure}[htb]
  \includegraphics[width=0.8\textwidth]{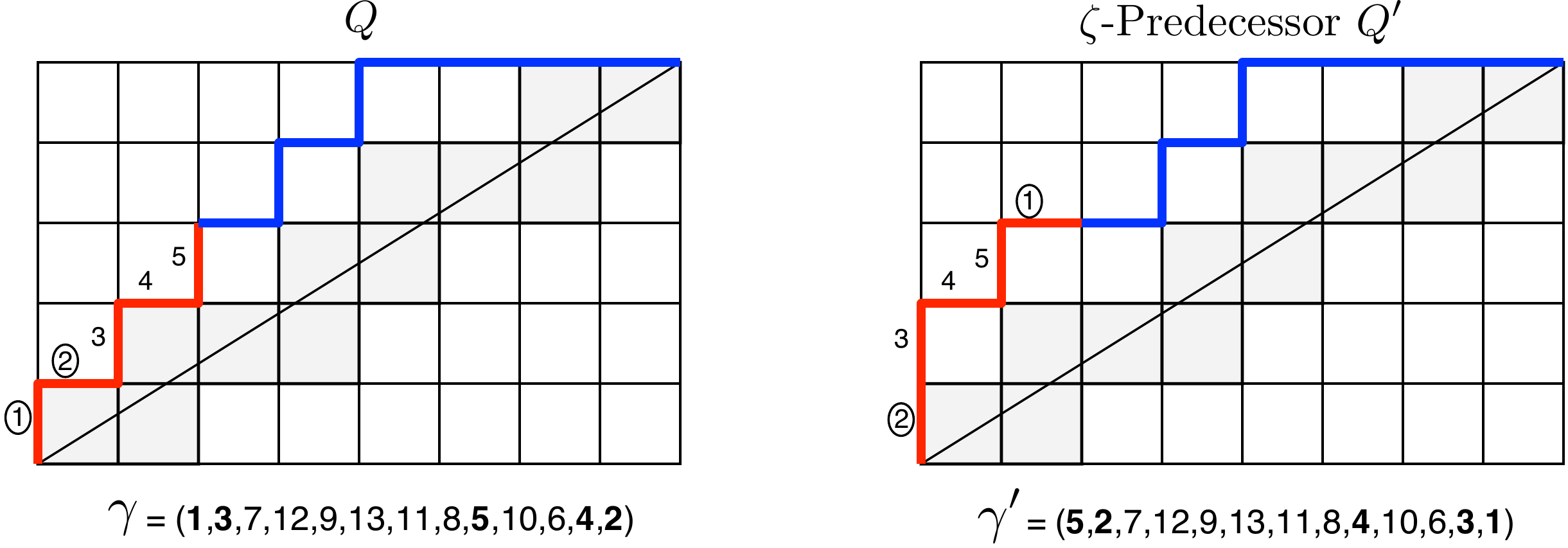} 
  \caption{Determining the $\zeta$-predecessor of a path $\Q=\zeta(\P)$ with $\delta(\P)$.}
  \label{fig:delta_Q_predecessor}
\end{figure}

\begin{proof}
Adding a box at the level $\ell_1=0$ of $P$ increases this level by $a+b$ and all other levels remain unchanged. The level $\ell_1=0$ is transformed from a north level in $P$ to an east level in $P'$, while the east level $\ell_i=a$ is transformed to a north level. Observing that the levels $\ell_1$ and $\ell_i$ correspond to the first north and first east steps in $Q$ respectively, and that the relative order of the levels less than $a+b$ is rotated once, one concludes the result. 
\end{proof}

The two formulations in Proposition~\ref{prop:predecessor1} and Proposition~\ref{prop:predecessor2} have the advantage that we do not actually need to know the value of the maximal label $m$, and reduces the problem of showing that $\zeta$ is a bijection to computing a single statistic.  To wit, if~$\delta(\P)$ can be directly computed from~$\Q=\zeta(\P)$, then we can  obtain the $\zeta$-predecessor of $\Q$, and repeat until we arrive at the diagonal path~$\P_0$. This would completely determine $\P$ from $\zeta(\P)$. 

\begin{proposition}\label{rem:zetainverse_delta}
Let $Q=\zeta(P)$ and $\Q=\Q_1,\dots,\Q_l$ be the list of $\zeta$-predecessors of $\Q$ with $\Q_l$ being the final path containing all boxes above the main diagonal. If $\delta_i$ is the $\delta$-statistic corresponding to~$\Q_i$, then the permutation $\gamma(P)$ is determined by 
\[
\gamma(P) = \rho\  \gamma_0\ \rho^{-1},
\]
where $\rho= \rho_{1,\delta_1}\dots \rho_{1,\delta_{l-1}}$, the permutation $\rho_{1,i}$ has cycle notation $(1, \dots , i)$, and $\gamma_0=\gamma(\P_0)$. 
The east steps of the path $\P$ are encoded by the cyclic descents of $\gamma$ when considered in one-line notation, as described in Section~\ref{sec:zetaeta}. 
\end{proposition}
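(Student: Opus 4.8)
The plan is to derive the displayed identity by telescoping Proposition~\ref{prop:predecessor1} along the chain of $\zeta$-predecessors, and then to read off the last sentence from the description of the map $\iota$ in Section~\ref{sec:zetaeta}.

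First I would set up the chain on the domain side. Put $\P_1 := \P$ and, for $1 \le i < l$, let $\P_{i+1}$ be the conjugate of the path obtained by removing the maximal box from $\P_i^c$ --- the operation appearing in Propositions~\ref{prop:predecessor1} and~\ref{prop:predecessor2}. By Proposition~\ref{prop:predecessor2}, $\zeta(\P_{i+1})$ is precisely the $\zeta$-predecessor of $\zeta(\P_i)$; hence by induction $\zeta(\P_i) = \Q_i$ for every $i$, each $\Q_i$ genuinely lies in the image of $\zeta$, and $\delta_i = \delta(\P_i)$ is the well-defined ``$\delta$-statistic corresponding to $\Q_i$.'' The chain is finite: removing the maximal box strictly decreases skew length and conjugation preserves skew length (Theorem~\ref{thm.slconj}), so $\sl(\P_1) > \sl(\P_2) > \cdots$ is a strictly decreasing sequence of non-negative integers; it terminates exactly when $\zeta(\P_l) = \Q_l$ is the path containing all boxes above the main diagonal, that is, when $\P_l = \P_0$.

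Next I would telescope. Proposition~\ref{prop:predecessor1} gives, for $1 \le i < l$,
\[
\gamma(\P_{i+1}) = \rho_{1,\delta_i}^{-1}\,\gamma(\P_i)\,\rho_{1,\delta_i},
\qquad\text{i.e.}\qquad
\gamma(\P_i) = \rho_{1,\delta_i}\,\gamma(\P_{i+1})\,\rho_{1,\delta_i}^{-1}.
\]
Applying these relations from $i=1$ to $i=l-1$ and using $\gamma(\P_l) = \gamma(\P_0) = \gamma_0$ yields
\[
\gamma(\P) = \gamma(\P_1) = \bigl(\rho_{1,\delta_1}\cdots\rho_{1,\delta_{l-1}}\bigr)\,\gamma_0\,\bigl(\rho_{1,\delta_1}\cdots\rho_{1,\delta_{l-1}}\bigr)^{-1} = \rho\,\gamma_0\,\rho^{-1},
\]
which is the asserted identity. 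For the final sentence: by definition $\gamma(\P)$ written in cycle notation beginning with $1$ records the same sequence of entries as the one-line notation of $\sigma(\P)$, so the cyclic descents of $\gamma(\P)$ read in one-line notation are exactly those of $\sigma(\P)$, which by Remark~\ref{rem:sigma} mark the east steps of $\P$ --- the same step used in item~(3) of Definition~\ref{def:iota}.

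I expect the only genuine point requiring care, rather than calculation, to be the identification of the two chains: that the list $\Q_1,\dots,\Q_l$ defined purely on the image side through $\zeta$-predecessors coincides term by term with $\zeta(\P_1),\dots,\zeta(\P_l)$ for the domain-side chain above, and that the rotation length used at step $i$ is governed by $\delta(\P_i)$ and not by some other avatar of the data. Both facts are already packaged in Propositions~\ref{prop:predecessor1} and~\ref{prop:predecessor2}, and once they are invoked the rest is the routine telescoping above; in particular no knowledge of the maximal hook lengths $m$ of the intermediate paths is ever needed.
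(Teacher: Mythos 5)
Your proof is correct and takes essentially the same route the paper intends: the paper states this proposition with no separate proof, treating it as the telescoped iteration of Propositions~\ref{prop:predecessor1} and~\ref{prop:predecessor2} described in the paragraph immediately preceding it, which is exactly your argument. Your explicit termination step (strictly decreasing skew length under box removal, preserved by conjugation) merely fills in a detail the paper leaves implicit.
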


\begin{remark}
The $\delta$ statistic has also been considered shortly after this paper by Xin in~\cite{xin_efficient_2015}. This statistic, and another related statistic called ``key", is used by the author to present a search algorithm for inverting the zeta map. This algorithm shows an alternative proof that the zeta map is a bijection in the cases~$(a,ak\pm 1)$, by giving a recursive construction of~$\zeta^{-1}(\Q)$. However, the general case remains open. The results of Xin in~\cite{xin_efficient_2015} are very similar to those presented in this section, and also use the operations of removing the maximal box in $\P$ and $\P^c$. Corollary~\ref{cor:delta_inverse} should be compared with~\cite[Corollary~19]{xin_efficient_2015}. We also present estimates for the~$\delta$ statistic and a precise formula in the Fuss-Catalan case $(a,ak+1)$ in Proposition~\ref{prop:funnydelta} and Corollary~\ref{cor:delta_Fuss}, which should be compared with~\cite[Theorem~16]{xin_efficient_2015}. The estimates we present in Proposition~\ref{prop:funnydelta} determine the number of children of the nodes in the search tree in the ``ReciPhi algorithm" in~\cite[Section~5]{xin_efficient_2015}.
Our algorithm for describing the inverse of zeta in the Fuss-Catalan case $(a,ak+1)$ 
should be compared with the ReciPhi algorithm for the Fuss-Catalan case in~\cite[Section~5]{xin_efficient_2015}.
\end{remark}

\subsection{Initial part of a rational bounce path}
The zeta map has been shown to be a bijection in the special cases $(a,am\pm 1)$ by way of a ``bounce path'' by which zeta inverse could be computed~\cite{Loehr,GMII}. However, constructing such a bounce path for the general~$(a,b)$ case remains elusive. In this section, we construct the initial part of a rational bounce path and show its relation to the $\delta$ statistic. In particular, we explicitly compute $\delta$ in the Fuss-Catalan case~$(a,ak+1)$.

\begin{definition}
Let $\Q$ be an $(a,ak+r)$-Dyck path with $0\leq k$ and $0<r<a$. The rational \emph{initial bounce path} of $\Q$ consists of a sequence of alternating $k+1$ \emph{vertical moves} and~$k$ \emph{horizontal moves}. We begin at $(0,0)$ with a vertical move followed by a horizontal move, and continue until eventually finish with the $(k+1)$th vertical move.  
Let $v_1,\dots,v_{k+1}$ denote the lengths of the successive vertical moves and $h_1,\dots, h_k$ denote the lengths of the successive horizontal moves. These lengths are determined as follows.

We start from $(0,0)$ and move north $v_1$ steps until reaching an east step of $\Q$. Next, move~$h_1=v_1$ steps east. Next, move north $v_2$ steps from the current position until reaching an east step of $\Q$.  Next, move $h_2=v_1+v_2$ steps east. In general, we move north $v_i$ steps from the current position until reaching an east step of the path, and then move east $e_i=v_1+\dots +v_i$ steps. This is done until obtaining the last vertical move $v_{k+1}$. 
\end{definition}

\begin{remark}
The definition of the initial bounce path is exactly the same as an initial part of the bounce path in the Fuss-Catalan case~\cite{Loehr,GMII}. The description of this initial part remains the same for the general $(a,b)$-case but we still do not know how to extend it to a complete bounce path in general. 
\end{remark}

It turns out that the initial bounce path is closely related to the $\delta$ statistic. Denote by $|v|=v_1+\dots +v_{k+1}$ and by $|h|=h_1+\dots +h_k$. In all the results of this section we always assume $0\leq k$ and $0<r<a$. 

\begin{proposition}\label{prop:funnydelta}
Let $\Q=\zeta(P)$ be an $(a,ak+r)$-Dyck path and $\widetilde \delta(\P) \leq \delta(\P)$ be the number of levels in~$\P$ that are less than or equal to $a(k+1)$. The two following equations hold:
\begin{equation}\label{eq:bounce1}
\widetilde \delta(\P) = |v|+|h|+1,
\end{equation}
\begin{equation}\label{eq:bounce2}
|v|+|h|+1 \leq \delta(\P) \leq |v|+|h|+r.
\end{equation}
\end{proposition}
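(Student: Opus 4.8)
The plan is to dispose of the inequality \eqref{eq:bounce2} quickly, then reduce the identity \eqref{eq:bounce1} to a statement about the bounce path read through the sweep-map description of $\zeta$.

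\textbf{Reducing \eqref{eq:bounce2} to \eqref{eq:bounce1}.} Since $b=ak+r$, we have $a(k+1)=a+b-r<a+b$, so every level of $\P$ that is $\le a(k+1)$ is $<a+b$; hence $\widetilde\delta(\P)\le\delta(\P)$. Moreover $\delta(\P)-\widetilde\delta(\P)$ counts the levels of $\P$ lying in the open interval $(a+b-r,\,a+b)$, which contains only $r-1$ integers, so $\delta(\P)-\widetilde\delta(\P)\le r-1$. Substituting \eqref{eq:bounce1} then gives $|v|+|h|+1\le\delta(\P)\le|v|+|h|+r$.

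\textbf{Setting up \eqref{eq:bounce1}.} By Theorem~\ref{thm:zeta_sweep}, reading the steps of $\Q=\zeta(\P)$ from the bottom-left corner is the same as reading the levels of $\P$ in increasing order, a step being a north step exactly when its level lies in $\N(\P)$. Thus $\widetilde\delta(\P)$ is precisely the number of initial steps of $\Q$ whose associated level is $\le a(k+1)$. I would prove \eqref{eq:bounce1} by induction on the bounce stage $i=1,\dots,k+1$. Write $c_i=v_1+\cdots+v_i$ and $d_i=h_1+\cdots+h_i$, so the $i$-th corner of the initial bounce path is $B_i=(d_{i-1},c_i)$, and set $L_i=\#\{\text{levels of }\P\text{ that are }\le ia\}$. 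The inductive claim is: $\Q$ reaches $B_i$ after exactly $L_i-1$ steps, and the next step of $\Q$ is an east step whose level is the greatest level of $\P$ not exceeding $ia$. For $i=k+1$ this reads $L_{k+1}=d_k+c_{k+1}+1=|v|+|h|+1=\widetilde\delta(\P)$, which is \eqref{eq:bounce1}. The base case $i=1$ uses that $\Q$ begins with exactly $v_1$ north steps and that the smallest east level of $\P$ equals $a$ (the initial point of the last, east, step of $\P$ has level $a$, and an east step cannot start at level $<a$ because the step must leave the next lattice point weakly above the diagonal); hence $L_1=v_1+1$. For the inductive step, the bounce rule takes $B_i$ to $B_{i+1}$ by $h_i=c_i$ east moves followed by $v_{i+1}$ north moves, and since $\Q$ is monotone the portion of $\Q$ from $B_i$ to $B_{i+1}$ has exactly $h_i$ east steps and $v_{i+1}$ north steps; so it suffices to prove that the east steps of $\Q$ in this portion carry precisely the east levels of $\P$ in $(ia,(i+1)a]$ (there being $h_i=c_i$ of them) and that the north steps carry precisely the north levels of $\P$ in $(ia,(i+1)a]$ (there being $v_{i+1}$ of them). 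Granting this, $L_{i+1}-L_i=h_i+v_{i+1}$ and the induction closes.

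\textbf{The crux.} The heart of the proof is that last pair of assertions, and the plan is to exploit the period-$a$ structure of the level function. A north step changes a level by $b\equiv r\pmod a$ with $\gcd(r,a)=1$ (as $\gcd(a,b)=1$ and $b=ak+r$), so the rows $y=0,1,\dots,a-1$ of the $a\times b$ grid realize each residue class modulo $a$ exactly once; hence inside any window of $a$ consecutive level-values, such as $(ia,(i+1)a]$, each row of $\P$ contributes exactly its east levels (respectively north levels) that fall in that window, with the east/north split inside a row governed by the single place where $\P$ turns from east steps to north steps in that row. I expect the main obstacle to be checking that the greedy bounce rule ``rise to $\Q$, then advance east by the accumulated rise $c_i$'' is exactly calibrated so that the number of east levels of $\P$ in the $i$-th window equals $c_i$ (the number of rows $\P$ has already completed) while the number of new north levels equals $v_{i+1}$ (the number of rows $\P$ opens in that window). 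This is the level-theoretic recasting of the classical bounce-path analysis for Fuss--Catalan paths in \cite{Loehr,GMII}, and it is precisely here that the hypothesis $0<r<a$ is used.
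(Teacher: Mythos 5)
Your handling of \eqref{eq:bounce2} is complete and coincides with the paper's: granted \eqref{eq:bounce1}, the difference $\delta(\P)-\widetilde\delta(\P)$ counts the levels of $\P$ strictly between $a(k+1)=a+b-r$ and $a+b$, of which there are at most $r-1$. Your framework for \eqref{eq:bounce1} --- reading $\Q$ as the levels of $\P$ sorted increasingly and inducting over windows of $a$ consecutive level values delimited by the multiples of $a$ --- is also exactly the paper's strategy.

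However, the step you label ``the crux'' is the entire mathematical content of \eqref{eq:bounce1}, and you do not prove it. Two things are needed. First, that each $ia$ for $i=1,\dots,k+1$ is an east level of $\P$ (it is the level of the initial point of the $i$-th east step counted from the end of $\P$); this is what makes the $i$-th vertical move of the bounce path stop precisely at the step of $\Q$ carrying level $ia$, so that your corners $B_i$ line up with your window boundaries. Second, writing $N_j$ and $E_j$ for the sets of north and east levels of $\P$ lying in $[ja,(j+1)a)$, one needs $|E_i|=|N_0|+\cdots+|N_{i-1}|$ for $1\le i\le k$. Your residue-class observation only shows that each row of the grid contributes at most one level to each window; it does not show that every row whose north level lies in some window $j<i$ actually contributes an east level to window $i$. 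The paper establishes this via the explicit decomposition $E_i=\bigcup_{j=0}^{i-1}\bigl(N_j+(i-j)a\bigr)$: if $n\in N_j$ is the level of a point $(x,y)$ starting a north step, then the point $(x-(i-j),y)$, whose level is $n+(i-j)a$, is forced to lie on $\P$ because the lattice point one row below it has level $n+(i-j)a-b<0$ (using $n+(i-j)a<(i+1)a\le ka<b$ for $i<k$, with a little extra care in the top window), and being strictly to the left of $(x,y)$ it starts an east step; conversely every east level in window $i$ arises this way from the first north level following it along $\P$. This forcing argument is where the arithmetic of the $(a,ak+r)$ rectangle genuinely enters, and without it the identifications $h_i=|E_i|$ and $v_{i+1}=|N_i|$, hence \eqref{eq:bounce1}, remain unproved.
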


This proposition will follow from the following lemma. Note that every such a path $\P$ contains the east levels $a,2a,\dots,(k+1)a$ at the end of the path. Moreover,

\begin{lemma}
The east steps of $\Q=\zeta(P)$ that are reached by the vertical moves $v_1,\dots,v_{k+1}$ of its initial bounce path correspond to the east levels $a,2a,\dots,(k+1)a$ of $P$.  
\end{lemma}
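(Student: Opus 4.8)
The plan is to run the bounce‑path recursion against the sweep‑map description of $\zeta$ (Theorem~\ref{thm:zeta_sweep}): the steps of $\zeta(P)$, read from the origin to $(b,a)$, are the steps of $P$ listed in increasing order of level, the $j$‑th step being north or east according to whether the $j$‑th smallest level of $P$ is a north or an east level. Write $b=ak+r$ with $0<r<a$. I would first collect a ``level dictionary'' from the fact that $P$ lies above the diagonal: every east level of $P$ is $\ge a$; every level $<a$ is a north level; the top endpoint of each north step of $P$ has level $\ge b$ (if the step is $(x,y-1)\to(x,y)$ then $ax\le b(y-1)$, so $by-ax\ge b$); for $1\le i\le k+1$ the number $ia$ is an east level (the last north step of $P$ reaches $(x_0,a)$ with $ax_0\le b(a-1)$, forcing $x_0\le b-k-1$, so the top row of $P$ has $\ge k+1$ east steps, whose starting levels are $a,2a,3a,\dots$); and, since $\gcd(a,b)=1$, the map $y\mapsto by\bmod a$ is a bijection on $\{0,\dots,a-1\}$ (with $y=a$ again landing on $0$), so all levels of $P$ in a fixed residue class $\rho$ mod $a$ occupy a single row of $P$ (the top row if $\rho=0$); reading that row left to right they decrease by $a$, the rightmost is the unique north level $n_\rho$ of the class, the east levels of the class are exactly $n_\rho+a,n_\rho+2a,\dots,n_\rho+M_\rho a$ (with $M_\rho\ge 0$ their number), and the leftmost, $n_\rho+M_\rho a$, is a north‑step top, hence $\ge b$.

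The crux is the following identity. Since $n_\rho+M_\rho a\ge b$ and $\lfloor b/a\rfloor=k$, we get $M_\rho+\lfloor n_\rho/a\rfloor=\lfloor(n_\rho+M_\rho a)/a\rfloor\ge k$ for every $\rho$. Using this, for each $l$ with $1\le l\le k$ one has
\[
\#\{\text{east levels of }P\ \text{in}\ [la,(l+1)a)\}=\#\{\text{north levels of }P<la\},
\]
because a residue class contributes at most one east level to the length‑$a$ window $[la,(l+1)a)$, and it contributes exactly one iff $1\le l-\lfloor n_\rho/a\rfloor\le M_\rho$; the left inequality says $n_\rho<la$, and the right follows from $l\le k\le M_\rho+\lfloor n_\rho/a\rfloor$. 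Summing over $l=1,\dots,m$ and using that there are no east levels $<a$ yields, for $0\le m\le k$,
\[
\#\{\text{east levels of }P<(m+1)a\}=\sum_{j=1}^{m}\#\{\text{north levels of }P<ja\}.\tag{$\ast$}
\]

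Now I would unwind the bounce path. Under the sweep map, $\zeta(P)$ makes its east step out of column $c$ — its $(c+1)$‑th east step — at height $\#\{\text{north levels}<e_{(c+1)}\}$, where $e_{(c+1)}$ is the $(c+1)$‑th smallest east level of $P$, and this step carries the label $e_{(c+1)}$. Let $H_j$ be the height reached by the $j$‑th vertical move; since each horizontal move has length $h_j=v_1+\dots+v_j=H_j$, the $i$‑th vertical move begins in column $X_{i-1}:=h_1+\dots+h_{i-1}=H_1+\dots+H_{i-1}$ and reaches the east step of $\zeta(P)$ leaving column $X_{i-1}$, namely the one labelled $e_{(X_{i-1}+1)}$. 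Induct on $i$ for $1\le i\le k+1$: for $i=1$, $X_0=0$ and $e_{(1)}=a$ by the dictionary; assuming the first $i-1$ vertical moves reach the east levels $a,2a,\dots,(i-1)a$, we have $H_j=\#\{\text{north levels}<ja\}$ for $j<i$, so $X_{i-1}=\sum_{j=1}^{i-1}\#\{\text{north levels}<ja\}=\#\{\text{east levels}<ia\}$ by $(\ast)$ with $m=i-1$, and since $ia$ is itself an east level it is precisely the $(X_{i-1}+1)$‑th smallest one, i.e.\ $e_{(X_{i-1}+1)}=ia$. (Along the way $H_{i-1}\le H_i$ by monotonicity, so each vertical move has nonnegative length and genuinely lands on that east step.) This is exactly the assertion of the lemma.

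The main obstacle is the identity $(\ast)$, equivalently the bound $M_\rho+\lfloor n_\rho/a\rfloor\ge k$: this is the one place the hypothesis $b=ak+r$ with $0<r<a$ is essential, and the windowed identity in the second paragraph is genuinely false for $l>k$ — which is precisely why the inductive description of the bounce path terminates after $k+1$ vertical moves. Once $(\ast)$ is in hand, the rest — translating the bounce recursion into sorted‑level language and matching up indices — is routine bookkeeping.
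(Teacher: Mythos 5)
Your proof is correct and follows essentially the same route as the paper's: your identity $(\ast)$ is the paper's relation $E_i=\bigcup_{j=0}^{i-1}\left(N_j+(i-j)a\right)$, which gives $|E_i|=|N_0|+\dots+|N_{i-1}|$, and your induction on the bounce points $(X_{i-1},H_{i-1})$ is the paper's induction establishing $v_i=|N_{i-1}|$ and $h_i=|E_i|$. Your residue-class row analysis simply makes explicit (and somewhat more rigorous) the north-level/east-level correspondence that the paper sketches informally.
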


\begin{proof}
Denote by $A_0=\{0,1,\dots , a-1\}$ the set of natural numbers between $0$ and $a-1$, and let~$A_i=A_0+ia$ be the translation of $A_0$ by $ia$. Note that the number of boxes above the main diagonal that are directly on the right of a north step with north level in $A_i$ is exactly equal to $i$. So, these sets can be used to encode the ``area-vector" of a Dyck path.

As in the known Fuss-Catalan bounce path description, we will show that the vertical steps of~$\Q$ that are directly on the left of the $v_i$ vertical move contribute area $i-1$ in $\P$. More precisely, we will show:

\begin{enumerate}[1.]
\item The vertical steps of~$\Q$ that are directly on the left of the $v_i$ vertical move correspond to north levels in $\P$ that belong to $A_{i-1}$.
\item The horizontal steps of~$\Q$ that are directly above of the $h_i$ horizontal move correspond to east levels in $\P$ that belong to $A_i$.
\end{enumerate}

Denote by $N_i$ the set of north levels in $\P$ that belong to $A_i$, for $i=0,\dots ,k$. Similarly, denote by $E_i$ the set of east levels in $\P$ that belong to $A_i$, for $i=1,\dots, k$. We first show that $E_i$ can be obtained as the disjoint union 
\begin{equation}\label{eq:ENcorrespondence}
E_i = \bigcup_{j=0}^{i-1} \left(N_j + (i-j)a\right). 
\end{equation}

For this, note that the first north level in $\P$ that appears after an east level $e\in E_i$ should be a north level $n\in N_j$ for some $j \in \{0,\dots i-1\}$, and that $e=n+(i-j)a$. Reciprocally, every north level $n\in N_j$ for some $j \in \{0,\dots i-1\}$ forces the east levels $e_{j+1},e_{j+2}\dots, e_{k}$ to appear as east levels in $\P$, where $e_{j+l}=n+l a$ (indeed, $e_{j+l}\in A_{j+l}$ which means that $e_{k-1}<ak<b=ak+r$. Then, all the lattice points one step below $e_{j+1},e_{j+2}\dots, e_{k-1}$ are below the main diagonal). Thus, the north level $n\in N_j$ contributes with exactly one east level $e=n+(i-j)a$ in $E_i$. 

Items 1 and 2 above are now equivalent to prove that $v_i=|N_{i-1}|$ and $h_i=|E_i|$.  Equation~\ref{eq:ENcorrespondence} implies that 
$|E_i|=|N_0|+\dots +|N_{i-1}|.$
Since $h_i=v_1+\dots+v_i$, it suffices to prove $v_i=|N_{i-1}|$. 
Note that $v_1$ is clearly the number of elements in $N_0$, since the smallest east level of $\P$ (which corresponds to the first east step in $\Q$) is equal to $a$. Moving $h_1=v_1=|E_1|$ steps horizontally covers all the east steps of $\Q$ corresponding to the east levels in $\P$ that belong to $A_1$. Moving up $v_2$ units from the current position hits the path at the east step corresponding to the first east level of $\P$ that belongs to $A_2$. This east level is exactly equal to $2a$, and all the north steps on the left of $v_2$ in $Q$ correspond to the north levels in $\P$ that belong to $A_1$, that is $v_2=|N_1|$. In general, $\P$ contains all east levels $a,2a,\dots,(k+1)a$. Therefore, the initial value of $E_i$ is equal to $ia$ and is smaller than all values in $N_i$. As a consequence the vertical move $v_i$ of the bounce path hits the path $\Q$ precisely at the east step corresponding to the level $ia$ of $\P$ as desired, and $v_i=|N_{i-1}|$. This finishes the proof of the proposition and the proof of items 1 and 2.
\end{proof}

\begin{proof}[Proof of Proposition~\ref{prop:funnydelta}]
The east step of $\Q$ that is reached by the vertical move~$v_{k+1}$ corresponds to the east level $(k+1)a$ in $\P$. So, $\widetilde \delta(\P)$ is equal to the position of this east step in $\Q$, which is equal to $|v|+|h|+1$. Since $a+b=(k+1)a+r$ and $a+b$ never appears as a level in $\P$, then~$\delta\leq \widetilde \delta + r-1 = |v|+|h|+r$.
\end{proof}

Replacing $r=1$ in Equation~\eqref{eq:bounce2}, we obtain.
\begin{corollary}\label{cor:delta_Fuss}
In the Fuss-Catalan case $(a,ak+1)$, the statistic $\delta(\P)$ is determined by the initial bounce path of $\Q=\zeta(\P)$ by
\begin{equation}
\delta(\P) = |v|+|h|+1.
\end{equation}
\end{corollary}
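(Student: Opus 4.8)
The plan is to obtain this corollary as an immediate specialization of Proposition~\ref{prop:funnydelta}. Observe first that an $(a,ak+1)$-Dyck path is exactly a path of the form $(a,ak+r)$ with $r=1$, so (assuming $a\geq 2$; the case $a=1$ forces $b=2$, $k=1$ and is trivial by inspection) the hypotheses $0\leq k$ and $0<r<a$ of Proposition~\ref{prop:funnydelta} are satisfied. Thus both displayed relations \eqref{eq:bounce1} and \eqref{eq:bounce2} hold for $\Q=\zeta(\P)$, with $r=1$.

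Next I would simply substitute $r=1$ into the double inequality \eqref{eq:bounce2}. It reads $|v|+|h|+1 \leq \delta(\P) \leq |v|+|h|+1$, which forces the equality $\delta(\P)=|v|+|h|+1$, as claimed. Equivalently, one can argue through \eqref{eq:bounce1}: the quantity $\delta(\P)-\widetilde\delta(\P)$ counts the levels of $\P$ lying strictly between $a(k+1)$ and $a+b=a(k+1)+r$; there are at most $r-1$ such integers, and when $r=1$ there are none (using also that $a+b$ is never itself a level), so $\delta(\P)=\widetilde\delta(\P)=|v|+|h|+1$.

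There is essentially no obstacle here: the real content — identifying the east steps hit by the vertical moves $v_1,\dots,v_{k+1}$ with the east levels $a,2a,\dots,(k+1)a$ of $\P$, and hence locating the step at level $a(k+1)$ in position $|v|+|h|+1$ of $\Q$ — is already carried out in the lemma preceding Proposition~\ref{prop:funnydelta}. The corollary is just the clean payoff of setting $r=1$ so that the error term $r-1$ in the estimate vanishes.
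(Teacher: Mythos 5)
Your proof is correct and matches the paper's own argument exactly: the paper obtains the corollary by "Replacing $r=1$ in Equation~\eqref{eq:bounce2}," which collapses the double inequality to the stated equality. Your additional remarks (the trivial $a=1$ case and the alternative count of levels between $a(k+1)$ and $a+b$) are fine but not needed beyond what the proposition already provides.
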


As a consequence of Corollary~\ref{cor:delta_inverse} and Corollary~\ref{cor:delta_Fuss} we obtain an alternative proof that the zeta map is a bijection in the Fuss-Catalan case~$(a,ak+1)$, as previously shown by Loehr in~\cite{Loehr}. 

\begin{corollary}\label{cor:zetabijection_Fuss}
The zeta map is a bijection in the Fuss-Catalan case $(a,ak+1)$.
\end{corollary}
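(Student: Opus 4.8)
The plan is to read off the corollary as the conjunction of Corollary~\ref{cor:delta_inverse} and Corollary~\ref{cor:delta_Fuss}; the one thing that must be made explicit is that, when $b=ak+1$, the statistic $\delta(\P)$ is a function of $\zeta(\P)$ alone.

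First I would note that the initial bounce path of a path is an intrinsic construction: given $\Q$, the vertical lengths $v_1,\dots,v_{k+1}$ and horizontal lengths $h_1,\dots,h_k$ are determined directly by $\Q$ (each $v_i$ is how far north one travels from the current position before meeting an east step of $\Q$, and $h_i=v_1+\dots+v_i$), with no reference to any $\zeta$-preimage. Hence $|v|$ and $|h|$ are functions of $\Q=\zeta(\P)$. Next I would apply Corollary~\ref{cor:delta_Fuss}, which in the Fuss-Catalan case gives $\delta(\P)=|v|+|h|+1$; combined with the previous sentence, this shows that $\delta(\P)$ is uniquely determined by $\zeta(\P)$ for every $(a,ak+1)$-Dyck path $\P$. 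This is exactly the hypothesis of Corollary~\ref{cor:delta_inverse}, so that result yields that $\zeta$ is invertible, with $\P$ reconstructed from $\zeta(\P)$ by the recursion of Proposition~\ref{rem:zetainverse_delta}: compute $\delta_1$ from $\Q$ via its initial bounce path, pass to the $\zeta$-predecessor $\Q_2$ using Proposition~\ref{prop:predecessor2}, and iterate, recording $\delta_1,\delta_2,\dots$, until reaching the final path $\zeta(\P_0)$, at which point $\gamma(\P)=\rho\,\gamma(\P_0)\,\rho^{-1}$ with $\rho=\rho_{1,\delta_1}\cdots\rho_{1,\delta_{l-1}}$ and the east steps of $\P$ are the cyclic descents of $\gamma(\P)$. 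Finally, $\zeta$ is a self-map of the finite set $\DD_{a,ak+1}$ possessing a left inverse, hence a bijection.

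I expect no genuine obstacle: all of the substance is already in Corollary~\ref{cor:delta_inverse} (the inversion scheme) and in Corollary~\ref{cor:delta_Fuss} (the bounce-path computation of $\delta$), and the present statement is their harvest. The only point deserving a sentence of care is that the recursive inversion of Proposition~\ref{rem:zetainverse_delta} stays inside the Fuss-Catalan family at every stage — each $\zeta$-predecessor $\Q_i$ equals $\zeta(\P^{(i)})$ for some $(a,ak+1)$-Dyck path $\P^{(i)}$ — so that Corollary~\ref{cor:delta_Fuss} reapplies and each $\delta_i$ is again computable from $\Q_i$; and that this recursion terminates, which holds because the list $\Q_1,\dots,\Q_l$ of $\zeta$-predecessors is finite, ending at the path containing all boxes above the main diagonal.
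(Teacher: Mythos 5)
Your proposal is correct and follows exactly the paper's route: the paper derives this corollary by simply combining Theorem~\ref{cor:delta_inverse} (invertibility of $\zeta$ whenever $\delta(\P)$ is determined by $\zeta(\P)$) with Corollary~\ref{cor:delta_Fuss} (the bounce-path formula $\delta(\P)=|v|+|h|+1$ in the case $b=ak+1$). Your added remarks — that the initial bounce path is intrinsic to $\Q$, that the predecessor recursion stays within $\DD_{a,ak+1}$ and terminates, and that a left inverse on a finite set is a bijection — are exactly the points the paper leaves implicit, so the two arguments coincide in substance.
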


\section*{Acknowledgements}
\label{sec.ack}

Some of the results in this work are partially the result of working sessions at the Algebraic Combinatorics Seminar at the Fields Institute with the active participation of Farid Aliniaeifard, Nantel Bergeron, Eric Ens, Sirous Homayouni, Sahar Jamali, Shu Xiao Li, Trueman MacHenry, and Mike Zabrocki. 

We thank two anonymous referees for their comments and for pointing us to some references of previously known results.

We especially thank Rishi Nath for discussions involving core partitions that helped simplify arguments in Section~\ref{sec:skew_length}, Drew Armstrong for sharing his knowledge of the history of this subject, and Greg Warrington for pointing us the references about the $\dinv$ statistic in Remark~\ref{rem:dinv}.  We are also grateful to York University for hosting a visit of the third author.

\bibliographystyle{amsalpha}
\bibliography{abCores}

\end{document}